\documentclass{article}%
\usepackage{amsmath}
\usepackage{amsfonts}
\usepackage{amssymb}
\usepackage{graphicx}%
\setcounter{MaxMatrixCols}{30}
\newtheorem{theorem}{Theorem}

\newtheorem{corollary}[theorem]{Corollary}

\newtheorem{definition}[theorem]{Definition}

\newtheorem{lemma}[theorem]{Lemma}

\newtheorem{proposition}[theorem]{Proposition}
\newtheorem{remark}[theorem]{Remark}

\newenvironment{proof}[1][Proof]{\noindent\textbf{#1.} }{\ \rule{0.5em}{0.5em}}
\begin{document}

\title{Evaluation of Nonsymmetric Macdonald Superpolynomials at Special Points}
\author{Charles F. Dunkl\thanks{email: cfd5z@virginia.edu}\\Dept. of Mathematics\\University of Virginia\\Charlottesville VA 22904-4137 US}
\date{31 March 2021}
\maketitle

\begin{abstract}
In a preceding paper the theory of nonsymmetric Macdonald polynomials taking
values in modules of the Hecke algebra of type \$A\$ (Dunkl and Luque SLC
2012) was applied to such modules consisting of polynomials in anti-commuting
variables, to define nonsymmetric Macdonald superpolynomials. These
polynomials depend on two parameters \$%
$\backslash$%
left( q,t%
$\backslash$%
right) \$ and are defined by means of a Yang-Baxter graph. The present paper
determines the values of a subclass of the polynomials at the special points
\$%
$\backslash$%
left( 1,t,t\symbol{94}\{2\}\% ,%
$\backslash$%
ldots%
$\backslash$%
right) \$ or\$%
$\backslash$%
left( 1,t\symbol{94}\{-1\},t\symbol{94}\{-2\},%
$\backslash$%
ldots%
$\backslash$%
right) \$. The arguments use induction on the degree and computations with
products of generators of the Hecke algebra. The resulting formulas involve \$%
$\backslash$%
left( q,t%
$\backslash$%
right) \$-hookproducts. Evaluations are also found for Macdonald
superpolynomials having restricted symmetry andantisymmetry properties

\end{abstract}
\tableofcontents

\section{Introduction}

In the prequel \cite{D2020} of this paper we defined a representation of the
Hecke algebra of type $A$ on spaces of superpolynomials. By using the theory
of vector-valued nonsymmetric Macdonald polynomials developed by Luque and the
author \cite{DL2012} we constructed nonsymmetric Macdonald superpolynomials.
The basic theory including Cherednik operators, the Yang=Baxter graph method
for computing the Macdonald superpolynomials, and norm formulas were
described. The norm refers to an inner product with respect to which the
generators of the Hecke algebra are self-adjoint. The theory relies on
relating the Young tableaux approach to irreducible Hecke algebra modules to
polynomials in anti-commuting variables. Also that paper showed how to produce
symmetric and anti-symmetric Macdonald superpolynomials, and their norms, by
use of the technique of Baker and Forrester \cite{BF1999}. In the present
paper we consider the evaluation of the polynomials at certain special points.
The class of polynomials which lead to attractive formulas in pure product
form is relatively small. These values are expressed by shifted $q$%
-factorials, both ordinary (positive integer labeled) and the type labeled by
partitions, and $\left(  q,t\right)  $-hook products.

In Section \ref{SectBG} one finds the necessary background on the Hecke
algebra of type $A$ and its representations on polynomials in anti-commuting
(\textit{fermionic}) variables and on superpolynomials which combine commuting
(\textit{bosonic}) and anti-commuting variables. This section also defines the
Cherednik operators, a pairwise commuting set, whose simultaneous eigenvectors
are called nonsymmetric Macdonald superpolynomials. They are constructed
starting from degree zero by means of the Yang-Baxter graph. The necessary
details from \cite{D2020} are briefly given. Section \ref{SectEval} presents
the main results with proofs about the evaluations; there are two types with
similar arguments. The methods rely on steps in the graph to determine the
values starting from degree zero. Some of the arguments are fairly technical
computations using products of generators of the Hecke algebra. The definition
of $\left(  q,t\right)  $-hook products and their use in the evaluation
formulas are presented in Section \ref{SectHkPro}. The evaluations are
extended to Macdonald polynomials, of the types studied in the previous
sections, with restricted symmetry and antisymmetry properties in Section
\ref{SectHkPro}. The conclusion and ideas for further investigations in
Section \ref{SectConc} conclude the paper.

\section{\label{SectBG}Background}

\subsection{The Hecke algebra}

The Hecke algebra $\mathcal{H}_{N}\left(  t\right)  $ of type $A_{N-1}$ with
parameter $t$ is the associative algebra over an extension field of
$\mathbb{Q}$, generated by $\left\{  T_{1},\ldots,T_{N-1}\right\}  $ subject
to the braid relations%
\begin{subequations}
\begin{gather}
T_{i}T_{i+1}T_{i}=T_{i+1}T_{i}T_{i+1},~1\leq i<N-1,\label{Hbrd}\\
T_{i}T_{j}=T_{j}T_{i},~\left\vert i-j\right\vert \geq2,\nonumber
\end{gather}
and the quadratic relations%
\end{subequations}
\begin{equation}
\left(  T_{i}-t\right)  \left(  T_{i}+1\right)  =0,~1\leq i<N, \label{Hquad}%
\end{equation}
where $t$ is a generic parameter (this means $t^{n}\neq1$ for $2\leq n\leq N$,
and $t\neq0$). The quadratic relation implies $T_{i}^{-1}=\frac{1}{t}\left(
T_{i}+1-t\right)  $. There is a commutative set of \textit{Jucys-Murphy
elements} in $\mathcal{H}_{N}\left(  t\right)  $ defined by $\omega
_{N}=1,\omega_{i}=t^{-1}T_{i}\omega_{i+1}T_{i}$ for $1\leq i<N$, that is,%
\[
\omega_{i}=t^{i-N}T_{i}T_{i+1}\cdots T_{N-1}T_{N-1}T_{N-2}\cdots T_{i}%
\]
Simultaneous eigenvectors of $\left\{  \omega_{i}\right\}  $ form bases of
irreducible representations of the algebra. The symmetric group $\mathcal{S}%
_{N}$ is the group of permutations of $\left\{  1,2,\ldots,N\right\}  $ and is
generated by the simple reflections (adjacent transpositions) $\left\{
s_{i}:1\leq i<N\right\}  $, where $s_{i}$ interchanges $i,i+1$ and fixes the
other points (the $s_{i}$ satisfy the braid relations and $s_{i}^{2}=1$).

\subsection{Fermionic polynomials}

Consider polynomials in $N$ anti-commuting (fermionic) variables $\theta
_{1},\theta_{2},\ldots,\theta_{N}$. They satisfy $\theta_{i}^{2}=0$ and
$\theta_{i}\theta_{j}+\theta_{j}\theta_{i}=0$ for $i\neq j$. The basis for
these polynomials consists of monomials labeled by subsets of $\left\{
1,2,\ldots,N\right\}  $:%
\[
\phi_{E}:=\theta_{i_{1}}\cdots\theta_{i_{m}},~E=\left\{  i_{1},i_{2}%
,\cdots,i_{m}\right\}  ,1\leq i_{1}<i_{2}<\cdots<i_{m}\leq N.
\]

The polynomials have coefficients in an extension field of $\mathbb{Q}\left(
q,t\right)  $ with transcendental $q,t$, or generic $q,t$ satisfying
$q,t\neq0$ $q^{a}\neq1,q^{a}t^{n}\neq1$ for $a\in\mathbb{Z}$ and
$n\neq2,3,\ldots,N$.

\begin{definition}
$\mathcal{P}:=\mathrm{span}\left\{  \phi_{E}:E\subset\left\{  1,\ldots
,N\right\}  \right\}  $ and $\mathcal{P}_{m}:=\mathrm{span}\left\{  \phi
_{E}:\#E=m\right\}  $ for $0\leq m\leq N$. The fermionic degree of $\phi_{E}$
is $\#E$.
\end{definition}

This is a brief description of the action of $T_{i}$ on $\mathcal{P}$: suppose
$j\in E_{1}$ implies $j<i$, and $j\in E_{2}$ implies $j>i+1$: then%
\begin{align}
T_{i}\phi_{E_{1}}\theta_{i}\theta_{i+1}\phi_{E_{2}} &  =-\phi_{E_{1}}%
\theta_{i}\theta_{i+1}\phi_{E_{2}},T_{i}\phi_{E_{1}}\phi_{E_{2}}=t\phi_{E_{1}%
}\phi_{E_{2}},\label{defTth}\\
T_{i}\phi_{E_{1}}\theta_{i}\phi_{E_{2}} &  =\phi_{E_{1}}\theta_{i+1}%
\phi_{E_{2}},T_{i}\phi_{E_{1}}\theta_{i+1}\phi_{E_{2}}=T_{i}\phi_{E_{1}%
}\left(  t\theta_{i}+\left(  t-1\right)  \theta_{i+1}\right)  \phi_{E_{2}%
}.\nonumber
\end{align}
Then $\left\{  T_{i}:1\leq i<N\right\}  $ satisfy the braid and quadratic relations.

There are two degree-changing linear maps which commute with the Hecke algebra action.

\begin{definition}
\label{defMD}\{\{For $n\in\mathbb{Z}$ set $\sigma\left(  n\right)  :=\left(
-1\right)  ^{n}$ and\}\} for $E\subset\left\{  1,2,\ldots,N\right\}  $, $1\leq
i\leq N$ set $s\left(  i,E\right)  :=\#\left\{  j\in E:j<i\right\}  $. Define
the operators $\partial_{i}$ and $\widehat{\theta}_{i}$ by $\partial_{i}%
\theta_{i}\phi_{E}=\phi_{E},\partial_{i}\phi_{E}=0$ and $\widehat{\theta}%
_{i}\phi_{E}=\theta_{i}\phi_{E}=\left(  -1\right)  ^{s\left(  i,E\right)
}\phi_{E\cup\left\{  i\right\}  }$ for $i\notin E$, while $\widehat{\theta
}_{i}\phi_{E}=0$ for $i\in E$ (also $i\in E$ implies $\phi_{E}=\left(
-1\right)  ^{s\left(  i,E\right)  }\theta_{i}\phi_{E\backslash\left\{
i\right\}  }$ and $\partial_{i}\phi_{E}=\left(  -1\right)  ^{s\left(
i,E\right)  }\phi_{E\backslash\left\{  i\right\}  }$). Define $M:=\sum
_{i=1}^{N}\widehat{\theta}_{i}$ and $D:=\sum_{i=1}^{N}t^{i-1}\partial_{i}$.
\end{definition}

It is clear that $D^{2}=0=M^{2}$. For $n=0,1,2,\ldots$ let $\left[  n\right]
_{t}:=\dfrac{1-t^{n}}{1-t}$.

\begin{proposition}
\label{commMD}$M$ and $D$ commute with $T_{i}$ for $1\leq i<N,$and
$MD+DM=\left[  N\right]  _{t}.$
\end{proposition}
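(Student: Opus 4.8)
The plan is to treat the two assertions separately and to reduce everything to the canonical (anti)commutation relations between the creation operators $\widehat{\theta}_{i}$ and the annihilation operators $\partial_{i}$, together with the fact that $T_{i}$ acts only on the slots $i,i+1$. I would first record that $M\phi_{E}=\left(\sum_{j=1}^{N}\theta_{j}\right)\phi_{E}$, so that $M$ is simply left multiplication by $\theta_{1}+\cdots+\theta_{N}$; this is immediate from $\widehat{\theta}_{j}\phi_{E}=\theta_{j}\phi_{E}$ in Definition \ref{defMD}. Next I would verify the relations $\widehat{\theta}_{i}\partial_{j}+\partial_{j}\widehat{\theta}_{i}=\delta_{ij}$ (the identity on $\mathcal{P}$ when $i=j$, and zero when $i\neq j$) by evaluating both sides on a basis monomial $\phi_{E}$ and tracking the signs $\left(-1\right)^{s\left(i,E\right)}$. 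For $i=j$ exactly one of $\widehat{\theta}_{i}\partial_{i}\phi_{E}$, $\partial_{i}\widehat{\theta}_{i}\phi_{E}$ is nonzero according as $i\in E$ or $i\notin E$, and in either case the two factors $\left(-1\right)^{s\left(i,E\right)}$ combine to $+1$, returning $\phi_{E}$; for $i\neq j$ the two orders produce the same monomial with opposite signs, since inserting $\theta_{i}$ and deleting $\theta_{j}$ in the two orders differ by one transposition of adjacent factors.

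For the commutations $MT_{i}=T_{i}M$ and $DT_{i}=T_{i}D$ I would exploit that $T_{i}$ is local at $\{i,i+1\}$: fixing $E_{1}\subset\{1,\ldots,i-1\}$ and $E_{2}\subset\{i+2,\ldots,N\}$, the four monomials $\phi_{E_{1}}\phi_{E_{2}}$, $\phi_{E_{1}}\theta_{i}\phi_{E_{2}}$, $\phi_{E_{1}}\theta_{i+1}\phi_{E_{2}}$, $\phi_{E_{1}}\theta_{i}\theta_{i+1}\phi_{E_{2}}$ span a $T_{i}$-invariant block on which $T_{i}$ is given explicitly by (\ref{defTth}). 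I would first observe that each $\widehat{\theta}_{j}$ and each $\partial_{j}$ with $j\notin\{i,i+1\}$ commutes with $T_{i}$: such an operator only inserts or removes $\theta_{j}$ in the $E_{1}$- or $E_{2}$-part, $j$ lies to one side of both $i$ and $i+1$, so the sign $\left(-1\right)^{s\left(j,\cdot\right)}$ is the same for every monomial of the block, and the two operations act on disjoint slots.

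This reduces the claim to $\left(\widehat{\theta}_{i}+\widehat{\theta}_{i+1}\right)T_{i}=T_{i}\left(\widehat{\theta}_{i}+\widehat{\theta}_{i+1}\right)$ and $\left(t^{i-1}\partial_{i}+t^{i}\partial_{i+1}\right)T_{i}=T_{i}\left(t^{i-1}\partial_{i}+t^{i}\partial_{i+1}\right)$, which I would check by evaluating both sides on the four block vectors using (\ref{defTth}) and the sign conventions of Definition \ref{defMD}. Finally, for $MD+DM=\left[N\right]_{t}$ I would expand $MD+DM=\sum_{i,j}t^{j-1}\left(\widehat{\theta}_{i}\partial_{j}+\partial_{j}\widehat{\theta}_{i}\right)$ and substitute the relations from the first step; only the diagonal terms $i=j$ survive, leaving $\sum_{i=1}^{N}t^{i-1}=\left[N\right]_{t}$ times the identity.

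The main obstacle throughout is the sign bookkeeping carried by the exponents $s\left(i,E\right)$: the cancellations in the off-diagonal anticommutators and the agreement of $T_{i}\left(\widehat{\theta}_{i}+\widehat{\theta}_{i+1}\right)$ with $\left(\widehat{\theta}_{i}+\widehat{\theta}_{i+1}\right)T_{i}$ on the mixed block vectors both hinge on these parities working out, and it is there that the computation, though elementary, demands the most care. Once the weights $t^{i-1}$ and $t^{i}$ are correctly paired with $\partial_{i}$ and $\partial_{i+1}$, the eigenvalue $t$ on the empty slot and $-1$ on the doubly occupied slot make the second commutator collapse, just as the value $t$ on $\phi_{E_{1}}\phi_{E_{2}}$ and the exchange $\theta_{i}\leftrightarrow\theta_{i+1}$ make the first one vanish.
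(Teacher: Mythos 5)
The paper itself gives no proof of Proposition \ref{commMD}: it is background imported from the prequel \cite{D2020}, so your argument must stand on its own. In essentials it does. Realizing $M$ as left multiplication by $\theta_{1}+\cdots+\theta_{N}$, establishing the relations $\widehat{\theta}_{i}\partial_{j}+\partial_{j}\widehat{\theta}_{i}=\delta_{ij}$, and reducing the commutations with $T_{i}$ to the four-dimensional block spanned by $\phi_{E_{1}}\phi_{E_{2}}$, $\phi_{E_{1}}\theta_{i}\phi_{E_{2}}$, $\phi_{E_{1}}\theta_{i+1}\phi_{E_{2}}$, $\phi_{E_{1}}\theta_{i}\theta_{i+1}\phi_{E_{2}}$ is a sound plan; the block identities $\left(\widehat{\theta}_{i}+\widehat{\theta}_{i+1}\right)T_{i}=T_{i}\left(\widehat{\theta}_{i}+\widehat{\theta}_{i+1}\right)$ and $\left(t^{i-1}\partial_{i}+t^{i}\partial_{i+1}\right)T_{i}=T_{i}\left(t^{i-1}\partial_{i}+t^{i}\partial_{i+1}\right)$ do check out against (\ref{defTth}), and the anticommutation relations then give $MD+DM=\sum_{j=1}^{N}t^{j-1}=\left[N\right]_{t}$ exactly as you say.

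One justification, however, is wrong as stated and needs repair. You assert that for $j\notin\{i,i+1\}$ the operators $\widehat{\theta}_{j},\partial_{j}$ commute with $T_{i}$ because ``the sign $\left(-1\right)^{s\left(j,\cdot\right)}$ is the same for every monomial of the block.'' That is true when $j<i$, since then $s\left(j,E\right)$ only counts elements of $E_{1}$; but it is false when $j>i+1$: there $s\left(j,E\right)$ also counts whichever of $i,i+1$ lie in $E$, so the signs attached to $\phi_{E_{1}}\phi_{E_{2}}$, to $\phi_{E_{1}}\theta_{i}\phi_{E_{2}}$, and to $\phi_{E_{1}}\theta_{i}\theta_{i+1}\phi_{E_{2}}$ differ by factors of $-1$. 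The commutation you want is still true, but for a slightly different reason: the sign depends on the block monomial only through its fermionic degree in the slots $i,i+1$, and by (\ref{defTth}) the operator $T_{i}$ preserves that degree (it maps the spans of $\{1\}$, $\{\theta_{i},\theta_{i+1}\}$, $\{\theta_{i}\theta_{i+1}\}$ each to themselves), so the degree-dependent sign factor passes through $T_{i}$ unchanged. With that one-sentence correction, and with the block computations written out, your proof is complete and correct.
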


The spaces $\mathcal{P}_{m,0}:=\ker D\cap\mathcal{P}_{m}$ and $\mathcal{P}%
_{m+1,1}:=\ker M\cap\mathcal{P}_{m+1}$ are irreducible $\mathcal{H}_{N}\left(
t\right)  $-modules and are isomorphic under the map $D:\mathcal{P}%
_{m+1,1}\rightarrow\mathcal{P}_{m,0}$ and are of isotype $\left(
N-m,1^{m}\right)  $.The representations of $\mathcal{H}_{N}\left(  t\right)  $
occurring in this paper correspond to reverse standard Young tableaus (RSYT)
of hook shape (see Dipper and James \cite{DJ1986} for details of the
representation theory) These are labeled by partitions $\left(  N-n,1^{n}%
\right)  $ of $N$ and are graphically described by Ferrers diagrams: boxes at
$\left\{  \left[  1,i\right]  :1\leq i\leq N-n\right\}  \cup\left\{  \left[
j,1\right]  :2\leq j\leq n\right\}  $. The numbers $\left\{  1,2,\ldots
,N\right\}  $ are entered in the boxes in decreasing order in the row and in
the column. For a given RSYT $Y$ let $Y\left[  a,b\right]  $ be the entry at
$\left[  a,b\right]  $ and define the content $c\left(  Y\left[  a,b\right]
,Y\right)  :=b-a$. The vector $\left[  c\left(  i,Y\right)  :1\leq i\leq
N\right]  $ is called the content vector of $Y$. It defines $Y$ uniquely
(trivially true for hook tableaux). The representation of $\mathcal{H}%
_{N}\left(  t\right)  $ is defined on the span of the RSYT's of shape $\left(
N-n,1^{n}\right)  $ in such a way that $\omega_{i}Y=t^{c\left(  i,Y\right)
}Y$ for $1\leq i\leq N$. We use a space-saving way of displaying an RSYT in
two rows, with the second row consisting of the entries $Y_{E}\left[
2,1\right]  ,Y_{E}\left[  3,1\right]  ,\ldots$.. Note that $Y\left[
1,1\right]  =N$ always.

As example let $N=8,n=3,$%
\begin{equation}
Y=%
\begin{bmatrix}
8 & 6 & 4 & 3 & 1\\
\cdot & 7 & 5 & 2 &
\end{bmatrix}
\label{exYE}%
\end{equation}
and $\left[  c\left(  i,Y\right)  \right]  _{i=1}^{8}=\left[
4,-3,3,2,-2,1,-1,0\right]  $.

We showed \cite{D2020} that $\mathcal{P}_{m}$ is a direct sum of the
$\mathcal{H}_{N}\left(  t\right)  $-modules corresponding to $\left(
N-m,1^{m}\right)  $ and $\left(  N+1-m,1^{m-1}\right)  $ ; $\ker
D\cap\mathcal{P}_{m}$ and $\ker M\cap\mathcal{P}_{m}$ respectively.

\subsection{The module $\ker D\cap\mathcal{P}_{m}$}

The basis of $\ker D\cap\mathcal{P}_{m}$ is described as follows: Let
$\mathcal{Y}_{0}:=\left\{  E:\#E=m+1,N\in E\right\}  $ and for $E\in
\mathcal{Y}_{0}$ let $\psi_{E}=D\phi_{E}$. Associate $E$ to the RSYT $Y_{E}$
which contains the elements of $E$ in decreasing order in column $1$, that is,
$\left\{  \left[  j,1\right]  :1\leq j\leq m+1\right\}  $, and the elements of
$E^{C}$ in $\left\{  \left[  1,i\right]  :2\leq i\leq N-m\right\}  $. In the
example $Y=Y_{R}$ with $E=\left\{  2,5,7,8\right\}  $. The content vector of
$E$ is defined by $c\left(  i,E\right)  =c\left(  i,Y_{E}\right)  $. For each
$E\in\mathcal{Y}_{0}$ there is a polynomial $\tau_{E}\in\ker D\cap
\mathcal{P}_{m}$ such that $\omega_{i}\tau_{E}=t^{c\left(  i,E\right)  }%
\tau_{E}$ for $1\leq i\leq N$, and if $\mathrm{inv}\left(  E\right)  =k$ then
$\tau_{E}-t^{k}\psi_{E}\in\mathrm{span}\left\{  \psi_{E^{\prime}}%
:\mathrm{inv}\left(  E^{\prime}\right)  <k\right\}  $. In particular if
$E=\left\{  N-m,N-m+1,\ldots,N\right\}  $ then $\tau_{E}=D\phi_{E}$ (and this
is one of the two cases that are used here). For example suppose $N=7,m=3$
then%
\[
Y_{E}=%
\begin{bmatrix}
7 & 3 & 2 & 1\\
\cdot & 6 & 5 & 4
\end{bmatrix}
,
\]
$\left[  c\left(  i,E\right)  \right]  _{i=1}^{7}=\left[
3,2,1,-3,-2,-1,0\right]  $, and $\tau_{E}=D\left(  \theta_{4}\theta_{5}%
\theta_{6}\theta_{7}\right)  =t^{3}\theta_{5}\theta_{6}\theta_{7}-t^{4}%
\theta_{4}\theta_{6}\theta_{7}+t^{5}\theta_{4}\theta_{5}\theta_{7}-t^{6}%
\theta_{4}\theta_{5}\theta_{6}$.

\subsection{The module $\ker M\cap\mathcal{P}_{m}$}

The basis of $\ker M\cap\mathcal{P}_{m}$ is described as follows: Let
$\mathcal{Y}_{1}:=\left\{  F:\#F=m-1,N\notin F\right\}  $ and for
$F\in\mathcal{Y}_{1}$ let $\eta_{F}=M\phi_{F}$. Associate $F$ to the RSYT
$Y_{F}$ which contains the elements of $F$ in decreasing order in column $1$,
that is, $\left\{  \left[  j,1\right]  :2\leq j\leq m\right\}  $, and the
elements of $F^{C}$ in $\left\{  \left[  1,i\right]  :1\leq i\leq
N-m+11\right\}  $. In the example (\ref{exYE}) $Y=Y_{F}$ with $F=\left\{
2,5,7\right\}  $. As before the content vector of $F$ is defined by $c\left(
i,F\right)  =c\left(  i,Y_{F}\right)  $. For each $F\in\mathcal{Y}_{1}$ there
is a polynomial $\tau_{F}\in\ker M\cap\mathcal{P}_{m}$ such that $\omega
_{i}\tau_{F}=t^{c\left(  i,F\right)  }\tau_{F}$ for $1\leq i\leq N$, and if
$\mathrm{inv}\left(  F\right)  =k$ then $\tau_{F}-\eta_{F}\in\mathrm{span}%
\left\{  \psi_{F^{\prime}}:\mathrm{inv}\left(  F^{\prime}\right)  >k\right\}
$. Note that $F\in\mathcal{Y}_{1}$ implies $0\leq\mathrm{inv}\left(  F\right)
\leq\left(  m-1\right)  \left(  N-m+1\right)  $ and the maximum value occurs
at $F=\left\{  1,2,\ldots,m-1\right\}  $. This case is the second of those to
be studied here. For this set $\tau_{F}=M\phi_{F}$. As example let $N=7,m=5$
then%
\[
Y_{F}=%
\begin{bmatrix}
7 & 6 & 5 &  & \\
\cdot & 4 & 3 & 2 & 1
\end{bmatrix}
,
\]
$\left[  c\left(  i,F\right)  \right]  _{i=1}^{7}=\left[
-4,-3,-2,-1,2,1,0\right]  $, and $\tau_{F}=\theta_{1}\theta_{2}\theta
_{3}\theta_{4}\left(  \theta_{5}+\theta_{6}+\theta_{7}\right)  $.

\subsection{Superpolynomials}

We extend the polynomials in $\left\{  \theta_{i}\right\}  $ by adjoining $N$
commuting variables $x_{1},\ldots,x_{N}$ (that is $\left[  x_{i},x_{j}\right]
=0,\left[  x_{i},\theta_{j}\right]  =0,\theta_{i}\theta_{j}=-\theta_{j}%
\theta_{i}$ for all $i,j$). Each polynomial is a sum of monomials $x^{\alpha
}\phi_{E}$ where $E\subset\left\{  1,2,\ldots,N\right\}  $ and $\alpha
\in\mathbb{N}_{0}^{N},x^{\alpha}:=\prod\limits_{i=1}^{N}x_{i}^{\alpha_{i}}$.
The partitions in $\mathbb{N}_{0}^{N}$ are denoted by $\mathbb{N}_{0}^{N,+}$
($\lambda\in\mathbb{N}_{0}^{N,+}$ if and only if $\lambda_{1}\geq\lambda
_{2}\geq\ldots\geq\lambda_{N}$). The \textit{fermionic} degree of this
monomial is $\#E$ and the \textit{bosonic} degree is $\left\vert
\alpha\right\vert :=\sum_{i=1}^{N}\alpha_{i}$. The symmetric group
$\mathcal{S}_{N}$ acts on the variables by $\left(  xw\right)  _{i}%
=x_{w\left(  i\right)  }$ and on exponents by $\left(  w\alpha\right)
_{i}=\alpha_{w^{-1}\left(  i\right)  }$ for $1\leq i\leq N,w\in\mathcal{S}%
_{N}$ (consider $x$ as a row vector, $\alpha$ as a column vector and $w$ as a
permutation matrix, $\widetilde{w}_{i,j}=\delta_{i,w\left(  j\right)  }$, then
$xw=x\widetilde{w}$ and $w\alpha=\widetilde{w}\alpha$). Thus $\left(
xw\right)  ^{\alpha}=x^{w\alpha}$. Let $s\mathcal{P}_{m}:=\mathrm{span}%
\left\{  x^{\alpha}\phi_{E}:\alpha\in\mathbb{N}_{0}^{N},\#E=m\right\}  $. Then
using the decomposition $\mathcal{P}_{m}=\mathcal{P}_{m,0}\oplus
\mathcal{P}_{m,1}$ let%
\begin{align*}
s\mathcal{P}_{m,0}  &  =\mathrm{span}\left\{  x^{\alpha}\psi_{E}:\alpha
\in\mathbb{N}_{0}^{N},E\in\mathcal{Y}_{0}\right\}  ,\\
s\mathcal{P}_{m,1}  &  =\mathrm{span}\left\{  x^{\alpha}\eta_{E}:\alpha
\in\mathbb{N}_{0}^{N},E\in\mathcal{Y}_{1}\right\}  .
\end{align*}
The Hecke algebra $\mathcal{H}_{N}\left(  t\right)  $ is represented on
$s\mathcal{P}_{m}$. This allows us to apply the theory of nonsymmetric
Macdonald polynomials taking values in $\mathcal{H}_{N}\left(  t\right)
$-modules (see \cite{DL2012} ).

\begin{definition}
Suppose $p\in s\mathcal{P}_{m}$ and $1\leq i<N$ then set%
\begin{equation}
\boldsymbol{T}_{i}p\left(  x;\theta\right)  :=\left(  1-t\right)  x_{i+1}%
\frac{p\left(  x;\theta\right)  -p\left(  xs_{i};\theta\right)  }%
{x_{i}-x_{i+1}}+T_{i}p\left(  xs_{i};\theta\right)  . \label{defTTp}%
\end{equation}

\end{definition}

Note that $T_{i}$ acts on the $\theta$ variables according to Formula
(\ref{defTth}).

\begin{definition}
Let $T^{\left(  N\right)  }=T_{N-1}T_{N-2}\cdots T_{1}$ and for $p\in
s\mathcal{P}_{m}$ and $1\leq i\leq N$%
\begin{align*}
\boldsymbol{w}p\left(  x;\theta\right)   &  :=T^{\left(  N\right)  }p\left(
qx_{N},x_{1},x_{2},\ldots,x_{N-1};\theta\right)  ,\\
\xi_{i}p\left(  x;\theta\right)   &  :=t^{i-N}\boldsymbol{T}_{i}%
\boldsymbol{T}_{i+1}\cdots\boldsymbol{T}_{N-1}\boldsymbol{wT}_{1}%
^{-1}\boldsymbol{T}_{2}^{-1}\cdots\boldsymbol{T}_{i-1}^{-1}p\left(
x;\theta\right)  .
\end{align*}

\end{definition}

The operators $\xi_{i}$ are Cherednik operators, defined by Baker and
Forrester \cite{BF1997} (see Braverman et al \cite{BDLM2012} for the
significance of these operators in double affine Hecke algebras). They
mutually commute (the proof in the vector-valued situation is in \cite[Thm.
3.8]{DL2012}). The simultaneous eigenfunctions are called nonsymmetric
Macdonald polynomials. They have a triangularity property with respect to the
partial order $\rhd$ on the compositions $\mathbb{N}_{0}^{N}$, which is
derived from the dominance order:
\begin{align*}
\alpha &  \prec\beta~\Longleftrightarrow\sum_{j=1}^{i}\alpha_{j}\leq\sum
_{j=1}^{i}\beta_{j},~1\leq i\leq N,~\alpha\neq\beta\text{,}\\
\alpha\lhd\beta &  \Longleftrightarrow\left(  \left\vert \alpha\right\vert
=\left\vert \beta\right\vert \right)  \wedge\left[  \left(  \alpha^{+}%
\prec\beta^{+}\right)  \vee\left(  \alpha^{+}=\beta^{+}\wedge\alpha\prec
\beta\right)  \right]  \text{.}%
\end{align*}
The rank function on compositions is involved in the formula for an NSMP.

\begin{definition}
For $\alpha\in\mathbb{N}_{0}^{N},1\leq i\leq N$%
\[
r_{\alpha}\left(  i\right)  :=\#\left\{  j:\alpha_{j}>\alpha_{i}\right\}
+\#\left\{  j:1\leq j\leq i,\alpha_{j}=\alpha_{i}\right\}  \text{,}%
\]
then $r_{\alpha}\in\mathcal{S}_{N}\text{. There is a shortest expression
}r_{\alpha}=s_{i_{1}}s_{i_{2}}\ldots s_{i_{k}}$ and $R_{\alpha}:=\left(
T_{i_{1}}T_{i_{2}}\cdots T_{i_{k}}\right)  ^{-1}\in\mathcal{H}_{N}\left(
t\right)  $ (that is, $R_{\alpha}=T\left(  r_{\alpha}\right)  ^{-1}$).
\end{definition}

A consequence is that $r_{\alpha}\alpha=\alpha^{+}$, the \textit{nonincreasing
rearrangement} of $\alpha$, for any $\alpha\in\mathbb{N}_{0}^{N}$ , and
$r_{\alpha}=I$ if and only if $\alpha\in\mathbb{N}_{0}^{N,+}$.

\begin{theorem}
\label{Mform}(\cite[Thm. 4.12]{DL2012}) Suppose $\alpha\in\mathbb{N}_{0}^{N}$
and $E\in\mathcal{Y}_{k}$, $k=0,1$ then there exists a $\left(  \xi
_{i}\right)  $-simultaneous eigenfunction%
\begin{equation}
M_{\alpha,E}\left(  x;\theta\right)  =t^{e\left(  \alpha^{+},E\right)
}q^{\beta\left(  \alpha\right)  }x^{\alpha}R_{\alpha}\left(  \tau_{E}\left(
\theta\right)  \right)  +\sum_{\beta\vartriangleleft\alpha}x^{\beta}%
v_{\alpha,\beta,E}\left(  \theta;q,t\right)  \label{MaEeqn}%
\end{equation}
where $v_{\alpha,\beta,E}\left(  \theta;q,t\right)  \in\mathcal{P}_{m,k}$ and
its coefficients are rational functions of $q,t$. Also $\xi_{i}M_{\alpha
,E}\left(  x;\theta\right)  =\zeta_{\alpha,E}\left(  i\right)  M_{\alpha
,E}\left(  x;\theta\right)  $ where $\zeta_{\alpha,E}\left(  i\right)
=q^{\alpha_{i}}t^{c\left(  r_{\alpha}\left(  i\right)  ,E\right)  }$ for
$1\leq i\leq N.$ The exponents $\beta\left(  \alpha\right)  :=\sum_{i=1}%
^{N}\binom{\alpha_{i}}{2}$ and $e\left(  \alpha^{+},E\right)  :=\sum_{i=1}%
^{N}\alpha_{i}^{+}\left(  N-i+c\left(  i,E\right)  \right)  $.
\end{theorem}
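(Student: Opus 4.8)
The plan is to prove existence by induction on the bosonic degree $\left\vert \alpha\right\vert $, combining the triangularity of the Cherednik operators with their mutual commutativity (already available) and a spectral-separation argument; the Yang--Baxter graph then supplies the constructive version.

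\textbf{Base case.} First I would verify that at bosonic degree zero each $\xi_{i}$ restricts to the Jucys--Murphy element $\omega_{i}$. Indeed, for a polynomial $p$ with no $x$-dependence the divided-difference term in \eqref{defTTp} vanishes, so $\boldsymbol{T}_{i}p=T_{i}p$ and $\boldsymbol{w}p=T^{(N)}p$. Substituting into the definition of $\xi_{i}$ and using $T^{(N)}T_{1}^{-1}\cdots T_{i-1}^{-1}=T_{N-1}\cdots T_{i}$ collapses the product to $t^{i-N}T_{i}\cdots T_{N-1}T_{N-1}\cdots T_{i}=\omega_{i}$. Hence $M_{0,E}=\tau_{E}$ is already a simultaneous eigenfunction with $\zeta_{0,E}(i)=t^{c(i,E)}$, matching the stated formula since $r_{0}=I$ and $e(0,E)=\beta(0)=0$.

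\textbf{Triangularity.} The computational heart is to show that each $\xi_{i}$ is triangular on the basis $\{x^{\beta}R_{\beta}\tau_{E'}\}$ with respect to $\lhd$, with diagonal entry $\zeta_{\alpha,E}(i)=q^{\alpha_{i}}t^{c(r_{\alpha}(i),E)}$ on the leading monomial $x^{\alpha}R_{\alpha}\tau_{E}$. I would compute the action of $\boldsymbol{w}$ on $x^{\alpha}R_{\alpha}\tau_{E}$: the cyclic variable shift $x\mapsto(qx_{N},x_{1},\dots,x_{N-1})$ produces the factor $q^{\alpha_{1}}$ and cyclically permutes the exponents, while $T^{(N)}$ acts on the fermionic factor. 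Feeding the result through the strings $\boldsymbol{T}_{i}\cdots\boldsymbol{T}_{N-1}$ and $\boldsymbol{T}_{1}^{-1}\cdots\boldsymbol{T}_{i-1}^{-1}$, the divided-difference part of each $\boldsymbol{T}_{j}$ in \eqref{defTTp} contributes only monomials $x^{\gamma}$ with $\gamma\lhd\alpha$, so the leading term is governed by the Hecke-algebra (permutation) part alone; this reproduces $\zeta_{\alpha,E}(i)$ through the eigenvalue relation $\omega_{r_{\alpha}(i)}\tau_{E}=t^{c(r_{\alpha}(i),E)}\tau_{E}$ after conjugation by $R_{\alpha}$. The normalization $t^{e(\alpha^{+},E)}q^{\beta(\alpha)}$ is precisely what makes the leading coefficient transform consistently as $\alpha$ is built up degree by degree.

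\textbf{Diagonalization and main obstacle.} Given triangularity together with the commutativity of $\{\xi_{i}\}$, I would construct $M_{\alpha,E}$ by subtracting correction terms indexed by $\beta\lhd\alpha$ so as to annihilate all off-diagonal contributions, i.e.\ by projecting $x^{\alpha}R_{\alpha}\tau_{E}$ onto the joint $\zeta_{\alpha,E}$-eigenspace; the corrections are uniquely solvable provided the eigenvalue tuples $(\zeta_{\beta,E'}(i))_{i=1}^{N}$ separate the pairs $(\beta,E')$ occurring strictly below $(\alpha,E)$. This \emph{spectral separation} is guaranteed by the genericity hypotheses on $(q,t)$ (the conditions $q^{a}\neq1$ and $q^{a}t^{n}\neq1$ imposed on the coefficient field), which exclude accidental coincidences among the values $q^{\alpha_{i}}t^{c(\cdots)}$; equivalently, one realizes the same object along the Yang--Baxter graph, where $\boldsymbol{w}$ raises the degree and the $\boldsymbol{T}_{j}$ act as eigenvalue-shifting intertwiners along the edges. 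The hard part will be the explicit triangularity computation for $\boldsymbol{w}$, since it couples the bosonic exponent manipulation with the fermionic Hecke action through $T^{(N)}$ and the reduced word $R_{\alpha}$: checking both that the non-leading monomials land strictly below $\alpha$ in $\lhd$ and that the diagonal eigenvalue is exactly $q^{\alpha_{i}}t^{c(r_{\alpha}(i),E)}$ demands careful bookkeeping with reduced decompositions of $r_{\alpha}$ and the braid and quadratic relations, and it is there that the normalizing exponents $e(\alpha^{+},E)$ and $\beta(\alpha)$ must be tracked rather than guessed.
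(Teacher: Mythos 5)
The first thing to note is that this paper contains no proof of Theorem \ref{Mform}: it is quoted verbatim from \cite[Thm.~4.12]{DL2012} as background material, so there is no internal argument to compare yours against. Judged on its own terms, your proposal reconstructs the strategy of the cited source: degree zero as the seed (your verification that $\xi_{i}$ collapses to the Jucys--Murphy element $\omega_{i}$ on $x$-independent polynomials is correct, including the cancellation $T^{\left(  N\right)  }T_{1}^{-1}\cdots T_{i-1}^{-1}=T_{N-1}\cdots T_{i}$, so that $M_{0,E}=\tau_{E}$ with $\zeta_{0,E}\left(  i\right)  =t^{c\left(  i,E\right)  }$), triangularity of the $\xi_{i}$ with respect to $\lhd$, and then either spectral separation and projection or the Yang--Baxter graph (the affine step $x_{N}\boldsymbol{w}$ together with the intertwiners of (\ref{step})) to generate the eigenfunctions.

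However, as a proof your proposal is incomplete at exactly the point where the theorem has content. The triangularity of $\xi_{i}$ on the basis $\left\{  x^{\beta}R_{\beta}\tau_{E^{\prime}}\right\}  $ --- in particular that the divided-difference parts of the $\boldsymbol{T}_{j}$ strings and the cyclic shift inside $\boldsymbol{w}$ produce only terms $x^{\gamma}$ with $\gamma\lhd\alpha$, and that the diagonal coefficient is exactly $q^{\alpha_{i}}t^{c\left(  r_{\alpha}\left(  i\right)  ,E\right)  }$ with the normalization $t^{e\left(  \alpha^{+},E\right)  }q^{\beta\left(  \alpha\right)  }$ --- is asserted and explicitly deferred (``demands careful bookkeeping''), not carried out. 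That computation \emph{is} the theorem; without it the spectral-separation step has nothing to act on. The separation argument itself is sound under the paper's genericity hypotheses, since the spectral vector determines the pair $\left(  \beta,E^{\prime}\right)  $: the $q$-exponents recover $\beta$ and the $t$-exponents recover the content vector, which determines a hook RSYT uniquely. So: right plan, correct base case, but the core lemma is a placeholder; to turn this into a proof you would need to either execute the triangularity computation in full or fall back on citing \cite{DL2012}, which is precisely what the paper does.
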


The applications in the present paper require formulas for the transformation
(called a \textit{step}) $M_{\alpha,E}\rightarrow M_{s_{i}\alpha,E}$ when
$\alpha_{i+1}>\alpha_{i}$:
\begin{equation}
M_{s_{i}\alpha,E}\left(  x;\theta\right)  =\left(  \boldsymbol{T}_{i}%
+\frac{1-t}{1-\zeta_{\alpha,E}\left(  i+1\right)  /\zeta_{\alpha,E}\left(
i\right)  }\right)  M_{\alpha,E}\left(  x;\theta\right)  . \label{step}%
\end{equation}
and for the \textit{affine step}:
\begin{align*}
\Phi\alpha &  =\left(  \alpha_{2},\alpha_{3},\ldots,\alpha_{N},\alpha
_{1}+1\right) \\
\zeta_{\Phi\alpha,E}  &  =\left[  \zeta_{\alpha,E}\left(  2\right)
,\zeta_{\alpha,E}\left(  3\right)  ,\ldots,\zeta_{\alpha,E}\left(  N\right)
,q\zeta_{\alpha,E}\left(  1\right)  \right] \\
M_{\Phi\alpha,E}\left(  x\right)   &  =x_{N}\boldsymbol{w}M_{\alpha,E}\left(
x\right)  .
\end{align*}
Two other key relations are $\zeta_{\alpha,E}\left(  i+1\right)
=t\zeta_{\alpha,E}\left(  i\right)  $ implies $\left(  \boldsymbol{T}%
_{i}+1\right)  M_{\alpha,E}=0$ and $\zeta_{\alpha,E}\left(  i+1\right)
=t^{-1}\zeta_{\alpha,E}\left(  i\right)  $ implies $\left(  \boldsymbol{T}%
_{i}-t\right)  M_{\alpha,E}=0$.

\section{\label{SectEval}Evaluations and Steps}

We consider two types of evaluations: (0) $x^{\left(  0\right)  }=\left(
1,t,t^{2},\ldots,t^{N-1}\right)  $, $E=\left\{  N-m,N-m+1,\ldots,N\right\}  ,$
$\alpha\in\mathbb{N}_{0}^{N}$ with $\alpha_{i}=0$ for $N-m\leq i\leq N$ , and
$M_{\alpha,E}\left(  x^{\left(  0\right)  }\right)  =V^{\left(  0\right)
}\left(  \alpha\right)  \tau_{E}$; (1) $x^{\left(  1\right)  }=\left(
1,t^{-1},t^{-2},\ldots,t^{1-N}\right)  $, $F=\left\{  1,2,\ldots,m\right\}  $,
$\alpha\in\mathbb{N}_{0}^{N}$ with $\alpha_{i}=0$ for $m+1\leq i\leq N$, and
$M_{\alpha,F}\left(  x^{\left(  1\right)  }\right)  =V^{\left(  1\right)
}\left(  \alpha\right)  \tau_{F}$.

\begin{definition}
Let $\mathcal{N}_{0}:=\left\{  \alpha\in\mathbb{N}_{0}^{N}:i\geq
N-m\Longrightarrow\alpha_{i}=0\right\}  $, $\mathcal{N}_{0}^{+}:=\mathcal{N}%
_{0}\cap\mathbb{N}_{0}^{N,+}$. Let $\mathcal{N}_{1}:=\left\{  \alpha
\in\mathbb{N}_{0}^{N}:i>m\Longrightarrow\alpha_{i}=0\right\}  $,
$\mathcal{N}_{1}^{+}:=\mathcal{N}_{1}\cap\mathbb{N}_{0}^{N,+}$.
\end{definition}

Conceptually the two derivations are very much alike, but there are
differences involving signs and powers of $t$ that need careful attention. We
begin by expressing $V^{\left(  0\right)  }\left(  \alpha\right)  $ and
$V^{\left(  1\right)  }\left(  \alpha\right)  $ in terms of $V^{\left(
0\right)  }\left(  \alpha^{+}\right)  $ and $V^{\left(  1\right)  }\left(
\alpha^{+}\right)  $. Since we are concerned with evaluations the following is
used throughout:

\begin{definition}
For a fixed point $x\in\mathbb{R}^{N}$ and $1\leq i<N$ let $b\left(
x;i\right)  =\dfrac{t-1}{1-x_{i}/x_{i+1}}$ $\left(  x_{i}\neq x_{i+1}\right)
$. In particular if $x_{i+1}=t^{n}x_{i}$ then let $\kappa_{n}:=b\left(
x;i\right)  $ for $n\in\mathbb{Z}\backslash\left\{  0\right\}  $. If $n\geq1$
then $\kappa_{n}=\dfrac{t^{n}}{\left[  n\right]  _{t}}$ and $\kappa
_{-n}=-\dfrac{1}{\left[  n\right]  _{t}}$.
\end{definition}

In terms of $b$ the evaluation formula for $\boldsymbol{T}_{i}$ is%
\begin{equation}
\boldsymbol{T}_{i}p\left(  x;\theta\right)  =b\left(  x;i\right)  p\left(
x;\theta\right)  +\left(  T_{i}-b\left(  x;i\right)  \right)  p\left(
xs_{i};\theta\right)  \label{Tbp}%
\end{equation}

The following are used repeatedly in the sequel.

\begin{lemma}
\label{Ti+1}Suppose for some $i<N$ there is a polynomial $p\left(
x;\theta\right)  $ and a point $y$ such that $\left(  \boldsymbol{T}%
_{i}+1\right)  p=0$ and $y_{i+1}=ty_{i}$ then $\left(  T_{i}+1\right)
p\left(  y;\theta\right)  =0$.
\end{lemma}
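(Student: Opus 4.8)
The plan is to exploit that $\left( \boldsymbol{T}_i+1\right) p=0$ is an identity of honest polynomials in $x$: the divided-difference term in (\ref{defTTp}) has no pole, since $p\left( x;\theta\right) -p\left( xs_i;\theta\right) $ is divisible by $x_i-x_{i+1}$, so the identity may be evaluated at any point we like. The crucial observation is that the right point to evaluate at is \emph{not} $y$ itself but its transpose $z:=ys_i$. Indeed, the hypothesis $y_{i+1}=ty_i$ says precisely that $z_i=y_{i+1}=ty_i=tz_{i+1}$, i.e. $z_i/z_{i+1}=t$, and therefore $b\left( z;i\right) =\dfrac{t-1}{1-z_i/z_{i+1}}=\dfrac{t-1}{1-t}=-1$ (this is $\kappa_{-1}$). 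The point is that $b=-1$ is exactly the value that converts the terms without $T_i$ into the combination $T_i+1$ acting on $p(y)$.

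Concretely, I would rewrite $\boldsymbol{T}_i$ in the form (\ref{Tbp}), so that the hypothesis reads
\[
b\left( x;i\right) p\left( x;\theta\right) +\bigl( T_i-b\left( x;i\right) \bigr) p\left( xs_i;\theta\right) +p\left( x;\theta\right) =0
\]
as a polynomial identity. Setting $x=z=ys_i$ and using $b\left( z;i\right) =-1$ together with $zs_i=y$, the coefficient of $p\left( z;\theta\right) $ collapses to $b\left( z;i\right) +1=0$, while the operator multiplying $p\left( zs_i;\theta\right) =p\left( y;\theta\right) $ becomes $T_i-b\left( z;i\right) =T_i+1$. What survives is exactly $\left( T_i+1\right) p\left( y;\theta\right) =0$, the claim. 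Equivalently one can substitute $x=ys_i$ directly in (\ref{defTTp}): the prefactor $\left( 1-t\right) z_{i+1}/\left( z_i-z_{i+1}\right) $ equals $\left( 1-t\right) y_i/\bigl( \left( t-1\right) y_i\bigr) =-1$, so the divided-difference term is $p\left( y;\theta\right) -p\left( z;\theta\right) $, the remaining term is $T_ip\left( zs_i;\theta\right) =T_ip\left( y;\theta\right) $, and after adding $p\left( z;\theta\right) $ the two $p\left( z;\theta\right) $ contributions cancel, leaving $p\left( y;\theta\right) +T_ip\left( y;\theta\right) =0$.

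The computation is short, so there is no serious obstacle; the only point needing care is the well-definedness of $b\left( z;i\right) $, which requires $z_i\neq z_{i+1}$, i.e. $y_i\neq0$. At the special evaluation points of interest, $\left( 1,t,t^{2},\ldots\right) $ and $\left( 1,t^{-1},t^{-2},\ldots\right) $, every coordinate is a nonzero power of $t$, so this holds automatically. For a completely general $y$ one sidesteps the issue by arguing from the polynomial identity (\ref{defTTp}) directly, where the cancellation above is a cancellation of polynomials taking place before any division, so the conclusion persists at $y_i=0$ by continuity in $x$. Throughout, I would keep in mind that $T_i$ acts only on the $\theta$-variables, so evaluating the $x$-arguments at $y$ and applying $T_i$ to the fermionic factor are independent operations that commute freely.
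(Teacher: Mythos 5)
Your proof is correct, but it follows a genuinely different route from the paper's. The paper evaluates the identity $\left(  \boldsymbol{T}_{i}+1\right)  p=0$ at the point $y$ itself, where $b\left(  y;i\right)  =t$, which yields $\left(  1+t\right)  p\left(  y;\theta\right)  +\left(  T_{i}-t\right)  p\left(  ys_{i};\theta\right)  =0$; it then applies $T_{i}+1$ on the left and uses the quadratic relation (\ref{Hquad}) in the form $\left(  T_{i}+1\right)  \left(  T_{i}-t\right)  =0$ to annihilate the unwanted term, leaving $\left(  1+t\right)  \left(  T_{i}+1\right)  p\left(  y;\theta\right)  =0$, and finally divides by $1+t$. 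You evaluate instead at the reflected point $z=ys_{i}$, where $b\left(  z;i\right)  =-1$; since this value of $b$ matches the eigenvalue $-1$ in the hypothesis, formula (\ref{Tbp}) collapses to $\left(  \boldsymbol{T}_{i}+1\right)  p\left(  z;\theta\right)  =\left(  T_{i}+1\right)  p\left(  zs_{i};\theta\right)  =\left(  T_{i}+1\right)  p\left(  y;\theta\right)  $, and the conclusion is immediate, with no operator identity and no division. Both arguments are equally uniform across the pair of lemmas (your method proves Lemma \ref{Ti-t} by evaluating at $ys_{i}$, where $b=t$ matches the eigenvalue $t$), so what your route buys is economy: no use of the quadratic relation and no need for $1+t$ to be invertible, which the paper's proof tacitly requires (harmless here, since generic $t$ excludes $t=-1$). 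You also flag a point the paper passes over silently: both proofs invoke $b$ at a point whose $i$-th and $\left(  i+1\right)  $-th coordinates differ, which amounts to $y_{i}\neq0$; your fallback to the polynomial form (\ref{defTTp}), or a density argument on the hyperplane $y_{i+1}=ty_{i}$, settles the degenerate case $y_{i}=y_{i+1}=0$, where $ys_{i}=y$, the divided-difference term carries the factor $x_{i+1}=0$, and the claim is trivially true.
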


\begin{proof}
By hypothesis $b\left(  y;i\right)  =t$ and thus $\left(  1+t\right)  p\left(
y;\theta\right)  +\left(  T_{i}-t\right)  p\left(  ys_{i};\theta\right)  =0$.
Then $\left(  1+t\right)  \left(  T_{i}+1\right)  p\left(  y;\theta\right)
=-\left(  T_{i}+1\right)  \left(  T_{i}-t\right)  p\left(  ys_{i}%
;\theta\right)  =0$.
\end{proof}

\begin{lemma}
\label{Ti-t}Suppose for some $i<N$ there is a polynomial $p\left(
x;\theta\right)  $ and a point $y$ such that $\left(  \boldsymbol{T}%
_{i}-t\right)  p=0$ and $y_{i}=ty_{i+1}$ then $\left(  T_{i}-t\right)
p\left(  y;\theta\right)  =0$.
\end{lemma}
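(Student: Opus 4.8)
The plan is to mirror the proof of Lemma \ref{Ti+1}, adjusting the signs and the role of $t$ to match the hypothesis $\left(\boldsymbol{T}_{i}-t\right)p=0$ and the point condition $y_{i}=ty_{i+1}$. First I would compute the value $b\left(y;i\right)$ at the given point. By definition $b\left(y;i\right)=\dfrac{t-1}{1-y_{i}/y_{i+1}}$, and substituting $y_{i}=ty_{i+1}$ gives $1-y_{i}/y_{i+1}=1-t$, so $b\left(y;i\right)=\dfrac{t-1}{1-t}=-1$.

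Next I would expand the hypothesis using the evaluation formula (\ref{Tbp}) for $\boldsymbol{T}_{i}$. Applying $\left(\boldsymbol{T}_{i}-t\right)p=0$ at the point $y$ yields
\begin{equation}
b\left(y;i\right)p\left(y;\theta\right)+\left(T_{i}-b\left(y;i\right)\right)p\left(ys_{i};\theta\right)-tp\left(y;\theta\right)=0.\nonumber
\end{equation}
Substituting $b\left(y;i\right)=-1$ gives $\left(-1-t\right)p\left(y;\theta\right)+\left(T_{i}+1\right)p\left(ys_{i};\theta\right)=0$, that is,
\begin{equation}
\left(1+t\right)p\left(y;\theta\right)=\left(T_{i}+1\right)p\left(ys_{i};\theta\right).\nonumber
\end{equation}

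Finally I would apply $\left(T_{i}-t\right)$ to both sides and invoke the quadratic relation (\ref{Hquad}), which states $\left(T_{i}-t\right)\left(T_{i}+1\right)=0$. This kills the right-hand side, leaving $\left(1+t\right)\left(T_{i}-t\right)p\left(y;\theta\right)=0$. Since $t$ is generic (in particular $t\neq-1$, as $t^{2}\neq1$), the factor $1+t$ is nonzero and can be divided out, yielding $\left(T_{i}-t\right)p\left(y;\theta\right)=0$ as desired. The only subtle point to watch is the correct sign of $b\left(y;i\right)$ arising from the hypothesis $y_{i}=ty_{i+1}$ (as opposed to $y_{i+1}=ty_{i}$ in Lemma \ref{Ti+1}); this reversal is exactly what produces $b=-1$ here rather than $b=t$, and it is what makes the quadratic relation factor $\left(T_{i}-t\right)\left(T_{i}+1\right)$ applicable in this case. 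Everything else is a routine consequence of the quadratic relation and the genericity of $t$, so I expect no serious obstacle.
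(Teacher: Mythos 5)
Your proof is correct and follows essentially the same route as the paper: compute $b(y;i)=-1$ from $y_i=ty_{i+1}$, expand $(\boldsymbol{T}_i-t)p=0$ at $y$ via the evaluation formula, apply $(T_i-t)$ and kill the right-hand side with the quadratic relation $(T_i-t)(T_i+1)=0$. Your explicit remark that genericity of $t$ permits dividing by $1+t$ is a detail the paper leaves implicit, but the argument is identical.
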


\begin{proof}
By hypothesis $b\left(  y;i\right)  =-1$ and thus $\left(  -t-1\right)
p\left(  y;\theta\right)  +\left(  T_{i}+1\right)  p\left(  ys_{i}%
;\theta\right)  =0$. Then $\left(  1+t\right)  \left(  T_{i}-t\right)
p\left(  y;\theta\right)  =-\left(  T_{i}-t\right)  \left(  T_{i}+1\right)
p\left(  ys_{i};\theta\right)  =0$.
\end{proof}

In type (0) $\zeta_{\alpha,E}\left(  i\right)  =t^{i-N}$ for $N-m\leq i\leq N$
which implies $\left(  \boldsymbol{T}_{i}+1\right)  M_{\alpha,E}=0$ for
$N-m\leq i<N$.

\begin{lemma}
\label{M0ctauE}Suppose $M_{\alpha,E}$ is of type (0) and $x_{i+1}=tx_{i}$ for
$N-m\leq i<N$ then $M_{\alpha,E}\left(  x\right)  =c\tau_{E}$ for some
constant depending on $x$, and $\left(  T_{i}-t\right)  M_{\alpha,E}\left(
x\right)  =0$ for $1\leq i<N-m-1$.
\end{lemma}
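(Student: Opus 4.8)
The plan is to show first that the fermionic value $M_{\alpha,E}(x)$ is forced into a one-dimensional column-antisymmetric subspace spanned by $\tau_E$, and then to read off the $t$-eigenvalue statement from the explicit form $\tau_E=D\phi_E$. As already noted just before the lemma, in type (0) we have $(\boldsymbol{T}_i+1)M_{\alpha,E}=0$ for $N-m\le i<N$ (this comes from $\zeta_{\alpha,E}(i+1)/\zeta_{\alpha,E}(i)=t^{(i+1)-N}/t^{i-N}=t$ together with the key relation that $\zeta(i+1)=t\zeta(i)$ forces $\boldsymbol{T}_i+1$ to annihilate). At the chosen point the hypothesis $x_{i+1}=tx_i$ gives $b(x;i)=t$, so Lemma \ref{Ti+1} applies and yields $(T_i+1)M_{\alpha,E}(x)=0$ for every $N-m\le i<N$. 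Thus $v:=M_{\alpha,E}(x)\in\mathcal{P}_{m,0}$ is annihilated by each $T_i+1$ with $N-m\le i<N$, where $T_i$ acts only on the $\theta$-variables via (\ref{defTth}).

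Next I would identify $v$. The operators $T_{N-m},\dots,T_{N-1}$ generate a parabolic subalgebra isomorphic to $\mathcal{H}_{m+1}(t)$ acting on the indices $\{N-m,\dots,N\}$, and the conditions $(T_i+1)v=0$ say exactly that $v$ lies in its sign-isotypic component $(T_i\mapsto-1)$. In the irreducible hook module $\mathcal{P}_{m,0}$ of type $(N-m,1^m)$ this component is one-dimensional: by the dual Pieri (vertical-strip) branching rule the sign representation $(1^{m+1})$ of $\mathcal{H}_{m+1}(t)$ occurs in the restriction with multiplicity equal to the number of partitions $\nu\vdash N-m-1$ for which $(N-m,1^m)/\nu$ is a vertical strip, and the only such $\nu$ is $(N-m-1)$, giving multiplicity one. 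Since $\tau_E$ has precisely $\{N-m,\dots,N\}$ in its first column, the adjacent column entries force $T_i\tau_E=-\tau_E$ for $N-m\le i<N$, so $\tau_E$ spans this subspace and therefore $v=c\,\tau_E$ for a scalar $c=c(x)$. Verifying this multiplicity-one statement is the main obstacle; everything else is bookkeeping.

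Finally, for the eigenvalue claim I would use $\tau_E=D\phi_E$ with $\phi_E=\theta_{N-m}\cdots\theta_N$. Because $D=\sum_j t^{j-1}\partial_j$ only deletes a single $\theta_j$ with $j\in E$, every $\theta$-monomial appearing in $\tau_E$ uses indices in $\{N-m,\dots,N\}$. For $1\le i<N-m-1$ one has $i+1<N-m$, so both $i$ and $i+1$ are strictly smaller than every index occurring in $\tau_E$; the first case of (\ref{defTth}) then gives $T_i(\text{monomial})=t\,(\text{monomial})$, whence $T_i\tau_E=t\tau_E$. Combining with $v=c\,\tau_E$ yields $(T_i-t)M_{\alpha,E}(x)=c\,(T_i-t)\tau_E=0$ for $1\le i<N-m-1$, completing the argument.
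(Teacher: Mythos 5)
Your proof is correct, and its middle step follows a genuinely different route from the paper's. The opening is the same: type (0) gives $\left(  \boldsymbol{T}_{i}+1\right)  M_{\alpha,E}=0$ for $N-m\leq i<N$, and Lemma \ref{Ti+1} transfers this to $\left(  T_{i}+1\right)  M_{\alpha,E}\left(  x\right)  =0$ at the special point. To pin down the value as a multiple of $\tau_{E}$, the paper stays inside its own formalism: from $T_{i}v=-v$ for $N-m\leq i<N$ and the recursion $\omega_{N}=1$, $\omega_{i}=t^{-1}T_{i}\omega_{i+1}T_{i}$, it deduces $\omega_{i}v=t^{i-N}v$ for $N-m\leq i\leq N$, and since a content $c\left(  i,E^{\prime}\right)  =i-N\leq0$ with $i<N$ forces $i$ to lie in column $1$ of $Y_{E^{\prime}}$, i.e.\ $i\in E^{\prime}$, the cardinality of $E^{\prime}$ forces $E^{\prime}=E$ and the eigenvector expansion of $v$ collapses to a multiple of $\tau_{E}$. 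You instead restrict to the parabolic subalgebra generated by $T_{N-m},\ldots,T_{N-1}$ and use the vertical-strip (dual Pieri) branching rule to show that the sign-isotypic component of the hook module is one-dimensional; your identification of $\nu=\left(  N-m-1\right)  $ as the unique admissible partition is correct. Two remarks: first, what you actually need is that the \emph{dimension} of the sign-isotypic subspace is one, and that dimension is the sum over admissible $\nu$ of the Littlewood--Richardson coefficient times the dimension of the irreducible $\mathcal{H}_{N-m-1}\left(  t\right)  $-module labeled by $\nu$, not merely the number of admissible $\nu$; here the two agree because $\left(  N-m-1\right)  $ is a one-row shape whose module is one-dimensional, but the rule as you state it is not correct in general. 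Second, the branching rule for $\mathcal{H}_{N}\left(  t\right)  $ at generic $t$ (same combinatorics as for $\mathcal{S}_{N}$, by semisimplicity) is standard but is never invoked in the paper; the Jucys--Murphy/content argument avoids importing it and is self-contained given the setup, which is what that approach buys, while yours buys a conceptually robust description of the full sign-isotypic space independent of the Jucys--Murphy formalism. Your closing step, verifying $\left(  T_{i}-t\right)  \tau_{E}=0$ for $1\leq i<N-m-1$ directly from $\tau_{E}=D\phi_{E}$ and the action (\ref{defTth}), is a concrete computational substitute for the paper's appeal to the fact that $1,\ldots,N-m-1$ lie in the same row of $Y_{E}$; both are valid.
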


\begin{proof}
From $\left(  \boldsymbol{T}_{i}+1\right)  M_{\alpha,E}=0$ and Lemma
\ref{Ti+1} it follows that $\left(  T_{i}+1\right)  M_{\alpha,E}\left(
x\right)  =0$ for $N-m\leq i<N$. Thus $\left(  \omega_{i}-t^{i-N}\right)
M_{\alpha,E}\left(  x;\theta\right)  =0$ for $N-m\leq i\leq N$, and this
implies $M_{\alpha,E}\left(  x;\theta\right)  $ is a multiple of $\tau_{E}$
(the contents $\left[  c\left(  i,E^{\prime}\right)  \right]  _{i=N-m}^{N}$
determine $E^{\prime}$ uniquely). Furthermore $\left(  T_{i}-t\right)
\tau_{E}=0$ for $1\leq i<N-m-1$ (since $1,2,\ldots,N-m-1$ are in the same row
of $Y_{E}$).
\end{proof}

\begin{proposition}
Suppose $\alpha\in\mathcal{N}_{0}$ and $\alpha_{i}<\alpha_{i+1}$ (implying
$i+1<N-m)$ and $z=\zeta_{\alpha,E}\left(  i+1\right)  /\zeta_{\alpha,E}\left(
i\right)  $ then%
\[
M_{s_{i}\alpha,E}\left(  x^{\left(  0\right)  };\theta\right)  =\frac
{1-tz}{1-z}M_{\alpha,E}\left(  x^{\left(  0\right)  };\theta\right)  .
\]

\end{proposition}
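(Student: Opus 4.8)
The goal is to evaluate $M_{s_i\alpha,E}$ at the special point $x^{(0)}$, starting from the step formula (\ref{step}). The natural plan is to substitute $x=x^{(0)}$ into that formula and evaluate the operator $\boldsymbol{T}_i$ using its pointwise form (\ref{Tbp}). First I would observe that since $\alpha_i < \alpha_{i+1}$ forces $i+1 < N-m$, the index $i$ lies strictly inside the "row" part of the tableau, so at $x^{(0)} = (1,t,t^2,\ldots,t^{N-1})$ we have $x^{(0)}_{i+1} = t\,x^{(0)}_i$, giving $b(x^{(0)};i) = t$. This is the key geometric fact that makes the evaluation tractable.

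**The core computation.**

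Writing out (\ref{step}) at the point and applying (\ref{Tbp}) with $b(x^{(0)};i)=t$, I get
\begin{align*}
M_{s_i\alpha,E}(x^{(0)};\theta)
&= \Bigl(b(x^{(0)};i) + \tfrac{1-t}{1-z}\Bigr) M_{\alpha,E}(x^{(0)};\theta)\\
&\qquad + \bigl(T_i - b(x^{(0)};i)\bigr) M_{\alpha,E}(x^{(0)}s_i;\theta).
\end{align*}
The first bracket is $t + \frac{1-t}{1-z} = \frac{t(1-z)+(1-t)}{1-z} = \frac{1-tz}{1-z}$, which is exactly the claimed coefficient. So the entire proposition reduces to showing that the remaining term $\bigl(T_i - t\bigr) M_{\alpha,E}(x^{(0)}s_i;\theta)$ vanishes. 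The plan is therefore to prove this vanishing, which is where the real content lies.

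**Handling the vanishing term — the main obstacle.**

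The point $x^{(0)}s_i$ swaps coordinates $i,i+1$, so at $x^{(0)}s_i$ we have $(x^{(0)}s_i)_i = t^i = t\cdot t^{i-1} = t\,(x^{(0)}s_i)_{i+1}$, i.e. the relation $y_i = t\,y_{i+1}$ holds at $y = x^{(0)}s_i$. This is precisely the hypothesis of Lemma \ref{Ti-t}. To invoke that lemma I need the operator identity $(\boldsymbol{T}_i - t)M_{\alpha,E} = 0$. Here I would use the last displayed relations in the excerpt: since $\alpha_i < \alpha_{i+1}$ and (being inside the row) $c(r_\alpha(i),E)$ and $c(r_\alpha(i+1),E)$ differ appropriately, I must check that $z = \zeta_{\alpha,E}(i+1)/\zeta_{\alpha,E}(i)$ takes a value making $\boldsymbol{T}_i$ act by $t$. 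The hard part will be confirming exactly which spectral relation applies: I expect that because both $i,i+1$ map under $r_\alpha$ into the first row of $Y_E$ where contents increase by one along the row, the ratio $z$ satisfies $z = t$, triggering $(\boldsymbol{T}_i - t)M_{\alpha,E}=0$ via the stated relation $\zeta_{\alpha,E}(i+1)=t\zeta_{\alpha,E}(i) \Rightarrow (\boldsymbol{T}_i+1)M_{\alpha,E}=0$ — but I must be careful, since $\alpha_i<\alpha_{i+1}$ means $q^{\alpha_{i+1}}/q^{\alpha_i}\neq 1$, so $z$ is a genuine $q$-dependent quantity and the naive spectral argument may not directly give $(\boldsymbol{T}_i-t)M_{\alpha,E}=0$ as an operator identity.

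**Resolving the subtlety.**

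If the clean operator identity $(\boldsymbol{T}_i - t)M_{\alpha,E}=0$ does \emph{not} hold, the fallback plan is to evaluate $(T_i - t)M_{\alpha,E}(x^{(0)}s_i;\theta)$ directly using Lemma \ref{M0ctauE}. By that lemma, once the coordinates are ordered so that $x_{j+1}=tx_j$ along the relevant block, $M_{\alpha,E}$ evaluates to a multiple of $\tau_E$, and $(T_j - t)\tau_E = 0$ for indices $j$ in the first row of $Y_E$. Since $i+1 < N-m$ places $i$ in that row, I would argue that $M_{\alpha,E}(x^{(0)}s_i;\theta)$ is a constant multiple of $\tau_E$ and that $(T_i - t)\tau_E = 0$, giving the vanishing directly. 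I expect this second route — reducing the evaluation to $\tau_E$ and using the row-symmetry $(T_i-t)\tau_E=0$ — to be the robust path, with Lemma \ref{Ti-t} serving as the conceptually cleaner but more delicate alternative. The remaining bookkeeping (verifying the index range $i < N-m-1$ needed to apply $(T_i-t)\tau_E=0$, and tracking that $x^{(0)}s_i$ still satisfies $x_{j+1}=tx_j$ for $N-m\le j<N$ so Lemma \ref{M0ctauE} applies) is routine and I would relegate it to a sentence.
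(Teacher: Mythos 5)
Your proposal is correct and follows the paper's own route: the coefficient computation $t+\frac{1-t}{1-z}=\frac{1-tz}{1-z}$ via (\ref{Tbp}) with $b(x^{(0)};i)=t$, and the vanishing of $(T_i-t)M_{\alpha,E}(x^{(0)}s_i;\theta)$ via Lemma \ref{M0ctauE}, since the swap of coordinates $i,i+1$ with $i+1<N-m$ leaves the tail $x_{j+1}=tx_j$ ($N-m\leq j<N$) intact and $i<N-m-1$. You also correctly diagnosed that the alternative route through Lemma \ref{Ti-t} fails here because $(\boldsymbol{T}_i-t)M_{\alpha,E}=0$ is not an operator identity when $\alpha_i<\alpha_{i+1}$, which is precisely why the paper argues through the evaluation lemma instead.
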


\begin{proof}
From (\ref{step}) and (\ref{Tbp}) with $b\left(  x^{\left(  0\right)
},i\right)  =t$ it follows that%
\begin{align*}
M_{s_{i}\alpha,E}\left(  x^{\left(  0\right)  };\theta\right)   &  =\left(
t+\frac{1-t}{1-z}\right)  M_{\alpha,E}\left(  x^{\left(  0\right)  }%
;\theta\right)  +\left(  T_{i}-t\right)  M_{\alpha,E}\left(  x^{\left(
0\right)  }s_{i};\theta\right) \\
&  =\frac{1-tz}{1-z}M_{\alpha,E}\left(  x^{\left(  0\right)  };\theta\right)
,
\end{align*}
because $x^{\left(  0\right)  }s_{i}$satisfies the hypotheses of the Lemma
implying $\left(  T_{i}-t\right)  M_{\alpha,E}\left(  x^{\left(  0\right)
}s_{i};\theta\right)  =0$.
\end{proof}

The following products are used to relate $V^{\left(  k\right)  }\left(
\alpha\right)  $ to $V^{\left(  k\right)  }\left(  \alpha^{+}\right)  ,~k=0,1$.

\begin{definition}
Let $u_{0}\left(  z\right)  :=\dfrac{t-z}{1-z},u_{1}\left(  z\right)
:=\dfrac{1-tz}{1-z}$ . Suppose $\beta\in\mathbb{N}_{0}^{N}$ and $E^{\prime}%
\in\mathcal{Y}^{0}\cup\mathcal{Y}^{1}$ and $k=0,1$ then
\[
\mathcal{R}_{k}\left(  \beta,E^{\prime}\right)  :=\prod\limits_{1\leq i<j\leq
N,~\beta_{i}<\beta_{j}}u_{k}\left(  q^{\beta_{j}-\beta_{i}}t^{c\left(
r_{\beta}\left(  j\right)  ,E^{\prime}\right)  -c\left(  r_{\beta}\left(
i\right)  ,E^{\prime}\right)  }\right)  .
\]

\end{definition}

Note that the argument of $u_{k}$ is $\zeta_{\beta,E^{\prime\prime}}\left(
j\right)  /\zeta_{\beta,E^{\prime\prime}}\left(  i\right)  $ and there are
$\mathrm{inv}\left(  \beta\right)  $ factors, where%
\[
\mathrm{inv}\left(  \beta\right)  :=\left\{  \left(  i,j\right)  :1\leq
i<j\leq N,\beta_{i}<\beta_{j}\right\}  .
\]

\begin{lemma}
\label{RbRb1}If $\beta_{i}<\beta_{i+1}$ then $\mathcal{R}_{k}\left(
\beta,E^{\prime}\right)  =u_{k}\left(  \zeta_{\beta,E^{\prime\prime}}\left(
i+1\right)  /\zeta_{\beta,E^{\prime\prime}}\left(  i\right)  \right)
\mathcal{R}_{k}\left(  s_{i}\beta,E^{\prime}\right)  $.
\end{lemma}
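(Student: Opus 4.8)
The plan is to show that replacing $\beta$ by $s_{i}\beta$ (under the hypothesis $\beta_{i}<\beta_{i+1}$) deletes exactly one factor from the product defining $\mathcal{R}_{k}$, namely the factor indexed by the pair $(i,i+1)$, while every other factor survives unchanged after a relabeling of its index pair. The whole statement is then a ``remove one factor'' identity, and the work lies in justifying that relabeling.

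First I would record the behavior of the rank function under a step. I claim that $\beta_{i}<\beta_{i+1}$ forces $r_{s_{i}\beta}=r_{\beta}\circ s_{i}$, i.e. $r_{s_{i}\beta}(i)=r_{\beta}(i+1)$, $r_{s_{i}\beta}(i+1)=r_{\beta}(i)$, and $r_{s_{i}\beta}(l)=r_{\beta}(l)$ for $l\notin\{i,i+1\}$. This follows by direct inspection of the definition of $r_{\alpha}$: interchanging the values at positions $i,i+1$ preserves the multiset of entries, so the count $\#\{j:\alpha_{j}>\alpha_{\cdot}\}$ is unaffected, and the strict inequality $\beta_{i}<\beta_{i+1}$ makes the tie-breaking count $\#\{j\le\cdot:\alpha_{j}=\alpha_{\cdot}\}$ track the swap cleanly; a short check in the cases $l=i$, $l=i+1$, and $l\notin\{i,i+1\}$ gives the identity. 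Combining this with $(s_{i}\beta)_{l}=\beta_{s_{i}(l)}$ and the formula $\zeta_{\beta,E'}(l)=q^{\beta_{l}}t^{c(r_{\beta}(l),E')}$ from Theorem \ref{Mform} yields the transport rule $\zeta_{s_{i}\beta,E'}(l)=\zeta_{\beta,E'}(s_{i}(l))$ for every $l$, which is the engine of the argument.

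Next I would exhibit the bijection on inversion pairs. Since $\beta_{i}<\beta_{i+1}$, the pair $(i,i+1)$ lies in $\mathrm{inv}(\beta)$ but not in $\mathrm{inv}(s_{i}\beta)$. I would then check, in the cases determined by how the positions $a<b$ meet $\{i,i+1\}$, that the map sending $(a,b)$ to the reordering $(a',b')$ of $(s_{i}(a),s_{i}(b))$ is a bijection from $\mathrm{inv}(\beta)\setminus\{(i,i+1)\}$ onto $\mathrm{inv}(s_{i}\beta)$. In each case one verifies both that the inversion condition $\beta_{a}<\beta_{b}$ is equivalent to the corresponding condition for $s_{i}\beta$, and—crucially—that $(s_{i}(a'),s_{i}(b'))=(a,b)$ as an ordered pair. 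This last point makes the argument of $u_{k}$ invariant: using $\zeta_{s_{i}\beta}(l)=\zeta_{\beta}(s_{i}(l))$, the factor of $\mathcal{R}_{k}(s_{i}\beta,E')$ at $(a',b')$ is $u_{k}\!\left(\zeta_{\beta}(s_{i}(b'))/\zeta_{\beta}(s_{i}(a'))\right)=u_{k}\!\left(\zeta_{\beta}(b)/\zeta_{\beta}(a)\right)$, exactly the factor of $\mathcal{R}_{k}(\beta,E')$ at $(a,b)$.

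Assembling these, every factor of $\mathcal{R}_{k}(\beta,E')$ other than the one at $(i,i+1)$ is matched with precisely one equal-valued factor of $\mathcal{R}_{k}(s_{i}\beta,E')$, and conversely; the single unmatched factor is $u_{k}\!\left(\zeta_{\beta,E''}(i+1)/\zeta_{\beta,E''}(i)\right)$. Dividing it out gives the claimed identity. I expect the main obstacle to be purely the bookkeeping: getting the tie-breaking count right in the rank-function computation and confirming $(s_{i}(a'),s_{i}(b'))=(a,b)$ in each of the boundary cases $a\in\{i,i+1\}$ or $b\in\{i,i+1\}$; there is no conceptual difficulty beyond that.
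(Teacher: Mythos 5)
Your proof is correct and takes essentially the same approach as the paper, whose entire proof is the one-sentence assertion that the only factor appearing in $\mathcal{R}_{k}\left(\beta,E^{\prime}\right)$ but not in $\mathcal{R}_{k}\left(s_{i}\beta,E^{\prime}\right)$ is $u_{k}\left(\zeta_{\beta,E^{\prime\prime}}\left(i+1\right)/\zeta_{\beta,E^{\prime\prime}}\left(i\right)\right)$. You have simply supplied the bookkeeping the paper leaves implicit: the rank-function rule $r_{s_{i}\beta}=r_{\beta}\circ s_{i}$, the transport identity $\zeta_{s_{i}\beta,E^{\prime}}\left(l\right)=\zeta_{\beta,E^{\prime}}\left(s_{i}\left(l\right)\right)$, and the resulting bijection between inversion pairs.
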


\begin{proof}
The only factor that appears in $\mathcal{R}_{k}\left(  \beta,E^{\prime
}\right)  $ but not in $\mathcal{R}_{k}\left(  s_{i}\beta,E^{\prime}\right)  $
is $u_{k}\left(  \zeta_{\beta,E^{\prime\prime}}\left(  i+1\right)
/\zeta_{\beta,E^{\prime\prime}}\left(  i\right)  \right)  $.
\end{proof}

For the special case type (0) $1\leq r_{\alpha}\left(  i\right)  <N-m$ we find
$c\left(  r_{\alpha}\left(  i\right)  ,E\right)  =N-m-r_{\alpha}\left(
i\right)  $ and%
\[
\mathcal{R}_{1}\left(  \alpha,E\right)  =\prod\limits_{1\leq i<j<N-m,~\alpha
_{i}<\alpha_{j}}u_{1}\left(  q^{\alpha_{j}-\alpha_{i}}t^{r_{\alpha}\left(
i\right)  -r_{\alpha}\left(  j\right)  }\right)  .
\]

\begin{proposition}
\label{V0alpha}Suppose $\alpha\in\mathcal{N}_{0}$ then $M_{\alpha,E}\left(
x^{\left(  0\right)  };\theta\right)  =V^{\left(  0\right)  }\left(
\alpha\right)  \tau_{E}$ and%
\[
V^{\left(  0\right)  }\left(  \alpha\right)  =\mathcal{R}_{1}\left(
\alpha,E\right)  ^{-1}V^{\left(  0\right)  }\left(  \alpha^{+}\right)  .
\]

\end{proposition}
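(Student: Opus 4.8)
The plan is to prove the formula by induction on $\mathrm{inv}\left( \alpha\right)$, the number of inversions of $\alpha$. The base case is $\alpha\in\mathcal{N}_{0}^{+}$, i.e.\ $\alpha=\alpha^{+}$, where $\mathrm{inv}\left( \alpha\right)=0$. Here $\mathcal{R}_{1}\left( \alpha,E\right)$ is the empty product, hence equals $1$, and the asserted identity $V^{\left( 0\right) }\left( \alpha\right)=V^{\left( 0\right) }\left( \alpha^{+}\right)$ is a tautology. I would first confirm that $M_{\alpha,E}\left( x^{\left( 0\right) };\theta\right)$ is indeed a scalar multiple of $\tau_{E}$ for all $\alpha\in\mathcal{N}_{0}$; this follows from Lemma \ref{M0ctauE} once one observes that $x^{\left( 0\right) }$ satisfies $x_{i+1}^{\left( 0\right) }=tx_{i}^{\left( 0\right) }$ for all $i$, and in particular for $N-m\leq i<N$, so the hypotheses of that lemma hold and $M_{\alpha,E}\left( x^{\left( 0\right) };\theta\right)=V^{\left( 0\right) }\left( \alpha\right)\tau_{E}$ is well defined.

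For the inductive step, take $\alpha\in\mathcal{N}_{0}$ with $\alpha\neq\alpha^{+}$. Then there is an index $i$ with $\alpha_{i}<\alpha_{i+1}$, and applying $s_{i}$ strictly decreases the number of inversions; note that because $\alpha\in\mathcal{N}_{0}$ we automatically have $i+1<N-m$, so $s_{i}\alpha$ is again in $\mathcal{N}_{0}$ and the induction hypothesis applies to it. The key computational input is the Proposition immediately preceding, which evaluates the step $M_{\alpha,E}\rightarrow M_{s_{i}\alpha,E}$ at $x^{\left( 0\right) }$:
\[
M_{s_{i}\alpha,E}\left( x^{\left( 0\right) };\theta\right)=\frac{1-tz}{1-z}M_{\alpha,E}\left( x^{\left( 0\right) };\theta\right),\qquad z=\zeta_{\alpha,E}\left( i+1\right)/\zeta_{\alpha,E}\left( i\right).
\]
Dividing both sides by $\tau_{E}$ gives $V^{\left( 0\right) }\left( s_{i}\alpha\right)=u_{1}\left( z\right) V^{\left( 0\right) }\left( \alpha\right)$, since $u_{1}\left( z\right)=\frac{1-tz}{1-z}$ by definition and $z$ is precisely the argument appearing in $\mathcal{R}_{1}$.

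The remaining work is bookkeeping with $\mathcal{R}_{1}$. I would apply the above to $\beta:=s_{i}\alpha$, which has $\beta_{i}=\alpha_{i+1}>\alpha_{i}=\beta_{i+1}$, and rewrite it as a statement relating $\alpha$ and $s_{i}\alpha$ where $\alpha$ is the one with the ascent. By Lemma \ref{RbRb1}, since $\alpha_{i}<\alpha_{i+1}$, we have $\mathcal{R}_{1}\left( \alpha,E\right)=u_{1}\left( z\right)\mathcal{R}_{1}\left( s_{i}\alpha,E\right)$ with the same $z$. The induction hypothesis for $s_{i}\alpha$ reads $V^{\left( 0\right) }\left( s_{i}\alpha\right)=\mathcal{R}_{1}\left( s_{i}\alpha,E\right)^{-1}V^{\left( 0\right) }\left( \alpha^{+}\right)$, using that $\left( s_{i}\alpha\right)^{+}=\alpha^{+}$. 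Combining the step relation $V^{\left( 0\right) }\left( s_{i}\alpha\right)=u_{1}\left( z\right) V^{\left( 0\right) }\left( \alpha\right)$ with these two identities yields
\[
u_{1}\left( z\right) V^{\left( 0\right) }\left( \alpha\right)=\mathcal{R}_{1}\left( s_{i}\alpha,E\right)^{-1}V^{\left( 0\right) }\left( \alpha^{+}\right)=u_{1}\left( z\right)\mathcal{R}_{1}\left( \alpha,E\right)^{-1}V^{\left( 0\right) }\left( \alpha^{+}\right),
\]
and cancelling $u_{1}\left( z\right)$ gives the claim for $\alpha$.

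The step I expect to require the most care is the alignment of direction: the preceding Proposition is phrased for the ascent $\alpha_{i}<\alpha_{i+1}$ producing $s_{i}\alpha$ (which has a descent there), whereas $\mathcal{R}_{1}$ and the induction are organized so that inversions decrease toward $\alpha^{+}$. One must check that the single $u_{1}$ factor gained in the step is exactly the single factor by which $\mathcal{R}_{1}\left( \alpha,E\right)$ and $\mathcal{R}_{1}\left( s_{i}\alpha,E\right)$ differ, i.e.\ that the argument $z=\zeta_{\alpha,E}\left( i+1\right)/\zeta_{\alpha,E}\left( i\right)$ in the step coincides with the argument $\zeta_{\beta,E}\left( i+1\right)/\zeta_{\beta,E}\left( i\right)$ that Lemma \ref{RbRb1} singles out; this is immediate since $\beta=\alpha$ in that lemma's notation when we treat $\alpha$ as the composition with the ascent at $i$. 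Once the indices and the single distinguished factor are matched, everything reduces to the one-step cancellation above.
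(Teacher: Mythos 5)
Your proposal is correct and takes essentially the same route as the paper: Lemma \ref{M0ctauE} gives that $M_{\alpha,E}\left(x^{\left(0\right)};\theta\right)$ is a multiple of $\tau_{E}$, and the product formula follows by induction on $\mathrm{inv}\left(\alpha\right)$, combining the step evaluation $V^{\left(0\right)}\left(s_{i}\alpha\right)=u_{1}\left(z\right)V^{\left(0\right)}\left(\alpha\right)$ with the single-factor relation of Lemma \ref{RbRb1}. Your write-up merely makes explicit the bookkeeping (base case, direction of the induction, cancellation of $u_{1}\left(z\right)$) that the paper's proof leaves implicit.
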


\begin{proof}
By Lemma \ref{M0ctauE} $M_{\alpha,E}\left(  x^{\left(  0\right)  }%
;\theta\right)  $ is a multiple of $\tau_{E}$. For the product formula argue
by induction on $\mathrm{inv}\left(  \alpha\right)  $. If $\lambda
\in\mathbb{N}_{0}^{N,+}$ then $\mathcal{R}_{1}\left(  \lambda,E\right)  =1$.
If $\alpha_{i}<\alpha_{i+1}$ then%
\[
\frac{V^{\left(  0\right)  }\left(  s_{i}\alpha\right)  }{V^{\left(  0\right)
}\left(  \alpha\right)  }=u_{1}\left(  \frac{\zeta_{\alpha,E}\left(
i+1\right)  }{\zeta_{\alpha,E}\left(  i\right)  }\right)  =\frac
{\mathcal{R}_{1}\left(  \alpha,E\right)  }{\mathcal{R}_{1}\left(  s_{i}%
\alpha,E\right)  }.
\]

\end{proof}

In type (1) $\zeta_{\alpha}\left(  i\right)  =t^{N-i}$ for $m+1\leq i\leq N$
which implies $\left(  \boldsymbol{T}_{i}-t\right)  M_{\alpha,E}=0$ for
$m+1\leq i<N$.

\begin{lemma}
\label{M1ctauF}Suppose $M_{\alpha,F}$ is of type (1) and $x_{i}=tx_{i+1}$ for
$m+1\leq i<N$ then $M_{\alpha,E}\left(  x\right)  =c\tau_{F}$ for some
constant depending on $x$, and $\left(  T_{i}+1\right)  M_{\alpha,F}\left(
x\right)  =0$ for $1\leq i<m$.
\end{lemma}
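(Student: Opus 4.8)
The plan is to transcribe the proof of Lemma~\ref{M0ctauE}, exchanging the two quadratic eigenvalues of $T_i$ and reversing the $t$-scaling of the coordinates. First I would use the statement recorded just before the lemma: in type (1) one has $\zeta_\alpha(i)=t^{N-i}$ for $m+1\le i\le N$, whence $(\boldsymbol{T}_i-t)M_{\alpha,F}=0$ for $m+1\le i<N$. The evaluation point satisfies $x_i=tx_{i+1}$ on exactly this range of indices, so each such $i$ fulfils the hypotheses of Lemma~\ref{Ti-t} (taking $y=x$), and therefore $(T_i-t)M_{\alpha,F}(x;\theta)=0$ for $m+1\le i<N$.

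Next I would convert these $T_i$-relations into eigenvalue equations for the Jucys--Murphy elements. Since the entries $m+1,\ldots,N$ sit in the first row of $Y_F$ with $c(i,F)=N-i$, the relations $(T_i-t)M_{\alpha,F}(x)=0$ on $m+1\le i<N$ (together with the corner value at $i=N$) force $(\omega_i-t^{N-i})M_{\alpha,F}(x;\theta)=0$ for $m+1\le i\le N$, exactly as the relations $(T_i+1)$ did in the type (0) case. The restricted content string $[c(i,F)]_{i=m+1}^{N}=[N-m-1,\ldots,1,0]$ already pins down the hook tableau $Y_F$ uniquely inside the module, so the simultaneous $\omega$-eigenvector $M_{\alpha,F}(x;\theta)$ can only be a scalar multiple $c\,\tau_F$, with $c$ depending on $x$.

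Finally, for the second assertion I would observe that, with $F=\{1,2,\ldots,m\}$, the values $1,2,\ldots,m$ occupy consecutive boxes running down the first column of $Y_F$; hence every value-adjacent pair $(i,i+1)$ with $1\le i<m$ lies in one column, which gives $(T_i+1)\tau_F=0$ on that range. Multiplying by the scalar $c$ yields $(T_i+1)M_{\alpha,F}(x)=0$ for $1\le i<m$. The step I expect to require the most care is the middle one: verifying that the partial eigenvalue data on the block $m+1\le i\le N$ genuinely determines a multiple of $\tau_F$, and keeping the signs and powers of $t$ straight so that the ``$-t$''/``$x_i=tx_{i+1}$'' pattern of type (1) is not confused with the ``$+1$''/``$x_{i+1}=tx_i$'' pattern of type (0); the rest is a direct translation of Lemma~\ref{M0ctauE}.
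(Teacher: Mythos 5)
Your proposal is correct and follows the paper's own proof essentially verbatim: both use the type (1) relations $(\boldsymbol{T}_i-t)M_{\alpha,F}=0$ together with Lemma \ref{Ti-t} to get $(T_i-t)M_{\alpha,F}(x;\theta)=0$ for $m+1\le i<N$, convert these to the Jucys--Murphy eigenvalue equations $(\omega_i-t^{N-i})M_{\alpha,F}(x;\theta)=0$ whose restricted content string determines the hook tableau uniquely, and then obtain $(T_i+1)\tau_F=0$ for $1\le i<m$ from the fact that $1,2,\ldots,m$ lie in the same column of $Y_F$. No gaps; the points you flag as delicate (the uniqueness step and the sign/$t$-power bookkeeping) are handled exactly as in the paper.
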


\begin{proof}
From $\left(  \boldsymbol{T}_{i}-t\right)  M_{\alpha,F}=0$ and Lemma
\ref{Ti-t} it follows that $\left(  T_{i}-t\right)  M_{\alpha,F}\left(
x;\theta\right)  =0$ for $m+1\leq i<N$. Thus $\left(  \omega_{i}%
-t^{N-i}\right)  M_{\alpha,F}\left(  x;\theta\right)  =0$ for $m+1\leq i\leq
N$, and this implies $M_{\alpha,F}\left(  x;\theta\right)  $ is a multiple of
$\tau_{F}$ (the contents $\left[  c\left(  i,E^{\prime}\right)  \right]
_{i=m+1}^{N}$ determine $E^{\prime}$ uniquely). Thus $\left(  T_{i}+1\right)
\tau_{F}=0$ for $1\leq i<m$ (since $1,2,\ldots,m$ are in the same column of
$Y_{F}$).
\end{proof}

\begin{proposition}
Suppose $\alpha\in\mathcal{N}_{1}$ and $\alpha_{i}<\alpha_{i+1}$ (so that
$i+1\leq m)$ and $z=\zeta_{\alpha,F}\left(  i+1\right)  /\zeta_{\alpha
,F}\left(  i\right)  $ then%
\[
M_{s_{i}\alpha,F}\left(  x^{\left(  1\right)  };\theta\right)  =-\frac
{t-z}{1-z}M_{\alpha,F}\left(  x^{\left(  1\right)  };\theta\right)  .
\]

\end{proposition}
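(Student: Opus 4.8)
The plan is to mirror the derivation of the type~(0) Proposition, substituting the type~(1) data throughout. First I would invoke the step formula~(\ref{step}) with $E=F$ to write
\[
M_{s_{i}\alpha,F}=\left(\boldsymbol{T}_{i}+\frac{1-t}{1-z}\right)M_{\alpha,F},
\]
valid since $\alpha_{i}<\alpha_{i+1}$ and $z=\zeta_{\alpha,F}\left(i+1\right)/\zeta_{\alpha,F}\left(i\right)$. The next move is to evaluate both sides at the special point $x^{\left(1\right)}$. Because $x^{\left(1\right)}_{j}=t^{1-j}$, one computes $x^{\left(1\right)}_{i}/x^{\left(1\right)}_{i+1}=t$, hence $b\left(x^{\left(1\right)};i\right)=\frac{t-1}{1-t}=-1$; equivalently $x^{\left(1\right)}_{i}=tx^{\left(1\right)}_{i+1}$, which is exactly the hypothesis shape appearing in Lemma~\ref{Ti-t}.

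Applying the evaluation formula~(\ref{Tbp}) with $b\left(x^{\left(1\right)};i\right)=-1$ then gives
\[
M_{s_{i}\alpha,F}\left(x^{\left(1\right)};\theta\right)=\left(-1+\frac{1-t}{1-z}\right)M_{\alpha,F}\left(x^{\left(1\right)};\theta\right)+\left(T_{i}+1\right)M_{\alpha,F}\left(x^{\left(1\right)}s_{i};\theta\right),
\]
and the scalar collapses to $-1+\frac{1-t}{1-z}=\frac{z-t}{1-z}=-\frac{t-z}{1-z}$, which is precisely the claimed coefficient. So the whole statement reduces to showing that the residual term $\left(T_{i}+1\right)M_{\alpha,F}\left(x^{\left(1\right)}s_{i};\theta\right)$ vanishes.

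For this last step I would appeal to Lemma~\ref{M1ctauF}. The point is that the standing hypothesis $i+1\leq m$ forces the transposition $s_{i}$ to permute only the first $m$ coordinates, so the tail coordinates indexed by $m+1\leq j\leq N$ are untouched and $x^{\left(1\right)}s_{i}$ still satisfies $x_{j}=tx_{j+1}$ for $m+1\leq j<N$. Lemma~\ref{M1ctauF} then applies at the point $x^{\left(1\right)}s_{i}$ and yields $\left(T_{i}+1\right)M_{\alpha,F}\left(x^{\left(1\right)}s_{i};\theta\right)=0$ for all $1\leq i<m$, which covers the range in force here. I expect the only delicate points to be bookkeeping ones: confirming the sign $b\left(x^{\left(1\right)};i\right)=-1$ (rather than $t$ as in type~(0)), and checking that the permuted point $x^{\left(1\right)}s_{i}$ genuinely remains in the regime where Lemma~\ref{M1ctauF} gives annihilation by $T_{i}+1$ rather than by $T_{i}-t$. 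No computation beyond the type~(0) case is otherwise required.
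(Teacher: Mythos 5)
Your proof is correct and is essentially identical to the paper's own argument: both apply the step formula (\ref{step}) together with the evaluation identity (\ref{Tbp}) at $b\left(x^{\left(1\right)};i\right)=-1$, collapse the scalar to $-\dfrac{t-z}{1-z}$, and kill the residual term $\left(T_{i}+1\right)M_{\alpha,F}\left(x^{\left(1\right)}s_{i};\theta\right)$ by Lemma \ref{M1ctauF}. Your explicit bookkeeping that $s_{i}$ with $i+1\leq m$ leaves the tail coordinates $m+1,\ldots,N$ untouched, so $x^{\left(1\right)}s_{i}$ still satisfies $x_{j}=tx_{j+1}$ for $m+1\leq j<N$, is precisely what the paper compresses into the phrase ``because $x^{\left(1\right)}s_{i}$ satisfies the hypotheses of the Lemma.''
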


\begin{proof}
From (\ref{step}) and (\ref{Tbp}) with $b\left(  x^{\left(  1\right)
},i\right)  =-1$ it follows that%
\begin{align*}
M_{s_{i}\alpha,F}\left(  x^{\left(  1\right)  };\theta\right)   &  =\left(
-1+\frac{1-t}{1-z}\right)  M_{\alpha,F}\left(  x^{\left(  1\right)  }%
;\theta\right)  +\left(  T_{i}+1\right)  M_{\alpha,F}\left(  x^{\left(
1\right)  }s_{i};\theta\right) \\
&  =-\frac{t-z}{1-z}M_{\alpha,F}\left(  x^{\left(  1\right)  };\theta\right)
,
\end{align*}
because $x^{\left(  1\right)  }s_{i}$ satisfies the hypotheses of the Lemma
implying $\left(  T_{i}+1\right)  M_{\alpha,F}\left(  x^{\left(  1\right)
}s_{i};\theta\right)  =0$.
\end{proof}

\begin{proposition}
\label{V1alpha}Suppose $\alpha\in\mathcal{N}_{1}$ then $M_{\alpha,F}\left(
x^{\left(  1\right)  };\theta\right)  =V^{\left(  1\right)  }\left(
\alpha\right)  \tau_{F}$ and%
\[
V^{\left(  1\right)  }\left(  \alpha\right)  =\left(  -1\right)
^{\mathrm{inv}\left(  \alpha\right)  }\mathcal{R}_{0}\left(  \alpha,F\right)
^{-1}V^{\left(  1\right)  }\left(  \alpha^{+}\right)  .
\]

\end{proposition}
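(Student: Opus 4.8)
The plan is to mirror the structure of Proposition \ref{V0alpha} almost verbatim, replacing the type (0) machinery by the type (1) machinery already set up in the excerpt. The statement $M_{\alpha,F}(x^{(1)};\theta)=V^{(1)}(\alpha)\tau_F$ is immediate from Lemma \ref{M1ctauF}: since $\alpha\in\mathcal{N}_1$ forces $x^{(1)}$ to satisfy $x_i=tx_{i+1}$ for $m+1\le i<N$ (because $x^{(1)}=(1,t^{-1},\ldots,t^{1-N})$), that lemma tells us $M_{\alpha,F}(x^{(1)})$ is a scalar multiple of $\tau_F$, and we simply name the scalar $V^{(1)}(\alpha)$. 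So the only real content is the product formula for the scalar.

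For the product formula I would argue by induction on $\mathrm{inv}(\alpha)$, exactly as in Proposition \ref{V0alpha}. The base case is $\alpha=\lambda\in\mathbb{N}_0^{N,+}$, where $\mathrm{inv}(\lambda)=0$, $(-1)^{\mathrm{inv}(\lambda)}=1$, and $\mathcal{R}_0(\lambda,F)=1$ (an empty product, since there are no pairs $i<j$ with $\lambda_i<\lambda_j$); thus both sides equal $V^{(1)}(\lambda)=V^{(1)}(\lambda^+)$ trivially. For the inductive step, suppose $\alpha_i<\alpha_{i+1}$ (which, since $\alpha\in\mathcal{N}_1$, forces $i+1\le m$, putting us in the regime of the preceding Proposition). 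Applying that Proposition with $z=\zeta_{\alpha,F}(i+1)/\zeta_{\alpha,F}(i)$ gives
\begin{equation}
\frac{V^{(1)}(s_i\alpha)}{V^{(1)}(\alpha)}=-\frac{t-z}{1-z}=-u_0(z).\nonumber
\end{equation}
On the other side, Lemma \ref{RbRb1} (with $k=0$) gives $\mathcal{R}_0(\alpha,F)=u_0(z)\,\mathcal{R}_0(s_i\alpha,F)$, so $\mathcal{R}_0(\alpha,F)^{-1}=u_0(z)^{-1}\mathcal{R}_0(s_i\alpha,F)^{-1}$, while $\mathrm{inv}(s_i\alpha)=\mathrm{inv}(\alpha)-1$ yields an extra sign $(-1)^{\mathrm{inv}(s_i\alpha)}=-(-1)^{\mathrm{inv}(\alpha)}$. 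Combining, the ratio of the claimed right-hand sides for $s_i\alpha$ and $\alpha$ is exactly $-u_0(z)$, matching the ratio of the left-hand sides; since the formula holds for $\alpha^+$ and any $\alpha$ is reached from $\alpha^+$ by a sequence of such simple raising steps, the induction closes.

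I expect no serious obstacle, but the point demanding care is the bookkeeping of the factor $u_0$ versus $u_1$ and the sign. In type (0) the step produced $u_1(z)=\tfrac{1-tz}{1-z}$ with no sign, whereas here the step produces $-\tfrac{t-z}{1-z}=-u_0(z)$; one must check that the minus sign in the Proposition is precisely absorbed by the change in parity of $\mathrm{inv}$, and that the argument of $u_0$ in the definition of $\mathcal{R}_0$ (namely $\zeta_{\beta,F}(j)/\zeta_{\beta,F}(i)$ for the unique new inversion pair) coincides with $z=\zeta_{\alpha,F}(i+1)/\zeta_{\alpha,F}(i)$ removed in passing from $\alpha$ to $s_i\alpha$. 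Both are guaranteed by Lemma \ref{RbRb1} and the remark identifying the argument of $u_k$ with the ratio of $\zeta$'s, so the verification is routine once the signs are tracked.
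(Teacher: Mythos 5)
Your proposal is correct and follows essentially the same route as the paper's proof: Lemma \ref{M1ctauF} for the statement that $M_{\alpha,F}\left(x^{\left(1\right)};\theta\right)$ is a multiple of $\tau_{F}$, then induction on $\mathrm{inv}\left(\alpha\right)$ with base case $\mathcal{R}_{0}\left(\lambda,F\right)=1$ for partitions, using the preceding Proposition for the ratio $V^{\left(1\right)}\left(s_{i}\alpha\right)/V^{\left(1\right)}\left(\alpha\right)=-u_{0}\left(z\right)$ and Lemma \ref{RbRb1} to match it against $-\mathcal{R}_{0}\left(\alpha,F\right)/\mathcal{R}_{0}\left(s_{i}\alpha,F\right)$. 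Your explicit tracking of the sign via $\mathrm{inv}\left(s_{i}\alpha\right)=\mathrm{inv}\left(\alpha\right)-1$ is exactly the bookkeeping the paper leaves implicit.
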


\begin{proof}
By Lemma \ref{M1ctauF} $M_{\alpha,F}\left(  x^{\left(  1\right)  }%
;\theta\right)  $ is a multiple of $\tau_{F}$. For the product formula argue
by induction on $\mathrm{inv}\left(  \alpha\right)  $. If $\lambda
\in\mathbb{N}_{0}^{N,+}$ then $\mathcal{R}_{0}\left(  \lambda,F\right)  =1$.
If $\alpha_{i}<\alpha_{i+1}$ then%
\[
\frac{V^{\left(  1\right)  }\left(  s_{i}\alpha\right)  }{V^{\left(  1\right)
}\left(  \alpha\right)  }=-u_{0}\left(  \frac{\zeta_{\alpha,F}\left(
i+1\right)  }{\zeta_{\alpha,F}\left(  i\right)  }\right)  =-\frac
{\mathcal{R}_{0}\left(  \alpha,F\right)  }{\mathcal{R}_{0}\left(  s_{i}%
\alpha,F\right)  }.
\]

\end{proof}

We will use induction on the last nonzero part of $\lambda\in\mathbb{N}%
_{0}^{N,+}$ to derive $V^{\left(  \ast\right)  }\left(  \lambda\right)  $.
Suppose $\lambda_{k}\geq1$ and $\lambda_{i}=0$ for $i>k$ where $1\leq k\leq
N-m-1$ in type (0) and $1\leq k\leq m$ in type (1). Define compositions in
$\mathbb{N}_{0}^{N}$ by%
\begin{align}
\lambda^{\prime}  &  =\left(  \lambda_{1},\ldots,\lambda_{k-1},\lambda
_{k}-1,0,\ldots\right) \label{lb2a2b}\\
\alpha &  =\left(  \lambda_{k}-1,\lambda_{1},\ldots,\lambda_{k-1}%
,0,\ldots\right) \nonumber\\
\beta &  =\left(  \lambda_{1},\ldots,\lambda_{k-1},0,\ldots,\lambda_{k}\right)
\nonumber
\end{align}%
\begin{align}
\delta &  =\left(  \lambda_{1},\ldots,\lambda_{k-1},0,\ldots,\overset
{n}{\lambda_{k}},\overset{n+1}{0},0\ldots\right) \label{del2lb}\\
\lambda &  =\left(  \lambda_{1},\ldots,\lambda_{k-1},\lambda_{k}%
,0,\ldots\right)  ,\nonumber
\end{align}
where $n=N-m-1$ in type (0) and $n=m$ in type (1). The transitions from
$\lambda^{\prime}\rightarrow\alpha$ and from $\delta\rightarrow\lambda$ use
Propositions \ref{V0alpha} and \ref{V1alpha}. The affine step $\alpha
\rightarrow\beta$ and the steps $\beta\rightarrow\delta$ require technical computations.

\begin{proposition}
Suppose $\lambda\in\mathcal{N}_{0}^{+}$ and $\lambda^{\prime},\alpha$ are
given by (\ref{lb2a2b}) then%
\[
V^{\left(  0\right)  }\left(  \alpha\right)  =V^{\left(  0\right)  }\left(
\lambda^{\prime}\right)  \prod\limits_{i=1}^{k-1}\frac{1-q^{\lambda
_{i}-\lambda_{k}+1}t^{k-i}}{1-q^{\lambda_{i}-\lambda_{k}+1}t^{k-i+1}}.
\]

\end{proposition}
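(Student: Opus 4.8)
The plan is to reduce the statement to Proposition \ref{V0alpha} and then evaluate the product $\mathcal{R}_{1}(\alpha,E)$ by hand. First I would observe that $\alpha^{+}=\lambda^{\prime}$. Since $\lambda$ is a partition, $\lambda_{k-1}\geq\lambda_{k}$, so the entry $\lambda_{k}-1$ that is placed in the first slot of $\alpha$ satisfies $\lambda_{k}-1<\lambda_{k}\leq\lambda_{j-1}$ for every $2\leq j\leq k$; hence sorting $\alpha=(\lambda_{k}-1,\lambda_{1},\ldots,\lambda_{k-1},0,\ldots)$ into nonincreasing order returns exactly $(\lambda_{1},\ldots,\lambda_{k-1},\lambda_{k}-1,0,\ldots)=\lambda^{\prime}$. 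Proposition \ref{V0alpha} then gives $V^{(0)}(\alpha)=\mathcal{R}_{1}(\alpha,E)^{-1}V^{(0)}(\lambda^{\prime})$, so the whole claim reduces to showing that $\mathcal{R}_{1}(\alpha,E)^{-1}$ equals the displayed product.

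Next I would pin down the inversions of $\alpha$. The nonzero entries beyond the first are $\alpha_{j}=\lambda_{j-1}$ for $2\leq j\leq k$, each satisfying $\lambda_{j-1}\geq\lambda_{k}>\lambda_{k}-1=\alpha_{1}$, while every entry in a position $j>k$ is $0\leq\lambda_{k}-1=\alpha_{1}$ because $\lambda_{k}\geq1$. Among the first $k$ positions, $\alpha_{2}\geq\cdots\geq\alpha_{k}$ is already nonincreasing, so no pair $2\leq i<j\leq k$ is an inversion. Therefore the only pairs with $i<j$ and $\alpha_{i}<\alpha_{j}$ are the $k-1$ pairs $(1,j)$ for $2\leq j\leq k$, which matches the number of factors claimed and in particular all satisfy $j\leq k<N-m$ so that the special-case type~(0) formula for $\mathcal{R}_{1}$ applies.

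The key computation is the rank function at these positions. I would show $r_{\alpha}(1)=k$, since exactly the $k-1$ entries $\lambda_{1},\ldots,\lambda_{k-1}$ exceed $\lambda_{k}-1$ and only position $1$ itself carries the value $\lambda_{k}-1$. For $2\leq j\leq k$ I would show $r_{\alpha}(j)=j-1$: writing $a$ for the number of parts strictly larger than $\lambda_{j-1}$, the block of parts equal to $\lambda_{j-1}$ occupies the consecutive $\lambda$-indices $a+1,\ldots$, so the count of equal entries among the first $j-1$ of them is $(j-1)-a$, and $r_{\alpha}(j)=a+\bigl((j-1)-a\bigr)=j-1$; this telescoping is what absorbs any ties among the $\lambda_{i}$. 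Feeding these into the type~(0) formula, the argument of $u_{1}$ for the pair $(1,j)$ is $q^{\alpha_{j}-\alpha_{1}}t^{r_{\alpha}(1)-r_{\alpha}(j)}=q^{\lambda_{j-1}-\lambda_{k}+1}t^{k-j+1}$, and reindexing by $i=j-1$ turns this into $q^{\lambda_{i}-\lambda_{k}+1}t^{k-i}$.

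Finally I would substitute into $u_{1}(z)=(1-tz)/(1-z)$, obtaining $\mathcal{R}_{1}(\alpha,E)=\prod_{i=1}^{k-1}(1-q^{\lambda_{i}-\lambda_{k}+1}t^{k-i+1})/(1-q^{\lambda_{i}-\lambda_{k}+1}t^{k-i})$, and invert to reach the asserted product. The only delicate point is the rank bookkeeping when several parts $\lambda_{i}$ coincide, and I expect the cleanest route is the telescoping identity above rather than tracking individual tie-breaks; one should also confirm, as noted, that the cyclic placement of $\lambda_{k}-1$ creates no spurious inversion with the trailing zeros, which holds precisely because $\lambda_{k}\geq1$.
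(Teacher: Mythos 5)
Your proof is correct and takes essentially the same route as the paper: both reduce the claim to Proposition \ref{V0alpha} via the observation $\alpha^{+}=\lambda^{\prime}$ and then identify the displayed product as $\mathcal{R}_{1}\left(  \alpha,E\right)  ^{-1}$. The only cosmetic difference is that you compute the $u_{1}$-arguments explicitly through the rank function and inversion count, whereas the paper reads them off as ratios of the cyclically shifted spectral vector entries $\zeta_{\alpha,E}\left(  i\right)  $ --- equivalent bookkeeping, since the argument of $u_{1}$ is exactly $\zeta_{\alpha,E}\left(  j\right)  /\zeta_{\alpha,E}\left(  i\right)  $.
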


\begin{proof}
The spectral vector of $\lambda^{\prime}$ has $\zeta_{\lambda^{\prime}%
,E}\left(  i\right)  =q^{\lambda_{i}}t^{N-m-i}$ for $1\leq i<k,\zeta
_{\lambda^{\prime},E}\left(  k\right)  =q^{\lambda_{k}-1}t^{N-m-k}$ while
$\zeta_{\alpha,E}\left(  1\right)  =\zeta_{\lambda^{\prime},E}\left(
k\right)  $ and $\zeta_{\alpha,E}\left(  i\right)  =\zeta_{\lambda^{\prime}%
,E}\left(  i-1\right)  $ for $2\leq i\leq k$. The product is $\mathcal{R}%
_{1}\left(  \alpha,E\right)  ^{-1}.$
\end{proof}

\begin{proposition}
Suppose $\lambda\in\mathcal{N}_{0}^{+}$ and $\delta$ is as in (\ref{del2lb})
then%
\[
V^{\left(  0\right)  }\left(  \lambda\right)  =\frac{1-q^{\lambda_{k}%
}t^{N-m-k}}{1-q^{\lambda_{k}}t}V^{\left(  0\right)  }\left(  \delta\right)  .
\]

\end{proposition}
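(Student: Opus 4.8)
The plan is to recognize that $\delta$ is merely a rearrangement of $\lambda$ with $\delta^{+}=\lambda$, so that $V^{\left(0\right)}\left(\lambda\right)$ can be read off from $V^{\left(0\right)}\left(\delta\right)$ via Proposition \ref{V0alpha}. Indeed $\delta$ carries the parts $\lambda_{1},\ldots,\lambda_{k-1}$ in positions $1,\ldots,k-1$, the single positive entry $\lambda_{k}$ in position $n=N-m-1$, and zeros elsewhere, so its nonincreasing rearrangement is exactly $\lambda$, and $\delta\in\mathcal{N}_{0}$. Proposition \ref{V0alpha} then yields $V^{\left(0\right)}\left(\delta\right)=\mathcal{R}_{1}\left(\delta,E\right)^{-1}V^{\left(0\right)}\left(\lambda\right)$, equivalently $V^{\left(0\right)}\left(\lambda\right)=\mathcal{R}_{1}\left(\delta,E\right)V^{\left(0\right)}\left(\delta\right)$. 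It therefore remains to evaluate $\mathcal{R}_{1}\left(\delta,E\right)$ and show it equals the stated ratio.

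The first substantive step is to identify the inversions of $\delta$. I would check that no pair among positive entries contributes: for $i<j\leq k-1$ the monotonicity $\lambda_{i}\geq\lambda_{j}$ rules out $\delta_{i}<\delta_{j}$, and for $j=n$ with $i\leq k-1$ one has $\delta_{i}=\lambda_{i}\geq\lambda_{k}=\delta_{n}$. The only inversions are the pairs $\left(i,n\right)$ with $k\leq i\leq n-1$, where $\delta_{i}=0<\lambda_{k}=\delta_{n}$; there are exactly $n-k=N-m-1-k$ of them, matching $\mathrm{inv}\left(\delta\right)$.

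Next I would compute the ranks. For $k\leq i\leq n-1$ one finds $r_{\delta}\left(i\right)=i+1$, since there are $k$ entries exceeding $0$ (the $k-1$ positive $\lambda$'s together with $\lambda_{k}$ at position $n$) plus the $i-k+1$ zeros in positions $k,\ldots,i$; and for position $n$ one finds $r_{\delta}\left(n\right)=k$, since the count of entries exceeding $\lambda_{k}$ cancels against the count of entries equal to $\lambda_{k}$ through position $n$. All these ranks lie strictly below $N-m$, so the special-case content formula $c\left(r_{\delta}\left(i\right),E\right)=N-m-r_{\delta}\left(i\right)$ applies, giving $c\left(r_{\delta}\left(n\right),E\right)-c\left(r_{\delta}\left(i\right),E\right)=i+1-k$ and $\delta_{n}-\delta_{i}=\lambda_{k}$ for each inversion. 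Hence each factor is $u_{1}\left(q^{\lambda_{k}}t^{i+1-k}\right)=\dfrac{1-q^{\lambda_{k}}t^{i+2-k}}{1-q^{\lambda_{k}}t^{i+1-k}}$.

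The final step is to observe, writing $s=i+1-k$ so that $s$ runs from $1$ to $N-m-1-k$, that the product telescopes:
\[
\mathcal{R}_{1}\left(\delta,E\right)=\prod_{s=1}^{N-m-1-k}\frac{1-q^{\lambda_{k}}t^{s+1}}{1-q^{\lambda_{k}}t^{s}}=\frac{1-q^{\lambda_{k}}t^{N-m-k}}{1-q^{\lambda_{k}}t},
\]
which is the claimed ratio. The main obstacle is the bookkeeping in the rank computation, in particular getting $r_{\delta}\left(n\right)=k$ right, including the subtle point that this value is independent of the multiplicity of $\lambda_{k}$ among $\lambda_{1},\ldots,\lambda_{k-1}$; once the ranks and contents are pinned down, the telescoping is immediate.
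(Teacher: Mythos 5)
Your proof is correct and follows essentially the same route as the paper: invoke Proposition \ref{V0alpha} with $\delta^{+}=\lambda$, reduce the claim to computing $\mathcal{R}_{1}\left(\delta,E\right)$, and evaluate it as a telescoping product of factors $u_{1}\left(q^{\lambda_{k}}t^{i+1-k}\right)$ over the inversions $\left(i,N-m-1\right)$, $k\leq i\leq N-m-2$. The only difference is that you spell out the rank and content bookkeeping (in particular $r_{\delta}\left(N-m-1\right)=k$) that the paper records implicitly by just stating the relevant entries of the spectral vector $\zeta_{\delta,E}$.
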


\begin{proof}
The relevant part of $\zeta_{\delta,E}$ is $\zeta_{\delta,E}\left(  i\right)
=t^{N-m-i-1}$ for $k\leq i\leq N-m-2$ and $\zeta_{\delta,E}\left(
N-m-1\right)  =q^{\lambda_{k}}t^{N-m-k}$. Thus%
\[
\mathcal{R}_{1}\left(  \delta,E\right)  =\prod\limits_{i=k}^{N-m-2}%
u_{1}\left(  q^{\lambda_{k}}t^{i+1-k}\right)  =\prod\limits_{j=0}%
^{N-m-k-2}\frac{1-q^{\lambda_{k}}t^{j+2}}{1-q^{\lambda_{k}}t^{j+1}}%
\]
and this product telescopes.
\end{proof}

\begin{proposition}
Suppose $\lambda\in\mathcal{N}_{1}^{+}$ and $\lambda^{\prime},\alpha$ are
given by (\ref{lb2a2b}) then%
\[
V^{\left(  1\right)  }\left(  \alpha\right)  =\left(  -t\right)  ^{1-k}%
\prod\limits_{i=1}^{k-1}\frac{1-q^{\lambda_{i}-\lambda_{k}+1}t^{i-k}%
}{1-q^{\lambda_{i}-\lambda_{k}+1}t^{i-k-1}}V^{\left(  1\right)  }\left(
\lambda^{\prime}\right)  .
\]

\end{proposition}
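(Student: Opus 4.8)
The plan is to obtain the formula as a direct specialization of Proposition \ref{V1alpha}, combined with an explicit evaluation of the rank function, the inversion count, and the spectral ratios for the composition $\alpha=\left(\lambda_{k}-1,\lambda_{1},\ldots,\lambda_{k-1},0,\ldots\right)$. First I would observe that $\alpha^{+}=\lambda^{\prime}$: since $\lambda\in\mathcal{N}_{1}^{+}$ satisfies $\lambda_{1}\geq\cdots\geq\lambda_{k-1}\geq\lambda_{k}\geq1$, the leading entry $\lambda_{k}-1$ of $\alpha$ is strictly smaller than each of $\lambda_{1},\ldots,\lambda_{k-1}$ and is no smaller than the trailing zeros, so its nonincreasing rearrangement is $\left(\lambda_{1},\ldots,\lambda_{k-1},\lambda_{k}-1,0,\ldots\right)=\lambda^{\prime}$. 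Proposition \ref{V1alpha} then gives $V^{\left(1\right)}\left(\alpha\right)=\left(-1\right)^{\mathrm{inv}\left(\alpha\right)}\mathcal{R}_{0}\left(\alpha,F\right)^{-1}V^{\left(1\right)}\left(\lambda^{\prime}\right)$, so the task reduces to computing $\mathrm{inv}\left(\alpha\right)$ and $\mathcal{R}_{0}\left(\alpha,F\right)$ in closed form.

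Next I would count inversions and identify the rank function. The same comparison shows that the inversions of $\alpha$ are exactly the pairs $\left(1,j\right)$ with $2\leq j\leq k$, whence $\mathrm{inv}\left(\alpha\right)=k-1$ and the sign equals $\left(-1\right)^{k-1}$. From the definition of $r_{\alpha}$ one checks directly that $r_{\alpha}\left(1\right)=k$ and $r_{\alpha}\left(j\right)=j-1$ for $2\leq j\leq k$ (equivalently $r_{\alpha}=s_{1}s_{2}\cdots s_{k-1}$ carries the smallest of the first $k$ entries to the front), while the trailing zero positions are fixed. Consequently each inversion $\left(1,j\right)$ contributes the single factor $u_{0}$ evaluated at $\zeta_{\alpha,F}\left(j\right)/\zeta_{\alpha,F}\left(1\right)=q^{\lambda_{j-1}-\lambda_{k}+1}\,t^{c\left(j-1,F\right)-c\left(k,F\right)}$.

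The key simplification is that the ranks $1,\ldots,k$ appearing here all lie in the first column of $Y_{F}$, where the contents are consecutive integers; hence $c\left(a,F\right)-c\left(b,F\right)=a-b$ for column positions $a,b$, and the exponent of $t$ collapses to $\left(j-1\right)-k$. Writing $i=j-1$ this yields $\mathcal{R}_{0}\left(\alpha,F\right)=\prod_{i=1}^{k-1}u_{0}\!\left(q^{\lambda_{i}-\lambda_{k}+1}t^{i-k}\right)$. Using $u_{0}\left(z\right)=\dfrac{t-z}{1-z}=\dfrac{t\left(1-z/t\right)}{1-z}$ I would factor one power of $t$ out of each of the $k-1$ numerators, obtaining $\mathcal{R}_{0}\left(\alpha,F\right)=t^{k-1}\prod_{i=1}^{k-1}\dfrac{1-q^{\lambda_{i}-\lambda_{k}+1}t^{i-k-1}}{1-q^{\lambda_{i}-\lambda_{k}+1}t^{i-k}}$. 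Inverting this and combining with the sign $\left(-1\right)^{k-1}$ produces the prefactor $\left(-1\right)^{k-1}t^{1-k}=\left(-t\right)^{1-k}$ together with exactly the displayed ratio of products, which completes the proof.

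I expect the main obstacle to be the bookkeeping in the second step: verifying the rank-function values and, above all, confirming that all of the ranks $1,\ldots,k$ remain in the column so that the content differences telescope to $a-b$. Ties among the parts $\lambda_{i}$ must be resolved through the tie-breaking clause in the definition of $r_{\alpha}$, and one should check the boundary index $k$ (the last free column slot) so that $c\left(k,F\right)$ is computed by the column rule rather than the row rule; once this is settled, the manipulation of $u_{0}$ and the identification of the sign are routine.
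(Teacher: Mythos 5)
Your proposal is correct and takes essentially the same route as the paper's proof: both reduce the statement to Proposition \ref{V1alpha} via $\alpha^{+}=\lambda^{\prime}$, compute $\mathrm{inv}\left(\alpha\right)=k-1$ and $\mathcal{R}_{0}\left(\alpha,F\right)=\prod_{i=1}^{k-1}u_{0}\left(q^{\lambda_{i}-\lambda_{k}+1}t^{i-k}\right)=t^{k-1}\prod_{i=1}^{k-1}\frac{1-q^{\lambda_{i}-\lambda_{k}+1}t^{i-k-1}}{1-q^{\lambda_{i}-\lambda_{k}+1}t^{i-k}}$, and then invert and attach the sign; your write-up merely makes explicit the rank-function and content bookkeeping that the paper compresses into its direct statement of the spectral vectors $\zeta_{\alpha,F}$. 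The only cosmetic slip is the parenthetical claim $r_{\alpha}=s_{1}s_{2}\cdots s_{k-1}$ (with the paper's conventions this word is the inverse, $s_{k-1}\cdots s_{1}$), but the values $r_{\alpha}\left(1\right)=k$ and $r_{\alpha}\left(j\right)=j-1$ that your computation actually uses are correct.
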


\begin{proof}
The spectral vector of $\lambda^{\prime}$ has $\zeta_{\lambda^{\prime}%
,F}\left(  i\right)  =q^{\lambda_{i}}t^{i-1-m}$ for $1\leq i<m,\zeta
_{\lambda^{\prime},F}\left(  k\right)  =q^{\lambda_{k}-1}t^{k-1-m}$ while
$\zeta_{\alpha,F}\left(  1\right)  =\zeta_{\lambda^{\prime},F}\left(
k\right)  $ and $\zeta_{\alpha,F}\left(  i\right)  =\zeta_{\lambda^{\prime}%
,F}\left(  i-1\right)  $ for $2\leq i\leq k$. Also $\mathrm{inv}\left(
\alpha\right)  =k-1$. Then%
\[
\mathcal{R}_{0}\left(  \alpha,F\right)  =\prod\limits_{i=1}^{k-1}u_{0}\left(
q^{\lambda_{i}-\lambda_{k}+1}t^{i-k}\right)  =t^{k-1}\prod\limits_{i=1}%
^{k-1}\frac{1-q^{\lambda_{i}-\lambda_{k}+1}t^{i-k-1}}{1-q^{\lambda_{i}%
-\lambda_{k}+1}t^{i-k}}.
\]
Combine this with $V^{\left(  1\right)  }\left(  \alpha\right)  =\left(
-1\right)  ^{\mathrm{inv}\left(  \alpha\right)  }\mathcal{R}_{0}\left(
\alpha,F\right)  ^{-1}V^{\left(  1\right)  }\left(  \lambda^{\prime}\right)
$.$.$
\end{proof}

\begin{proposition}
Suppose $\lambda\in\mathcal{N}_{1}^{+}$ and $\delta$ is as in (\ref{del2lb})
then%
\[
V^{\left(  1\right)  }\left(  \lambda\right)  =\left(  -t\right)  ^{m-k}%
\frac{1-q^{\lambda_{k}}t^{k-m-1}}{1-q^{\lambda_{k}}t^{-1}}V^{\left(  1\right)
}\left(  \delta\right)  .
\]

\end{proposition}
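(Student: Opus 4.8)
The plan is to mirror the proof of the type~(0) companion proposition, with $\mathcal{R}_1$ replaced by $\mathcal{R}_0$ and with the extra sign factor $\left(-1\right)^{\mathrm{inv}}$ that appears in Proposition~\ref{V1alpha} but not in Proposition~\ref{V0alpha}. The starting observation is that $\delta$ is simply a rearrangement of $\lambda$: it moves the last nonzero part $\lambda_k$ from coordinate $k$ to coordinate $m$, filling coordinates $k,\ldots,m-1$ with zeros, so that $\delta^{+}=\lambda$. Proposition~\ref{V1alpha} then reads $V^{\left(1\right)}\left(\delta\right)=\left(-1\right)^{\mathrm{inv}\left(\delta\right)}\mathcal{R}_0\left(\delta,F\right)^{-1}V^{\left(1\right)}\left(\lambda\right)$, which I would invert to
\[
V^{\left(1\right)}\left(\lambda\right)=\left(-1\right)^{\mathrm{inv}\left(\delta\right)}\mathcal{R}_0\left(\delta,F\right)V^{\left(1\right)}\left(\delta\right),
\]
reducing everything to an explicit evaluation of the sign and of the product $\mathcal{R}_0\left(\delta,F\right)$.

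For the sign I would count the inversions of $\delta$: the only pairs $i<j$ with $\delta_i<\delta_j$ have $\delta_i=0$ and $\delta_j=\lambda_k$, namely $i\in\left\{k,\ldots,m-1\right\}$ and $j=m$, so $\mathrm{inv}\left(\delta\right)=m-k$. These same $m-k$ pairs are exactly the factors of $\mathcal{R}_0\left(\delta,F\right)$, so the next task is to pin down the spectral coordinates $\zeta_{\delta,F}\left(i\right)=q^{\delta_i}t^{c\left(r_\delta\left(i\right),F\right)}$ entering them. Using the content values $c\left(i,F\right)=i-1-m$ for $1\leq i\leq m$ attached to this $F$, I would evaluate the rank function on the zero block and at the moved part, obtaining $r_\delta\left(i\right)=i+1$ for $k\leq i\leq m-1$ and $r_\delta\left(m\right)=k$. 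This gives $\zeta_{\delta,F}\left(i\right)=t^{i-m}$ for $k\leq i\leq m-1$ and $\zeta_{\delta,F}\left(m\right)=q^{\lambda_k}t^{k-1-m}$.

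Each inversion pair $\left(i,m\right)$ then contributes the argument $\zeta_{\delta,F}\left(m\right)/\zeta_{\delta,F}\left(i\right)=q^{\lambda_k}t^{k-1-i}$ to $u_0$, so
\[
\mathcal{R}_0\left(\delta,F\right)=\prod_{i=k}^{m-1}u_0\left(q^{\lambda_k}t^{k-1-i}\right)=\prod_{i=k}^{m-1}\frac{t^{i+2-k}-q^{\lambda_k}}{t^{i+1-k}-q^{\lambda_k}},
\]
where the negative powers of $t$ have been cleared so that the product telescopes to $\left(t^{m+1-k}-q^{\lambda_k}\right)/\left(t-q^{\lambda_k}\right)$. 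Finally I would check that $\left(-1\right)^{m-k}$ times this ratio equals $\left(-t\right)^{m-k}\left(1-q^{\lambda_k}t^{k-m-1}\right)/\left(1-q^{\lambda_k}t^{-1}\right)$ by clearing one more power of $t$, which is exactly the stated factor. I expect the main obstacle to be the correct determination of $r_\delta$ on the zero coordinates and at the moved part, since the associated contents and powers of $t$ must be tracked with care; by contrast the telescoping and the final bookkeeping of signs and powers of $t$ are routine once the arguments of $u_0$ have been put into the form displayed above.
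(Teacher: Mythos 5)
Your proposal is correct and takes essentially the same route as the paper: both reduce the claim to Proposition \ref{V1alpha} applied to $\delta$ (with $\delta^{+}=\lambda$), compute the spectral values $\zeta_{\delta,F}\left(i\right)=t^{i-m}$ for $k\leq i\leq m-1$ and $\zeta_{\delta,F}\left(m\right)=q^{\lambda_{k}}t^{k-m-1}$, telescope the $m-k$ factors of $\mathcal{R}_{0}\left(\delta,F\right)$, and use $\mathrm{inv}\left(\delta\right)=m-k$. The only (cosmetic) difference is that the paper extracts the factor $t^{m-k}$ from the product before telescoping, while you clear the negative powers of $t$ factor by factor and reconcile the powers of $t$ at the end; your explicit verification of the rank function values $r_{\delta}\left(i\right)=i+1$ and $r_{\delta}\left(m\right)=k$ is left implicit in the paper.
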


\begin{proof}
The relevant part of $\zeta_{\delta,F}$ is $\zeta_{\delta,F}\left(  i\right)
=t^{i-m}$ for $k\leq i\leq m-1$ and $\zeta_{\delta,E}\left(  m\right)
=q^{\lambda_{k}}t^{k-m-1}$. Thus%
\[
\mathcal{R}_{0}\left(  \delta,E\right)  =\prod\limits_{i=k}^{m-1}u_{0}\left(
q^{\lambda_{k}}t^{k-i-1}\right)  =t^{m-k}\prod\limits_{j=0}^{m-k-1}%
\frac{1-q^{\lambda_{k}}t^{-j-2}}{1-q^{\lambda_{k}}t^{-j-1}}%
\]
and this product telescopes to $\dfrac{1-q^{\lambda_{k}}t^{k-m-1}%
}{1-q^{\lambda_{k}}t^{-1}}$. The use of $\mathrm{inv}\left(  \delta\right)
=m-k$ completes the proof.
\end{proof}

The methods used in these calculations are similar to those used in
\cite{DL2015} for evaluations of scalar valued Macdonald polynomials, however
the following computations (from $\alpha$ to $\delta$) are significantly different.

Each of the remaining transitions is calculated in its own subsection. The
following two lemmas will be used in both types. Recall $t\boldsymbol{T}%
_{i}^{-1}=1-t+\boldsymbol{T}_{i}$ for any $i$.

\begin{lemma}
Suppose $f=t\boldsymbol{T}_{i}^{-1}g$ and $b=b\left(  x;i\right)  $ then
\begin{equation}
\left(  T_{i}-b\right)  f\left(  xs_{i}\right)  =\left(  1+b\right)  \left(
t-b\right)  g\left(  x\right)  -b\left(  T_{i}-b\right)  g\left(
xs_{i}\right)  \label{TbfTb}%
\end{equation}

\end{lemma}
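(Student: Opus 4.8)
The plan is to rewrite $f$ via the recalled identity $t\boldsymbol{T}_{i}^{-1}=1-t+\boldsymbol{T}_{i}$, so that $f=(1-t)g+\boldsymbol{T}_{i}g$, then evaluate this function at the swapped point $xs_{i}$ using the pointwise formula (\ref{Tbp}), and finally apply the Hecke generator $T_{i}$ (which acts only on the $\theta$-variables) together with the quadratic relation (\ref{Hquad}) to collapse the resulting operator product into a scalar. The whole argument is a substitution followed by one use of $T_{i}^{2}=(t-1)T_{i}+t$.

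First I would record the elementary identity $b(x;i)+b(xs_{i};i)=t-1$. Writing $r=x_{i}/x_{i+1}$ one has $b=\dfrac{t-1}{1-r}$, while $(xs_{i})_{i}=x_{i+1}$ and $(xs_{i})_{i+1}=x_{i}$ give $b(xs_{i};i)=\dfrac{t-1}{1-r^{-1}}$; since $\dfrac{1}{1-r}+\dfrac{1}{1-r^{-1}}=1$, the identity follows, i.e. $b(xs_{i};i)=t-1-b$. I also use $(xs_{i})s_{i}=x$ because $s_{i}^{2}=1$. Now evaluating $f=(1-t)g+\boldsymbol{T}_{i}g$ at the point $xs_{i}$ and applying (\ref{Tbp}) to $(\boldsymbol{T}_{i}g)(xs_{i})$ with base point $xs_{i}$ yields
\[
f(xs_{i})=(1-t)\,g(xs_{i})+b(xs_{i};i)\,g(xs_{i})+\bigl(T_{i}-b(xs_{i};i)\bigr)g(x).
\]
Substituting $b(xs_{i};i)=t-1-b$, the coefficient of $g(xs_{i})$ becomes $(1-t)+(t-1-b)=-b$, so
\[
f(xs_{i})=-b\,g(xs_{i})+\bigl(T_{i}+1-t+b\bigr)g(x).
\]

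The final step is to apply $(T_{i}-b)$ to this expression. The term $-b\,(T_{i}-b)g(xs_{i})$ is produced directly. For the other term I would expand $(T_{i}-b)(T_{i}+1-t+b)=T_{i}^{2}+(1-t)T_{i}-b(1-t+b)$ and reduce with $T_{i}^{2}=(t-1)T_{i}+t$; the $T_{i}$-terms cancel, leaving the scalar $t-b(1-t+b)=(t-b)(1+b)$. Hence $(T_{i}-b)f(xs_{i})=(1+b)(t-b)g(x)-b(T_{i}-b)g(xs_{i})$, which is exactly (\ref{TbfTb}). The only delicate points are the sign bookkeeping in $b(xs_{i};i)=t-1-b$ and the cancellation of the $T_{i}$-term when the quadratic relation is invoked; everything else is routine substitution.
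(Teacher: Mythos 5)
Your proof is correct, but it follows a genuinely different route from the paper's. The paper works entirely at the base point $x$: it writes down the two reciprocal evaluation identities there — from $g=t^{-1}\boldsymbol{T}_{i}f$ it gets $tg(x)=bf(x)+(T_{i}-b)f(xs_{i})$, and from $f=(1-t+\boldsymbol{T}_{i})g$ it gets $f(x)=(1-t+b)g(x)+(T_{i}-b)g(xs_{i})$ — and then simply eliminates $f(x)$ by substitution, using the factorization $t-b(1-t+b)=(1+b)(t-b)$. That argument never needs the quadratic relation for the fermionic action of $T_{i}$, nor any relation between $b(x;i)$ and $b(xs_{i};i)$. You instead use only the single relation $f=(1-t+\boldsymbol{T}_{i})g$, but evaluate it at the reflected point $xs_{i}$; this forces you to establish the identity $b(xs_{i};i)=t-1-b(x;i)$ (which you verify correctly) and then to collapse the operator product $(T_{i}-b)(T_{i}+1-t+b)$ to the scalar $(1+b)(t-b)$ via $T_{i}^{2}=(t-1)T_{i}+t$. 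The paper's elimination is shorter and stays at one evaluation point; your version costs an extra operator computation but has its own merits: it isolates $f(xs_{i})$ explicitly as $-b\,g(xs_{i})+(T_{i}+1-t+b)g(x)$, and it records the reflection identity $b(x;i)+b(xs_{i};i)=t-1$, which is a reusable fact in this kind of evaluation bookkeeping.
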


\begin{proof}
From $g=t^{-1}\boldsymbol{T}_{i}f$ we get%
\begin{align*}
tg\left(  x\right)   &  =bf\left(  x\right)  +\left(  T_{i}-b\right)  f\left(
xs_{i}\right) \\
f\left(  x\right)   &  =\left(  1-t+b\right)  g\left(  x\right)  +\left(
T_{i}-b\right)  g\left(  xs_{i}\right)  ,
\end{align*}
thus%
\begin{align*}
\left(  T_{i}-b\right)  f\left(  xs_{i}\right)   &  =tg\left(  x\right)
-b\left(  1-t+b\right)  g\left(  x\right)  -b\left(  T_{i}-b\right)  g\left(
xs_{i}\right) \\
&  =\left(  1+b\right)  \left(  t-b\right)  g\left(  x\right)  -b\left(
T_{i}-b\right)  g\left(  xs_{i}\right)  .
\end{align*}

\end{proof}

The next formula is a modified braid relation.

\begin{lemma}
\label{Tbraids}Suppose $b=\dfrac{ac}{a+c+1-t}$ or $j,\ell,j+\ell\in
\mathbb{Z}\backslash\left\{  0\right\}  $ then%
\begin{align*}
\left(  T_{i}-a\right)  \left(  T_{i+1}-b\right)  \left(  T_{i}-c\right)   &
=\left(  T_{i+1}-c\right)  \left(  T_{i}-b\right)  \left(  T_{i+1}-a\right) \\
\left(  T_{i}-\kappa_{j}\right)  \left(  T_{i+1}-\kappa_{j+\ell}\right)
\left(  T_{i}-\kappa_{\ell}\right)   &  =\left(  T_{i+1}-\kappa_{\ell}\right)
\left(  T_{i}-\kappa_{j+\ell}\right)  \left(  T_{i+1}-\kappa_{j}\right)
\end{align*}

\end{lemma}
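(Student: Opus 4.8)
The plan is to prove the first identity as a purely algebraic consequence of the braid and quadratic relations, and then to derive the second identity by checking that the parameters $a=\kappa_{j}$, $b=\kappa_{j+\ell}$, $c=\kappa_{\ell}$ satisfy the hypothesis $b=ac/\left(a+c+1-t\right)$ of the first.

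For the first identity, write $T=T_{i}$ and $U=T_{i+1}$, so that the only relations available are the braid relation $TUT=UTU$ and the quadratic relations $T^{2}=\left(t-1\right)T+t$ and $U^{2}=\left(t-1\right)U+t$. I would expand each side of $\left(T-a\right)\left(U-b\right)\left(T-c\right)=\left(U-c\right)\left(T-b\right)\left(U-a\right)$ as a noncommutative polynomial in $T$ and $U$. The degree-three terms are $TUT$ on the left and $UTU$ on the right, which are equal by the braid relation and hence cancel. The quadratic relations then reduce the single $T^{2}$ term on the left and the single $U^{2}$ term on the right to linear terms. After collecting, the coefficients of $TU$, of $UT$, and of the identity match on the two sides automatically, so the difference of the two sides is a linear combination of $T$ and $U$ alone.

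The key observation is that, in this difference, the coefficient of $T$ and the coefficient of $U$ are negatives of one another; each vanishes exactly when $b\left(a+c+1-t\right)=ac$, that is, when $b=ac/\left(a+c+1-t\right)$. Thus the first identity holds under precisely the stated hypothesis. I expect the main, though still routine, obstacle to be the careful bookkeeping in this expansion: remembering to apply the quadratic relation only to $T^{2}$ and $U^{2}$, and tracking signs so that the two surviving linear coefficients are correctly recognized as negatives of each other.

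For the second identity, I would write $\kappa_{n}$ in the uniform form $\kappa_{n}=\left(t-1\right)/\left(1-t^{-n}\right)$, valid for $n\in\mathbb{Z}\setminus\{0\}$ and consistent with the stated values $\kappa_{n}=t^{n}/\left[n\right]_{t}$ and $\kappa_{-n}=-1/\left[n\right]_{t}$. Setting $a=\kappa_{j}$, $b=\kappa_{j+\ell}$, $c=\kappa_{\ell}$, it remains only to verify $\kappa_{j+\ell}=\kappa_{j}\kappa_{\ell}/\left(\kappa_{j}+\kappa_{\ell}+1-t\right)$. Substituting $u=t^{-j}$ and $v=t^{-\ell}$ turns this into a rational identity whose sole nontrivial input is $\left(1-u\right)+\left(1-v\right)-\left(1-u\right)\left(1-v\right)=1-uv$; clearing denominators then yields $\kappa_{j}\kappa_{\ell}/\left(\kappa_{j}+\kappa_{\ell}+1-t\right)=\left(t-1\right)/\left(1-uv\right)=\kappa_{j+\ell}$, as required. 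The hypothesis $j,\ell,j+\ell\neq0$ is exactly what guarantees that all three $\kappa$'s are defined, and genericity of $t$ keeps the denominators nonzero, so the first identity applies and gives the second.
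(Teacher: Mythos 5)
Your proof is correct and follows essentially the same route as the paper: a direct expansion of the triple products, applying the quadratic relation to the single squared generator and the braid relation to the cubic term, with the hypothesis $b\left(a+c+1-t\right)=ac$ emerging as exactly the condition that kills the leftover linear terms. The only differences are cosmetic — the paper adds symmetric correction terms to one side instead of subtracting the two sides, and it merely asserts the identity $\kappa_{j+\ell}=\kappa_{j}\kappa_{\ell}/\left(\kappa_{j}+\kappa_{\ell}+1-t\right)$, which you verify explicitly via the uniform form $\kappa_{n}=\left(t-1\right)/\left(1-t^{-n}\right)$.
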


\begin{proof}
Expand%
\begin{align*}
&  \left(  T_{i}-a\right)  \left(  T_{i+1}-b\right)  \left(  T_{i}-c\right)
+cT_{i}T_{i+1}+aT_{i+1}T_{i}+abc\\
&  =T_{i}T_{i+1}T_{i}+acT_{i+1}+b\left(  a+c\right)  T_{i}-bT_{i}^{2}\\
&  =T_{i}T_{i+1}T_{i}-bt+acT_{i+1}+b\left(  a+c-t+1\right)  T_{i}\\
&  =T_{i}T_{i+1}T_{i}-bt+ac\left(  T_{i+1}+T_{i}\right)  .
\end{align*}
which is symmetric in $T_{i},T_{i+1}$ since $T_{i}T_{i+1}T_{i}=T_{i+1}%
T_{i}T_{i+1}$. If $a=\kappa_{j}$ and $c=\kappa_{\ell}$ then $b=\kappa_{j+\ell
}$.
\end{proof}

\subsection{From $\alpha$ to $\delta$ for type (0)}

In this section we will prove $M_{\delta,E}\left(  x^{\left(  0\right)
}\right)  =q^{\lambda_{k}-1}t^{2N-m-k-1}\dfrac{1-q^{\lambda_{k}}t^{N-k+1}%
}{1-q^{\lambda_{k}}t^{N-k}}M_{\alpha,E}\left(  x^{\left(  0\right)  }\right)
$. Start with $\delta$ (where $\delta_{i}=\lambda_{i}$ for $1\leq i\leq
k-1,\delta_{N-m-1}=\lambda_{k}$ and $\delta_{i}=0$ otherwise). Let
$\beta^{\left(  N-m-1\right)  }=\delta$ and $\beta^{\left(  j\right)
}=s_{j-1}\beta^{\left(  j-1\right)  }$ for $N-m\leq j\leq N$ (so that
$\beta^{\left(  N\right)  }=\beta$ in (\ref{lb2a2b})). Abbreviate
$z=\zeta_{\delta,E}\left(  N-m-1\right)  =\zeta_{\lambda,E}\left(  k\right)
=q^{\lambda_{k}}t^{N-m-k}$. If $N-m-1\leq i<N$ then $\zeta_{\beta^{\left(
i+1\right)  },E}\left(  i+1\right)  =z,\zeta_{\beta^{\left(  i+1\right)  }%
,E}\left(  i\right)  =t^{i+1-N}$. Set $p_{i}\left(  x\right)  =M_{\beta
^{\left(  i\right)  },E}\left(  x;\theta\right)  $ for $N-m-1\leq i\leq N$,
then%
\begin{align*}
p_{i}\left(  x\right)   &  =\left(  \boldsymbol{T}_{i-1}+\frac{1-t}%
{1-zt^{N-i-1}}\right)  p_{i+1}\left(  x\right) \\
&  =\left(  b\left(  x;i\right)  +\frac{1-t}{1-zt^{N-i-1}}\right)
p_{i+1}\left(  x\right)  +\left(  T_{i}-b\left(  x;i\right)  \right)
p_{i+1}\left(  xs_{i}\right)  .
\end{align*}
To start set $i=N-m-1$ and $x=x^{\left(  0\right)  }$ (thus $b\left(
x^{\left(  0\right)  };i\right)  =t$)%
\begin{equation}
p_{N-m-1}\left(  x^{\left(  0\right)  }\right)  =\frac{1-zt^{m+1}}{1-zt^{m}%
}p_{N-m}\left(  x^{\left(  0\right)  }\right)  +\left(  T_{N-m-1}-t\right)
p_{N-m}\left(  x^{\left(  0\right)  }s_{N-m-1}\right)  . \label{start00}%
\end{equation}
Two series of points are used in the calculation: Define $y_{N-m-1}=x^{\left(
0\right)  },y_{N-m}=x^{\left(  0\right)  }s_{N-m},y_{i}=y_{i-1}s_{i}$ for
$N-m+1\leq i\leq N-1$; define $v_{N-m-2}=x^{\left(  0\right)  },v_{i}%
=v_{i-1}s_{i}$ for $N-m-1\leq i\leq N-1$. Thus%
\begin{align*}
y_{i-1}  &  =\left(  \ldots,\overset{N-m-1}{t^{N-m-2}},t^{N-m},\ldots
,\overset{i}{t^{N-m-1}},t^{i},\ldots,t^{N-1}\right)  ,b\left(  y_{i-1}%
;i\right)  =\kappa_{i+m-N+1},\\
v_{i-1}  &  =\left(  \ldots,\overset{N-m-1}{t^{N-m-1}},t^{N-m},\ldots
,\overset{i}{t^{N-m-2}},t^{i},\ldots,t^{N-1}\right)  ,b\left(  v_{i-1}%
;i\right)  =\kappa_{i+m-N+2}.
\end{align*}

\begin{lemma}
\label{Tkpy0}Suppose $N-m\leq j<N$ and $\boldsymbol{T}_{j}p=-p$ then $\left(
T_{j}-\kappa_{j-N+m+1}\right)  p\left(  y_{j}\right)  =-\dfrac{\left[
j-N+m+2\right]  _{t}}{\left[  j-N+m+1\right]  _{t}}p\left(  y_{j-1}\right)  $
and $\left(  T_{j}-\kappa_{j-N+m+2}\right)  p\left(  v_{j}\right)
=-\dfrac{\left[  j-N+m+3\right]  _{t}}{\left[  j-N+m+2\right]  _{t}}p\left(
v_{j-1}\right)  $ for $1\leq j\leq i$.
\end{lemma}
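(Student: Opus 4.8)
The plan is to read off both identities directly from the evaluation formula (\ref{Tbp}) for $\boldsymbol{T}_j$, using only the hypothesis $\boldsymbol{T}_jp=-p$ together with the values of $b$ already recorded for the points $y_{j-1}$ and $v_{j-1}$. The two assertions are completely parallel, differing only in which family of points and which $\kappa$-index occurs, so I would carry out the $y$-statement in full and then obtain the $v$-statement by the identical argument with the index shifted by one.

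First I would specialize (\ref{Tbp}) at the point $x=y_{j-1}$ with index $j$. Since $y_j=y_{j-1}s_j$ by definition and $b(y_{j-1};j)=\kappa_{j-N+m+1}$ from the display preceding the lemma, the formula reads $(\boldsymbol{T}_jp)(y_{j-1})=\kappa_{j-N+m+1}\,p(y_{j-1})+(T_j-\kappa_{j-N+m+1})p(y_j)$, where $T_j$ on the right now acts only on the $\theta$ variables. The hypothesis $\boldsymbol{T}_jp=-p$ lets me replace the left-hand side by $-p(y_{j-1})$, and solving for the remaining term gives $(T_j-\kappa_{j-N+m+1})p(y_j)=-(1+\kappa_{j-N+m+1})p(y_{j-1})$.

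It then remains to simplify the scalar $1+\kappa_n$ with $n=j-N+m+1$. Because $N-m\leq j<N$ forces $n\geq 1$, I may use $\kappa_n=t^n/[n]_t$; a one-line computation with $[n]_t=(1-t^n)/(1-t)$ gives $1+\kappa_n=([n]_t+t^n)/[n]_t=(1-t^{n+1})/((1-t)[n]_t)=[n+1]_t/[n]_t$, and substituting $n+1=j-N+m+2$ produces the first claimed identity. For the second claim I would instead set $x=v_{j-1}$, use $v_j=v_{j-1}s_j$ and $b(v_{j-1};j)=\kappa_{j-N+m+2}$, and apply the same $1+\kappa_n$ simplification with $n=j-N+m+2$, which is again a positive index.

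There is no serious obstacle: the whole lemma is an evaluation of the single relation (\ref{Tbp}) at the prescribed points, followed by one elementary $[\,\cdot\,]_t$ identity. The only items requiring care are bookkeeping, namely confirming that $y_j$ and $v_j$ are exactly the $s_j$-images of $y_{j-1}$ and $v_{j-1}$ (so that the term $p(xs_j)$ in (\ref{Tbp}) yields the desired argument) and checking the stated values of $b$ at those points; both facts are already supplied in the display immediately preceding the statement.
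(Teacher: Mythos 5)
Your proposal is correct and follows essentially the same route as the paper's own proof: both evaluate the relation (\ref{Tbp}) at $x=y_{j-1}$ (resp. $x=v_{j-1}$), use the hypothesis $\boldsymbol{T}_jp=-p$ to get $\left(T_j-b(x;j)\right)p(xs_j)=-\left(1+b(x;j)\right)p(x)$, and finish with the identity $1+\kappa_n=\left[n+1\right]_t/\left[n\right]_t$ for $n\geq1$. Your bookkeeping (that $y_j=y_{j-1}s_j$, $v_j=v_{j-1}s_j$, and the stated values of $b$) matches the display preceding the lemma, so nothing is missing.
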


\begin{proof}
This follows from
\[
\left(  T_{j}-b\left(  x;j\right)  \right)  p\left(  xs_{j}\right)  =-\left(
1+b\left(  x;j\right)  \right)  p\left(  x\right)
\]
with $x=y_{j-1}$ so that $xs_{j}=y_{j}=y_{j-1}$, and with $x=v_{j-1}$. If
$n=1,2,\ldots$ then $1+\kappa_{n}=\dfrac{1+t+\cdots+t^{n-1}+t^{n}}%
{1+t+\cdots+t^{n-1}}\allowbreak=\dfrac{\left[  n+1\right]  _{t}}{\left[
n\right]  _{t}}$.
\end{proof}

\begin{proposition}
\label{P0pTp}For $N-m\leq i\leq N$%
\begin{align}
p_{N-m-1}\left(  x^{0}\right)   &  =\frac{1-zt^{m+1}}{1-zt^{m}}\left(
T_{N-m}-t\right)  \cdots\left(  T_{i-1}-\kappa_{i-N+m}\right)  p_{i}\left(
y_{i-1}\right) \label{px0pi}\\
&  +\left(  T_{N-m-1}-t\right)  \left(  T_{N-m}-\kappa_{2}\right)
\cdots\left(  T_{i-1}-\kappa_{i-N+m+1}\right)  p_{i}\left(  v_{i-1}\right)
.\nonumber
\end{align}

\end{proposition}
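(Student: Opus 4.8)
The plan is to prove the identity by induction on $i$, from $i=N-m$ up to $i=N$; write $A_{i}$ and $B_{i}$ for the two operator products $\left(T_{N-m}-t\right)\cdots\left(T_{i-1}-\kappa_{i-N+m}\right)$ and $\left(T_{N-m-1}-t\right)\cdots\left(T_{i-1}-\kappa_{i-N+m+1}\right)$ appearing in the statement, abbreviate $c_{i}:=\dfrac{1-t}{1-zt^{N-i-1}}$ and $a:=i-N+m$. The base case $i=N-m$ is exactly (\ref{start00}): there $A_{N-m}$ is the empty product, $B_{N-m}$ is the single factor $T_{N-m-1}-t$, and $y_{N-m-1}=x^{\left(0\right)}$, $v_{N-m-1}=x^{\left(0\right)}s_{N-m-1}$, so the claimed right-hand side is precisely that of (\ref{start00}).

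For the inductive step I assume the level-$i$ identity and insert the evaluation form of the step recurrence, $p_{i}\left(x\right)=\left(b\left(x;i\right)+c_{i}\right)p_{i+1}\left(x\right)+\left(T_{i}-b\left(x;i\right)\right)p_{i+1}\left(xs_{i}\right)$, at the two points $x=y_{i-1}$ and $x=v_{i-1}$. With $b\left(y_{i-1};i\right)=\kappa_{a+1}$, $b\left(v_{i-1};i\right)=\kappa_{a+2}$, $y_{i-1}s_{i}=y_{i}$ and $v_{i-1}s_{i}=v_{i}$, each term of the level-$i$ formula breaks into an off-diagonal piece carrying a new factor $T_{i}-\kappa_{a+1}$ (resp. $T_{i}-\kappa_{a+2}$) and evaluated at $y_{i}$ (resp. $v_{i}$), plus a diagonal piece still evaluated at $y_{i-1}$ (resp. $v_{i-1}$). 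Since the factors of $A_{i},B_{i}$ act only on the $\theta$ variables, prepending them to the off-diagonal pieces just appends the new factor on the right of each product, turning $A_{i}$ into $A_{i+1}$ and $B_{i}$ into $B_{i+1}$; these two pieces are exactly the two terms of the level-$(i+1)$ identity, with the correct $\kappa$-indices and the correct evaluation points $y_{i},v_{i}$.

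What remains is to show that the two diagonal leftover pieces cancel, and this is where I expect the real work to lie. The key input is that $\left(\boldsymbol{T}_{j}+1\right)p_{i+1}=0$ for every $j$ with $N-m-1\leq j\leq i-1$: the entries of $\beta^{\left(i+1\right)}$ in positions $N-m-1,\ldots,i$ are all zero, so their contents decrease by one per step and $\zeta_{\beta^{\left(i+1\right)},E}\left(j+1\right)=t\,\zeta_{\beta^{\left(i+1\right)},E}\left(j\right)$. This is the hypothesis of Lemma \ref{Tkpy0}, which I apply repeatedly. Peeling the factors off $A_{i}\,p_{i+1}\left(y_{i-1}\right)$ from $j=i-1$ down to $j=N-m$, the $y$-identity telescopes the ratios $\left[j-N+m+2\right]_{t}/\left[j-N+m+1\right]_{t}$ to give $A_{i}\,p_{i+1}\left(y_{i-1}\right)=\left(-1\right)^{a}\left[a+1\right]_{t}\,p_{i+1}\left(y_{N-m-1}\right)$; the $v$-identity likewise gives $B_{i}\,p_{i+1}\left(v_{i-1}\right)=\left(-1\right)^{a+1}\left[a+2\right]_{t}\,p_{i+1}\left(v_{N-m-2}\right)$ (the final factor at $j=N-m-1$ is just outside the stated range of Lemma \ref{Tkpy0}, but its proof goes through verbatim because $b\left(v_{N-m-2};N-m-1\right)=\kappa_{1}=t$). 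Since $y_{N-m-1}=v_{N-m-2}=x^{\left(0\right)}$, both leftovers are scalar multiples of $p_{i+1}\left(x^{\left(0\right)}\right)$.

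Extracting that common vector, the cancellation reduces to the scalar identity
\[
\frac{1-zt^{m+1}}{1-zt^{m}}\left(\kappa_{a+1}+c_{i}\right)\left[a+1\right]_{t}=\left(\kappa_{a+2}+c_{i}\right)\left[a+2\right]_{t},
\]
which, using $\kappa_{n}\left[n\right]_{t}=t^{n}$ and $c_{i}\left[n\right]_{t}=\dfrac{1-t^{n}}{1-zt^{N-i-1}}$, I would verify by checking that both sides collapse to $\dfrac{1-zt^{m+1}}{1-zt^{N-i-1}}$; this closes the induction. The main obstacle is thus the bookkeeping of this cancellation: recognizing that the entire zero-block of $\beta^{\left(i+1\right)}$ is annihilated in the $\boldsymbol{T}_{j}+1$ sense so that Lemma \ref{Tkpy0} telescopes cleanly, and then tracking the signs, the $\kappa$-indices, and the powers of $t$ precisely enough that the telescoped scalars combine with the $b$-coefficients to vanish.
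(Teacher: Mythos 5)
Your proof is correct and follows essentially the same route as the paper's: induction on $i$ from the base case (\ref{start00}), using the evaluation form of the step (\ref{step}) at the two points $y_{i-1},v_{i-1}$, with the off-diagonal pieces assembling the level-$(i+1)$ products and the diagonal pieces telescoping via Lemma \ref{Tkpy0} to multiples of $p_{i+1}\left(x^{\left(0\right)}\right)$ that cancel. Your scalar cancellation identity (both sides collapsing to $\frac{1-zt^{m+1}}{1-zt^{N-i-1}}$) is exactly the paper's coefficient computation in a slightly different packaging, and your explicit treatment of the $j=N-m-1$ edge case of Lemma \ref{Tkpy0} is a detail the paper passes over silently.
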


\begin{proof}
The formula is true for $i=N-m$ by (\ref{start00}). Assume it holds for some
$i$ then%
\begin{align}
p_{i}\left(  y_{i-1}\right)   &  =\left(  \frac{1-t}{1-zt^{N-i-1}}%
+\kappa_{i+m-N+1}\right)  p_{i+1}\left(  y_{i-1}\right)  +\left(  T_{i}%
-\kappa_{i+m-N+1}\right)  p_{i+1}\left(  y_{i}\right) \label{step001}\\
p_{i}\left(  v_{i-1}\right)   &  =\left(  \frac{1-t}{1-zt^{N-i-1}}%
+\kappa_{i+m-N+2}\right)  p_{i+1}\left(  v_{i-1}\right)  +\left(  T_{i}%
-\kappa_{i+m-N+2}\right)  p_{i+1}\left(  v_{i}\right)  , \label{step002}%
\end{align}
and
\begin{align*}
\frac{1-t}{1-zt^{N-i-1}}+\kappa_{i+m-N+1}  &  =\frac{1-zt^{m}}{\left(
1-zt^{N-i-1}\right)  \left[  i+m-N+1\right]  _{t}},\\
\frac{1-t}{1-zt^{N-i-1}}+\kappa_{i+m-N+2}  &  =\frac{1-zt^{m+1}}{\left(
1-zt^{N-i-1}\right)  \left[  i+m-N+2\right]  _{t}}.
\end{align*}
Then $p_{i+1}$ satisfies the hypotheses of Lemma \ref{Tkpy0} for $N-m\leq j<i$
and
\begin{align}
&  \left(  T_{N-m}-t\right)  \cdots\left(  T_{i-1}-\kappa_{i-N+m}\right)
p_{i+1}\left(  y_{i-1}\right) \label{repTp0a}\\
&  =\left(  -1\right)  ^{i-N+m}\frac{\left[  2\right]  _{t}}{\left[  1\right]
_{t}}\frac{\left[  3\right]  _{t}}{\left[  2\right]  _{t}}\cdots\dfrac{\left[
i-N+m+1\right]  _{t}}{\left[  i-N+m\right]  _{t}}p_{i+1}\left(  y_{N-m-1}%
\right) \\
&  =\left(  -1\right)  ^{i-N+m}\left[  i-N+m+1\right]  _{t}~p_{i+1}\left(
x^{\left(  0\right)  }\right) \nonumber
\end{align}%
\begin{align}
&  \left(  T_{N-m-1}-t\right)  \cdots\left(  T_{i-1}-\kappa_{i-N+m+1}\right)
p_{i}\left(  v_{i-1}\right) \label{repTp0b}\\
&  =\left(  -1\right)  ^{i-N+m+1}\frac{\left[  2\right]  _{t}}{\left[
1\right]  _{t}}\frac{\left[  3\right]  _{t}}{\left[  2\right]  _{t}}%
\cdots\dfrac{\left[  i-N+m+2\right]  _{t}}{\left[  i-N+m+1\right]  _{t}%
}p_{i+1}\left(  v_{N-m-2}\right) \\
&  =\left(  -1\right)  ^{i-N+m+1}\left[  i-N+m+2\right]  _{t}~p_{i+1}\left(
x^{\left(  0\right)  }\right)  .\nonumber
\end{align}
The first part of the right side of (\ref{step001}) combined with
\ref{repTp0a} gives%
\begin{align*}
&  \frac{1-zt^{m+1}}{1-zt^{m}}\frac{1-zt^{m}}{\left(  1-zt^{N-i-1}\right)
\left[  i+m-N+1\right]  _{t}}\left(  -1\right)  ^{i-N+m}\left[
i-N+m+1\right]  _{t}~p_{i+1}\left(  x^{\left(  0\right)  }\right) \\
&  =\left(  -1\right)  ^{i-N+m}\frac{1-zt^{m+1}}{1-zt^{N-i-1}}p_{i+1}\left(
x^{\left(  0\right)  }\right)
\end{align*}
which cancels out the first part of the right side of (\ref{step002}) combined
with (\ref{repTp0b}); note the factor $\left(  -1\right)  ^{i-N+m+1}$. The
terms that remain are exactly the claimed formula for $i+1$.
\end{proof}

\begin{proposition}
\label{MzM0}$M_{\delta,E}\left(  x^{\left(  0\right)  };\theta\right)
=\dfrac{1-zt^{m+1}}{1-zt^{m}}\left(  T_{N-m}-t\right)  \cdots\left(
T_{N-1}-\kappa_{m}\right)  M_{\beta,E}\left(  y_{N-1};\theta\right)  $.
\end{proposition}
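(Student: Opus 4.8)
The claim is the instance $i=N$ of Proposition \ref{P0pTp} with its second summand suppressed. Indeed $p_{N-m-1}=M_{\delta,E}$ and $p_{N}=M_{\beta,E}$, so specialising (\ref{px0pi}) to $i=N$ gives
\[
M_{\delta,E}\left(x^{\left(0\right)};\theta\right)=\frac{1-zt^{m+1}}{1-zt^{m}}\left(T_{N-m}-t\right)\cdots\left(T_{N-1}-\kappa_{m}\right)M_{\beta,E}\left(y_{N-1};\theta\right)+\mathrm{S},
\]
with $\mathrm{S}=\left(T_{N-m-1}-t\right)\left(T_{N-m}-\kappa_{2}\right)\cdots\left(T_{N-1}-\kappa_{m+1}\right)M_{\beta,E}\left(v_{N-1};\theta\right)$. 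Hence the proposition is equivalent to $\mathrm{S}=0$, and the whole task is to prove this vanishing.

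Write $u:=M_{\beta,E}\left(v_{N-1};\theta\right)$. The first step records that $T_{j}u=-u$ for $N-m-1\leq j\leq N-2$. For these indices the relevant entries of the spectral vector of $\beta$ are the consecutive powers $t^{-m},t^{1-m},\ldots,t^{0}$, so $\zeta_{\beta,E}\left(j+1\right)=t\,\zeta_{\beta,E}\left(j\right)$ and therefore $\left(\boldsymbol{T}_{j}+1\right)M_{\beta,E}=0$. At the same positions $v_{N-1}$ carries the consecutive powers $t^{N-m-1},\ldots,t^{N-1}$, so $\left(v_{N-1}\right)_{j+1}=t\left(v_{N-1}\right)_{j}$. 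Lemma \ref{Ti+1} then produces $\left(T_{j}+1\right)u=0$, i.e. $u$ is anti-symmetric under $T_{N-m-1},\ldots,T_{N-2}$.

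Since $\kappa_{1}=t$, factor $\mathrm{S}=\left(T_{N-m-1}-t\right)w$, where $w=\left(T_{N-m}-\kappa_{2}\right)\cdots\left(T_{N-1}-\kappa_{m+1}\right)u$. The plan is to show that $w$ lies in the $t$-eigenspace of $T_{N-m-1}$, whence $\left(T_{N-m-1}-t\right)w=0$. All vectors in sight belong to the $\mathcal{H}_{m+2}\left(t\right)$-submodule $W\subset\ker D\cap\mathcal{P}_{m}$ generated by $u$, where $\mathcal{H}_{m+2}\left(t\right)=\langle T_{N-m-1},\ldots,T_{N-1}\rangle$. The vector $u$ is the sign vector for the parabolic $\langle T_{N-m-1},\ldots,T_{N-2}\rangle$; moreover it cannot be annihilated by $T_{N-1}+1$ as well, for that would force anti-symmetry under all $m+1$ of $T_{N-m-1},\ldots,T_{N-1}$ and hence a column of length $m+2$, impossible in the hook $\left(N-m,1^{m}\right)$ whose first column has length $m+1$. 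Consequently $W$ is the irreducible hook $\left(2,1^{m}\right)$ of $\mathcal{H}_{m+2}\left(t\right)$ and not its sign character.

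In this representation the vanishing $\left(T_{N-m-1}-t\right)\cdots\left(T_{N-1}-\kappa_{m+1}\right)u=0$ is the technical heart, and it is the step I expect to be hardest. For $m=1$ it is a $2\times2$ computation in the standard representation of $\mathcal{H}_{3}\left(t\right)$: the braid relation forces the diagonal entry of $T_{N-1}$ on the $\left(-1\right)$-eigenvector $u$ of $T_{N-2}$ to equal $\kappa_{2}$, so $\left(T_{N-1}-\kappa_{2}\right)u$ has no $u$-component and lands in the $t$-eigenspace of $T_{N-2}$, giving $\left(T_{N-2}-t\right)\left(T_{N-1}-\kappa_{2}\right)u=0$. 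For general $m$ one peels the factors from the right, using the modified braid relation of Lemma \ref{Tbraids} to reorganise products of the shifted generators and the $\left[n\right]_{t}$-telescoping of Lemma \ref{Tkpy0}; the point is that, because only $m$ of the $m+1$ spectral ratios equal $t$, the final factor $\left(T_{N-1}-\kappa_{m+1}\right)$ matches the seminormal diagonal value and annihilates $u$ rather than telescoping it to a nonzero multiple of $u$, so that $w$ does lie in the $t$-eigenspace of $T_{N-m-1}$. Establishing this for all $m$, and checking that the sign character never intervenes along the way, completes the proof that $\mathrm{S}=0$.
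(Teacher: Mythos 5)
Your reduction is sound and matches the paper's own plan: setting $i=N$ in Proposition \ref{P0pTp}, the proposition is equivalent to the vanishing of the second summand $\mathrm{S}$, and your verification that $\left(T_{j}+1\right)u=0$ for $N-m-1\leq j\leq N-2$ (from the spectral vector of $\beta$ and Lemma \ref{Ti+1}) agrees with the paper, as does the identification of the cyclic module generated by $u$ as the hook $\left(2,1^{m}\right)$ of $\mathcal{H}_{m+2}\left(t\right)$. The gap is that the vanishing $\left(T_{N-m-1}-t\right)\left(T_{N-m}-\kappa_{2}\right)\cdots\left(T_{N-1}-\kappa_{m+1}\right)u=0$ --- which is the entire content of the paper's proof --- is never established. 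You prove it only for $m=1$, and your sketch for general $m$ rests on an assertion that is false as stated: the factor $\left(T_{N-1}-\kappa_{m+1}\right)$ does not annihilate $u$. Indeed, with the paper's notation $\pi_{i}=\theta_{N-m-1}\cdots\theta_{i-1}\theta_{i+1}\cdots\theta_{N}$ one has $u=c\,D\left(\pi_{N}\right)$ and, since $T_{N-1}\pi_{N}=\pi_{N-1}$ and $T_{N-1}$ commutes with $D$, $\left(T_{N-1}-\kappa_{m+1}\right)u=c\,D\left(\pi_{N-1}\right)-c\,\dfrac{t^{m+1}}{\left[m+1\right]_{t}}D\left(\pi_{N}\right)\neq0$; already for $m=1$ this is a nonzero multiple of the $t$-eigenvector of $T_{N-2}$. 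What is true is only that the $u$-component is removed; after the first factor the vector leaves the antisymmetric line, and each subsequent factor $\left(T_{N-j}-\kappa_{m+2-j}\right)$ mixes components, so a genuine induction tracking all coefficients through all $m+1$ factors is unavoidable. Invoking Lemma \ref{Tbraids} and Lemma \ref{Tkpy0} as tools to ``peel factors'' is not that induction; note in particular that Lemma \ref{Tkpy0} is a statement about evaluations at the shifted points $y_{j},v_{j}$, not about the algebraic action of the $T_{j}$ inside the module.

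For comparison, the paper closes exactly this gap by an explicit fermionic computation. It first identifies $p_{N}\left(v_{N-1}\right)=c\,\psi_{1}$ with $\psi_{1}=D\left(\theta_{N-m-1}\cdots\theta_{N-1}\right)$ (your antisymmetry step plus a uniqueness argument), then lifts the problem through $D$: setting $f_{N}=\pi_{N}$ and $f_{N-j}=\left(T_{N-j}-\kappa_{m+2-j}\right)f_{N-j+1}$, it proves by induction the closed formula
\begin{equation*}
f_{N-j}=\pi_{N-j}+\frac{1}{\left[m+2-j\right]_{t}}\sum_{i=0}^{j-1}\left(-1\right)^{i-j}t^{m+1-i}\pi_{N-i},
\end{equation*}
whence $f_{N-m-1}=t^{m-N+2}D\left(\pi_{N}\theta_{N}\right)$, a $D$-image. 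Since the $T_{i}$ commute with $D$ and $D^{2}=0$, applying $D$ yields $\mathrm{S}=c\,D\left(f_{N-m-1}\right)=0$. This induction (or an equivalent computation in your seminormal picture, which would be of comparable length) is precisely the missing content; your proposal stops where the real work begins.
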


\begin{proof}
Set $i=N$ in (\ref{px0pi}). To complete the proof we need to show%
\[
\left(  T_{N-m-1}-t\right)  \left(  T_{N-m}-\kappa_{2}\right)  \cdots\left(
T_{N-1}-\kappa_{m+1}\right)  p_{N}\left(  v_{N-1}\right)  =0.
\]
By construction $\left(  v_{N-1}\right)  _{i}=t^{i}$ and $r_{\beta}\left(
i\right)  =i+1,$ $\zeta_{\beta,E}\left(  i\right)  =t^{i+1-N}$ for $N-m-1\leq
i\leq N-1$ . This implies $\left(  \boldsymbol{T}_{i}+1\right)  p_{N}=0$ and
$\left(  T_{i}+1\right)  p_{N}\left(  v_{N-1}\right)  =0$ for $N-m-1\leq
i<N-1$. Let $\psi_{1}=D\left(  \theta_{N-m-1}\theta_{N-m}\cdots\theta
_{N-1}\right)  $ then $\left(  T_{i}+1\right)  \psi_{1}=0$ for $N-m-1\leq
i<N-1.$ This property defines $\psi_{1}$ up to a multiplicative constant, and
thus $p_{N}\left(  v_{N-1}\right)  =c\psi_{1}$ (because $\psi_{2}%
:=T_{N-m-1}T_{N-m}\cdots T_{N-1}\psi_{1}$ satisfies $\left(  T_{j}+1\right)
\psi_{2}=0$ for $N-m\leq j<N$ and thus $\psi_{2}=c^{\prime}\tau_{E}$). To set
up an inductive argument let $\pi_{i}:=\theta_{N-m-1}\cdots\theta_{i-1}%
\theta_{i+1}\cdots\theta_{N}$ and set $f_{N}:=\pi_{N-1},f_{N-j}:=\left(
T_{N-j}-\kappa_{m+2-j}\right)  f_{N-j+1}$ for $1\leq j\leq m+1$. Then
$T_{j}\pi_{i}=-\pi_{i}$ if $N-m-1\leq j\leq i-2$ or $i+1\leq j<N$, $T_{i-1}%
\pi_{i}=\pi_{i-1}$ and $T_{i}\pi_{i}=\left(  t-1\right)  \pi_{i}+t\pi_{i+1}$.
Claim that%
\[
f_{N-j}=\pi_{N-j}+\frac{1}{\left[  m+2-j\right]  _{t}}\sum_{i=0}^{j-1}\left(
-1\right)  ^{i-j}t^{m+1-i}\pi_{N-i}.
\]
The first step is $f_{N-1}=\left(  T_{N-1}-\kappa_{m+1}\right)  \pi_{N}%
=\pi_{N-1}-\dfrac{t^{m+1}}{\left[  m+1\right]  _{t}}\pi_{N}$. Note $\kappa
_{n}+1=\frac{\left[  n+1\right]  _{t}}{\left[  n\right]  _{t}}$ for $n\geq1$.
Suppose the formula is true for some $j<m+1$, then%
\begin{align*}
f_{N-j-1}  &  =\left(  T_{N-j-1}-\kappa_{m+1-j}\right)  f_{N-j}=\pi
_{N-j-1}-\frac{t^{m+1-j}}{\left[  m+1-j\right]  _{t}}\pi_{N-j}\\
&  +\frac{1}{\left[  m+2-j\right]  _{t}}\sum_{i}^{j-1}\left(  -1\right)
^{i-j}t^{m+1-i}\left(  -\kappa_{m+1-j}-1\right)  \pi_{N-i}\\
&  =\pi_{N-j-1}+\frac{1}{\left[  m+1-j\right]  _{t}}\sum_{i}^{j}\left(
-1\right)  ^{i-j-1}t^{m+1-i}\pi_{N-i}.
\end{align*}
This proves the formula. Set $j=m+1$ then $f_{N-m-1}=\sum_{i=0}^{m+1}\left(
-1\right)  ^{m+1-i}t^{m+1-i}\pi_{N-i}$. By definition%
\[
D\left(  \pi_{N}\theta_{N}\right)  =\sum_{j=N-m-1}^{N}\left(  -1\right)
^{j-N+m+1}t^{j-1}\pi_{j}=\sum_{i=0}^{m+1}\left(  -1\right)  ^{m+1-i}%
t^{N-1-i}\pi_{N-i}%
\]
and so $f_{N-m-1}=t^{m-N+2}D\left(  \pi_{N}\theta_{N}\right)  $. Now
$p_{N}\left(  v_{N-1}\right)  =c\psi_{1}=cD\left(  \pi_{N}\right)  $Thus
\begin{align*}
&  \left(  T_{N-m-1}-t\right)  \left(  T_{N-m}-\kappa_{2}\right)
\cdots\left(  T_{N-1}-\kappa_{m+1}\right)  D\left(  \pi_{N}\right) \\
&  =D\left\{  \left(  T_{N-m-1}-t\right)  \left(  T_{N-m}-\kappa_{2}\right)
\cdots\left(  T_{N-1}-\kappa_{m+1}\right)  \pi_{N}\right\}  =D\left(
f_{N-m-1}\right)  =0,
\end{align*}
because $D^{2}=0$.
\end{proof}

Next we consider the transition from $\alpha$ to $\beta$ (see \ref{lb2a2b})
with the affine step $M_{\beta,E}\left(  x;\theta\right)  =x_{N}%
\boldsymbol{w}M_{\alpha,E}\left(  x;\theta\right)  $ (recall $\boldsymbol{w}%
p\left(  x;\theta\right)  =T_{N-1}\cdots T_{1}p\left(  qx_{N},x_{1}%
,\ldots,x_{N-1};\theta\right)  $). To get around the problem of evaluation at
the $q$-shifted point we use $\xi_{1}=t^{1-N}\boldsymbol{T}_{1}\boldsymbol{T}%
_{2}\cdots\boldsymbol{T}_{N-1}w$ thus%
\[
M_{\beta,E}\left(  x;\theta\right)  =x_{N}t^{N-1}\left(  \boldsymbol{T}%
_{N-1}^{-1}\cdots\boldsymbol{T}_{2}^{-1}\boldsymbol{T}_{1}^{-1}\xi
_{1}M_{\alpha,E}\right)  \left(  x;\theta\right)
\]
where $\xi_{1}M_{\alpha,E}=\zeta_{\alpha,E}\left(  1\right)  M_{\alpha
,E}=q^{\lambda_{k}-1}t^{N-m-k}M_{\alpha,E}$. From the previous formula we see
that we need to evaluate the right hand side at $x=y_{N-1}$ and apply $\left(
T_{N-m}-t\right)  \cdots\left(  T_{N-1}-\kappa_{m}\right)  $. Since
$\zeta_{\alpha,E}\left(  i\right)  =t^{i-N}$ for $N-m\leq i\leq N$ it follows
that $\left(  \boldsymbol{T}_{i}+1\right)  M_{\alpha,E}=0$ for $N-m\leq i<N$.

\begin{definition}
Let $r_{0}=M_{\alpha,E}$ and $r_{i}=t\boldsymbol{T}_{i}^{-1}r_{i-1}$ for
$1\leq i<N$.
\end{definition}

The corresponding evaluation formula is%
\begin{equation}
r_{i}\left(  x\right)  =\left(  1-t+b\left(  x;i\right)  \right)
r_{i-1}\left(  x\right)  +\left(  T_{i}-b\left(  x;i\right)  \right)
r_{i-1}\left(  xs_{i}\right)  . \label{r2r}%
\end{equation}

\begin{proposition}
Suppose $1\leq i\leq N-m-2$ then $r_{i}\left(  x^{\left(  0\right)  }\right)
=r_{0}\left(  x^{\left(  0\right)  }\right)  =M_{\alpha,E}\left(  x^{\left(
0\right)  }\right)  .$
\end{proposition}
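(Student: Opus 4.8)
The plan is to run an induction on $i$ after simplifying the recursion (\ref{r2r}) at the special point. Since $x^{(0)}_{j}=t^{j-1}$ we have $x^{(0)}_i/x^{(0)}_{i+1}=t^{-1}$, hence $b(x^{(0)};i)=t$ and $1-t+b(x^{(0)};i)=1$. Thus (\ref{r2r}) collapses to $r_i(x^{(0)})=r_{i-1}(x^{(0)})+(T_i-t)\,r_{i-1}(x^{(0)}s_i)$, and the whole statement reduces to the single claim that $(T_i-t)\,r_{i-1}(x^{(0)}s_i)=0$ for $1\le i\le N-m-2$; chaining these equalities upward from $r_0$ then yields $r_i(x^{(0)})=r_0(x^{(0)})=M_{\alpha,E}(x^{(0)})$.

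To establish the claim I would first show that $r_{i-1}(x^{(0)}s_i)$ is a scalar multiple of $\tau_E$ and then invoke the row property of $\tau_E$. Recall $r_{i-1}=t^{i-1}\boldsymbol{T}_{i-1}^{-1}\cdots\boldsymbol{T}_1^{-1}M_{\alpha,E}$ and that $(\boldsymbol{T}_j+1)M_{\alpha,E}=0$ for $N-m\le j<N$. Because $i\le N-m-2$, every index $j$ with $N-m\le j<N$ satisfies $j\ge i+1$, so $\boldsymbol{T}_j$ commutes with each of $\boldsymbol{T}_1^{-1},\dots,\boldsymbol{T}_{i-1}^{-1}$ (their indices differ by at least $2$). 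Pushing $\boldsymbol{T}_j$ through the product gives $(\boldsymbol{T}_j+1)r_{i-1}=t^{i-1}\boldsymbol{T}_{i-1}^{-1}\cdots\boldsymbol{T}_1^{-1}(\boldsymbol{T}_j+1)M_{\alpha,E}=0$ for $N-m\le j<N$.

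Next I would evaluate at $y=x^{(0)}s_i$. Since the transposition $s_i$ touches only coordinates $i,i+1\le N-m-1$, the coordinates $y_{N-m},\dots,y_N$ are unchanged, so $y_{j+1}=ty_j$ for $N-m\le j<N$. Lemma \ref{Ti+1} then upgrades the operator identities to $(T_j+1)r_{i-1}(y)=0$ for $N-m\le j<N$. Arguing exactly as in the proof of Lemma \ref{M0ctauE}, these relations force $(\omega_j-t^{j-N})r_{i-1}(y)=0$ for $N-m\le j\le N$; since $r_{i-1}(y)\in\mathcal{P}_{m,0}$ and the contents $[c(j,E)]_{j=N-m}^{N}$ determine $E$ uniquely, we conclude $r_{i-1}(y)=c\,\tau_E$ for some scalar $c$. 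Finally, because $1\le i\le N-m-2$ the entries $i,i+1$ lie in the first row of $Y_E$, so $(T_i-t)\tau_E=0$ and therefore $(T_i-t)r_{i-1}(x^{(0)}s_i)=c\,(T_i-t)\tau_E=0$, as required.

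The main obstacle, and the place where the hypothesis $i\le N-m-2$ is genuinely used, is the commutation step: one must verify that the large-index operators $\boldsymbol{T}_j$ with $N-m\le j<N$ slide past all the building operators $\boldsymbol{T}_1^{-1},\dots,\boldsymbol{T}_{i-1}^{-1}$, which is exactly what breaks down once $i$ reaches $N-m-1$. Everything else is bookkeeping: confirming that $s_i$ leaves the tail of $x^{(0)}$ untouched so that Lemma \ref{Ti+1} applies, and reusing the $\omega_j$-eigenvalue rigidity already exploited in Lemma \ref{M0ctauE} to pin down $r_{i-1}(x^{(0)}s_i)$ as a multiple of $\tau_E$.
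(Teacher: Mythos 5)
Your proof is correct and follows essentially the same route as the paper's: the recursion (\ref{r2r}) collapses at $x^{(0)}$ because $b(x^{(0)};i)=t$, the operator identities $(\boldsymbol{T}_j+1)r_{i-1}=0$ for $N-m\le j<N$ pass to the evaluation at $x^{(0)}s_i$ via Lemma \ref{Ti+1}, forcing $r_{i-1}(x^{(0)}s_i)$ to be a multiple of $\tau_E$, which is annihilated by $T_i-t$. The only difference is that you spell out the commutation of the high-index $\boldsymbol{T}_j$ past $\boldsymbol{T}_1^{-1},\dots,\boldsymbol{T}_{i-1}^{-1}$ and the $\omega_j$-eigenvalue rigidity, steps the paper leaves implicit.
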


\begin{proof}
From $\left(  \boldsymbol{T}_{i}+1\right)  M_{\alpha,E}=0$ for $N-m\leq i<N$
it follows that $\left(  \boldsymbol{T}_{i}+1\right)  r_{j}=0$ if $j<N-m-1$.
By (\ref{r2r}) $r_{\ell}\left(  x^{\left(  0\right)  }\right)  =r_{\ell
-1}\left(  x^{\left(  0\right)  }\right)  +\left(  T_{\ell}-t\right)
r_{\ell-1}\left(  x^{\left(  0\right)  }s_{\ell}\right)  $ . Suppose
$\ell<N-m-1$ then $x^{\left(  0\right)  }s_{\ell}$ satisfies $\left(
x^{\left(  0\right)  }s_{\ell}\right)  _{i}=t^{i-1}$ for $i\geq N-m$ so that
$b\left(  x^{\left(  0\right)  }s_{\ell};i\right)  =t$ and $\left(
T_{i}+1\right)  r_{\ell-1}\left(  x^{\left(  0\right)  }s_{\ell}\right)  =0$,
$r_{\ell-1}\left(  x^{\left(  0\right)  }s_{\ell}\right)  $ is a multiple of
$\tau_{E}$ and $\left(  T_{\ell}-t\right)  r_{\ell-1}\left(  x^{\left(
0\right)  }s_{\ell}\right)  =0$. Thus $r_{\ell}\left(  x^{\left(  0\right)
}\right)  =r_{\ell-1}\left(  x^{\left(  0\right)  }\right)  $ and this holds
for $1\leq\ell\leq N-m-2$.
\end{proof}

Recall the points $y_{i}$ given by $y_{N-m-1}=x^{\left(  0\right)  }%
,y_{i}=y_{i-1}s_{i}$ for $N-m\leq i<N$. Define $\widetilde{y}_{i}=y_{i}%
s_{N-1}s_{N-2}\cdots s_{i+1}.$ for $N-m-1\leq i<N$. By the braid relations
\begin{align}
\widetilde{y}_{i+1}s_{i+1}s_{i+2}  &  =\left(  y_{i+1}s_{N-1}\cdots
s_{i+2}\right)  s_{i+1}s_{i+2}=y_{i+1}s_{N-1}\cdots s_{i+3}s_{i+1}%
s_{i+2}s_{i+1}\label{yi2y}\\
&  =y_{i+1}s_{i+1}s_{N-1}\cdots s_{i+2}s_{i+1}=y_{i}s_{N-1}\cdots
s_{i+1}=\widetilde{y}_{i}.\nonumber
\end{align}
These products are used in the proofs:%
\begin{align*}
P_{N-j}  &  =\left(  T_{N-m}-t\right)  \left(  T_{N-m+1}-\kappa_{2}\right)
\cdots\left(  T_{N-j}-\kappa_{m+1-j}\right) \\
\widetilde{P}_{N-j}  &  =\left(  T_{N-1}-t\right)  \left(  T_{N-2}-\kappa
_{2}\right)  \cdots\left(  T_{N-j}-\kappa_{j}\right)  .
\end{align*}
If $i+1<j$ then $P_{N-j}$ commutes with $\widetilde{P}_{N-i}$.

\begin{lemma}
\label{Pnjf}Suppose $\boldsymbol{T}f=-f$ for $N-j\leq i<N$ and $u_{i}=t^{i-1}$
for $N-j+1\leq i\leq N$ then%
\[
\widetilde{P}_{N-j}f\left(  us_{N-1}s_{N-2}\cdots s_{N-j}\right)  =\left(
-1\right)  ^{j}\left[  j+1\right]  _{t}f\left(  u\right)  .
\]

\end{lemma}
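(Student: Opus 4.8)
The plan is to argue by induction on $j$, stripping the rightmost factor of $\widetilde{P}_{N-j}$ and the rightmost transposition of the permutation simultaneously. Abbreviate $w_j:=s_{N-1}s_{N-2}\cdots s_{N-j}$, so that $w_j=w_{j-1}s_{N-j}$ and, by definition, $\widetilde{P}_{N-j}=\widetilde{P}_{N-j+1}\left(T_{N-j}-\kappa_j\right)$. The case $j=0$, in which $\widetilde{P}_N$ and $w_0$ are both empty and $[1]_t=1$, is trivially $f(u)=f(u)$. Before the inductive step I would record two elementary facts about $w_{j-1}$, proved by induction on the number of factors: it fixes every index $\le N-j$, and it sends $N-j+1\mapsto N$. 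Consequently the coordinates of $uw_{j-1}$ at positions $N-j$ and $N-j+1$ are $u_{N-j}=t^{N-j-1}$ and $u_N=t^{N-1}$ (here I use the hypothesis on $u$ at index $N-j$ as well as at $N$).

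The engine is the identity that comes from $\boldsymbol{T}_{N-j}f=-f$ and the evaluation formula (\ref{Tbp}): for every point $x$,
\[
\left(T_{N-j}-b\left(x;N-j\right)\right)f\left(xs_{N-j}\right)=-\left(1+b\left(x;N-j\right)\right)f\left(x\right).
\]
I would apply it with $x=uw_{j-1}$, so that $xs_{N-j}=uw_{j-1}s_{N-j}=uw_j$. The key computation is the value of $b$: from the two coordinates just determined, $(uw_{j-1})_{N-j}/(uw_{j-1})_{N-j+1}=t^{N-j-1}/t^{N-1}=t^{-j}$, whence $b\left(uw_{j-1};N-j\right)=\kappa_j$. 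The identity therefore collapses the rightmost factor of $\widetilde{P}_{N-j}$ to
\[
\left(T_{N-j}-\kappa_j\right)f\left(uw_j\right)=-\left(1+\kappa_j\right)f\left(uw_{j-1}\right)=-\frac{[j+1]_t}{[j]_t}f\left(uw_{j-1}\right),
\]
using $1+\kappa_j=[j+1]_t/[j]_t$.

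Finally I would apply $\widetilde{P}_{N-j+1}=(T_{N-1}-t)\cdots(T_{N-j+1}-\kappa_{j-1})$ to the vector on the right. The level-$(j-1)$ hypotheses ($\boldsymbol{T}_if=-f$ for $N-j+1\le i<N$ and $u_i=t^{i-1}$ for $N-j+1\le i\le N$) are implied by the level-$j$ hypotheses, so the induction hypothesis gives $\widetilde{P}_{N-j+1}f(uw_{j-1})=(-1)^{j-1}[j]_tf(u)$; multiplying by the scalar $-[j+1]_t/[j]_t$ from the previous step yields
\[
\widetilde{P}_{N-j}f\left(uw_j\right)=-\frac{[j+1]_t}{[j]_t}\,(-1)^{j-1}[j]_t\,f\left(u\right)=(-1)^j[j+1]_tf\left(u\right),
\]
as asserted. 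I expect the only real difficulty to be the middle step: one must follow the action of $w_{j-1}$ on coordinates carefully enough to certify that $b\left(uw_{j-1};N-j\right)$ is exactly $\kappa_j$ — this is precisely where the geometric form of $u$ down to index $N-j$ enters — and one must confirm that after removing the factor $\left(T_{N-j}-\kappa_j\right)$ the remaining evaluation is genuinely at $uw_{j-1}$ with the weaker hypotheses still valid.
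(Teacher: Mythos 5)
Your proof is correct and follows essentially the same route as the paper's: both peel the factors of $\widetilde{P}_{N-j}$ from the innermost outward, each time using the identity $\left(T_{i}-b\left(x;i\right)\right)f\left(xs_{i}\right)=-\left(1+b\left(x;i\right)\right)f\left(x\right)$ with $b=\kappa_{n}$ and telescoping the constants $-\left(1+\kappa_{n}\right)=-\left[n+1\right]_{t}/\left[n\right]_{t}$, so your induction on $j$ merely formalizes the paper's repeated application of this relation. One remark: as you correctly noticed, the argument requires $u_{N-j}=t^{N-j-1}$, so the lemma's hypothesis should read $N-j\leq i\leq N$ rather than the stated $N-j+1\leq i\leq N$ (an off-by-one slip in the paper, harmless because both places the lemma is invoked supply that extra coordinate); your version handles this point more carefully than the paper's own proof, which in addition contains inconsequential indexing typos such as $b\left(\widetilde{u}^{\left(n-1\right)};N-n\right)=\kappa_{n-1}$ where $\kappa_{n}$ is meant.
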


\begin{proof}
Let $\widetilde{u}^{\left(  n\right)  }=us_{N-1}\cdots s_{N-n}$ then $\left(
\widetilde{u}^{\left(  n-1\right)  }\right)  _{N-n+1}=t^{N-1},\left(
\widetilde{u}^{\left(  n-1\right)  }\right)  _{N-n}=t^{N-n}$ and $b\left(
\widetilde{u}^{\left(  n-1\right)  };N-n\right)  =\kappa_{n-1}$. Thus%
\begin{align*}
\left(  T_{N-n}-\kappa_{n-1}\right)  f\left(  \widetilde{u}^{\left(  n\right)
}\right)   &  =\left(  T_{N-n}-\kappa_{n-1}\right)  f\left(  \widetilde
{u}^{\left(  n-1\right)  }s_{N-n}\right) \\
&  =-\left(  1+\kappa_{n-1}\right)  f\left(  \widetilde{u}^{\left(
n-1\right)  }\right)  =-\dfrac{\left[  n\right]  _{t}}{\left[  n-1\right]
_{t}}f\left(  \widetilde{u}^{\left(  n-1\right)  }\right)  .
\end{align*}
Repeated application of this relation shows $\widetilde{P}_{N-j}f\left(
us_{N-1}s_{N-2}\cdots s_{N-j}\right)  =\left(  -1\right)  ^{j}\frac{1}{\left[
2\right]  _{t}}\frac{\left[  2\right]  _{t}}{\left[  3\right]  _{t}}%
\cdots\frac{\left[  j+1\right]  _{t}}{\left[  j\right]  _{t}}f\left(
u\right)  $.
\end{proof}

If $x_{i+1}=t^{j}x_{ii}$ then $1+b\left(  x;i\right)  =\dfrac{\left[
j+1\right]  _{t}}{\left[  j\right]  _{t}},1-t+b\left(  x;i\right)  =\dfrac
{1}{\left[  j\right]  _{t}}$ and $\left(  1+b\left(  x;i\right)  \right)
\left(  t-b\left(  x;i\right)  \right)  =\dfrac{t\left[  j+1\right]
_{t}\left[  j-1\right]  _{t}}{\left[  j\right]  _{t}^{2}}$.

\begin{proposition}
For $2\leq j\leq m+1$%
\begin{align}
P_{N-1}r_{N-1}\left(  y_{N-1}\right)   &  =t^{j-1}\frac{\left[  m+1\right]
_{t}\left[  m+1-j\right]  _{t}}{\left[  m\right]  _{t}\left[  m+2-j\right]
_{t}}P_{N-j}r_{N-j}\left(  y_{N-j}\right) \label{PrPr0a}\\
&  +\left(  -1\right)  ^{j-1}\frac{t^{m}}{\left[  m+2-j\right]  _{t}%
}\widetilde{P}_{N-j+2}P_{N-j}\left(  T_{N-j+1}-\kappa_{m}\right)
r_{N-j}\left(  \widetilde{y}_{N-j}\right)  . \label{PrPr0b}%
\end{align}

\end{proposition}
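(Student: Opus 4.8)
The plan is to prove the recursion (\ref{PrPr0a})--(\ref{PrPr0b}) by induction on $j$, using the basic two-term expansion (\ref{r2r}) for $r_{N-j}$ together with the modified braid relation of Lemma \ref{Tbraids} to push the newly-introduced factor past the standing product $P_{N-j}$. The left side $P_{N-1}r_{N-1}(y_{N-1})$ is fixed; the induction hypothesis expresses it in terms of data at stage $j$, and I would show that rewriting $r_{N-j}(y_{N-j})$ via one more step of (\ref{r2r}) reproduces exactly the same expression with $j$ replaced by $j+1$. The two points $y_{N-j}$ and $\widetilde{y}_{N-j}$ are designed precisely so that the $b$-values appearing are the $\kappa_n$'s matching the subscripts in $P_{N-j}$ and $\widetilde{P}_{N-j+2}$.

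First I would record the geometry of the points. By construction $y_{N-j}=y_{N-j-1}s_{N-j}$, so $(y_{N-j})_{N-j+1}=t^{\,N-j-1}$ while $(y_{N-j})_{N-j}=t^{\,N-j}$, giving $b(y_{N-j};N-j)=\kappa_{-1}=-1/[1]_t$ or the appropriate $\kappa_n$; and using (\ref{yi2y}), $\widetilde{y}_{N-j-1}=\widetilde{y}_{N-j}s_{N-j}s_{N-j+1}$ so that $\widetilde{P}$ advances by the two factors $(T_{N-j}-t)$ and a shifted neighbor. Applying (\ref{r2r}) to $r_{N-j}=t\boldsymbol{T}_{N-j}^{-1}r_{N-j-1}$ at $x=y_{N-j}$ splits it into a diagonal term $(1-t+b)r_{N-j-1}(y_{N-j})$ and an off-diagonal term $(T_{N-j}-b)r_{N-j-1}(y_{N-j}s_{N-j})=(T_{N-j}-b)r_{N-j-1}(y_{N-j-1})$. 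The scalar identities $1+b=[n+1]_t/[n]_t$, $1-t+b=1/[n]_t$, and $(1+b)(t-b)=t[n+1]_t[n-1]_t/[n]_t^2$ recorded just before the statement convert every $b$-value into the ratios of $t$-integers that assemble into the coefficient $t^{j-1}[m+1]_t[m+1-j]_t/([m]_t[m+2-j]_t)$.

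The main obstacle will be the bookkeeping on the off-diagonal term (\ref{PrPr0b}): after expanding $r_{N-j}$ I must commute the freshly created factor $(T_{N-j}-\kappa_{\ast})$ to the correct position so that the standing $\widetilde{P}_{N-j+2}$ grows into $\widetilde{P}_{N-j+1}$ and $P_{N-j}$ into $P_{N-j+1}$ while the term $(T_{N-j+1}-\kappa_m)$ migrates intact. This is exactly where Lemma \ref{Tbraids} enters: the triple $(T_i-\kappa_j)(T_{i+1}-\kappa_{j+\ell})(T_i-\kappa_\ell)=(T_{i+1}-\kappa_\ell)(T_i-\kappa_{j+\ell})(T_{i+1}-\kappa_j)$ is precisely the move that swaps an adjacent pair of $\kappa$-shifted generators, and the additive index relation $j+\ell$ on the $\kappa$'s is what makes the subscripts in $P$ and $\widetilde P$ line up. Because $P_{N-j}$ commutes with $\widetilde P_{N-i}$ whenever $i+1<j$ (noted after the definition of the products), most factors pass through freely, and only the boundary pair requires the braid.

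Finally I would verify the base case $j=2$ by direct computation from (\ref{start00}) and the definition of $P_{N-2},\widetilde{P}_{N}$, confirming both the explicit scalar $t[m+1]_t[m-1]_t/([m]_t[m]_t)$ and the sign $(-1)$ on the $\widetilde{P}$-term; then the inductive step above, driven by (\ref{r2r}) and Lemma \ref{Tbraids}, completes the argument. The only genuinely delicate points are the sign tracking $(-1)^{j-1}$, which comes from the accumulation of $-(1+\kappa)$ factors exactly as in Lemma \ref{Pnjf}, and ensuring the power of $t$ in (\ref{PrPr0b}) stabilizes at $t^m/[m+2-j]_t$ rather than drifting with $j$.
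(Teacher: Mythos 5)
Your outline shares the paper's skeleton (induction on $j$, base case $j=2$, expansion of $r_{N-j}$ via (\ref{r2r})/(\ref{TbfTb}), Lemma \ref{Tbraids} to rearrange the boundary triple), but the plan has a genuine gap exactly where the proof is hardest: the inductive step as you describe it does not close. Expanding the term $P_{N-j}r_{N-j}\left(  y_{N-j}\right)  $ produces not only the desired multiple of $P_{N-j-1}r_{N-j-1}\left(  y_{N-j-1}\right)  $ but also an unwanted term proportional to $P_{N-j}r_{N-j-1}\left(  y_{N-j}\right)  $ (indeed $1-t+\kappa_{j-m-1}=-\kappa_{m+1-j}\neq0$, so the diagonal piece of your own (\ref{r2r}) expansion survives); likewise expanding $r_{N-j}\left(  \widetilde{y}_{N-j}\right)  $ in (\ref{PrPr0b}) produces a diagonal piece proportional to $r_{N-j-1}\left(  \widetilde{y}_{N-j}\right)  $ in addition to the off-diagonal piece that your braid-relation bookkeeping turns into the new (\ref{PrPr0b}). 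Neither extra term appears in the stage-$(j+1)$ formula, so the proof must show they cancel each other, and this cancellation is not a commutation exercise: it requires (i) the eigenvalue relations $\left(  \boldsymbol{T}_{i}+1\right)  r_{N-j-1}=0$ for $i\geq N-j+1$ (inherited from $\left(  \boldsymbol{T}_{i}+1\right)  M_{\alpha,E}=0$), used to trade $\left(  T_{N-j+1}-\kappa_{m}\right)  r_{N-j-1}\left(  \widetilde{y}_{N-j}\right)  $ for $-\frac{\left[  m+1\right]  _{t}}{\left[  m\right]  _{t}}r_{N-j-1}\left(  \widetilde{y}_{N-j}s_{N-j+1}\right)  $, and (ii) Lemma \ref{Pnjf} applied to $\widetilde{P}_{N-j+2}$ at the point $\widetilde{y}_{N-j}s_{N-j+1}=y_{N-j}s_{N-1}\cdots s_{N-j+2}$, which yields the factor $\left(  -1\right)  ^{j}\left[  j-1\right]  _{t}$ making the two extra terms cancel exactly. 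Your proposal invokes Lemma \ref{Pnjf} only as an analogy for sign-tracking and never uses it (or the eigenvalue relations) inside the induction, so the unwanted terms are left unaccounted for; this is the missing idea, not a detail.

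Three further corrections. First, the same eigenvalue mechanism is needed once more after the braid step, to move the evaluation point from $\widetilde{y}_{N-j}s_{N-j}$ to $\widetilde{y}_{N-j-1}$ via $b\left(  \widetilde{y}_{N-j}s_{N-j};N-j+1\right)  =\kappa_{m-j+1}$; the resulting factor $-\left(  1+\kappa_{m-j+1}\right)  $ is what updates $t^{m}/\left[  m+2-j\right]  _{t}$ to $t^{m}/\left[  m+1-j\right]  _{t}$ and flips the sign to $\left(  -1\right)  ^{j}$ — so the denominator in (\ref{PrPr0b}) does change with $j$, contrary to your closing remark that it should not drift. Second, the geometry: $\left(  y_{N-j}\right)  _{N-j+1}=t^{N-m-1}$ (the displaced entry), not $t^{N-j-1}$, so $b\left(  y_{N-j};N-j\right)  =\kappa_{j-m-1}$, not $\kappa_{-1}$; and in passing from stage $j$ to $j+1$ the product $P_{N-j}$ loses its last factor to become $P_{N-j-1}$, not $P_{N-j+1}$. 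Third, the base case is obtained by applying (\ref{TbfTb}) to $P_{N-2}\left(  T_{N-1}-\kappa_{m}\right)  r_{N-1}\left(  y_{N-2}s_{N-1}\right)  $; formula (\ref{start00}) belongs to the computation of $p_{i}=M_{\beta^{\left(  i\right)  },E}$ in the preceding argument and is not relevant to the polynomials $r_{i}$ here, though the scalar and sign you state for $j=2$ are correct.
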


\begin{proof}
Proceed by induction. By (\ref{TbfTb})%
\begin{gather*}
P_{N-1}r_{N-1}\left(  y_{N-1}\right)  =P_{N-2}\left(  T_{N-1}-\kappa
_{m}\right)  r_{N-1}\left(  y_{N-2}s_{N-1}\right) \\
=\dfrac{t\left[  m+1\right]  _{t}\left[  m-1\right]  _{t}}{\left[  m\right]
_{t}^{2}}P_{N-2}r_{N-2}\left(  y_{N-2}\right)  -\frac{t^{m}}{\left[  m\right]
_{t}}P_{N-2}\left(  T_{N-1}-\kappa_{m}\right)  r_{N-2}\left(  y_{N-2}%
s_{N-1}\right)
\end{gather*}
and $y_{N-1}=y_{N-2}s_{N-1}=\widetilde{y}_{N-1}$. Thus the formula is valid
for $j=2$ (with $\widetilde{P}_{N}=1$). Suppose it holds for some $j\leq m$,
then $b\left(  y_{N-1-j};N-j\right)  =\kappa_{m+1-j}$ and%
\begin{gather*}
P_{N-j}r_{N-j}\left(  y_{N-j}\right)  =P_{N-j-1}\left(  T_{N-j}-\kappa
_{m+1-j}\right)  r_{N-j}\left(  y_{N-1-j}s_{N-j}\right) \\
=\dfrac{t\left[  m+2-j\right]  _{t}\left[  m-j\right]  _{t}}{\left[
m+1-j\right]  _{t}^{2}}P_{N-j-1}r_{N-j-1}\left(  y_{N-1-j}\right) \\
-\frac{t^{m+1-j}}{\left[  m+1-j\right]  _{t}}P_{N-j-1}\left(  T_{N-j}%
-\kappa_{m+1-j}\right)  r_{N-j-1}\left(  y_{N-j}\right)
\end{gather*}
Combine with formula (\ref{PrPr0a}) to obtain%
\begin{equation}
t^{j}\frac{\left[  m+1\right]  _{t}\left[  m-j\right]  _{t}}{\left[  m\right]
_{t}\left[  m+1-j\right]  _{t}}P_{N-j-1}r_{N-j-1}\left(  y_{N-j-1}\right)
-\frac{t^{m}\left[  m+1\right]  _{t}}{\left[  m\right]  _{t}\left[
m+2-j\right]  _{t}}P_{N-j}r_{N-j-1}\left(  y_{N-j}\right)  . \label{AAA}%
\end{equation}
For the part in (\ref{PrPr0b}) $b\left(  \widetilde{y}_{N-j};N-j\right)
=\kappa_{j-1}$ thus%
\[
r_{N-j}\left(  \widetilde{y}_{N-j}\right)  =\frac{1}{\left[  j-1\right]  _{t}%
}r_{N-j-1}\left(  \widetilde{y}_{N-j}\right)  +\left(  T_{N-j}-\kappa
_{j-1}\right)  r_{N-j-1}\left(  \widetilde{y}_{N-j}s_{N-j}\right)  .
\]
The first part leads to%
\begin{align*}
&  \left(  -1\right)  ^{j-1}\frac{t^{m}}{\left[  m+2-j\right]  _{t}\left[
j-1\right]  _{t}}\widetilde{P}_{N-j+2}P_{N-j}\left(  T_{N-j+1}-\kappa
_{m}\right)  r_{N-j-1}\left(  \widetilde{y}_{N-j}\right) \\
&  =\left(  -1\right)  ^{j}\frac{\left[  m+1\right]  _{t}}{\left[  m\right]
_{t}}\frac{t^{m}}{\left[  m+2-j\right]  _{t}\left[  j-1\right]  _{t}%
}\widetilde{P}_{N-j+2}P_{N-j}r_{N-j-1}\left(  \widetilde{y}_{N-j}%
s_{N-j+1}\right)
\end{align*}
because $b\left(  \widetilde{y}_{N-j}s_{N-j+1};N-j+1\right)  =\kappa_{m}$ and
$\left(  \boldsymbol{T}_{N-j+1}+1\right)  r_{N-j-1}=0$. Then%
\begin{align*}
\widetilde{P}_{N-j+2}P_{N-j}r_{N-j-1}\left(  \widetilde{y}_{N-j}%
s_{N-j+1}\right)   &  =P_{N-j}\widetilde{P}_{N-j+2}r_{N-j-1}\left(
y_{N-j}s_{N-1}\cdots s_{N-j+2}\right) \\
&  =\left(  -1\right)  ^{j}\left[  j-1\right]  _{t}P_{N-j}r_{N-j-1}\left(
y_{N-j}\right)
\end{align*}
by Lemma \ref{Pnjf}, so combine to obtain $\dfrac{t^{m}\left[  m+1\right]
_{t}}{\left[  m+2-j\right]  _{t}\left[  m\right]  _{t}}P_{N-j}r_{N-j-1}\left(
y_{N-j}\right)  $ which cancels the second term in (\ref{AAA}). The second
part gives%
\begin{align*}
&  \left(  -1\right)  ^{j-1}\frac{t^{m}}{\left[  m+2-j\right]  _{t}}%
\widetilde{P}_{N-j+2}P_{N-j}\left(  T_{N-j+1}-\kappa_{m}\right)  \left(
T_{N-j}-\kappa_{j-1}\right)  r_{N-j-1}\left(  \widetilde{y}_{N-j}%
s_{N-j}\right) \\
&  =\frac{\left(  -1\right)  ^{j-1}t^{m}}{\left[  m+2-j\right]  _{t}%
}\widetilde{P}_{N-j+2}P_{N-j-1}\left(  T_{N-j}-\kappa_{m-j+1}\right) \\
&  \times\left(  T_{N-j+1}-\kappa_{m}\right)  \left(  T_{N-j}-\kappa
_{j-1}\right)  r_{N-j-1}\left(  \widetilde{y}_{N-j}s_{N-j}\right) \\
&  =\frac{\left(  -1\right)  ^{j-1}t^{m}}{\left[  m+2-j\right]  _{t}%
}\widetilde{P}_{N-j+2}P_{N-j-1}\left(  T_{N-j+1}-\kappa_{j-1}\right) \\
&  \times\left(  T_{N-j}-\kappa_{m}\right)  \left(  T_{N-j+1}-\kappa
_{m-j+1}\right)  r_{N-j-1}\left(  \widetilde{y}_{N-j}s_{N-j}\right) \\
&  =\frac{\left(  -1\right)  ^{j}t^{m}\left[  m+2-j\right]  _{t}}{\left[
m+1-j\right]  _{t}\left[  m+2-j\right]  _{t}}\widetilde{P}_{N-j+2}%
P_{N-j-1}\left(  T_{N-j+1}-\kappa_{j-1}\right) \\
&  \times\left(  T_{N-j}-\kappa_{m}\right)  r_{N-j-1}\left(  \widetilde
{y}_{N-j}s_{N-j}s_{-j+1}\right) \\
&  =\frac{\left(  -1\right)  ^{j}t^{m}}{\left[  m+1-j\right]  _{t}}%
\widetilde{P}_{N-j+1}P_{N-j-1}\left(  T_{N-j}-\kappa_{m}\right)
r_{N-j-1}\left(  \widetilde{y}_{N-1-j}\right)
\end{align*}
by Lemma \ref{Tbraids}, formula (\ref{yi2y}) and $b\left(  \widetilde{y}%
_{N-j}s_{N-j};N-j+1\right)  =\kappa_{m-j+1}$.
\end{proof}

\begin{proposition}
$P_{N-1}r_{N-1}\left(  y_{N-1}\right)  =t^{m}M_{\alpha,E}\left(  x^{\left(
0\right)  }\right)  $.
\end{proposition}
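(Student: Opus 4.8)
The plan is to push the recursion of the preceding proposition to its largest admissible value $j=m+1$ and then evaluate the single surviving term. At $j=m+1$ the coefficient of the first summand (\ref{PrPr0a}) carries the factor $\left[m+1-j\right]_{t}=\left[0\right]_{t}=0$, so that summand drops out. In the second summand (\ref{PrPr0b}) the block $P_{N-m-1}$ is an empty product (hence the identity), $\left[m+2-j\right]_{t}=\left[1\right]_{t}=1$, and the trailing factor $\left(T_{N-m}-\kappa_{m}\right)$ lengthens $\widetilde{P}_{N-m+1}$ to $\widetilde{P}_{N-m}$; together with $\widetilde{y}_{N-m-1}=x^{\left(0\right)}s_{N-1}\cdots s_{N-m}$ this reduces everything to
\[
P_{N-1}r_{N-1}\left(y_{N-1}\right)=\left(-1\right)^{m}t^{m}\,\widetilde{P}_{N-m}\,r_{N-m-1}\left(\widetilde{y}_{N-m-1}\right),
\]
so that it remains to prove $\left(-1\right)^{m}\widetilde{P}_{N-m}r_{N-m-1}\left(\widetilde{y}_{N-m-1}\right)=M_{\alpha,E}\left(x^{\left(0\right)}\right)$.

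I would evaluate $\widetilde{P}_{N-m}r_{N-m-1}\left(\widetilde{y}_{N-m-1}\right)$ by applying its factors from the right. Because $\left(\boldsymbol{T}_{i}+1\right)M_{\alpha,E}=0$ for $N-m\le i<N$ and $\boldsymbol{T}_{i}$ commutes through $\boldsymbol{T}_{N-m-1}^{-1}\cdots\boldsymbol{T}_{1}^{-1}$ once $i\ge N-m+1$, one has $\left(\boldsymbol{T}_{i}+1\right)r_{N-m-1}=0$ exactly for $N-m+1\le i\le N-1$. For these upper indices Lemma \ref{Pnjf} applies and unwinds the corresponding swaps, but at the bottom index $N-m$ the eigenrelation fails. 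There I would replace the naive step by the identity $\boldsymbol{T}_{N-m}r_{N-m-1}=r_{N-m}-\left(1-t\right)r_{N-m-1}$, insert it into the evaluation formula (\ref{Tbp}) at the point whose $\left(N-m\right)$-ratio equals $\kappa_{m}$, and use $1-t+\kappa_{m}=1/\left[m\right]_{t}$. This splits off a clean piece $-\frac{1}{\left[m\right]_{t}}r_{N-m-1}$, which Lemma \ref{Pnjf} finishes, and a carry piece $r_{N-m}$, which again fails the eigenrelation one index higher and must be split in the same way. Iterating $m$ times, with the factors $1/\left[m-\ell\right]_{t}$ cancelling the $\left[m-\ell\right]_{t}$ produced by Lemma \ref{Pnjf}, gives
\[
\widetilde{P}_{N-m}r_{N-m-1}\left(\widetilde{y}_{N-m-1}\right)=\sum_{\ell=0}^{m-1}\left(-1\right)^{m-\ell}r_{N-m-1+\ell}\left(x^{\left(0\right)}\right)+r_{N-1}\left(x^{\left(0\right)}\right).
\]

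It then remains to identify this with $\left(-1\right)^{m}M_{\alpha,E}\left(x^{\left(0\right)}\right)$. Since the earlier proposition gives $r_{i}\left(x^{\left(0\right)}\right)=M_{\alpha,E}\left(x^{\left(0\right)}\right)$ for $i\le N-m-2$, and since (\ref{r2r}) yields $r_{i}\left(x^{\left(0\right)}\right)-r_{i-1}\left(x^{\left(0\right)}\right)=\left(T_{i}-t\right)r_{i-1}\left(x^{\left(0\right)}s_{i}\right)$, the claim is equivalent to the vanishing of the alternating sum $\sum_{i=N-m-2}^{N-1}\left(-1\right)^{i}r_{i}\left(x^{\left(0\right)}\right)$.

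The heart of the argument — and the step I expect to resist a purely formal treatment — is this last reconciliation. The increments $\left(T_{i}-t\right)r_{i-1}\left(x^{\left(0\right)}s_{i}\right)$ are genuinely nonzero for $N-m-1\le i\le N-1$, so the separate values $r_{i}\left(x^{\left(0\right)}\right)$ do not individually equal $M_{\alpha,E}\left(x^{\left(0\right)}\right)$; only their signed combination collapses, through cancellation of all fermionic components other than $\tau_{E}$. Making this cancellation transparent rather than expanding the $\theta$-components by hand is the delicate point, and I would try to route it through the device used in Proposition \ref{MzM0}, expressing the boundary value as $D$ applied to a single fermionic monomial and exploiting $D^{2}=0$ to annihilate the extraneous terms while leaving the $\tau_{E}$-multiple.
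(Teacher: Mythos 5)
Your opening reduction is exactly the paper's: setting $j=m+1$ in (\ref{PrPr0a})--(\ref{PrPr0b}) kills the first summand and leaves $P_{N-1}r_{N-1}\left(y_{N-1}\right)=\left(-1\right)^{m}t^{m}\widetilde{P}_{N-m}r_{N-m-1}\left(\widetilde{y}_{N-m-1}\right)$. Your upward iteration is also sound as far as it goes: the operator identity $\boldsymbol{T}_{i}r_{i-1}=r_{i}-\left(1-t\right)r_{i-1}$, inserted into (\ref{Tbp}) at the points $x^{\left(0\right)}s_{N-1}\cdots s_{N-m+\ell+1}$ (whose relevant coordinate ratio is indeed $t^{m-\ell}$), together with Lemma \ref{Pnjf} applied to $r_{N-m-1+\ell}$ (legitimate, since $\left(\boldsymbol{T}_{i}+1\right)r_{N-m-1+\ell}=0$ holds for $i\geq N-m+\ell+1$), does yield your sum formula, and the bookkeeping showing the proposition is then equivalent to $\sum_{i=N-m-2}^{N-1}\left(-1\right)^{i}r_{i}\left(x^{\left(0\right)}\right)=0$ is arithmetically correct.

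The problem is that this alternating-sum identity is not a lemma en route to the proposition; it \emph{is} the proposition, and you do not prove it --- you state explicitly that you expect it to resist formal treatment and only gesture at the device of Proposition \ref{MzM0}. That device does not transfer to your configuration. In the paper's proof one step is taken \emph{downward}: $r_{N-m-1}\left(\widetilde{y}_{N-m-1}\right)$ is rewritten via (\ref{r2r}) in terms of $r_{N-m-2}$, and this index is chosen precisely because $r_{N-m-2}$ satisfies $\left(\boldsymbol{T}_{i}+1\right)r_{N-m-2}=0$ for \emph{all} $N-m\leq i<N$. Consequently the clean piece is finished by Lemma \ref{Pnjf} (producing $\left(-1\right)^{m}M_{\alpha,E}\left(x^{\left(0\right)}\right)$ after the $\left[m+1\right]_{t}$ cancellation), while the residual value $r_{N-m-2}\left(\widetilde{y}_{N-m-1}s_{N-m-1}\right)$ is forced, by Lemma \ref{Ti+1} and the uniqueness of contents, to be a scalar multiple of $\tau_{E}$; only then can the fermionic induction with $D^{2}=0$ be used, since it proves the statement that a fixed Hecke element annihilates the single vector $\tau_{E}$, namely $\widetilde{P}_{N-m-1}\tau_{E}=0$. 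Your upward route discards exactly this feature: the values $r_{i}\left(x^{\left(0\right)}\right)$ for $N-m-1\leq i\leq N-1$ satisfy only partial eigenrelations and are not $\tau_{E}$-multiples (as you note yourself), so the required cancellation is spread over several non-proportional terms whose increments $\left(T_{i}-t\right)r_{i-1}\left(x^{\left(0\right)}s_{i}\right)$ live at different permuted points; it cannot be cast as one operator killing one vector, which is the only thing the $D^{2}=0$ mechanism delivers. As it stands, the proposal is a correct reduction followed by an acknowledged missing step that carries the entire content of the proof, and the sketched repair would not close it without essentially redoing the paper's downward split.
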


\begin{proof}
Set $j=m+1$ in (\ref{PrPr0a}) thus%
\begin{gather*}
P_{N-1}r_{N-1}\left(  y_{N-1}\right)  =\left(  -1\right)  ^{m}t^{m}%
\widetilde{P}_{N-m+1}\left(  T_{N-m}-\kappa_{m}\right)  r_{N-m-1}\left(
\widetilde{y}_{N-m-1}\right)  \\
=\left(  -1\right)  ^{m}t^{m}\widetilde{P}_{N-m}\left\{
\begin{array}
[c]{c}%
\frac{1}{\left[  m+1\right]  _{t}}r_{N-m-2}\left(  \widetilde{y}%
_{N-m-1}\right)  \\
+\left(  T_{N-m-1}-\kappa_{m+1}\right)  r_{N-m-2}\left(  \widetilde{y}%
_{N-m-1}s_{N-m-1}\right)
\end{array}
\right\}
\end{gather*}
By Lemma \ref{Pnjf} $\widetilde{P}_{N-m}p_{r-m-2}\left(  y_{N-m-1}%
s_{N-1}\cdots s_{N-m}\right)  =\left(  -1\right)  ^{m}\left[  m+1\right]
_{t}r_{N-m-2}\left(  y_{N-m-1}\right)  $. Furthermore%
\[
\widetilde{y}_{N-m-1}s_{N-m+1}=\left(  \ldots,\overset{N-m-1}{t^{N-1}%
},\overset{N-m}{t^{N-m-2}},\overset{N-m+1}{t^{N-m-1}},\ldots,t^{N-2}\right)  ,
\]
and thus $\left(  \boldsymbol{T}_{i}+1\right)  r_{N-m-2}=0$ and by Lemma
\ref{Ti+1} $\left(  T_{i}+1\right)  r_{N-m-2}\left(  \widetilde{y}%
_{N-m-1}s_{N-m-1}\right)  =0$ for $N-m\leq i<N$. This implies $r_{N-m-2}%
\left(  \widetilde{y}_{N-m-1}s_{N-m-1}\right)  =c\tau_{E}$. But $\widetilde
{P}_{N-m-1}\tau_{E}=0$ and this is proved by an argument like the one used in
Proposition \ref{MzM0}. Let $\pi_{i}:=\theta_{N-m-1}\cdots\theta_{i-1}%
\theta_{i+1}\cdots\theta_{N}$ and set $f_{0}:=\pi_{N-m-1},f_{j}:=\left(
T_{N-m-2+j}-\kappa_{m+2-j}\right)  f_{j-1}$ for $1\leq j\leq m+1$. Claim%
\[
f_{j}=t^{j}\pi_{N-m-1+j}+\frac{1}{\left[  m+2-j\right]  _{t}}\sum_{i=0}%
^{j-1}\left(  -1\right)  ^{j-i}t^{i}\pi_{N-m-1+i}.
\]
The first step is%
\begin{align*}
f_{1} &  =\left(  T_{N-m-1}-\kappa_{m+1}\right)  \pi_{N-m-1}=t\pi
_{N-m}+\left(  t-1-\kappa_{m+1}\right)  \pi_{N-m--1}\\
&  =t\pi_{N-m}-\frac{1}{\left[  m+1\right]  _{t}}\pi_{N-m-1}.
\end{align*}
Suppose the formula is true for some $j<m+1$ then%
\begin{align*}
f_{j+1} &  =\left(  T_{N-m-1+j}-\kappa_{m+1-j}\right)  f_{j}=t^{j+1}%
\pi_{N-m+j}+t^{j}\left(  t-1-\kappa_{m+1-j}\right)  \pi_{N-m-1+j}\\
&  +\frac{1}{\left[  m+2-j\right]  _{t}}\left(  -1-\kappa_{m+1-j}\right)
\sum_{i=0}^{j-1}\left(  -1\right)  ^{j-i}t^{i}\pi_{N-m-1+i}\\
&  =t^{j+1}\pi_{N-m+j}-\frac{1}{\left[  m+1-j\right]  _{t}}t^{j}\pi
_{N-m-1+j}+\frac{1}{\left[  m+1-j\right]  _{t}}\sum_{i=0}^{j-1}\left(
-1\right)  ^{j+1-i}t^{i}\pi_{N-m-1+i},
\end{align*}
and this is the formula for $j+1$. Then $f_{m+1}=\sum\limits_{i=0}%
^{m+1}\left(  -1\right)  ^{m+1-i}t^{i}\pi_{N-m-1+i}$ and $D\left(  \pi
_{N}\theta_{N}\right)  =\sum\limits_{i=N-m-1}^{N}t^{i-1}\left(  -1\right)
^{i-N+m+1}\pi_{i}$, and thus $f_{m+1}=\left(  -1\right)  ^{m+1}t^{N-m-2}%
D\left(  \pi_{N}\theta_{N}\right)  =0$ and $D\left(  f_{m+1}\right)  =0$. As
in Proposition \ref{MzM0} this implies $\widetilde{P}_{N-m-1}r_{N-m-2}\left(
\widetilde{y}_{N-m-1}s_{N-m-1}\right)  =0$, and this completes the proof.
\end{proof}

\subsection{Evaluation formula for type (0)}

Recall the intermediate steps:%
\begin{align*}
V^{\left(  0\right)  }\left(  \alpha\right)   &  =V^{\left(  0\right)
}\left(  \lambda^{\prime}\right)  \prod\limits_{i=1}^{k-1}\frac{1-q^{\lambda
_{i}-\lambda_{k}+1}t^{k-i}}{1-q^{\lambda_{i}-\lambda_{k}+1}t^{k-i+1}}\\
P_{N-1}r_{N-1}\left(  y_{N-1}\right)   &  =t^{m}M_{\alpha,E}\left(  x^{\left(
0\right)  };\theta\right) \\
M_{\beta,E}\left(  y_{N-1};\theta\right)   &  =\zeta_{\alpha,E}\left(
1\right)  \left(  y_{N-1}\right)  _{N}~r_{N-1}\left(  y_{N-1}\right) \\
M_{\delta,E}\left(  x^{\left(  0\right)  };\theta\right)   &  =\dfrac
{1-q^{\lambda_{k}}t^{N-k+1}}{1-q^{\lambda_{k}}t^{N-k}}P_{N-1}M_{\beta
,E}\left(  y_{N-1};\theta\right) \\
V^{\left(  0\right)  }\left(  \lambda\right)   &  =\frac{1-q^{\lambda_{k}%
}t^{N-m-k}}{1-q^{\lambda_{k}}t}V^{\left(  0\right)  }\left(  \delta\right)  .
\end{align*}

\begin{proposition}
Suppose $\lambda\in\mathcal{N}_{0}^{+}$ satisfies $\lambda_{k}\geq1$ and
$\lambda_{i}=0$ for $i>k$ with $k<N-m$ then%
\begin{align*}
V^{\left(  0\right)  }\left(  \lambda\right)   &  =q^{\lambda_{k}%
-1}t^{2N-m-k-1}\frac{\left(  1-q^{\lambda_{k}}t^{N-m-k}\right)  \left(
1-q^{\lambda_{k}}t^{N-k+1}\right)  }{\left(  1-q^{\lambda_{k}}t\right)
\left(  1-q^{\lambda_{k}}t^{N-k}\right)  }\\
&  \times\prod\limits_{i=1}^{k-1}\frac{1-q^{\lambda_{i}-\lambda_{k}+1}t^{k-i}%
}{1-q^{\lambda_{i}-\lambda_{k}+1}t^{k-i+1}}V^{\left(  0\right)  }\left(
\lambda^{\prime}\right)  ,
\end{align*}
where $\lambda_{j}^{\prime}=\lambda_{j}$ for $j\neq k$ and $\lambda
_{k}^{\prime}=\lambda_{k}-1$.
\end{proposition}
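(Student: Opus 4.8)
The plan is to concatenate the five relations displayed in the list immediately preceding the statement; once those are in hand the argument is pure bookkeeping of scalar factors, with no new representation-theoretic input. I would work from the bottom of that list upward. Starting from $V^{(0)}(\lambda)=\frac{1-q^{\lambda_k}t^{N-m-k}}{1-q^{\lambda_k}t}V^{(0)}(\delta)$ and recalling that $M_{\delta,E}(x^{(0)};\theta)=V^{(0)}(\delta)\tau_E$, it suffices to evaluate $M_{\delta,E}(x^{(0)};\theta)$ and compare coefficients of $\tau_E$. Proposition \ref{MzM0} gives $M_{\delta,E}(x^{(0)};\theta)=\frac{1-q^{\lambda_k}t^{N-k+1}}{1-q^{\lambda_k}t^{N-k}}\,P_{N-1}M_{\beta,E}(y_{N-1};\theta)$, using $z=\zeta_{\delta,E}(N-m-1)=q^{\lambda_k}t^{N-m-k}$ so that $\frac{1-zt^{m+1}}{1-zt^m}=\frac{1-q^{\lambda_k}t^{N-k+1}}{1-q^{\lambda_k}t^{N-k}}$, where $P_{N-1}=(T_{N-m}-t)\cdots(T_{N-1}-\kappa_m)$.

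The one genuine computation is the scalar produced by the affine step $M_{\beta,E}(y_{N-1};\theta)=\zeta_{\alpha,E}(1)(y_{N-1})_N\,r_{N-1}(y_{N-1})$. Here $\zeta_{\alpha,E}(1)=q^{\lambda_k-1}t^{N-m-k}$ as recorded in the affine-step discussion, and the component $(y_{N-1})_N$ I would determine by tracing the construction $y_{N-m-1}=x^{(0)}$, $y_i=y_{i-1}s_i$ for $N-m\le i<N$: the value $t^{N-m-1}$ sitting in position $N-m$ of $x^{(0)}$ is carried one slot to the right by each of the transpositions $s_{N-m},s_{N-m+1},\ldots,s_{N-1}$ and hence lands in position $N$, giving $(y_{N-1})_N=t^{N-m-1}$. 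Since both of these are scalars they commute past $P_{N-1}$, so together with $P_{N-1}r_{N-1}(y_{N-1})=t^m M_{\alpha,E}(x^{(0)};\theta)$ I obtain $P_{N-1}M_{\beta,E}(y_{N-1};\theta)=q^{\lambda_k-1}t^{(N-m-k)+(N-m-1)+m}M_{\alpha,E}(x^{(0)};\theta)$, whose $t$-exponent collapses to $2N-m-k-1$.

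It then remains to insert the top relation $V^{(0)}(\alpha)=V^{(0)}(\lambda')\prod_{i=1}^{k-1}\frac{1-q^{\lambda_i-\lambda_k+1}t^{k-i}}{1-q^{\lambda_i-\lambda_k+1}t^{k-i+1}}$ governing the $\lambda'\to\alpha$ transition, and to read off coefficients of $\tau_E$ throughout via $M_{\alpha,E}(x^{(0)};\theta)=V^{(0)}(\alpha)\tau_E$. Multiplying the two hook-type factors $\frac{1-q^{\lambda_k}t^{N-m-k}}{1-q^{\lambda_k}t}$ and $\frac{1-q^{\lambda_k}t^{N-k+1}}{1-q^{\lambda_k}t^{N-k}}$, the monomial $q^{\lambda_k-1}t^{2N-m-k-1}$, and the remaining product over $i$ then reproduces the asserted formula. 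The only obstacle worth flagging is the arithmetic: correctly identifying $(y_{N-1})_N=t^{N-m-1}$, verifying $(N-m-k)+(N-m-1)+m=2N-m-k-1$, and keeping the two $q$-factorial factors arising from the $\alpha\to\delta$ and $\delta\to\lambda$ steps distinct from the $\prod_{i=1}^{k-1}$ factor; everything else is direct substitution.
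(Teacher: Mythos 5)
Your proposal is correct and is essentially the paper's own proof: the paper likewise chains the five displayed intermediate relations and reduces everything to the one-line computation of the leading factors $t^{m}\zeta_{\alpha,E}\left(1\right)\left(y_{N-1}\right)_{N}=t^{m}\cdot q^{\lambda_{k}-1}t^{N-m-k}\cdot t^{N-m-1}=q^{\lambda_{k}-1}t^{2N-m-k-1}$, exactly the scalar you obtain. Your identification $\left(y_{N-1}\right)_{N}=t^{N-m-1}$ and the exponent arithmetic both check out.
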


\begin{proof}
The leading factors are $t^{m}\zeta_{\alpha,E}\left(  1\right)  \left(
y_{N-1}\right)  _{N}=t^{m}q^{\lambda_{k}-1}t^{N-m-k}t^{N-m-1}$.
\end{proof}

\begin{corollary}
Suppose $\lambda$ is as in the Proposition and $\lambda^{\prime\prime}$
satisfies $\lambda_{j}^{\prime\prime}=\lambda_{j}$ for $j\neq k$ and
$\lambda_{k}^{\prime\prime}=0$ then%
\begin{align}
V^{\left(  0\right)  }\left(  \lambda\right)   &  =q^{\binom{\lambda_{k}}{2}%
}t^{\lambda_{k}\left(  2N-m-k-1\right)  }\frac{\left(  qt^{N-m-k};q\right)
_{\lambda_{k}}\left(  qt^{N-k+1};q\right)  _{\lambda_{k}}}{\left(
qt;q\right)  _{\lambda_{k}}\left(  qt^{N-k};q\right)  _{\lambda_{k}}%
}\label{lb2lbb}\\
&  \times\prod\limits_{i=1}^{k-1}\frac{\left(  qt^{k-i};q\right)
_{\lambda_{i}}\left(  qt^{k-i+1};q\right)  _{\lambda_{i}-\lambda_{k}}}{\left(
qt^{k-i};q\right)  _{\lambda_{i}-\lambda_{k}}\left(  qt^{k-i+1};q\right)
_{\lambda_{i}}}V^{\left(  0\right)  }\left(  \lambda^{\prime\prime}\right)
.\nonumber
\end{align}

\end{corollary}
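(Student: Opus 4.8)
The plan is to prove the identity by iterating the recursion of the preceding Proposition. Write $\lambda^{(\ell)}$ for the composition agreeing with $\lambda$ in every part except the $k$-th, whose value is $\ell$; thus $\lambda^{(\lambda_k)}=\lambda$ and $\lambda^{(0)}=\lambda''$. Because $\lambda$ is a partition with $\lambda_{k-1}\ge\lambda_k$, each $\lambda^{(\ell)}$ with $0\le\ell\le\lambda_k$ is again a partition lying in $\mathcal{N}_0^+$ (the parts past $k$ remain zero and $k<N-m$), so the hypotheses of the Proposition hold at every stage with $\lambda_k$ replaced by $\ell$. Applying the Proposition successively for $\ell=\lambda_k,\lambda_k-1,\dots,1$ expresses $V^{(0)}(\lambda)$ as a product of $\lambda_k$ scalar factors times $V^{(0)}(\lambda'')$, and the rest of the argument is the collection of these factors into the stated $q$-shifted factorials.

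For the monomial prefactors, the $q$-exponents contribute $\prod_{\ell=1}^{\lambda_k}q^{\ell-1}=q^{\binom{\lambda_k}{2}}$ and the $t$-exponents contribute $\prod_{\ell=1}^{\lambda_k}t^{2N-m-k-1}=t^{\lambda_k(2N-m-k-1)}$, matching the two leading monomials. The four distinguished binomials are assembled using $(a;q)_n=\prod_{j=0}^{n-1}(1-aq^j)$: after the reindexing $j=\ell-1$ one gets $\prod_{\ell=1}^{\lambda_k}(1-q^\ell t^{N-m-k})=(qt^{N-m-k};q)_{\lambda_k}$, and likewise the factors carrying $t^{N-k+1}$, $t$, and $t^{N-k}$ collect into $(qt^{N-k+1};q)_{\lambda_k}$, $(qt;q)_{\lambda_k}$, and $(qt^{N-k};q)_{\lambda_k}$. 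This reproduces the first line of (\ref{lb2lbb}).

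It remains to treat the double product $\prod_{\ell=1}^{\lambda_k}\prod_{i=1}^{k-1}\frac{1-q^{\lambda_i-\ell+1}t^{k-i}}{1-q^{\lambda_i-\ell+1}t^{k-i+1}}$. After exchanging the order of multiplication, fix $i$ and note that as $\ell$ runs from $1$ to $\lambda_k$ the exponent $\lambda_i-\ell+1$ runs through the consecutive integers $\lambda_i,\lambda_i-1,\dots,\lambda_i-\lambda_k+1$. Hence $\prod_{\ell=1}^{\lambda_k}(1-q^{\lambda_i-\ell+1}t^{k-i})=\prod_{p=\lambda_i-\lambda_k+1}^{\lambda_i}(1-q^{p}t^{k-i})$, which is exactly the quotient $(qt^{k-i};q)_{\lambda_i}/(qt^{k-i};q)_{\lambda_i-\lambda_k}$; the denominator binomials give $(qt^{k-i+1};q)_{\lambda_i}/(qt^{k-i+1};q)_{\lambda_i-\lambda_k}$ in the same way. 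Taking the ratio and multiplying over $i$ yields precisely the second line of (\ref{lb2lbb}).

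The computation is essentially bookkeeping; the one place demanding care is this last step, where one must correctly identify the shifted window of consecutive $q$-powers as a ratio of two $q$-factorials offset by $\lambda_k$ (this is where the combination $\lambda_i-\lambda_k$ enters). A clean alternative is a direct induction on $\lambda_k$, absorbing each new factor produced by the Proposition into the accumulated symbols via $(a;q)_{n}=(1-aq^{n-1})(a;q)_{n-1}$ and $(a;q)_{n-\lambda_k}(1-aq^{n-\lambda_k})=(a;q)_{n-\lambda_k+1}$; I expect this inductive version to be the least error-prone route.
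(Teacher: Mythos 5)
Your proposal is correct and follows exactly the paper's route: iterate the preceding Proposition from $\lambda_k$ down to $1$ and collect the resulting factors, the only nontrivial bookkeeping being the identity $\prod_{l=1}^{\lambda_k}\left(1-q^{\lambda_i-l+1}t^{n}\right)=\left(qt^{n};q\right)_{\lambda_i}/\left(qt^{n};q\right)_{\lambda_i-\lambda_k}$ with $n=k-i,\,k-i+1$, which is precisely the formula the paper cites as its entire proof. Your write-up simply makes explicit the iteration and telescoping that the paper leaves implicit.
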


\begin{proof}
This uses%
\begin{equation}
\prod\limits_{l=1}^{\lambda_{k}}\left(  1-q^{\lambda_{i}-l+1}t^{n}\right)
=\frac{\left(  qt^{n};q\right)  _{\lambda_{i}}}{\left(  qt^{n};q\right)
_{\lambda_{i}-\lambda_{k}}} \label{lbijk}%
\end{equation}
with $n=k-i,k-i+1$.
\end{proof}

This formula can now be multiplied out over $k$, starting with $\lambda
=\boldsymbol{0}$, where $M_{\boldsymbol{0},E}\left(  x;\theta\right)
=\tau_{E}\left(  \theta\right)  $.

\begin{theorem}
\label{V0thm}Suppose $\lambda\in\mathcal{N}_{0}^{+}$ then
\begin{equation}
V^{\left(  0\right)  }\left(  \lambda\right)  =q^{\beta\left(  \lambda\right)
}t^{e_{0}\left(  \lambda\right)  }\prod\limits_{k=1}^{N-m-1}\frac{\left(
qt^{N-k+1};q\right)  _{\lambda_{k}}}{\left(  qt^{N-k};q\right)  _{\lambda_{k}%
}}\prod\limits_{1\leq i<j<N-m}\frac{\left(  qt^{j-i+1};q\right)  _{\lambda
_{i}-\lambda_{j}}}{\left(  qt^{j-i};q\right)  _{\lambda_{i}-\lambda_{j}}}
\label{V0lambda}%
\end{equation}
where $\beta\left(  \lambda\right)  :=\sum_{i=1}^{N-m-1}\binom{\lambda_{i}}%
{2}$ and $e_{0}\left(  \lambda\right)  :=\sum_{i=1}^{N-m-1}\lambda_{i}\left(
2N-m-i-1\right)  $.
\end{theorem}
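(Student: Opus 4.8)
The plan is to prove (\ref{V0lambda}) by induction on the number of nonzero parts of $\lambda$, using the one-step recursion (\ref{lb2lbb}) as the engine. Fix $\lambda\in\mathcal{N}_{0}^{+}$ and let $k$ be the largest index with $\lambda_{k}\geq1$ (so $\lambda_{i}=0$ for $i>k$ and $k\leq N-m-1$). Let $\lambda''$ agree with $\lambda$ except $\lambda''_{k}=0$; then $\lambda''$ has strictly fewer nonzero parts, and the Corollary expresses $V^{(0)}(\lambda)$ as an explicit $(q,t)$-factor times $V^{(0)}(\lambda'')$. The base case $\lambda=\boldsymbol{0}$ is immediate: every shifted factorial $(\,\cdot\,;q)_{0}$ equals $1$, both products are empty, $\beta(\boldsymbol{0})=e_{0}(\boldsymbol{0})=0$, and $M_{\boldsymbol{0},E}=\tau_{E}$ gives $V^{(0)}(\boldsymbol{0})=1$.

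For the inductive step I would assume the closed form holds for $\lambda''$ and verify that the ratio of the proposed right-hand sides for $\lambda$ and $\lambda''$ reproduces exactly the four-factor coefficient in (\ref{lb2lbb}). First, the monomial prefactors: since $\lambda$ and $\lambda''$ differ only in coordinate $k$, one has $\beta(\lambda)-\beta(\lambda'')=\binom{\lambda_{k}}{2}$ and $e_{0}(\lambda)-e_{0}(\lambda'')=\lambda_{k}(2N-m-k-1)$, matching $q^{\binom{\lambda_{k}}{2}}t^{\lambda_{k}(2N-m-k-1)}$. Second, in the single product over the index $k'$ only the term $k'=k$ changes, from $1$ to $\frac{(qt^{N-k+1};q)_{\lambda_{k}}}{(qt^{N-k};q)_{\lambda_{k}}}$, producing one of the recursion factors.

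The double product splits into the pairs affected by zeroing $\lambda_{k}$, namely those with $j=k$ and those with $i=k$. For the pairs $(i,k)$ with $i<k$, the exponent $\lambda_{i}-\lambda_{j}$ changes from $\lambda_{i}$ to $\lambda_{i}-\lambda_{k}$, and collecting the resulting numerators and denominators reproduces $\prod_{i=1}^{k-1}\frac{(qt^{k-i};q)_{\lambda_{i}}(qt^{k-i+1};q)_{\lambda_{i}-\lambda_{k}}}{(qt^{k-i};q)_{\lambda_{i}-\lambda_{k}}(qt^{k-i+1};q)_{\lambda_{i}}}$, exactly the $\prod_{i=1}^{k-1}$ factor in (\ref{lb2lbb}). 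The remaining recursion factor $\frac{(qt^{N-m-k};q)_{\lambda_{k}}}{(qt;q)_{\lambda_{k}}}$ must come from the pairs $(k,j)$ with $k<j<N-m$; here $\lambda_{j}=0$, so each contributes $\frac{(qt^{j-k+1};q)_{\lambda_{k}}}{(qt^{j-k};q)_{\lambda_{k}}}$, and setting $l=j-k$ this is a telescoping product $\prod_{l=1}^{N-m-1-k}\frac{(qt^{l+1};q)_{\lambda_{k}}}{(qt^{l};q)_{\lambda_{k}}}=\frac{(qt^{N-m-k};q)_{\lambda_{k}}}{(qt;q)_{\lambda_{k}}}$.

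The main obstacle, and really the only nonroutine bookkeeping, is this last telescoping: one must recognize that the factors $(qt^{N-m-k};q)_{\lambda_{k}}$ and $(qt;q)_{\lambda_{k}}$ occurring in (\ref{lb2lbb}) but \emph{absent} from the target closed form are precisely the surviving ends of the collapsing $i=k$ pairs. Once the four pieces are matched against the four-factor recursion, multiplying (\ref{lb2lbb}) by the inductive hypothesis for $\lambda''$ yields the claimed formula for $\lambda$, completing the induction.
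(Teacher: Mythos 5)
Your proposal is correct and is essentially the paper's own argument: both proofs iterate the one-part recursion of the Corollary containing (\ref{lb2lbb}), peeling off the last nonzero part and starting from $V^{\left(  0\right)  }\left(  \boldsymbol{0}\right)  =1$, with the same telescoping of the $\left(  qt^{l};q\right)  _{\lambda_{k}}$ factors over the zero tail. The only difference is direction of bookkeeping — the paper multiplies the ratios $\rho_{k}$ forward and simplifies to derive the closed form, while you verify that the stated closed form reproduces the recursion coefficient — which is the same computation read in reverse.
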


\begin{proof}
For $1\leq k<N-m$ define $\lambda^{\left(  k\right)  }$ by $\lambda
_{i}^{\left(  k\right)  }=\lambda_{i}$ for $1\leq i\leq k$ and $\lambda
_{i}^{\left(  k\right)  }=0$ for $i>k$. Formula (\ref{lb2lbb}) gives the value
of $\rho_{k}:=V^{\left(  0\right)  }\left(  \lambda^{\left(  k\right)
}\right)  /V^{\left(  0\right)  }\left(  \lambda^{\left(  k-1\right)
}\right)  $. For fixed $i\leq k$ the products $\left(  \ast;q\right)
_{\lambda_{i}}$ contribute%
\begin{align*}
&  \frac{\left(  qt^{N-m-i};q\right)  _{\lambda_{i}}\left(  qt^{N-i+1}%
;q\right)  _{\lambda_{i}}}{\left(  qt;q\right)  _{\lambda_{i}}\left(
qt^{N-i};q\right)  _{\lambda_{i}}}\prod\limits_{j=i+1}^{k}\frac{\left(
qt^{j-i};q\right)  _{\lambda_{i}}}{\left(  qt^{j-i+1};q\right)  _{\lambda_{i}%
}}\\
&  =\frac{\left(  qt^{N-m-i};q\right)  _{\lambda_{i}}\left(  qt^{N-i+1}%
;q\right)  _{\lambda_{i}}}{\left(  qt;q\right)  _{\lambda_{i}}\left(
qt^{N-i};q\right)  _{\lambda_{i}}}\frac{\left(  qt;q\right)  _{\lambda_{i}}%
}{\left(  qt^{k-i+1};q\right)  _{\lambda_{i}}}=\frac{\left(  qt^{N-m-i}%
;q\right)  _{\lambda_{i}}\left(  qt^{N-i+1};q\right)  _{\lambda_{i}}}{\left(
qt^{k-i+1};q\right)  _{\lambda_{i}}\left(  qt^{N-i};q\right)  _{\lambda_{i}}}%
\end{align*}
to $\rho_{1}\rho_{2}\cdots\rho_{k}$ (the product telescopes). Each pair
$\left(  i,j\right)  $ with $1\leq i<j\leq k$ contributes $\dfrac{\left(
qt^{j-i+1};q\right)  _{\lambda_{i}-\lambda_{j}}}{\left(  qt^{j-i};q\right)
_{\lambda_{i}-\lambda_{j}}}$. If $\lambda_{k}=0$ then $\rho_{k}=1$ and thus
$k$ can be replaced by $N-m-1$ in the above formulas. The exponents on $q,t$
follow easily from $\rho_{k}$.
\end{proof}

\begin{remark}
Recall the leading term of $M_{\lambda,E}\left(  x;\theta\right)  $, namely
$q^{\beta\left(  \lambda\right)  }t^{e\left(  \lambda,E\right)  }x^{\lambda
}\tau_{E}\left(  \theta\right)  $, where $e\left(  \lambda,E\right)
=\sum_{i=1}^{N}\lambda_{i}\left(  N-i+c\left(  i,E\right)  \right)  $. By
using $c\left(  i,E\right)  =N-m-i$ for $1\leq i<N-m$ one finds that
$e_{0}\left(  \lambda\right)  =\sum_{i=1}^{N-m-1}\lambda_{i}\left(  N+c\left(
i,E\right)  -1\right)  $ so that $e_{0}\left(  \lambda\right)  -e\left(
\lambda,E\right)  =\sum_{i=1}^{N-m-1}\lambda_{i}\left(  i-1\right)  =n\left(
\lambda\right)  $ and $\left(  x^{\left(  0\right)  }\right)  ^{\lambda
}=t^{n\left(  \lambda\right)  }$.
\end{remark}

There is a generalized $\left(  q,t,\lambda\right)  $-Pochhammer symbol%
\[
\left(  a;q,t\right)  _{\lambda}:=\prod\limits_{i=1}^{N}\left(  at^{1-i}%
;q\right)  _{\lambda_{i}}%
\]
and the $k$-product in (\ref{V0lambda}) can be written as $\left(
qt^{N};q,t\right)  _{\lambda}/\left(  qt^{N-1};q,t\right)  _{\lambda}$. In a
later section we will use a hook product formulation which incorporates a
formula for $V^{\left(  0\right)  }\left(  \alpha\right)  $.

\subsection{From $\alpha$ to $\delta$ for type (1)}

To adapt the results for type (0) to type (1) it almost suffices to
interchange $m\leftrightarrow N-m-1$ and replace $t$ by $t^{-1}$. But there
are signs and powers of $t$ , and different formulas involving $\kappa_{-n}$
to worry about. The interchange occurs often enough to get a symbol:

\begin{definition}
\label{defcapxi}Suppose $h\left(  t,m\right)  $ is a function of $t,m$
(possibly also depending on $\lambda$ or $\alpha$) then set $\Xi h\left(
t,m\right)  =h\left(  t^{-1},N-m-1\right)  $
\end{definition}

We will reuse some notations involving $y_{i},p_{i},r_{i}$ and so forth, with
modified definitions (but conceptually the same). In this section we will
prove $M_{\delta,F}\left(  x^{\left(  1\right)  }\right)  =q^{\lambda_{k}%
-1}t^{N-m-k}\dfrac{1-q^{\lambda_{k}}t^{k-N-1}}{1-q^{\lambda_{k}}t^{k-N}%
}M_{\alpha,F}\left(  x^{\left(  1\right)  }\right)  $. Start with $\delta$
(where $\delta_{i}=\lambda_{i}$ for $1\leq i\leq k-1,\delta_{N-m-1}%
=\lambda_{k}$ and $\delta_{i}=0$ otherwise). Let $\beta^{\left(  m\right)
}=\delta$ and $\beta^{\left(  j\right)  }=s_{j-1}\beta^{\left(  j-1\right)  }$
for $m+1\leq j\leq N$ (so that $\beta^{\left(  N\right)  }=\beta$ in
(\ref{lb2a2b})). Abbreviate $z=\zeta_{\delta,F}\left(  m\right)
=\zeta_{\lambda,F}\left(  k\right)  =q^{\lambda_{k}}t^{k-1-m}$. If $m\leq i<N$
then $\zeta_{\beta^{\left(  i+1\right)  },F}\left(  i+1\right)  =z,\zeta
_{\beta^{\left(  i+1\right)  },F}\left(  i\right)  =t^{N-i-1}$. Set
$p_{i}\left(  x\right)  =M_{\beta^{\left(  i\right)  },F}\left(
x;\theta\right)  $ for $m\leq i\leq N$, then%
\begin{align}
p_{i}\left(  x\right)   &  =\left(  \boldsymbol{T}_{i-1}+\frac{1-t}%
{1-zt^{i+1-N}}\right)  p_{i+1}\left(  x\right) \label{pi2pi1(1)}\\
&  =\left(  b\left(  x;i\right)  +\frac{1-t}{1-zt^{i+1-N}}\right)
p_{i+1}\left(  x\right)  +\left(  T_{i}-b\left(  x;i\right)  \right)
p_{i+1}\left(  xs_{i}\right)  .\nonumber
\end{align}

These are analogs of the type (0) definitions, with $m+1\leq i\leq N-1$:%
\begin{align*}
y_{m}  &  =x^{\left(  1\right)  },y_{m+1}=x^{\left(  1\right)  }s_{m+1}%
,y_{i}=y_{i-1}s_{i},\\
v_{m}  &  =x^{\left(  1\right)  }s_{m},v_{i}=v_{i-1}s_{i},\\
P_{i}  &  :=\left(  T_{m+1}+1\right)  \left(  T_{m+2}-\kappa_{-2}\right)
\cdots\left(  T_{i}-\kappa_{m-i}\right)  .
\end{align*}
In more detail
\begin{align}
y_{i-1}  &  =\left(  \ldots,\overset{m}{t^{1-m}},t^{-m-1},\ldots,\overset
{i}{t^{-m}},t^{-i},\ldots,t^{1-N}\right)  ,b\left(  y_{i-1};i\right)
=\kappa_{m-i},\label{yivi1}\\
v_{i-1}  &  =\left(  \ldots,\overset{m}{t^{-m}},t^{-m-1},\ldots,\overset
{i}{t^{1-m}},t^{-i},\ldots,t^{1-N}\right)  ,b\left(  v_{i-1};i\right)
=\kappa_{m-i-1}..\nonumber
\end{align}

Recall $\zeta_{\delta^{\left(  i\right)  },F}\left(  i\right)  =z,\zeta
_{\delta^{\left(  i\right)  },F}\left(  i+1\right)  =t^{N-i-1}$ for $m\leq
i<N.($The proof of the following is mostly the same as that for Proposition
\ref{P0pTp} except for signs and powers of $t$.)

\begin{proposition}
For $m+1\leq i\leq N$%
\begin{align}
p_{m}\left(  x^{\left(  1\right)  }\right)   &  =-t\frac{1-zt^{m-N}%
}{1-zt^{m+1-N}}P_{i-1}p_{i}\left(  y_{i-1}\right) \label{pmx(zero)}\\
&  +\left(  T_{m}+1\right)  \left(  T_{m+1}-\kappa_{-2}\right)  \cdots\left(
T_{i-1}-\kappa_{m-i}\right)  p_{i}\left(  v_{i-1}\right)  .\nonumber
\end{align}

\end{proposition}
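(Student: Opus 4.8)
The plan is to prove the identity by induction on $i$, running from the base case $i=m+1$ up to $i=N$, closely paralleling the proof of Proposition \ref{P0pTp} but with the type-(0) value $b(x^{(0)};i)=t$ replaced by the negative-index values $b(y_{i-1};i)=\kappa_{m-i}$ and $b(v_{i-1};i)=\kappa_{m-i-1}$ recorded in (\ref{yivi1}), and with the attendant sign changes. For the base case I would set $i=m$ and $x=x^{(1)}$ in the recursion (\ref{pi2pi1(1)}); since $b(x^{(1)};m)=-1$, $y_m=x^{(1)}$ and $v_m=x^{(1)}s_m$, the diagonal coefficient collapses to
\[
-1+\frac{1-t}{1-zt^{m+1-N}}=-t\,\frac{1-zt^{m-N}}{1-zt^{m+1-N}},
\]
while $P_m$ and the second operator product reduce to $1$ and $(T_m+1)$ respectively, which is exactly the asserted formula at $i=m+1$.

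For the inductive step I would assume the formula at level $i$ and evaluate (\ref{pi2pi1(1)}) at the two points $y_{i-1}$ and $v_{i-1}$ by means of (\ref{Tbp}). Each of $p_i(y_{i-1})$ and $p_i(v_{i-1})$ then splits into a \emph{diagonal} term (a scalar multiple of $p_{i+1}$ at the same point) and an \emph{off-diagonal} term $(T_i-b)p_{i+1}$ evaluated at $y_i=y_{i-1}s_i$ or $v_i=v_{i-1}s_i$. Appending the prefixed operator products and using $P_{i-1}(T_i-\kappa_{m-i})=P_i$, together with the corresponding identity that $(T_m+1)\cdots(T_{i-1}-\kappa_{m-i})(T_i-\kappa_{m-i-1})$ equals the level-$(i+1)$ second product, the two off-diagonal terms are precisely the two terms of the target formula at level $i+1$. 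The whole content of the step is therefore to show that the two diagonal terms cancel.

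For that cancellation I would first record the type-(1) analog of Lemma \ref{Tkpy0}: if $\boldsymbol{T}_jp=tp$, then from $(T_j-b)p(xs_j)=(t-b)p(x)$ with $b=\kappa_{m-j}$ (resp.\ $\kappa_{m-j-1}$) and the identity $t-\kappa_{-n}=[n+1]_{t}/[n]_{t}$ one obtains $(T_j-\kappa_{m-j})p(y_j)=\frac{[j-m+1]_{t}}{[j-m]_{t}}p(y_{j-1})$ and $(T_j-\kappa_{m-j-1})p(v_j)=\frac{[j-m+2]_{t}}{[j-m+1]_{t}}p(v_{j-1})$, with \emph{positive} ratios (the sign flip relative to type (0) arising from $\boldsymbol{T}_jp=tp$ in place of $\boldsymbol{T}_jp=-p$). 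Iterating these telescopes $P_{i-1}$ applied to the $y$-diagonal term down to $[i-m]_{t}\,p_{i+1}(x^{(1)})$, and the second product applied to the $v$-diagonal term down to $[i-m+1]_{t}\,p_{i+1}(x^{(1)})$. Combining with the coefficient identities
\begin{align*}
\frac{1-t}{1-zt^{i+1-N}}+\kappa_{m-i}&=\frac{-t^{i-m}(1-zt^{m+1-N})}{(1-zt^{i+1-N})[i-m]_{t}},\\
\frac{1-t}{1-zt^{i+1-N}}+\kappa_{m-i-1}&=\frac{-t^{i-m+1}(1-zt^{m-N})}{(1-zt^{i+1-N})[i-m+1]_{t}}
\end{align*}
(the analogs of the displayed formulas following (\ref{step002})), the $y$-diagonal contributes $+\tfrac{t^{i-m+1}(1-zt^{m-N})}{1-zt^{i+1-N}}p_{i+1}(x^{(1)})$ after multiplication by the fixed prefactor $-t\frac{1-zt^{m-N}}{1-zt^{m+1-N}}$, while the $v$-diagonal (which carries no prefactor) contributes the negative of the same quantity, so the two cancel and the induction closes.

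I expect the main obstacle to be exactly this sign-and-power bookkeeping: unlike type (0), the negative-index $\kappa_{-n}=-1/[n]_{t}$ enters the coefficient simplifications, the telescoped $[\cdot]_{t}$-ratios are positive rather than alternating, and the fixed prefactor $-t\frac{1-zt^{m-N}}{1-zt^{m+1-N}}$ must be verified to propagate through the induction unchanged. Establishing the two coefficient identities above and confirming that the two $p_{i+1}(x^{(1)})$ contributions are exact negatives of one another is the delicate heart of the argument; everything else transfers from the proof of Proposition \ref{P0pTp} essentially verbatim.
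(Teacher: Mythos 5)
Your proposal is correct and follows essentially the same route as the paper's own proof: the same induction on $i$ with base case obtained by specializing the recursion at $i=m$, $x=x^{(1)}$ (where $b=-1$), the same splitting into diagonal and off-diagonal terms with the off-diagonal terms extending the two operator products, and the same cancellation of the diagonal terms via the telescoping identities $(T_j-\kappa_{m-j})p(y_j)=\frac{[j-m+1]_t}{[j-m]_t}p(y_{j-1})$, $(T_j-\kappa_{m-j-1})p(v_j)=\frac{[j-m+2]_t}{[j-m+1]_t}p(v_{j-1})$ and the two coefficient identities, which you state in exactly the form the paper uses. The sign-and-power bookkeeping you flag as the delicate point is indeed where the paper's computation lives, and your stated factors ($\pm t^{i-m+1}(1-zt^{m-N})/(1-zt^{i+1-N})$) agree with it.
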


\begin{proof}
The transformation from $p_{i+1}$ to $p_{i}$ is in (\ref{pi2pi1(1)}).
Specialize to $i=m$ and $x=x^{\left(  1\right)  }$ so that $b\left(
x;i\right)  =-1$, $x^{\left(  1\right)  }s_{m}=v_{m}$, and%
\[
p_{m}\left(  x^{\left(  1\right)  }\right)  =-t\frac{1-zt^{m-N}}{1-zt^{m+1-N}%
}p_{m+1}\left(  y_{m}\right)  +\left(  T_{m}+1\right)  p_{m+1}\left(
v_{mi}\right)  .
\]
The values from (\ref{yivi1}) are $b\left(  y_{i-1};i\right)  =\kappa
_{m-i},b\left(  v_{i-1};i\right)  =\kappa_{m-i-1}$, $\zeta_{\delta^{\left(
i\right)  },F}\left(  i\right)  =z$. Thus%
\begin{align*}
p_{i}\left(  y_{i-1}\right)   &  =\left(  \frac{1-t}{1-zt^{i+1-N}}%
+\kappa_{m-i}\right)  p_{i+1}\left(  y_{i-1}\right)  +\left(  T_{i}%
-\kappa_{m-i}\right)  p_{i+1}\left(  y_{i-1}s_{i}\right) \\
p_{i}\left(  v_{i-1}\right)   &  =\left(  \frac{1-t}{1-zt^{i+1-N}}%
+\kappa_{m-i-1}\right)  p_{i+1}\left(  v_{i-1}\right)  +\left(  T_{i}%
-\kappa_{m-i-1}\right)  p_{i+1}\left(  v_{i-1}s_{i}\right)  ,
\end{align*}
then%
\begin{align*}
\frac{1-t}{1-zt^{i+1-N}}+\kappa_{m-i}  &  =-\frac{1-zt^{m-N+1}}{1-zt^{i+1-N}%
}\frac{t^{i-m}}{\left[  i-m\right]  _{t}},\\
\frac{1-t}{1-zt^{i+1-N}}+\kappa_{m-i-1}  &  =-\frac{1-zt^{m-N}}{1-zt^{i+1-N}%
}\frac{t^{i+1-m}}{\left[  i+1-m\right]  _{t}}%
\end{align*}
From the spectral vector of $p_{i+1}$ it follows that $\left(  \boldsymbol{T}%
_{j}-t\right)  p_{i+1}=0$ for $m\leq j<i$ and $\left(  T_{j}-b\left(
x;j\right)  \right)  p_{i+1}\left(  xs_{j}\right)  =\left(  t-b\left(
x;j\right)  \right)  p_{i+1}\left(  x\right)  $. Thus $\left(  T_{i-1}%
-\kappa_{m+1-i}\right)  p_{i+1}\left(  y_{i-1}\right)  =\frac{\left[
i-m\right]  _{t}}{\left[  i-m-1\right]  _{t}}p_{i+1}\left(  y_{i-2}\right)  $
and
\begin{align*}
\left(  T_{m+1}+1\right)  \cdots\left(  T_{i-1}-\kappa_{m+1-i}\right)
p_{i+1}\left(  y_{i-1}\right)   &  =\left[  i-m\right]  _{t}p_{i+1}\left(
y_{m}\right) \\
\left(  T_{m}+1\right)  \left(  T_{m+1}-\kappa_{-2}\right)  \cdots\left(
T_{i-1}-\kappa_{m-i}\right)  p_{i+1}\left(  v_{i-1}\right)   &  =\left[
i+1-m\right]  _{t}p_{i+1}\left(  x^{\left(  1\right)  }\right)  .
\end{align*}
Then $p_{i+1}\left(  y_{m}\right)  $ appears in the expression for
$p_{m}\left(  x^{\left(  1\right)  }\right)  $ with factor%
\[
\left(  -t\frac{1-zt^{m-N}}{1-zt^{m+1-N}}\right)  \left(  -\frac{1-zt^{m-N+1}%
}{1-zt^{i+1-N}}\frac{t^{i-m}}{\left[  i-m\right]  _{t}}\right)  \left[
i-m\right]  _{t}=t^{i+1-m}\frac{1-zt^{m-N}}{1-zt^{i+1-N}}%
\]
and $p_{i+1}\left(  x^{\left(  1\right)  }\right)  $ with factor
$-\dfrac{1-zt^{m-N}}{1-zt^{i+1-N}}\dfrac{t^{i+1-m}}{\left[  i+1-m\right]
_{t}}\left[  i+1-m\right]  _{t}$ and the two cancel out ($y_{m}=x^{\left(
1\right)  }$). This proves the inductive step.
\end{proof}

\begin{proposition}
$M_{\delta,F}\left(  x^{\left(  1\right)  }\right)  =-t\dfrac{1-zt^{m-N}%
}{1-zt^{m+1-N}}P_{N-1}p_{N}\left(  y_{N-1}\right)  $.
\end{proposition}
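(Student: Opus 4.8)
The plan is to read the identity off the preceding Proposition by specializing $i=N$ and then showing that the term it leaves behind vanishes. Setting $i=N$ in (\ref{pmx(zero)}) and using $p_{m}=M_{\delta,F}$ (since $\delta=\beta^{(m)}$) gives
\[
M_{\delta,F}\left(x^{(1)}\right)=-t\frac{1-zt^{m-N}}{1-zt^{m+1-N}}P_{N-1}p_{N}\left(y_{N-1}\right)+Q\,p_{N}\left(v_{N-1}\right),
\]
where $Q:=\left(T_{m}+1\right)\left(T_{m+1}-\kappa_{-2}\right)\cdots\left(T_{N-1}-\kappa_{m-N}\right)$, so the whole statement is equivalent to $Q\,p_{N}\left(v_{N-1}\right)=0$. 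This is exactly the $\Xi$-mirror (Definition \ref{defcapxi}) of the vanishing proved inside Proposition \ref{MzM0} for type $(0)$, and I would follow that template with the operator $M$ and the identity $M^{2}=0$ playing the roles that $D$ and $D^{2}=0$ play there.

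First I would get hold of $p_{N}\left(v_{N-1}\right)$ as a fermionic polynomial. Reading $v_{N-1}$ off (\ref{yivi1}) with $i=N$, its last block is geometric with ratio $t$, that is $\left(v_{N-1}\right)_{i}=t\left(v_{N-1}\right)_{i+1}$ for $m\leq i\leq N-2$; on the other hand the spectral vector of $\beta=\beta^{(N)}$ satisfies $\zeta_{\beta,F}\left(i+1\right)/\zeta_{\beta,F}\left(i\right)=t^{-1}$ on that range, so $\left(\boldsymbol{T}_{i}-t\right)p_{N}=0$ there. Lemma \ref{Ti-t} converts this into $\left(T_{i}-t\right)p_{N}\left(v_{N-1}\right)=0$ for $m\leq i\leq N-2$, leaving only $T_{N-1}$ to act nontrivially. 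Moreover $p_{N}\left(v_{N-1}\right)\in\ker M\cap\mathcal{P}_{m}$, and since $MD+DM=\left[N\right]_{t}$ (Proposition \ref{commMD}) any $w\in\ker M$ obeys $w=\left[N\right]_{t}^{-1}M\left(Dw\right)$, so $p_{N}\left(v_{N-1}\right)$ lies in the image of $M$; using the relations above I would then write it as $c\,M\left(\phi\right)$ for an explicit monomial $\phi$, the type-$(1)$ counterpart of the product $\theta_{N-m-1}\cdots\theta_{N-1}$ appearing in Proposition \ref{MzM0}.

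The last step pulls $M$ through $Q$. Because $M$ commutes with every $T_{i}$ (Proposition \ref{commMD}), $Q\,p_{N}\left(v_{N-1}\right)=c\,M\left(Q\phi\right)$, so it suffices to show $Q\phi$ lies in the image of $M$. I would evaluate $Q\phi$ by an induction on its factors---the $t\mapsto t^{-1}$, sign-adjusted analog of the $f_{j}$ recursion of Proposition \ref{MzM0}, in which the leading factor $T_{m}+1=T_{m}-\kappa_{-1}$ is handled on the same footing as the later $T_{m+j}-\kappa_{-j-1}$---arriving at a scalar times $M\left(\phi'\right)$ for a monomial $\phi'$. Then $Q\,p_{N}\left(v_{N-1}\right)=c\,M\left(Q\phi\right)=c'\,M^{2}\left(\phi'\right)=0$, which cancels the residual term and leaves precisely the asserted formula.

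The hard part will be this final induction: matching the $\kappa_{-n}=-1/\left[n\right]_{t}$ coefficients, the accumulating signs, and the powers of $t$ so that $Q\phi$ telescopes exactly into the image of $M$. A secondary subtlety, absent in type $(0)$, is that the point $v_{N-1}$ does not transmit the low-index spectral relations of $\beta$, so only the block $m\leq i\leq N-2$ survives after applying Lemma \ref{Ti-t}; for this reason the identification $p_{N}\left(v_{N-1}\right)=c\,M\left(\phi\right)$ should be verified carefully from the explicit form rather than read off the eigenvalue pattern alone.
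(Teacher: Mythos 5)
Your plan follows the paper's own proof essentially step for step: set $i=N$ in the preceding proposition, reduce to the vanishing of $\left(T_{m}+1\right)\left(T_{m+1}-\kappa_{-2}\right)\cdots\left(T_{N-1}-\kappa_{m-N}\right)p_{N}\left(v_{N-1}\right)$, use the spectral relations and Lemma \ref{Ti-t} to identify $p_{N}\left(v_{N-1}\right)=c\,M\left(\theta_{1}\cdots\theta_{m-1}\theta_{N}\right)$, commute $M$ through the product of Hecke factors, and kill the result with $M^{2}=0$ after a telescoping $f_{i}$-type induction. The induction you defer is exactly the one the paper carries out (showing $f_{N-m}=\left(-1\right)^{m-1}t^{N-m}M\left(\theta_{1}\cdots\theta_{m-1}\right)$), and your cautionary remark about verifying the identification $p_{N}\left(v_{N-1}\right)=c\,M\left(\phi\right)$ from the partial set of relations corresponds precisely to the paper's parenthetical uniqueness argument via $h=T_{m}\cdots T_{N-1}M\left(\theta_{1}\cdots\theta_{m-1}\theta_{N}\right)$.
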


\begin{proof}
Set $i=N$ in (\ref{pmx(zero)}). Claim $\left(  T_{m}+1\right)  \left(
T_{m+1}-\kappa_{-2}\right)  \cdots\left(  T_{N-1}-\kappa_{m-N}\right)
p_{N}\left(  v_{N-1}\right)  =0$. From $\left(  v_{N-1}\right)  _{i}=t^{-i}$
and $\zeta_{\beta,F}\left(  i\right)  =t^{N-i-1}$ for $m\leq i\leq N-1$ it
follows that $\left(  T_{i}-t\right)  p_{N}\left(  v_{N-1}\right)  =0$ for
$m\leq i\leq N-2$. This implies $p_{N}\left(  v_{N-1}\right)  =cM\left(
\theta_{1}\theta_{2}\cdots\theta_{m-1}\theta_{N}\right)  $ for some constant (
similarly to the argument in Proposition \ref{MzM0} $h=T_{m}T_{m+1}\cdots
T_{N-1}M\left(  \theta_{1}\cdots\theta_{m-1}\theta_{N}\right)  $ satisfies
$\left(  T_{i}-t\right)  h=0$ for $m+1\leq i\leq N-1$ implying $h=c^{\prime
}\tau_{F}$). Let $g=\theta_{1}\theta_{2}\cdots\theta_{m-1}$ and $f_{0}%
=g\theta_{N}$ then define $f_{i}=\left(  T_{N-i}-\kappa_{m-N+i-1}\right)
f_{i-1}$ for $1\leq i\leq N-m$. Use induction to show
\[
f_{i}=t^{i}g\theta_{N-i}+\frac{t^{N-m}}{\left[  N-m+1-i\right]  _{t}}%
\sum_{j=0}^{i-1}g\theta_{N-j}.
\]
The start is%
\begin{align*}
f_{1}  &  =g\theta_{N}=\left(  T_{N-1}+\frac{1}{\left[  N-m\right]  _{t}%
}\right)  g\theta_{N}=tg\theta_{N-1}+\left(  t-1+\frac{1}{\left[  N-m\right]
_{t}}\right)  g\theta_{N}\\
&  =tg\theta_{N-1}+\frac{t^{N-m}}{\left[  N-m\right]  _{t}}g\theta_{N}.
\end{align*}
Assume the formula is true for some $i<N-m$ then%
\begin{align*}
f_{i+1}  &  =\left(  T_{N-i-1}-\kappa_{m-N+i}\right)  f_{i}=t^{i+1}%
g\theta_{N-i-1}+t^{i}\left(  t-1+\frac{1}{\left[  N-m-i\right]  _{t}}\right)
g\theta_{N-i}\\
&  +\frac{t^{N-m}}{\left[  N-m+1-i\right]  _{t}}\left\{  t+\frac{1}{\left[
N-m-i\right]  _{t}}\right\}  \sum_{j=0}^{i-1}g\theta_{N-j}\\
&  =t^{i+1}g\theta_{N-i-1}+\frac{t^{N-m}}{\left[  N-m-i\right]  _{t}}%
\sum_{j=0}^{i}g\theta_{N-j}%
\end{align*}
(because $t+\frac{1}{\left[  n\right]  _{t}}=\frac{\left[  n+1\right]  _{t}%
}{\left[  n\right]  t}$). Thus $f_{N-m}=t^{N-m}g\sum_{j=0}^{N-m}\theta
_{N-j}=\left(  -1\right)  ^{m-1}t^{N-m}M\left(  g\right)  $ and $M\left(
f_{N-m}\right)  =0.$
\end{proof}

Next we consider the transition from $\alpha$ to $\beta$ (see \ref{lb2a2b})
with the affine step $M_{\beta,F}\left(  x;\theta\right)  =x_{N}%
\boldsymbol{w}M_{\alpha,F}\left(  x;\theta\right)  $ and as before the
calculation is based on the formula%
\[
M_{\beta,F}\left(  x;\theta\right)  =x_{N}t^{N-1}\left(  \boldsymbol{T}%
_{N-1}^{-1}\cdots\boldsymbol{T}_{2}^{-1}\boldsymbol{T}_{1}^{-1}\xi
_{1}M_{\alpha,F}\right)  \left(  x;\theta\right)
\]
where $\xi_{1}M_{\alpha,F}=\zeta_{\alpha,F}\left(  1\right)  M_{\alpha
,F}=q^{\lambda_{k}-1}t^{k-m-1}M_{\alpha,F}$. From the previous formula we see
that we need to evaluate $P_{N-1}M_{\beta,F}\left(  y_{N-1}\right)  $. Since
$\zeta_{\alpha,F}\left(  i\right)  =t^{N-i}$ for $m+1\leq i\leq N$ it follows
that $\left(  \boldsymbol{T}_{i}-t\right)  M_{\alpha,F}=0$ for $m+1\leq i<N$.

\begin{definition}
Let $r_{0}=M_{\alpha,F}$ and $r_{i}=t\boldsymbol{T}_{i}^{-1}r_{i-1}$ for
$1\leq i<N$.
\end{definition}

\begin{proposition}
Suppose $1\leq i\leq m-1$ then $r_{i}\left(  x^{\left(  1\right)  }\right)
=\left(  -t\right)  ^{i}r_{0}\left(  x^{\left(  1\right)  }\right)  =\left(
-t\right)  ^{i}M_{\alpha,F}\left(  x^{\left(  1\right)  }\right)  .$
\end{proposition}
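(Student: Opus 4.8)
The plan is to mirror the type~(0) argument, proceeding by induction on $i$ and using the recurrence that results from $r_i = t\boldsymbol{T}_i^{-1} r_{i-1}$ together with the evaluation formula (\ref{Tbp}); this is exactly formula (\ref{r2r}), which holds verbatim here since the definition of $r_i$ is unchanged. Specializing at $x = x^{(1)}$, where each $x^{(1)}_j = t^{1-j}$ gives $b(x^{(1)};i) = -1$, the recurrence collapses to
\begin{align*}
r_i(x^{(1)}) &= \bigl(1 - t + b(x^{(1)};i)\bigr) r_{i-1}(x^{(1)}) + \bigl(T_i - b(x^{(1)};i)\bigr) r_{i-1}(x^{(1)} s_i) \\
&= -t\, r_{i-1}(x^{(1)}) + (T_i + 1)\, r_{i-1}(x^{(1)} s_i).
\end{align*}
Thus the whole proposition reduces to the claim that $(T_i + 1)\, r_{i-1}(x^{(1)} s_i) = 0$ for $1 \le i \le m-1$; granting this, induction yields $r_i(x^{(1)}) = -t\, r_{i-1}(x^{(1)}) = (-t)^i M_{\alpha,F}(x^{(1)})$, the sign change relative to type~(0) arising precisely from $b = -1$ in place of $b = t$.

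Next I would prove the claim by identifying $r_{i-1}(x^{(1)} s_i)$ as a scalar multiple of $\tau_F$. The starting point is the relation $(\boldsymbol{T}_j - t) M_{\alpha,F} = 0$ for $m+1 \le j < N$ noted just above. Since $r_{i-1} = t^{i-1} \boldsymbol{T}_{i-1}^{-1} \cdots \boldsymbol{T}_1^{-1} M_{\alpha,F}$ and the hypothesis $i \le m-1$ ensures $j - \ell \ge 2$ for every $j \ge m+1$ and every $\ell \le i-1$, each $\boldsymbol{T}_j$ commutes through the product, so $(\boldsymbol{T}_j - t)\, r_{i-1} = 0$ for $m+1 \le j < N$. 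Because $i \le m-1$, the transposition $s_i$ touches only coordinates $i, i+1 \le m$, so the point $y = x^{(1)} s_i$ still satisfies $y_j = t\, y_{j+1}$ for all $m+1 \le j < N$; Lemma~\ref{Ti-t} then gives $(T_j - t)\, r_{i-1}(x^{(1)} s_i) = 0$ on this range. By the uniqueness-of-contents argument in Lemma~\ref{M1ctauF} (the values $[c(j,F)]_{j=m+1}^{N}$ determine the tableau), this forces $r_{i-1}(x^{(1)} s_i) = c\,\tau_F$ for some scalar $c$. Finally, since $1 \le i < m$, Lemma~\ref{M1ctauF} supplies $(T_i + 1)\tau_F = 0$, so $(T_i + 1)\, r_{i-1}(x^{(1)} s_i) = c\,(T_i + 1)\tau_F = 0$, which closes the claim.

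I expect the only real subtlety to be the bookkeeping of the range $i \le m-1$, which is exactly what makes both the commutation step and the hypothesis of Lemma~\ref{Ti-t} valid, and which is sharp: at $i = m$ the swap $s_m$ would disturb coordinate $m+1$, breaking $y_j = t\, y_{j+1}$, and $(T_m + 1)\tau_F$ need not vanish since $m$ and $m+1$ occupy different parts of $Y_F$. Everything else is a faithful translation of the type~(0) proposition under the correspondence replacing the row relation $(T_i - t)\tau_E = 0$ by the column relation $(T_i + 1)\tau_F = 0$ and the eigenvalue $t^{i-N}$ by $t^{N-i}$, so no genuinely new computation is required.
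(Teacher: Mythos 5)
Your proof is correct and follows essentially the same route as the paper's: the recurrence (\ref{r2r}) specialized at $b(x^{(1)};i)=-1$, the commutation of $\boldsymbol{T}_j$ ($j\geq m+1$) past the $\boldsymbol{T}_\ell^{-1}$ to get $(\boldsymbol{T}_j-t)r_{i-1}=0$, then Lemma \ref{Ti-t} and the uniqueness-of-contents argument of Lemma \ref{M1ctauF} to identify $r_{i-1}(x^{(1)}s_i)$ as a multiple of $\tau_F$, killed by $T_i+1$. If anything, your write-up is more careful than the paper's (which carries over some type-(0) typos such as the range $\ell<N-m-1$ and stray references to $x^{(0)}$), and your remark on why $i=m$ fails correctly locates the sharpness of the hypothesis.
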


\begin{proof}
From $\left(  \boldsymbol{T}_{i}-t\right)  M_{\alpha,F}=0$ for $m+1\leq i<N$
it follows that $\left(  \boldsymbol{T}_{i}-t\right)  r_{j}=0$ if $j<m$ . By
(\ref{r2r}) $r_{\ell}\left(  x^{\left(  1\right)  }\right)  =-tr_{\ell
-1}\left(  x^{\left(  1\right)  }\right)  +\left(  T_{\ell}+1\right)
r_{\ell-1}\left(  x^{\left(  1\right)  }s_{\ell}\right)  $ . Suppose
$\ell<N-m-1$ then $x^{\left(  0\right)  }s_{\ell}$ satisfies $\left(
x^{\left(  1\right)  }s_{\ell}\right)  _{i}=t^{-i-}$ for $i\geq m+1$ so that
$b\left(  x^{\left(  0\right)  }s_{\ell};i\right)  =-1$ and $\left(
T_{i}-t\right)  r_{\ell-1}\left(  x^{\left(  1\right)  }s_{\ell}\right)  =0$,
$r_{\ell-1}\left(  x^{\left(  1\right)  }s_{\ell}\right)  $ is a multiple of
$\tau_{F}$ and $\left(  T_{\ell}+1t\right)  r_{\ell-1}\left(  x^{\left(
1\right)  }s_{\ell}\right)  =0$. Thus $r_{\ell}\left(  x^{\left(  0\right)
}\right)  =-tr_{\ell-1}\left(  x^{\left(  0\right)  }\right)  $ and this holds
for $1\leq\ell\leq m-1$.
\end{proof}

Similarly to the type (0) computations let
\begin{align*}
\widetilde{P}_{N-j}  &  :=\left(  T_{N-1}+1\right)  \left(  T_{N-2}%
-\kappa_{-2}\right)  \cdots\left(  T_{N-j}-\kappa_{-j}\right) \\
\widetilde{y}_{N-j-1}  &  :=y_{N-j-1}s_{N-1}s_{N-2}\cdots s_{N-j}.
\end{align*}

\begin{lemma}
\label{TTfj1}Suppose $\boldsymbol{T}_{i}f=tf$ for $N-j\leq i<N$ and
$u_{i}=ct^{1-i}$ for $N-j+1\leq i\leq N$ then%
\[
\left(  T_{N-1}+1\right)  \left(  T_{N-2}-\kappa_{-2}\right)  \cdots\left(
T_{N-j}-\kappa_{-j}\right)  f\left(  us_{N-1}s_{N-2}\cdots s_{N-j}\right)
=\left[  j+1\right]  _{t}f\left(  u\right)  .
\]

\end{lemma}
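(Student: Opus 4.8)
The plan is to run the argument in exact parallel with Lemma \ref{Pnjf}, simply replacing the eigenvalue relation $\boldsymbol{T}_i f=-f$ by $\boldsymbol{T}_i f=tf$ and tracking how the signs and powers of $t$ change. First I would introduce the chain of points $\widetilde{u}^{(n)}:=u\,s_{N-1}s_{N-2}\cdots s_{N-n}$ for $0\leq n\leq j$, so that $\widetilde{u}^{(0)}=u$, $\widetilde{u}^{(j)}=u\,s_{N-1}\cdots s_{N-j}$ is the argument appearing on the left-hand side, and crucially $\widetilde{u}^{(n)}=\widetilde{u}^{(n-1)}s_{N-n}$. This last identity means that passing from $\widetilde{u}^{(n-1)}$ to $\widetilde{u}^{(n)}$ is a single adjacent transposition at position $N-n$, so the evaluation formula (\ref{Tbp}) is applicable there.

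The key local computation is to read off the two coordinates of $\widetilde{u}^{(n-1)}$ that enter $b(\widetilde{u}^{(n-1)};N-n)$. The transpositions $s_{N-1},\ldots,s_{N-n+1}$ forming $\widetilde{u}^{(n-1)}$ only permute the last $n-1$ entries, cycling $u_N$ into position $N-n+1$ and shifting the others, while position $N-n$ is left untouched; hence $(\widetilde{u}^{(n-1)})_{N-n+1}=u_N=ct^{1-N}$ and $(\widetilde{u}^{(n-1)})_{N-n}=u_{N-n}=ct^{1-N+n}$. Their ratio is $t^{n}$, so $b(\widetilde{u}^{(n-1)};N-n)=\kappa_{-n}$. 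Substituting the hypothesis $\boldsymbol{T}_{N-n}f=tf$ into (\ref{Tbp}) then yields $(T_{N-n}-\kappa_{-n})f(\widetilde{u}^{(n)})=(t-\kappa_{-n})f(\widetilde{u}^{(n-1)})$, and the scalar simplifies by $t-\kappa_{-n}=t+\tfrac{1}{[n]_t}=\tfrac{[n+1]_t}{[n]_t}$.

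Observing that $\kappa_{-1}=-1$, the leading factor $T_{N-1}+1$ equals $T_{N-1}-\kappa_{-1}$, so the operator on the left is the uniform product $\prod_{i=1}^{j}(T_{N-i}-\kappa_{-i})$ with rightmost factor indexed by $i=j$. I would then peel the factors off from the right, applying $(T_{N-n}-\kappa_{-n})$ to $f(\widetilde{u}^{(n)})$ for $n=j,j-1,\ldots,1$ in turn; at each stage the required relation $\boldsymbol{T}_{N-n}f=tf$ is available precisely because $N-j\leq N-n<N$. This produces the telescoping product
\[
\prod_{n=1}^{j}\frac{[n+1]_t}{[n]_t}=\frac{[j+1]_t}{[1]_t}=[j+1]_t,
\]
landing on $f(\widetilde{u}^{(0)})=f(u)$. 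In contrast to the type (0) case, every factor contributes a strictly positive scalar, which is exactly why no sign $(-1)^j$ survives here.

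The main thing to get right is the bookkeeping of the two relevant coordinates of $\widetilde{u}^{(n-1)}$ and the ensuing identification $b(\widetilde{u}^{(n-1)};N-n)=\kappa_{-n}$; once that and the identity $t-\kappa_{-n}=[n+1]_t/[n]_t$ are pinned down, the telescoping is automatic and the constant $c$ drops out harmlessly since only ratios of coordinates enter $b$.
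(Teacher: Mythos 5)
Your proof is correct and is essentially the paper's own argument: the same chain of points $\widetilde{u}^{(n)}=us_{N-1}\cdots s_{N-n}$, the same local identification of $b$, and the same peeling of the factors from the right with $t-\kappa_{-n}=[n+1]_t/[n]_t$ telescoping to $[j+1]_t$. In fact your index bookkeeping ($b(\widetilde{u}^{(n-1)};N-n)=\kappa_{-n}$, matching the operator factors $T_{N-n}-\kappa_{-n}$) is the corrected form of what the paper prints (the paper writes $\kappa_{1-k}$ and $[k]_t/[k-1]_t$, an index shift that is meaningless at $k=1$ and would not telescope to the stated answer); note only that at the final step both you and the paper use $u_{N-j}=ct^{1+j-N}$, which strictly requires reading the hypothesis range as $N-j\le i\le N$ — this extended hypothesis does hold at the points where the lemma is applied.
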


\begin{proof}
Let $\widetilde{u}^{\left(  k\right)  }=us_{N-1}\cdots s_{N-k}$ then $\left(
\widetilde{u}^{\left(  k-1\right)  }\right)  _{N-k+1}=t^{1-N},\left(
\widetilde{u}^{\left(  k-1\right)  }\right)  _{N-k}=t^{k-N}$ and $b\left(
\widetilde{u}^{\left(  k-1\right)  };N-k\right)  =\kappa_{1-k}$. Thus%
\begin{align*}
\left(  T_{N-k}-\kappa_{1-k}\right)  f\left(  \widetilde{u}^{\left(  k\right)
}\right)   &  =\left(  T_{N-k}-\kappa_{1-k}\right)  f\left(  \widetilde
{u}^{\left(  k-1\right)  }s_{N-k}\right) \\
&  =\left(  t-\kappa_{1-k}\right)  f\left(  \widetilde{u}^{\left(  k-1\right)
}\right)  =\dfrac{\left[  k\right]  _{t}}{\left[  k-1\right]  _{t}}.
\end{align*}
Repeated application of this formula shows%
\begin{align*}
\left(  T_{N-1}+1\right)  \cdots\left(  T_{N-j}-\kappa_{-j}\right)  f\left(
us_{N-1}s_{N-2}\cdots s_{N-j}\right)   &  =\frac{1}{\left[  2\right]  _{t}%
}\frac{\left[  2\right]  _{t}}{\left[  3\right]  _{t}}\cdots\frac{\left[
j+1\right]  _{t}}{\left[  j\right]  _{t}}f\left(  u\right) \\
&  =\left[  j+1\right]  _{t}f\left(  u\right)  .
\end{align*}

\end{proof}

\begin{proposition}
For $2\leq j\leq N-m$%
\begin{align}
P_{N-1}r_{N-1}\left(  y_{N-1}\right)   &  =t^{j-1}\frac{\left[  N-m\right]
_{t}\left[  N-m-j\right]  _{t}}{\left[  N-m-1\right]  _{t}\left[
N-m+1-j\right]  _{t}}P_{N-j}r_{N-j}\left(  y_{N-j}\right) \label{PrPt1a}\\
&  +\frac{1}{\left[  N-m+1-j\right]  _{t}}\widetilde{P}_{N-j+2}P_{N-j}\left(
T_{N-j+1}-\kappa_{m+1-N}\right)  r_{N-j}\left(  \widetilde{y}_{N-j}\right)  .
\label{PrPt1b}%
\end{align}

\end{proposition}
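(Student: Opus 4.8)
The plan is to argue by induction on $j$, following the skeleton of the type (0) proof of (\ref{PrPr0a})--(\ref{PrPr0b}) but with the type (1) data: the points $y_i,v_i,\widetilde{y}_i$, the operators $P_i$ and $\widetilde{P}_{N-j}$, and the $b$-values recorded in (\ref{yivi1}), namely $b(y_{i-1};i)=\kappa_{m-i}$ and $b(v_{i-1};i)=\kappa_{m-i-1}$. The driving mechanism is the recursion $r_i=t\boldsymbol{T}_i^{-1}r_{i-1}$ together with the evaluation identities (\ref{TbfTb}) and (\ref{r2r}); the spectral relation $(\boldsymbol{T}_i-t)r_j=0$ for $j<m$, $m+1\le i<N$, proved in the preceding proposition, is what collapses the intermediate $(T_i-\kappa)$ factors.

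For the base case $j=2$ I would peel off the last factor of $P_{N-1}$, writing $P_{N-1}r_{N-1}(y_{N-1})=P_{N-2}(T_{N-1}-\kappa_{m-N+1})r_{N-1}(y_{N-2}s_{N-1})$, and then apply (\ref{TbfTb}) with $i=N-1$, $g=r_{N-2}$ and $b=b(y_{N-2};N-1)=\kappa_{m-N+1}$. Using $\kappa_{m-N+1}=-1/[N-m-1]_t$ one computes $(1+b)(t-b)=t[N-m-2]_t[N-m]_t/[N-m-1]_t^{2}$ and $-b=1/[N-m-1]_t$; together with $y_{N-1}=y_{N-2}s_{N-1}=\widetilde{y}_{N-2}$ and the empty product $\widetilde{P}_N=1$ this is precisely (\ref{PrPt1a})--(\ref{PrPt1b}) at $j=2$.

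For the inductive step, assuming the formula for some $j<N-m$, I would first transform the term (\ref{PrPt1a}): apply (\ref{TbfTb}) to $P_{N-j}r_{N-j}(y_{N-j})$ with $i=N-j$ and $b=\kappa_{m-N+j}$, evaluate the scalar coefficient $\dfrac{1-t}{1-zt^{i+1-N}}+\kappa_{m-i}$ in the closed form established in the preceding proposition, and combine it with the prefactor of (\ref{PrPt1a}). This produces a clean $P_{N-j-1}r_{N-j-1}(y_{N-j-1})$ contribution plus a spurious $P_{N-j}r_{N-j-1}(y_{N-j})$ term. Next I would expand the term (\ref{PrPt1b}) by applying (\ref{r2r}) at $\widetilde{y}_{N-j}$, where $b(\widetilde{y}_{N-j};N-j)=\kappa_{1-j}$: the first summand, after collapsing $\widetilde{P}$ through Lemma \ref{TTfj1} and using the $\widetilde{y}$-shift identity analogous to (\ref{yi2y}), yields exactly the term cancelling the spurious $P_{N-j}r_{N-j-1}(y_{N-j})$, while the second summand, after pushing the accumulated $\kappa$-factors through the modified braid relation of Lemma \ref{Tbraids}, becomes the new (\ref{PrPt1b}) term at level $j+1$, with $\widetilde{P}_{N-j+2}$ reduced to $\widetilde{P}_{N-j+1}$.

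The main obstacle is the bookkeeping of signs and powers of $t$, which genuinely differs from type (0) because the relevant $\kappa$-indices are now negative: since $\kappa_{-n}=-1/[n]_t$, the collapse that gave $(-1)^j[j+1]_t$ in Lemma \ref{Pnjf} instead gives the sign-free $[j+1]_t$ of Lemma \ref{TTfj1}, which is why the coefficient in (\ref{PrPt1b}) is the clean $1/[N-m+1-j]_t$ with no sign and no power of $t$, and every telescoping ratio of $[n]_t$'s must be matched against the prefactor $t^{j-1}[N-m]_t[N-m-j]_t/([N-m-1]_t[N-m+1-j]_t)$. The step needing the most care is verifying the index arithmetic for Lemma \ref{Tbraids}: the three factors produced are $(T_{N-j}-\kappa_{m-N+j})(T_{N-j+1}-\kappa_{m+1-N})(T_{N-j}-\kappa_{1-j})$, and one must check that with $a=m-N+j$, $c=1-j$ one has $a+c=m+1-N$ with $a,c,a+c$ all nonzero, so that $\kappa_a\kappa_c\to\kappa_{a+c}$ applies and, after the swap, the surviving $(T-\kappa)$ factor lands on the index absorbed into $\widetilde{P}_{N-j+1}$ via the two-step shift $\widetilde{y}_{N-j}s_{N-j}s_{N-j+1}=\widetilde{y}_{N-j-1}$ supplied by the analogue of (\ref{yi2y}).
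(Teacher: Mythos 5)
Your proposal is correct and takes essentially the same route as the paper's proof: induction on $j$ with the base case obtained from (\ref{TbfTb}) at $b=\kappa_{m-N+1}$, and an inductive step that splits (\ref{PrPt1a}) via (\ref{TbfTb}) at $b=\kappa_{j+m-N}$, splits (\ref{PrPt1b}) via (\ref{r2r}) at $b=\kappa_{1-j}$, cancels the spurious $P_{N-j}r_{N-j-1}\left(y_{N-j}\right)$ terms through Lemma \ref{TTfj1}, and turns the remainder into the level-$(j+1)$ term by the braid relation of Lemma \ref{Tbraids} together with the shift $\widetilde{y}_{N-j}s_{N-j}s_{N-j+1}=\widetilde{y}_{N-j-1}$. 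One cosmetic slip: the scalar $\frac{1-t}{1-zt^{i+1-N}}+\kappa_{m-i}$ you invoke belongs to the $p_i$-recursion of the preceding proposition and never enters this argument --- here the coefficients come entirely from $\left(1+b\right)\left(t-b\right)$ and $-b$ in (\ref{TbfTb}) with the $b$ you correctly specify --- so this mis-reference does not affect the validity of your outline.
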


\begin{proof}
Proceed by induction. By (\ref{TbfTb})%
\begin{align*}
P_{N-1}r_{N-1}\left(  y_{N-1}\right)   &  =P_{N-2}\left(  T_{N-1}%
-\kappa_{m-N+1}\right)  r_{N-1}\left(  y_{N-2}s_{N-1}\right) \\
&  =\dfrac{t\left[  N-m\right]  _{t}\left[  N-m-2\right]  _{t}}{\left[
N-m-1\right]  _{t}^{2}}P_{N-2}r_{N-2}\left(  y_{N-2}\right) \\
&  +\frac{1}{\left[  N-m-1\right]  _{t}}P_{N-2}\left(  T_{N-1}-\kappa
_{m+1-N}\right)  r_{N-2}\left(  y_{N-2}s_{N-1}\right)
\end{align*}
and $y_{N-1}=y_{N-2}s_{N-1}=\widetilde{y}_{N-2}$. Thus the formula is valid
for $j=2$ (with $\widetilde{P}_{N}=1$). Suppose it holds for some $j\leq
N-m-2$, then $b\left(  y_{N-1-j};N-j\right)  =\kappa_{j+m-N}$ and%
\begin{align*}
P_{N-j}r_{N-j}\left(  y_{N-j}\right)   &  =P_{N-j-1}\left(  T_{N-j}%
-\kappa_{j+m-N}\right)  r_{N-j}\left(  y_{N-j-1}s_{N-j}\right) \\
&  =\dfrac{t\left[  N-m+1-j\right]  _{t}\left[  N-m-1-j\right]  _{t}}{\left[
N-m-j\right]  _{t}^{2}}P_{N-j-1}r_{N-j-1}\left(  y_{N-j-1}\right) \\
&  +\frac{1}{\left[  N-m-j\right]  _{t}}P_{N-j-1}\left(  T_{N-j}%
-\kappa_{j+m-N}\right)  r_{N-j-1}\left(  y_{N-j}\right)  .
\end{align*}
Combine with formula (\ref{PrPt1a}) to obtain%
\begin{align}
&  t^{j}\frac{\left[  N-m\right]  _{t}\left[  N-m-1-j\right]  _{t}}{\left[
N-m-1\right]  _{t}\left[  N-m-j\right]  _{t}}P_{N-j-1}r_{N-j-1}\left(
y_{N-j-1}\right) \label{BBB}\\
&  +\frac{t^{j-1}\left[  N-m\right]  _{t}}{\left[  N-m-1\right]  _{t}\left[
N-m+1-j\right]  _{t}}P_{N-j}r_{N-j-1}\left(  y_{N-j}\right)  .
\end{align}
For the second line (\ref{PrPt1b}) $b\left(  \widetilde{y}_{N-j};N-j\right)
=\kappa_{1-j}$ thus%
\[
r_{N-j}\left(  \widetilde{y}_{N-j}\right)  =-\frac{t^{j-1}}{\left[
j-1\right]  _{t}}r_{N-j-1}\left(  \widetilde{y}_{N-j}\right)  +\left(
T_{N-j}-\kappa_{1-j}\right)  r_{N-j-1}\left(  \widetilde{y}_{N-j}%
s_{N-j}\right)  .
\]
The first part leads to%
\begin{align*}
&  -\frac{t^{j-1}}{\left[  N-m+1-j\right]  _{t}\left[  j-1\right]  _{t}%
}\widetilde{P}_{N-j+2}P_{N-j}\left(  T_{N-j+1}-\kappa_{m+1-N}\right)
r_{N-j-1}\left(  \widetilde{y}_{N-j}\right) \\
&  =-\frac{\left[  N-m\right]  _{t}}{\left[  N-m-1\right]  _{t}}\frac{t^{j-1}%
}{\left[  N-m+1-j\right]  _{t}\left[  j-1\right]  _{t}}\widetilde{P}%
_{N-j+2}P_{N-j}r_{N-j-1}\left(  \widetilde{y}_{N-j}s_{N-j+1}\right)
\end{align*}
because $b\left(  \widetilde{y}_{N-j}s_{N-j+1};N-j+1\right)  =\kappa_{m+1-N}$
and $\left(  \boldsymbol{T}_{N-j+1}-t\right)  r_{N-j-1}=0$. Then%
\begin{align*}
\widetilde{P}_{N-j+2}P_{N-j}r_{N-j-1}\left(  \widetilde{y}_{N-j}%
s_{N-j+1}\right)   &  =P_{N-j}\widetilde{P}_{N-j+2}r_{N-j-1}\left(
y_{N-j}s_{N-1}\cdots s_{N-j+2}\right) \\
&  =\left[  j-1\right]  _{t}P_{N-j}r_{N-j-1}\left(  y_{N-j}\right)
\end{align*}
by Lemma \ref{TTfj1}, so combine to obtain $\dfrac{\left[  N-m\right]  _{t}%
}{\left[  N-m-1\right]  _{t}}\dfrac{\left(  -1\right)  t^{j-1}}{\left[
N-m+1-j\right]  _{t}}P_{N-j}r_{N-j-1}\left(  y_{N-j}\right)  $ which cancels
the second term in (\ref{BBB}). The second part gives (using the braid
relations in (\ref{Tbraids}))%
\begin{align*}
&  \frac{1}{\left[  N-m+1-j\right]  _{t}}\widetilde{P}_{N-j+2}P_{N-j}\left(
T_{N-j+1}-\kappa_{m+1-N}\right)  \left(  T_{N-j}-\kappa_{1-j}\right)
r_{N-j-1}\left(  \widetilde{y}_{N-j}s_{N-j}\right) \\
&  =\frac{1}{\left[  N-m+1-j\right]  _{t}}\widetilde{P}_{N-j+2}P_{N-j-1}\\
&  \times\left(  T_{N-j}-\kappa_{m+j-N}\right)  \left(  T_{N-j+1}%
-\kappa_{m+1-N}\right)  \left(  T_{N-j}-\kappa_{1-j}\right)  r_{N-j-1}\left(
\widetilde{y}_{N-j}s_{N-j}\right) \\
&  =\frac{1}{\left[  N-m+1-j\right]  _{t}}\widetilde{P}_{N-j+2}P_{N-j-1}\\
&  \times\left(  T_{N-j+1}-\kappa_{1-j}\right)  \left(  T_{N-j}-\kappa
_{m+1-N}\right)  \left(  T_{N-j+1}-\kappa_{m+j-N}\right)  r_{N-j-1}\left(
\widetilde{y}_{N-j}s_{N-j}\right) \\
&  =\frac{\left[  N-m+1-j\right]  _{t}}{\left[  N-m+1-j\right]  _{t}\left[
N-m-j\right]  _{t}}\widetilde{P}_{N-j+2}P_{N-j-1}\\
&  \times\left(  T_{N-j+1}-\kappa_{1-j}\right)  \left(  T_{N-j}-\kappa
_{m+1-N}\right)  r_{N-j-1}\left(  \widetilde{y}_{N-j}s_{N-j}s_{N-j+1}\right)
\\
&  =\frac{1}{\left[  N-m-j\right]  _{t}}\widetilde{P}_{N-j+1}P_{N-j-1}\left(
T_{N-j}-\kappa_{m+1-N}\right)  r_{N-j-1}\left(  \widetilde{y}_{N-j-1}\right)
\end{align*}
by (\ref{yi2y}) and $b\left(  \widetilde{y}_{N-j-1};N-j+1\right)
=\kappa_{m+j-N}.$
\end{proof}

\begin{proposition}
$P_{N-1}r_{N-1}\left(  y_{N-1}\right)  =\left(  -1\right)  ^{m}t^{N-1}%
M_{\beta,F}\left(  x^{\left(  1\right)  };\theta\right)  $.
\end{proposition}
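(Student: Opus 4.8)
The plan is to run the recursion of the preceding proposition down to the boundary and then evaluate the resulting pieces, in close parallel with the type~(0) computation but tracking the different signs, powers of $t$, and the use of $M$ in place of $D$. First I would record the identity $M_{\beta,F}(x)=\zeta_{\alpha,F}(1)x_{N}r_{N-1}(x)$, which follows from the affine-step expression for $M_{\beta,F}$, the eigenrelation $\xi_{1}M_{\alpha,F}=\zeta_{\alpha,F}(1)M_{\alpha,F}$, and $r_{N-1}=t^{N-1}\boldsymbol{T}_{N-1}^{-1}\cdots\boldsymbol{T}_{1}^{-1}M_{\alpha,F}$; in particular $M_{\beta,F}(x^{(1)})=\zeta_{\alpha,F}(1)t^{1-N}r_{N-1}(x^{(1)})$. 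Because the affine step gives $\zeta_{\beta,F}(i+1)/\zeta_{\beta,F}(i)=t^{-1}$ for $m\leq i\leq N-2$, one has $(\boldsymbol{T}_{i}-t)M_{\beta,F}=0$ there, so Lemma~\ref{Ti-t} at the $t$-decreasing point $x^{(1)}$ yields $(T_{i}-t)M_{\beta,F}(x^{(1)})=0$ for $m\leq i\leq N-2$, which forces $m,\ldots,N$ into the first row. Hence $M_{\beta,F}(x^{(1)})$ is a scalar multiple of $\tau_{F}$ and the claim reduces to an identity of scalars.

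Next I would apply \eqref{PrPt1a}--\eqref{PrPt1b} with $j=N-m$. The line \eqref{PrPt1a} carries the factor $[N-m-j]_{t}=[0]_{t}=0$ and disappears, while in \eqref{PrPt1b} one has $P_{m}=1$, $[1]_{t}=1$, and $\widetilde{P}_{m+2}(T_{m+1}-\kappa_{m+1-N})=\widetilde{P}_{m+1}$, leaving $P_{N-1}r_{N-1}(y_{N-1})=\widetilde{P}_{m+1}r_{m}(\widetilde{y}_{m})$. I would then expand $r_{m}=t\boldsymbol{T}_{m}^{-1}r_{m-1}$ through \eqref{r2r}; reading the coordinates from \eqref{yivi1} gives $b(\widetilde{y}_{m};m)=\kappa_{m-N}=-1/[N-m]_{t}$, whence $1-t+b=-t^{N-m}/[N-m]_{t}$ and $\widetilde{P}_{m+1}(T_{m}-b)=\widetilde{P}_{m}$.

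The first of the two resulting pieces is $(1-t+b)\,\widetilde{P}_{m+1}r_{m-1}(\widetilde{y}_{m})$. Since $(\boldsymbol{T}_{i}-t)r_{m-1}=0$ for $m+1\leq i<N$ (the operators $\boldsymbol{T}_{j}^{-1}$ with $j\leq m-1$ commute with these $\boldsymbol{T}_{i}$) and $\widetilde{y}_{m}=x^{(1)}s_{N-1}\cdots s_{m+1}$, Lemma~\ref{TTfj1} gives $\widetilde{P}_{m+1}r_{m-1}(\widetilde{y}_{m})=[N-m]_{t}\,r_{m-1}(x^{(1)})$; combined with $r_{m-1}(x^{(1)})=(-t)^{m-1}M_{\alpha,F}(x^{(1)})$ this piece equals $(-1)^{m}t^{N-1}M_{\alpha,F}(x^{(1)})$.

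The hard part is the remaining piece $\widetilde{P}_{m}r_{m-1}(\widetilde{y}_{m}s_{m})$. In type~(0) the analogous boundary term vanishes because the column constraint pins the evaluation to a single tableau $\tau_{E}$ with $\widetilde{P}\tau_{E}=0$ by $D^{2}=0$. The type~(1) situation is genuinely different: the relation $(\boldsymbol{T}_{i}-t)r_{m-1}=0$ holds only for $m+1\leq i<N$, one short of determining the hook tableau, so $r_{m-1}(\widetilde{y}_{m}s_{m})$ is a combination of the tableaux having $m+1,\ldots,N$ in the first row, and moreover $\widetilde{P}_{m}\tau_{F}=[N-m+1]_{t}\tau_{F}\neq0$ (since $T_{i}\tau_{F}=t\tau_{F}$ for $i\geq m$). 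Thus this term does not vanish; it is precisely what upgrades $M_{\alpha,F}(x^{(1)})$ to $M_{\beta,F}(x^{(1)})$. I would evaluate it by the $M^{2}=0$ analogue of Proposition~\ref{MzM0}: write $r_{m-1}(\widetilde{y}_{m}s_{m})$, which lies in $\ker M$ and hence in the image of $M$, as $M$ of a preimage, commute $\widetilde{P}_{m}$ through $M$, and run an explicit $\pi_{i}$/$f_{j}$ recursion to extract the surviving $\tau_{F}$-component. Reassembling the two pieces then gives $(-1)^{m}t^{N-1}M_{\beta,F}(x^{(1)})$, and this boundary evaluation, where the extra tableau freedom must be controlled, is the main obstacle.
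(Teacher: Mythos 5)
Up through the splitting, your outline coincides with the paper's proof: setting $j=N-m$ in (\ref{PrPt1a})--(\ref{PrPt1b}) kills the first line, $\widetilde{P}_{m+2}\left(T_{m+1}-\kappa_{m+1-N}\right)=\widetilde{P}_{m+1}$, expanding $r_{m}=t\boldsymbol{T}_{m}^{-1}r_{m-1}$ at $\widetilde{y}_{m}$ with $b\left(\widetilde{y}_{m};m\right)=\kappa_{m-N}$ produces the two pieces, and your evaluation of the first piece as $\left(-1\right)^{m}t^{N-1}M_{\alpha,F}\left(x^{\left(1\right)};\theta\right)$ is exactly the paper's. The genuine gap is your claim that the boundary piece $\widetilde{P}_{m}r_{m-1}\left(\widetilde{y}_{m}s_{m}\right)$ survives. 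It rests on the assertion that $T_{i}\tau_{F}=t\tau_{F}$ for $i\geq m$, which fails at $i=m$: in $Y_{F}$ the entry $m$ lies in the column and $m+1$ in the first row, so $T_{m}$ is not the scalar $t$ on $\tau_{F}$ (indeed $T_{m}\left(\theta_{1}\cdots\theta_{m}\theta_{j}\right)=\theta_{1}\cdots\theta_{m-1}\theta_{m+1}\theta_{j}$ for $j\geq m+2$). Hence $\widetilde{P}_{m}\tau_{F}\neq\left[N-m+1\right]_{t}\tau_{F}$; in fact $\widetilde{P}_{m}\tau_{F}=0$, which is precisely the paper's key computation: with $g=\theta_{1}\cdots\theta_{m-1}$ one finds $\left(T_{N-1}-\kappa_{-1}\right)\cdots\left(T_{m}-\kappa_{m-N}\right)\left(g\theta_{m}\right)=g\left(\theta_{m}+\cdots+\theta_{N}\right)=\left(-1\right)^{m-1}M\left(g\right)$, and since $\tau_{F}=M\left(g\theta_{m}\right)$ and $M$ commutes with the $T_{i}$, one gets $\widetilde{P}_{m}\tau_{F}=\pm M\left(M\left(g\right)\right)=0$ by $M^{2}=0$. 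You also have the tableau count backwards: Lemma \ref{Ti-t} applies at $\widetilde{y}_{m}s_{m}$ for every $m+1\leq i\leq N-1$, and there is exactly one hook RSYT with $m+1,\ldots,N$ in the first row, so $r_{m-1}\left(\widetilde{y}_{m}s_{m}\right)=c\tau_{F}$ and the boundary term vanishes. Type (1) is thus structurally identical to type (0), with $M^{2}=0$ replacing $D^{2}=0$, and the total equals your first piece.

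What forced you into this contortion is a misprint in the statement: the right-hand side should read $M_{\alpha,F}\left(x^{\left(1\right)};\theta\right)$, not $M_{\beta,F}\left(x^{\left(1\right)};\theta\right)$. The paper's own proof ends with $P_{N-1}r_{N-1}\left(y_{N-1}\right)=-t^{N-m}r_{m-1}\left(x^{\left(1\right)}\right)=\left(-1\right)^{m}t^{N-1}r_{0}\left(x^{\left(1\right)}\right)$ where $r_{0}=M_{\alpha,F}$; the summary of intermediate steps in the following subsection records the identity with $M_{\alpha,F}$; and the type (0) analogue reads $t^{m}M_{\alpha,E}\left(x^{\left(0\right)}\right)$. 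The polynomial $M_{\beta,F}$ enters only through the separate identity $M_{\beta,F}\left(y_{N-1};\theta\right)=\zeta_{\alpha,F}\left(1\right)\left(y_{N-1}\right)_{N}r_{N-1}\left(y_{N-1}\right)$, evaluated at $y_{N-1}$, not at $x^{\left(1\right)}$. Your own step 2 in fact shows the literal version cannot hold: the available relations $\left(\boldsymbol{T}_{i}-t\right)M_{\beta,F}=0$ are only for $m\leq i\leq N-2$, and they do not force proportionality to $\tau_{F}$ --- no hook RSYT can have $m,\ldots,N-1$ in the same row as $N$ (which always occupies the corner), since the first row has only $N-m$ boxes --- whereas the left-hand side is a multiple of $\tau_{F}$ by Lemma \ref{M1ctauF}. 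So the identity you set out to prove is false as printed, and the corrected statement is established by the argument whose first half you already carried out.
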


\begin{proof}
Set $j=N-m$ in (\ref{PrPt1a})%
\[
P_{N-1}r_{N-1}\left(  y_{N-1}\right)  =\widetilde{P}_{m+2}\left(
T_{m+1}-\kappa_{m+1-N}\right)  r_{m}\left(  \widetilde{y}_{m}\right)
=\widetilde{P}_{m+1}r_{m}\left(  \widetilde{y}_{m}\right)
\]
and $b\left(  \widetilde{y}_{m};m\right)  =\kappa_{m-N}$ (note $\widetilde
{y}_{m}=x^{\left(  1\right)  }s_{N-1}s_{N-2}\cdots s_{m+1}$) thus%
\[
\widetilde{P}_{m+1}r_{m}\left(  \widetilde{y}_{m}\right)  =-\frac{t^{N-m}%
}{\left[  N-m\right]  _{t}}\widetilde{P}_{m+1}r_{m-1}\left(  \widetilde{y}%
_{m}\right)  +\widetilde{P}_{m+1}\left(  T_{m}-\kappa_{m-N}\right)
r_{m-1}\left(  \widetilde{y}_{m}s_{m}\right)  .
\]
Now $\widetilde{y}_{m}=\left(  \ldots,\overset{\left(  m\right)  }{t^{1-m}%
},\overset{\left(  m+1\right)  }{t^{1-N}},t^{-m},\ldots,t^{2-N}\right)  $ thus
$\widetilde{y}_{m}$ satisfies the hypothesis of Lemma \ref{TTfj1} with
$j=N-m-1$ and%
\begin{align*}
&  \left(  T_{N-1}+1\right)  \left(  T_{N-2}-\kappa_{-2}\right)  \cdots\left(
T_{m+1}-\kappa_{m+1-N}\right)  r_{m-1}\left(  x^{\left(  1\right)  }%
s_{N-1}s_{N-2}\cdots s_{m+1}\right) \\
&  =\left[  N-m\right]  _{t}r_{m-1}\left(  x^{\left(  1\right)  }\right)  .
\end{align*}
Since $\widetilde{y}_{m}s_{m}=\left(  \ldots,t^{1-N},\overset{\left(
m+1\right)  }{t^{1-m}},t^{-m},\ldots t^{2-N}\right)  $ and $\left(
\boldsymbol{T}_{i}-t\right)  r_{m-1}=0$ for $m+1\leq i<N$ it follows that
$\left(  T_{i}-t\right)  r_{m-1}\left(  \widetilde{y}_{m}s_{m}\right)  =0$ for
the same $i$ values and hence $r_{m-1}\left(  \widetilde{y}_{m}s_{m}\right)
=c\tau_{F}$ (with $F=\left\{  1,2,\ldots,m\right\}  $ because $m+1,m+2,\ldots
,N$ lie in the same row of $Y_{F}$). Take $\tau_{F}=M\left(  \theta_{1}%
\cdots\theta_{m}\right)  $ and $g=\theta_{1}\cdots\theta_{m-1}$ then%
\begin{align*}
\left(  T_{m}-\kappa_{m-N}\right)  \left(  g\theta_{m}\right)   &
=g\theta_{m+1}+\frac{1}{\left[  N-m\right]  _{t}}g\theta_{m},\\
\left(  T_{m+1}-\kappa_{m+1-N}\right)  \left(  T_{m}-\kappa_{m-N}\right)
\left(  g\theta_{m}\right)   &  =g\theta_{m+2}+\frac{1}{\left[  N-m-1\right]
_{t}}g\theta_{m+1}\\
&  +\left(  t+\frac{1}{\left[  N-m-1\right]  _{t}}\right)  \frac{1}{\left[
N-m\right]  _{t}}g\theta_{m}\\
&  =g\theta_{m+2}+\frac{1}{\left[  N-m-1\right]  _{t}}g\left(  \theta
_{m+1}+\theta_{m}\right)  ,
\end{align*}
because $t+\frac{1}{\left[  j-1\right]  _{t}}=\frac{\left[  j\right]  _{t}%
}{\left[  j-1\right]  _{t}}$. Continue this process to obtain%
\[
\left(  T_{N-1}-\kappa_{-1}\right)  \cdots\left(  T_{m}-\kappa_{m-N}\right)
\left(  g\theta_{m}\right)  =g\left(  \theta_{N}+\cdots+\theta_{m}\right)
=\left(  -1\right)  ^{m-1}M\left(  g\right)
\]
thus $\left(  T_{N-1}-\kappa_{-1}\right)  \cdots\left(  T_{m}-\kappa
_{m-N}\right)  r_{m-1}\left(  \widetilde{y}_{m}s_{m}\right)  =0$ because
$M^{2}=0$. Thus $P_{N-1}r_{N-1}\left(  y_{N-1}\right)  =-t^{N-m}r_{m-1}\left(
x^{\left(  1\right)  }\right)  =\left(  -1\right)  ^{m}t^{N-1}r_{0}\left(
x^{\left(  1\right)  }\right)  $.
\end{proof}

\subsection{Evaluation formula for type (1)}

Recall the intermediate steps:%
\begin{align*}
V^{\left(  1\right)  }\left(  \alpha\right)   &  =\left(  -t\right)
^{1-k}\prod\limits_{i=1}^{k-1}\frac{1-q^{\lambda_{i}-\lambda_{k}+1}t^{i-i}%
}{1-q^{\lambda_{i}-\lambda_{k}+1}t^{i-k-1}}V^{\left(  1\right)  }\left(
\lambda^{\prime}\right) \\
P_{N-1}r_{N-1}\left(  y_{N-1}\right)   &  =\left(  -1\right)  ^{m}%
t^{N-1}M_{\alpha,F}\left(  x^{\left(  1\right)  };\theta\right) \\
M_{\beta,F}\left(  y_{N-1};\theta\right)   &  =\zeta_{\alpha,F}\left(
1\right)  \left(  y_{N-1}\right)  _{N}~r_{N-1}\left(  y_{N-1}\right) \\
M_{\delta,F}\left(  x^{\left(  1\right)  };\theta\right)   &  =-t\dfrac
{1-q^{\lambda_{k}}t^{k-N-1}}{1-q^{\lambda_{k}}t^{k-N}}P_{N-1}M_{\beta
,F}\left(  y_{N-1};\theta\right) \\
V^{\left(  1\right)  }\left(  \lambda\right)   &  =\left(  -t\right)
^{m-k}\frac{1-q^{\lambda_{k}}t^{k-m-1}}{1-q^{\lambda_{k}}t^{-1}}V^{\left(
1\right)  }\left(  \delta\right)  .
\end{align*}

\begin{proposition}
Suppose $\lambda\in\mathcal{N}_{1}^{+}$ satisfies $\lambda_{k}\geq1$ and
$\lambda_{i}=0$ for $i>k$ with $k\leq m$ then%
\begin{align*}
V^{\left(  1\right)  }\left(  \lambda\right)   &  =q^{\lambda_{k}-1}%
t^{N-m-k}\frac{\left(  1-q^{\lambda_{k}}t^{k-m-1}\right)  \left(
1-q^{\lambda_{k}}t^{k-N-1}\right)  }{\left(  1-q^{\lambda_{k}}t^{-1}\right)
\left(  1-q^{\lambda_{k}}t^{k-N}\right)  }\\
&  \times\prod\limits_{i=1}^{k-1}\frac{1-q^{\lambda_{i}-\lambda_{k}+1}t^{i-k}%
}{1-q^{\lambda_{i}-\lambda_{k}+1}t^{i-k-1}}V^{\left(  1\right)  }\left(
\lambda^{\prime}\right)  ,
\end{align*}
where $\lambda_{j}^{\prime}=\lambda_{j}$ for $j\neq k$ and $\lambda
_{k}^{\prime}=\lambda_{k}-1$.
\end{proposition}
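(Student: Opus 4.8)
The plan is to chain together the five relations just recalled, mirroring the one‑line proof of the type (0) Proposition (where the leading factor was simply $t^{m}\zeta_{\alpha,E}\left(1\right)\left(y_{N-1}\right)_{N}$), while tracking the extra signs and powers of $t$ peculiar to type (1). First I would combine the affine‑step identity $M_{\beta,F}\left(y_{N-1};\theta\right)=\zeta_{\alpha,F}\left(1\right)\left(y_{N-1}\right)_{N}r_{N-1}\left(y_{N-1}\right)$ with the evaluation $P_{N-1}r_{N-1}\left(y_{N-1}\right)=\left(-1\right)^{m}t^{N-1}M_{\alpha,F}\left(x^{\left(1\right)};\theta\right)$. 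Since $P_{N-1}$ acts only on the $\theta$‑variables while $\zeta_{\alpha,F}\left(1\right)\left(y_{N-1}\right)_{N}$ is a scalar, this gives
\[
P_{N-1}M_{\beta,F}\left(y_{N-1};\theta\right)=\left(-1\right)^{m}t^{N-1}\zeta_{\alpha,F}\left(1\right)\left(y_{N-1}\right)_{N}M_{\alpha,F}\left(x^{\left(1\right)};\theta\right).
\]

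Next I would substitute this into $M_{\delta,F}\left(x^{\left(1\right)};\theta\right)=-t\frac{1-q^{\lambda_{k}}t^{k-N-1}}{1-q^{\lambda_{k}}t^{k-N}}P_{N-1}M_{\beta,F}\left(y_{N-1};\theta\right)$ and cancel $\tau_{F}$ using $M_{\delta,F}\left(x^{\left(1\right)}\right)=V^{\left(1\right)}\left(\delta\right)\tau_{F}$ and $M_{\alpha,F}\left(x^{\left(1\right)}\right)=V^{\left(1\right)}\left(\alpha\right)\tau_{F}$. Inserting the explicit leading values $\zeta_{\alpha,F}\left(1\right)=q^{\lambda_{k}-1}t^{k-m-1}$ (from the affine step) and $\left(y_{N-1}\right)_{N}=t^{-m}$ (read off from the displayed form of $y_{i-1}$ at $i=N$), this expresses $V^{\left(1\right)}\left(\delta\right)$ as a scalar multiple of $V^{\left(1\right)}\left(\alpha\right)$. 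Finally I would insert the two endpoint relations $V^{\left(1\right)}\left(\alpha\right)=\left(-t\right)^{1-k}\prod_{i=1}^{k-1}\frac{1-q^{\lambda_{i}-\lambda_{k}+1}t^{i-k}}{1-q^{\lambda_{i}-\lambda_{k}+1}t^{i-k-1}}V^{\left(1\right)}\left(\lambda^{\prime}\right)$ and $V^{\left(1\right)}\left(\lambda\right)=\left(-t\right)^{m-k}\frac{1-q^{\lambda_{k}}t^{k-m-1}}{1-q^{\lambda_{k}}t^{-1}}V^{\left(1\right)}\left(\delta\right)$, so that $V^{\left(1\right)}\left(\lambda\right)$ is written directly in terms of $V^{\left(1\right)}\left(\lambda^{\prime}\right)$. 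The two fraction factors from the $\delta\to\lambda$ step and the $\beta\to\delta$ step supply the quotient $\frac{\left(1-q^{\lambda_{k}}t^{k-m-1}\right)\left(1-q^{\lambda_{k}}t^{k-N-1}\right)}{\left(1-q^{\lambda_{k}}t^{-1}\right)\left(1-q^{\lambda_{k}}t^{k-N}\right)}$, while the product over $i$ is carried along unchanged.

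The only genuine work is assembling the scalar prefactor, and this is where an error is most likely to slip in, so it is where I would focus. Collecting the signs from $\left(-t\right)^{m-k}$, the $-t$, the $\left(-1\right)^{m}$, and $\left(-t\right)^{1-k}$ gives $\left(-1\right)^{\left(m-k\right)+1+m+\left(1-k\right)}=\left(-1\right)^{2m-2k+2}=1$, so the overall sign is $+1$. Collecting the powers of $t$ from those same factors together with $t^{N-1}$ and $\zeta_{\alpha,F}\left(1\right)\left(y_{N-1}\right)_{N}=q^{\lambda_{k}-1}t^{k-2m-1}$ gives $\left(m-k\right)+1+\left(N-1\right)+\left(k-2m-1\right)+\left(1-k\right)=N-m-k$, and the power of $q$ is $\lambda_{k}-1$. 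Hence the prefactor equals $q^{\lambda_{k}-1}t^{N-m-k}$, and combining it with the displayed quotient and the product over $i$ reproduces the asserted formula verbatim.
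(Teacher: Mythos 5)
Your proposal is correct and follows exactly the paper's route: the paper likewise chains the five recalled intermediate relations and its proof consists precisely of computing the leading scalar, $t^{N+m-2k+1}\zeta_{\alpha,F}\left(1\right)\left(y_{N-1}\right)_{N}=q^{\lambda_{k}-1}t^{N-m-k}$, which agrees with your sign count $\left(-1\right)^{2m-2k+2}=1$ and exponent count $\left(m-k\right)+1+\left(N-1\right)+\left(k-2m-1\right)+\left(1-k\right)=N-m-k$. Your write-up merely makes explicit the chaining and the cancellation of $\tau_{F}$ that the paper leaves implicit.
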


\begin{proof}
The leading factors are $t^{N+m-2k+1}\zeta_{\alpha,F}\left(  1\right)  \left(
y_{N-1}\right)  _{N}=t^{N-m-k}q^{\lambda_{k}-1}$, since $\zeta_{\alpha
,F}\left(  1\right)  =q^{\lambda_{k}-1}t^{k-1-m}$ and $\left(  y_{N-1}\right)
_{N}=t^{-m}$.
\end{proof}

\begin{corollary}
Suppose $\lambda$ is as in the Proposition and $\lambda^{\prime\prime}$
satisfies $\lambda_{j}^{\prime\prime}=\lambda_{j}$ for $j\neq k$ and
$\lambda_{k}^{\prime\prime}=0$ then%
\begin{align*}
V^{\left(  1\right)  }\left(  \lambda\right)   &  =q^{\binom{\lambda_{k}}{2}%
}t^{\lambda_{k}\left(  N-m-k\right)  }\frac{\left(  qt^{k-m-1};q\right)
_{\lambda_{k}}\left(  qt^{k-N-1};q\right)  _{\lambda_{k}}}{\left(
qt^{-1};q\right)  _{\lambda_{k}}\left(  qt^{k-N};q\right)  _{\lambda_{k}}}\\
&  \times\prod\limits_{i=1}^{k-1}\frac{\left(  qt^{i-k};q\right)
_{\lambda_{i}}\left(  qt^{i-k-1};q\right)  _{\lambda_{i}-\lambda_{k}}}{\left(
qt^{i-k};q\right)  _{\lambda_{i}-\lambda_{k}}\left(  qt^{i-k-1};q\right)
_{\lambda_{i}}}V^{\left(  1\right)  }\left(  \lambda^{\prime\prime}\right)  .
\end{align*}

\end{corollary}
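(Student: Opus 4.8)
The plan is to iterate the preceding Proposition in the last part $\lambda_k$, exactly as in the proof of the type (0) Corollary. Introduce the intermediate compositions $\mu^{(\ell)}$ with $\mu^{(\ell)}_j=\lambda_j$ for $j<k$, $\mu^{(\ell)}_k=\ell$, and $\mu^{(\ell)}_j=0$ for $j>k$, so that $\mu^{(\lambda_k)}=\lambda$ and $\mu^{(0)}=\lambda''$. For each $\ell$ with $1\leq\ell\leq\lambda_k$ the composition $\mu^{(\ell)}$ satisfies the hypotheses of the Proposition with $k$-th part $\ell$, and $\mu^{(\ell-1)}$ is its primed companion; the Proposition then supplies the ratio $V^{(1)}(\mu^{(\ell)})/V^{(1)}(\mu^{(\ell-1)})$. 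Multiplying these ratios over $\ell=1,\dots,\lambda_k$ telescopes $V^{(1)}(\lambda)$ down to $V^{(1)}(\lambda'')$, and the Corollary amounts to evaluating the resulting product.

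First I would collect the monomial prefactors: the $q$-powers give $\prod_{\ell=1}^{\lambda_k}q^{\ell-1}=q^{\binom{\lambda_k}{2}}$ and the $t$-powers give $\prod_{\ell=1}^{\lambda_k}t^{N-m-k}=t^{\lambda_k(N-m-k)}$, matching the stated leading monomial. Next, the four factors depending only on $\ell$ assemble directly into shifted $q$-factorials by the definition $(a;q)_n=\prod_{j=0}^{n-1}(1-aq^j)$: since $1-q^\ell t^{k-m-1}$ runs over $\ell=1,\dots,\lambda_k$ one obtains $(qt^{k-m-1};q)_{\lambda_k}$, and likewise $(qt^{k-N-1};q)_{\lambda_k}$ in the numerator and $(qt^{-1};q)_{\lambda_k}$, $(qt^{k-N};q)_{\lambda_k}$ in the denominator. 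This reproduces the first line of the claimed product.

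The only step requiring care is the double product $\prod_{\ell=1}^{\lambda_k}\prod_{i=1}^{k-1}$ built from the $i$-indexed factors of the Proposition. I would interchange the two products and, for each fixed $i$, apply formula (\ref{lbijk}) with $n=i-k$ to the numerator and $n=i-k-1$ to the denominator; as $\ell$ ranges over $1,\dots,\lambda_k$ the exponent $\lambda_i-\ell+1$ ranges over $\lambda_i,\dots,\lambda_i-\lambda_k+1$, which is exactly the range covered by (\ref{lbijk}). This converts the $i$-th factor into $\dfrac{(qt^{i-k};q)_{\lambda_i}(qt^{i-k-1};q)_{\lambda_i-\lambda_k}}{(qt^{i-k};q)_{\lambda_i-\lambda_k}(qt^{i-k-1};q)_{\lambda_i}}$, giving the second line.

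This is the same argument as for type (0), and no genuine obstacle arises: the sign factors $(-t)^{1-k}$, $(-t)^{m-k}$ and $(-1)^m$ that appeared in deriving the Proposition have already cancelled into its sign-free statement, so at the level of the Corollary only telescoping remains. The one point to watch is that the relevant $t$-exponents are now negative (for instance $t^{-1}$ and $t^{k-N}$), but (\ref{lbijk}) is a purely algebraic identity valid for every integer $n$, so its use at $n=i-k$ and $n=i-k-1$ needs no modification.
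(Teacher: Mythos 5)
Your proposal is correct and takes essentially the same route as the paper: the paper's proof of this Corollary consists precisely of iterating the preceding Proposition over the last part $\lambda_k$ and invoking formula (\ref{lbijk}), here with $n=i-k$ and $n=i-k-1$, which is exactly your telescoping computation (the type (0) analogue makes this explicit with $n=k-i,k-i+1$). Your added observations — that each intermediate $\mu^{(\ell)}$ remains a partition in $\mathcal{N}_{1}^{+}$ so the Proposition applies at every step, and that (\ref{lbijk}) holds for negative exponents $n$ — are correct and fill in details the paper leaves implicit.
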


\begin{proof}
This uses formula (\ref{lbijk}).
\end{proof}

This formula can now be multiplied out over $k$, starting with $\lambda
=\boldsymbol{0}$, where $M_{\boldsymbol{0},F}\left(  x;\theta\right)
=\tau_{F}\left(  \theta\right)  $.

\begin{theorem}
\label{V1lambda}Suppose $\lambda\in\mathcal{N}_{1}^{+}$ then
\[
V^{\left(  1\right)  }\left(  \lambda\right)  =q^{\beta\left(  \lambda\right)
}t^{e_{1}\left(  \lambda\right)  }\prod\limits_{k=1}^{m}\frac{\left(
qt^{k-N-1};q\right)  _{\lambda_{k}}}{\left(  qt^{k-N};q\right)  _{\lambda_{k}%
}}\prod\limits_{1\leq i<j\leq m}\frac{\left(  qt^{i-1-1};q\right)
_{\lambda_{i}-\lambda_{j}}}{\left(  qt^{i-j};q\right)  _{\lambda_{i}%
-\lambda_{j}}}%
\]
where $\beta\left(  \lambda\right)  :=\sum_{i=1}^{N-m-1}\binom{\lambda_{i}}%
{2}$ and $e_{1}\left(  \lambda\right)  :=\sum_{i=1}^{m}\lambda_{i}\left(
N-m-i\right)  $.
\end{theorem}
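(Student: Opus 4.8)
The plan is to mirror the proof of Theorem~\ref{V0thm} line for line, interchanging the roles of $N-m-1$ and $m$ in the spirit of the symbol $\Xi$ of Definition~\ref{defcapxi} and reading off the data from the Corollary immediately preceding this theorem. For $1\le k\le m$ define the truncation $\lambda^{(k)}$ by $\lambda_i^{(k)}=\lambda_i$ for $1\le i\le k$ and $\lambda_i^{(k)}=0$ for $i>k$. Since $M_{\boldsymbol{0},F}(x;\theta)=\tau_F(\theta)$ we have $V^{(1)}(\boldsymbol{0})=1$, so $V^{(1)}(\lambda)=\rho_1\rho_2\cdots\rho_m$ where $\rho_k:=V^{(1)}(\lambda^{(k)})/V^{(1)}(\lambda^{(k-1)})$ is supplied in closed product form by that Corollary. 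Because $\rho_k=1$ whenever $\lambda_k=0$, the upper index $k=m$ may be used uniformly irrespective of where the last nonzero part of $\lambda$ sits; this is precisely what allows the induction to close.

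First I would split each factor of $\rho_k$ into three groups: the prefactor $q^{\binom{\lambda_k}{2}}t^{\lambda_k(N-m-k)}$, the single-index factors carrying a subscript $\lambda_i$, and the difference factors carrying a subscript $\lambda_i-\lambda_k$. The prefactors accumulate immediately to $q^{\sum_k\binom{\lambda_k}{2}}t^{\sum_k\lambda_k(N-m-k)}=q^{\beta(\lambda)}t^{e_1(\lambda)}$, using $\lambda_i=0$ for $i>m$. The difference factors $\dfrac{(qt^{i-k-1};q)_{\lambda_i-\lambda_k}}{(qt^{i-k};q)_{\lambda_i-\lambda_k}}$ occurring in $\rho_k$ are exactly the pair-$(i,k)$ terms of the target formula, so gathering them over all $k$ reproduces $\prod_{1\le i<j\le m}\dfrac{(qt^{i-j-1};q)_{\lambda_i-\lambda_j}}{(qt^{i-j};q)_{\lambda_i-\lambda_j}}$ with no further manipulation.

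The technical heart is the telescoping of the single-index factors for each fixed $i$. At step $k=i$ the prefactor part contributes $\dfrac{(qt^{i-m-1};q)_{\lambda_i}(qt^{i-N-1};q)_{\lambda_i}}{(qt^{-1};q)_{\lambda_i}(qt^{i-N};q)_{\lambda_i}}$, while for each $k=i+1,\ldots,m$ the term $\dfrac{(qt^{i-k};q)_{\lambda_i}}{(qt^{i-k-1};q)_{\lambda_i}}$ appears; writing $l=k-i$, this latter product is $\prod_{l=1}^{m-i}\dfrac{(qt^{-l};q)_{\lambda_i}}{(qt^{-l-1};q)_{\lambda_i}}$, which telescopes to $\dfrac{(qt^{-1};q)_{\lambda_i}}{(qt^{i-m-1};q)_{\lambda_i}}$. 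Multiplying the two pieces, the factors $(qt^{i-m-1};q)_{\lambda_i}$ and $(qt^{-1};q)_{\lambda_i}$ cancel and leave precisely $\dfrac{(qt^{i-N-1};q)_{\lambda_i}}{(qt^{i-N};q)_{\lambda_i}}$; the product over $i$ then yields the first displayed product $\prod_{k=1}^{m}\dfrac{(qt^{k-N-1};q)_{\lambda_k}}{(qt^{k-N};q)_{\lambda_k}}$.

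I expect the only genuine obstacle to be this single-index telescoping together with the bookkeeping of the $t$-exponents. All signs have already been absorbed in the Corollary (the $(-t)^{1-k}$ and $(-t)^{m-k}$ factors from the intermediate affine and shift steps cancelled there), so no signs survive into the final product; the main care is to confirm that the prefactor powers $\lambda_k(N-m-k)$ sum to $e_1(\lambda)$ and that the cancellation of the boundary symbols $(qt^{-1};q)_{\lambda_i}$ and $(qt^{i-m-1};q)_{\lambda_i}$ is exact, which is guaranteed once the three groups of factors have been separated cleanly.
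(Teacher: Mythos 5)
Your proposal is correct and takes essentially the same route as the paper: the paper's proof of this theorem is just an invocation of the Theorem \ref{V0thm} argument via the interchange $\Xi$, and that argument is precisely the telescoping over truncations $\lambda^{(k)}$ of the preceding type (1) Corollary that you carry out explicitly (prefactors, single-index telescoping, and pair factors). Note also that your reading of the misprinted pair factor as $\left(qt^{i-j-1};q\right)_{\lambda_i-\lambda_j}$ and of the $q$-exponent as $\sum_{k=1}^{m}\binom{\lambda_k}{2}$ is the intended one.
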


\begin{proof}
This is the same argument used in Theorem \ref{V0thm} by the application of
$\Xi$ .
\end{proof}

\begin{remark}
Recall the leading term of $M_{\lambda,F}\left(  x;\theta\right)  $, namely
$q^{\beta\left(  \lambda\right)  }t^{e\left(  \lambda,F\right)  }x^{\lambda
}\tau_{F}\left(  \theta\right)  $, where $e\left(  \lambda,F\right)
=\sum_{i=1}^{N}\lambda_{i}\left(  N-i+c\left(  i,F\right)  \right)  $. By
using $c\left(  i,F\right)  =i-m-1$ for $1\leq i\leq m$ one finds that
$e_{1}\left(  \lambda\right)  =\sum_{i=1}^{m}\lambda_{i}\left(  N+c\left(
i,F\right)  -2i+1\right)  $ so that $e_{1}\left(  \lambda\right)  -e\left(
\lambda,F\right)  =-\sum_{i=1}^{m}\lambda_{i}\left(  i-1\right)  =-n\left(
\lambda\right)  $ and $\left(  x^{\left(  1\right)  }\right)  ^{\lambda
}=t^{-n\left(  \lambda\right)  }$.
\end{remark}

\section{\label{SectHkPro}Hook product formulation}

Recall the definition of the $\left(  q,t\right)  $-hook product%
\[
h_{q,t}\left(  a;\lambda\right)  =\prod\limits_{\left(  i,j\right)  \in
\lambda}\left(  1-aq^{\mathrm{arm}\left(  i,j;\lambda\right)  }t^{l\mathrm{eg}%
\left(  i,j;\lambda\right)  }\right)  ,
\]
where $\mathrm{arm}\left(  i,j;\lambda\right)  =\lambda_{i}-j$ and
$\mathrm{leg}\left(  i,j;\lambda\right)  =\#\left\{  l:i<l\leq\ell\left(
\lambda\right)  ,j\leq\lambda_{l}\right\}  $, where the length of $\lambda$ is
$\ell\left(  \lambda\right)  =\max\left\{  i:\lambda_{i}\geq1\right\}  $. The
terminology refers to the Ferrers diagram of $\lambda$ which consists of boxes
at $\left\{  \left(  i,j\right)  :1\leq i\leq\ell\left(  \lambda\right)
,1\leq j\leq\lambda_{i}\right\}  $.

\begin{proposition}
Suppose $\lambda\in\mathbb{N}_{0}^{N,+}$ and $\ell\left(  \lambda\right)  \leq
L$ for some fixed $L\leq N$ then%
\begin{equation}
\prod\limits_{1\leq i<j\leq L}\frac{\left(  qt^{j-i};q\right)  _{\lambda
_{i}-\lambda_{j}}}{\left(  qt^{j-i+1};q\right)  _{\lambda_{i}-\lambda_{j}}%
}=h_{q,t}\left(  qt;\lambda\right)
{\displaystyle\prod_{i=1}^{L}}
\left(  qt^{L-i+1};q\right)  _{\lambda_{1}}^{-1}. \label{hookpr1}%
\end{equation}

\end{proposition}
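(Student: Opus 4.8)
The plan is to compute the hook product $h_{q,t}(qt;\lambda)$ directly from its definition and reorganize it into the pairwise product. With $a=qt$ each box $(i,j)$ contributes the factor $1-q^{\lambda_i-j+1}t^{\mathrm{leg}(i,j)+1}$, so that, adopting the convention $\lambda_l=0$ for $l>\ell(\lambda)$ (harmless since $\ell(\lambda)\le L$),
\[
h_{q,t}(qt;\lambda)=\prod_{i=1}^{L}\prod_{j=1}^{\lambda_i}\left(1-q^{\lambda_i-j+1}t^{\mathrm{leg}(i,j)+1}\right).
\]

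First I would isolate a single row $i$. Because $\lambda$ is nonincreasing, $\mathrm{leg}(i,j)=\#\{l>i:\lambda_l\ge j\}$ is a step function of $j$, taking the constant value $r$ precisely on the block $\lambda_{i+r+1}<j\le\lambda_{i+r}$ for $0\le r\le L-i$. On that block I substitute $k=\lambda_i-j+1$, turning the factors into $1-q^{k}t^{r+1}$ with $k$ running over the consecutive range $\lambda_i-\lambda_{i+r}+1\le k\le\lambda_i-\lambda_{i+r+1}$, so the block equals $(qt^{r+1};q)_{\lambda_i-\lambda_{i+r+1}}/(qt^{r+1};q)_{\lambda_i-\lambda_{i+r}}$. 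Multiplying over $r$ gives the row-$i$ contribution $\prod_{r=0}^{L-i}(qt^{r+1};q)_{\lambda_i-\lambda_{i+r+1}}/(qt^{r+1};q)_{\lambda_i-\lambda_{i+r}}$.

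Next I reindex, putting $j=i+r+1$ in each numerator and $j=i+r$ in each denominator (so $r+1=j-i$ and $r+1=j-i+1$ respectively); the row-$i$ contribution becomes $\prod_{j=i+1}^{L+1}(qt^{j-i};q)_{\lambda_i-\lambda_j}$ divided by $\prod_{j=i}^{L}(qt^{j-i+1};q)_{\lambda_i-\lambda_j}$. The denominator term at $j=i$ is $(qt;q)_0=1$, while the numerator term at $j=L+1$ is $(qt^{L-i+1};q)_{\lambda_i}$ because $\lambda_{L+1}=0$. Splitting these boundary factors off and taking the product over all rows $i$ collapses the remaining double product into $\prod_{1\le i<j\le L}$, yielding
\[
h_{q,t}(qt;\lambda)=\prod_{i=1}^{L}(qt^{L-i+1};q)_{\lambda_i}\prod_{1\le i<j\le L}\frac{(qt^{j-i};q)_{\lambda_i-\lambda_j}}{(qt^{j-i+1};q)_{\lambda_i-\lambda_j}},
\]
which rearranges to the stated identity (with the boundary factor carrying the $i$-th part $\lambda_i$).

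The argument is essentially bookkeeping; the only real care needed is in the step-function description of the leg lengths and in keeping the conventions consistent ($\lambda_l=0$ for $l>\ell(\lambda)$ and empty $q$-Pochhammer symbols equal to $1$) so that the telescoping blocks and the reindexing align. A check at $L=2$, where both sides reduce to $(qt;q)_{\lambda_1-\lambda_2}/(qt^2;q)_{\lambda_1-\lambda_2}$, confirms the normalization.
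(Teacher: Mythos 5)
Your proof is correct, but it takes a genuinely different route from the paper's. The paper argues by induction on the boxes of the Ferrers diagram: writing $\lambda^{\prime}$ for $\lambda$ with the last box removed (so $\lambda_{k}^{\prime}=\lambda_{k}-1$ at the last nonzero part), it computes the three ratios $A\left(  \lambda\right)  /A\left(  \lambda^{\prime}\right)  $, $h_{q,t}\left(  qt;\lambda\right)  /h_{q,t}\left(  qt;\lambda^{\prime}\right)  $ and $B\left(  \lambda\right)  /B\left(  \lambda^{\prime}\right)  $ --- where $A$ denotes the left-hand side and $B$ the second factor on the right --- shows each telescopes so that the two sides change by the same factor, and starts the induction at $\lambda=\left(  1,0,\ldots,0\right)  $. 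You instead evaluate $h_{q,t}\left(  qt;\lambda\right)  $ in closed form in one pass: decomposing each row into the blocks on which $\mathrm{leg}\left(  i,j\right)  $ is constant, recognizing each block as a ratio of $q$-Pochhammer symbols, and reindexing so that the boundary terms at $j=L+1$ produce exactly $\prod_{i=1}^{L}\left(  qt^{L-i+1};q\right)  _{\lambda_{i}}$ while the interior terms assemble into the pairwise product. Your computation is sound: the step-function description of the leg-lengths (constant value $r$ on $\lambda_{i+r+1}<j\leq\lambda_{i+r}$), the disjoint coverage of each row by these blocks, and both reindexings check out, with empty blocks handled by the empty-product convention. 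What your approach buys is a non-inductive, structurally transparent argument that exhibits the hook product directly as the pairwise ratios times the generalized Pochhammer symbol $\left(  qt^{L};q,t\right)  _{\lambda}$; what the paper's approach buys is consistency with the box-removal ratio computations used throughout the rest of the paper (the same ratios appear in the evaluation recursions), at the cost of needing a base case and an induction. One remark: the displayed statement has $\left(  qt^{L-i+1};q\right)  _{\lambda_{1}}$, but this is evidently a typo for $\lambda_{i}$ --- as confirmed by the paper's subsequent identification of this product with $\left(  qt^{L};q,t\right)  _{\lambda}$ --- and your derivation correctly produces the $\lambda_{i}$ version.
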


\begin{proof}
The argument is by implicit induction on the last box to be added to the
Ferrers diagram of $\lambda$. Suppose $\lambda_{i}=0$ for $i>k$ and
$\lambda_{k}\geq1$. Define $\lambda^{\prime}$ by $\lambda_{i}^{\prime}%
=\lambda_{i}$ for all $i$ except $\lambda_{k}^{\prime\prime}=\lambda_{k}-1$.
Denote the product on the left side of (\ref{hookpr1}) by $A\left(
\lambda\right)  $, then%
\begin{align*}
\frac{A\left(  \lambda\right)  }{A\left(  \lambda^{\prime}\right)  }  &  =%
{\displaystyle\prod_{i=1}^{k-1}}
\frac{1-q^{\lambda_{i}-\lambda_{k}+1}t^{k-i+1}}{1-q^{\lambda_{i}-\lambda
_{k}+1}t^{k-i}}%
{\displaystyle\prod_{j=k+1}^{L}}
\frac{1-q^{\lambda_{k}}t^{j-k}}{1-q^{\lambda_{k}}t^{i-k+1}}\\
&  =\frac{1-q^{\lambda_{k}}t}{1-q^{\lambda_{k}}t^{L-k+1}}%
{\displaystyle\prod_{i=1}^{k-1}}
\frac{1-q^{\lambda_{i}-\lambda_{k}+1}t^{k-i+1}}{1-q^{\lambda_{i}-\lambda
_{k}+1}t^{k-i}};
\end{align*}
the $j$-product telescopes. Adjoining a box at $\left(  k,\lambda_{k}\right)
$ to the diagram of $\lambda^{\prime}$ causes these changes: $\mathrm{leg}%
\left(  i,\lambda_{k};\lambda\right)  =\mathrm{leg}\left(  i,\lambda
_{k};\lambda^{\prime}\right)  +1$ for $1\leq i<k$, $\mathrm{arm}\left(
k,j;\lambda\right)  =\mathrm{arm}\left(  k,j;\lambda^{\prime}\right)
+1=\lambda_{k}-j$ for $1\leq j<\lambda_{k}$. The calculation also uses
$\mathrm{arm}\left(  i,\lambda_{k};\lambda\right)  =\mathrm{arm}\left(
i,\lambda_{k};\lambda^{\prime}\right)  =\lambda_{i}-\lambda_{k}$;
$\mathrm{leg}\left(  k,j;\lambda^{\prime}\right)  =\mathrm{leg}\left(
k,j;\lambda\right)  =0$. Thus%
\[
\frac{h_{q,t}\left(  qt;\lambda\right)  }{h_{q,t}\left(  qt;\lambda^{\prime
}\right)  }=\left(  1-q^{\lambda_{k}}t\right)
{\displaystyle\prod_{i=1}^{k-1}}
\frac{1-q^{\lambda_{i}-\lambda_{k}+1}t^{k-i+1}}{1-q^{\lambda_{i}-\lambda
_{k}+1}t^{k-i}},
\]
because the change in the product for row $\#k$ is%
\[%
{\displaystyle\prod_{j=1}^{\lambda_{k}}}
\left(  1-qtq^{\lambda_{k}-j}\right)
{\displaystyle\prod_{j=1}^{\lambda_{k}-1}}
\left(  1-qtq^{\lambda_{k}-1-j}\right)  ^{-1}=1-q^{\lambda_{k}}t.
\]
Denote the second product in (\ref{hookpr1}) by $B\left(  \lambda\right)  $
then%
\[
\frac{B\left(  \lambda\right)  }{B\left(  \lambda^{\prime}\right)  }%
=\frac{\left(  qt^{L-k+1};q\right)  _{\lambda_{k}-1}}{\left(  qt^{L-k+1}%
;q\right)  _{\lambda_{k}}}=\frac{1}{1-q^{\lambda_{k}}t^{L-k+1}}.
\]
Hence $\dfrac{A\left(  \lambda\right)  }{A\left(  \lambda^{\prime}\right)
}=\dfrac{h_{q,t}\left(  qt;\lambda\right)  B\left(  \lambda\right)  }%
{h_{q,t}\left(  qt;\lambda^{\prime}\right)  B\left(  \lambda^{\prime}\right)
}.$ To start the induction let $\lambda=\left(  1,0,\ldots,0\right)  $, then
$A\left(  \lambda\right)  =\prod_{j=2}^{L}\dfrac{1-qt^{j-1}}{1-qt^{j}}%
=\dfrac{1-qt}{1-qt^{L}}$, while $h_{q,t}\left(  qt;\lambda\right)  =1-qt$ and
$B\left(  \lambda\right)  =\left(  1-qt^{L}\right)  ^{-1}$. This completes the proof.
\end{proof}

Note that $%
{\displaystyle\prod_{i=1}^{L}}
\left(  qt^{L-i+1};q\right)  _{\lambda_{1}}=\left(  qt^{L};q,t\right)
_{\lambda}$ (the generalized $\left(  q,t\right)  $-Pochhammer symbol).
Setting $L=N-m-1$ in the Proposition leads to another formulation:

\begin{theorem}
Suppose $\lambda\in\mathcal{N}_{0}^{+}$ then%
\[
V^{\left(  0\right)  }\left(  \lambda\right)  =q^{\beta\left(  \lambda\right)
}t^{e_{0}\left(  \lambda\right)  }\frac{\left(  qt^{N};q,t\right)  _{\lambda
}\left(  qt^{N-m-1};q,t\right)  _{\lambda}}{\left(  qt^{N-1};q,t\right)
_{\lambda}h_{q,t}\left(  qt;\lambda\right)  }.
\]

\end{theorem}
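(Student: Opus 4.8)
The plan is to reconcile the explicit product formula of Theorem~\ref{V0thm} with the claimed hook-product form by rewriting each of its two products separately and then reassembling. The Remark following Theorem~\ref{V0thm} and the Proposition leading to (\ref{hookpr1}) supply exactly the two identities needed, so no new computation with the Yang--Baxter graph is required; the work is purely a matter of translating between the generalized $(q,t)$-Pochhammer symbol and the $(q,t)$-hook product.

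First I would handle the single-index product $\prod_{k=1}^{N-m-1}(qt^{N-k+1};q)_{\lambda_k}/(qt^{N-k};q)_{\lambda_k}$ in (\ref{V0lambda}). Since $\lambda\in\mathcal{N}_{0}^{+}$ forces $\lambda_i=0$ for $i\geq N-m$, every factor of $(a;q,t)_\lambda=\prod_{i=1}^{N}(at^{1-i};q)_{\lambda_i}$ with $i\geq N-m$ equals $1$, so $(qt^{N};q,t)_\lambda=\prod_{k=1}^{N-m-1}(qt^{N-k+1};q)_{\lambda_k}$ and $(qt^{N-1};q,t)_\lambda=\prod_{k=1}^{N-m-1}(qt^{N-k};q)_{\lambda_k}$. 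Thus the $k$-product equals $(qt^{N};q,t)_\lambda/(qt^{N-1};q,t)_\lambda$, which already produces the factors $(qt^{N};q,t)_\lambda$ and $(qt^{N-1};q,t)_\lambda$ of the target formula.

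Next I would set $L:=N-m-1$ and address the double-index product $\prod_{1\leq i<j<N-m}(qt^{j-i+1};q)_{\lambda_i-\lambda_j}/(qt^{j-i};q)_{\lambda_i-\lambda_j}$. The constraint $j<N-m$ is the same as $j\leq L$, and this product is precisely the reciprocal of the left-hand side of (\ref{hookpr1}). Applying the Proposition with this $L$ --- and using $\prod_{i=1}^{L}(qt^{L-i+1};q)_{\lambda_i}=(qt^{N-m-1};q,t)_\lambda$, valid because $\lambda_i=0$ for $i>L$ --- I would obtain
\[
\prod_{1\leq i<j<N-m}\frac{(qt^{j-i+1};q)_{\lambda_i-\lambda_j}}{(qt^{j-i};q)_{\lambda_i-\lambda_j}}=\frac{(qt^{N-m-1};q,t)_\lambda}{h_{q,t}(qt;\lambda)},
\]
supplying the remaining numerator factor $(qt^{N-m-1};q,t)_\lambda$ and the denominator $h_{q,t}(qt;\lambda)$.

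Substituting both rewritings into (\ref{V0lambda}) while leaving the pure-power prefactor $q^{\beta(\lambda)}t^{e_0(\lambda)}$ untouched then yields the stated formula. I do not expect any genuine obstacle here: the delicate points are merely bookkeeping --- matching the two product ranges ($j<N-m\Leftrightarrow j\leq L$), noticing that the $(i,j)$-product appears inverted relative to (\ref{hookpr1}), and justifying the truncation of the generalized Pochhammer symbols to their first $N-m-1$ coordinates --- and all three follow at once from $\lambda\in\mathcal{N}_{0}^{+}$.
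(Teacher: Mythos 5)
Your proposal is correct and is essentially the paper's own argument: the paper obtains this theorem precisely by combining Theorem~\ref{V0thm} with the identification of the $k$-product as $\left(qt^{N};q,t\right)_{\lambda}/\left(qt^{N-1};q,t\right)_{\lambda}$ and by setting $L=N-m-1$ in the Proposition containing (\ref{hookpr1}), inverted exactly as you do. The bookkeeping you spell out (truncating the generalized Pochhammer symbols using $\lambda_{i}=0$ for $i\geq N-m$, and checking $\ell\left(\lambda\right)\leq L$) is exactly what the paper leaves implicit.
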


The same method can be applied to $V^{\left(  1\right)  }\left(
\lambda\right)  $ by using $\Xi$ (Definition \ref{defcapxi}).

\begin{theorem}
Suppose $\lambda\in\mathcal{N}_{1}^{+}$ then
\[
V^{\left(  1\right)  }\left(  \lambda\right)  =q^{\beta\left(  \lambda\right)
}t^{e_{1}\left(  \lambda\right)  }\frac{\left(  qt^{-N};q,t^{-1}\right)
_{\lambda}\left(  qt^{-m};q,t^{-1}\right)  _{\lambda}}{\left(  qt^{1-N}%
;q,t^{-1}\right)  _{\lambda}h_{q,1/t}\left(  qt^{-1};\lambda\right)  }.
\]

\end{theorem}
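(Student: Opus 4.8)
The plan is to reproduce, via the involution $\Xi$ of Definition \ref{defcapxi}, the derivation that produced the preceding ($V^{(0)}$) hook-product theorem. The starting point is Theorem \ref{V1lambda}, which already expresses $V^{(1)}(\lambda)$ as $q^{\beta(\lambda)}t^{e_1(\lambda)}$ times two double products: a \emph{diagonal} product $\prod_{k=1}^{m}(qt^{k-N-1};q)_{\lambda_k}/(qt^{k-N};q)_{\lambda_k}$ and an \emph{off-diagonal} product $\prod_{1\le i<j\le m}(qt^{i-j-1};q)_{\lambda_i-\lambda_j}/(qt^{i-j};q)_{\lambda_i-\lambda_j}$. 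The entire task is to recognize the first as a ratio of generalized $(q,t^{-1})$-Pochhammer symbols and the second as a single hook product times one more such symbol; the prefactor $q^{\beta(\lambda)}t^{e_1(\lambda)}$ is inherited unchanged, since $\Xi$ acts only on the combinatorial hook identity and not on the spectral data that produced $e_1(\lambda)$.

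For the off-diagonal product I would invoke the hook-product identity (\ref{hookpr1}), but with $t$ replaced by $t^{-1}$ and with $L=m$. This is precisely the action of $\Xi$ on that purely combinatorial $(q,t)$-identity, and it is legitimate because $\lambda\in\mathcal{N}_1^+$ forces $\lambda_i=0$ for $i>m$, so $\ell(\lambda)\le m=L$. Under $t\mapsto t^{-1}$ the factor $(qt^{j-i};q)$ becomes $(qt^{i-j};q)$ and $(qt^{j-i+1};q)$ becomes $(qt^{i-j-1};q)$, so the left side of (\ref{hookpr1}) turns into the reciprocal of the off-diagonal product above, while the right side becomes $h_{q,1/t}(qt^{-1};\lambda)\prod_{i=1}^{m}(qt^{i-m-1};q)_{\lambda_i}^{-1}$. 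The generalized-symbol identity noted after the Proposition, transported by $\Xi$, reads $\prod_{i=1}^{m}(qt^{i-m-1};q)_{\lambda_i}=(qt^{-m};q,t^{-1})_\lambda$. Taking reciprocals then yields
\[
\prod_{1\le i<j\le m}\frac{(qt^{i-j-1};q)_{\lambda_i-\lambda_j}}{(qt^{i-j};q)_{\lambda_i-\lambda_j}}=\frac{(qt^{-m};q,t^{-1})_\lambda}{h_{q,1/t}(qt^{-1};\lambda)}.
\]

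For the diagonal product I would use only the definition $(a;q,t^{-1})_\lambda=\prod_{i=1}^{N}(at^{i-1};q)_{\lambda_i}$ together with $\lambda_i=0$ for $i>m$: this gives $\prod_{k=1}^{m}(qt^{k-N-1};q)_{\lambda_k}=(qt^{-N};q,t^{-1})_\lambda$ and $\prod_{k=1}^{m}(qt^{k-N};q)_{\lambda_k}=(qt^{1-N};q,t^{-1})_\lambda$, so the diagonal product equals $(qt^{-N};q,t^{-1})_\lambda/(qt^{1-N};q,t^{-1})_\lambda$. Substituting both evaluations back into Theorem \ref{V1lambda} assembles the claimed formula. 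The only point requiring genuine care, and the step I expect to be the main obstacle, is the bookkeeping of the substitution $t\mapsto t^{-1}$ inside the various generalized Pochhammer symbols: one must hold $q$, $\lambda$, and the prefactor fixed while $\Xi$ rewrites the exponents, and verify that the shift $t^{1-i}$ in the definition of $(a;q,t)_\lambda$ becomes $t^{i-1}$ under $\Xi$ so that the powers $qt^{k-N-1}$, $qt^{k-N}$, and $qt^{k-m-1}$ line up exactly with the symbols $(qt^{-N};q,t^{-1})_\lambda$, $(qt^{1-N};q,t^{-1})_\lambda$, and $(qt^{-m};q,t^{-1})_\lambda$ with no stray power of $t$ left over.
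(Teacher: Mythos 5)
Your proposal is correct and is essentially the paper's own proof: the paper disposes of this theorem with the single remark that ``the same method can be applied to $V^{\left(1\right)}\left(\lambda\right)$ by using $\Xi$,'' i.e.\ start from the explicit product formula for $V^{\left(1\right)}\left(\lambda\right)$ and transport the hook-product identity (\ref{hookpr1}) (with $L=m$, $t\mapsto t^{-1}$) together with the generalized Pochhammer identification, exactly as you do. Your write-up merely makes explicit the bookkeeping (including the harmless typo $\lambda_{1}$ versus $\lambda_{i}$ in (\ref{hookpr1}) and the fact that the factors with $i>m$ are empty products) that the paper leaves to the reader.
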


There is a modified definition of leg-length for arbitrary compositions
$\alpha\in\mathbb{N}_{0}^{N}$:%
\[
\mathrm{leg}\left(  i,j;\alpha\right)  =\#\left\{  r:r>i,j\leq\alpha_{r}%
\leq\alpha_{i}\right\}  +\#\left\{  r:r<i,j\leq\alpha_{r}+1\leq\alpha
_{i}\right\}  .
\]
Suppose $\alpha_{i+1}>\alpha_{i}$ then
\begin{equation}
\frac{h_{q,t}\left(  qt,s_{i}\alpha\right)  }{h_{q,t}\left(  qt,\alpha\right)
}=\frac{1-q^{\alpha_{i+1}-\alpha_{i}}t^{r_{\alpha}\left(  i\right)
-r_{\alpha}\left(  i+1\right)  }}{1-tq^{\alpha_{i+1}-\alpha_{i}}t^{r_{\alpha
}\left(  i\right)  -r_{\alpha}\left(  i+1\right)  }}=u_{1}\left(  \frac
{\zeta_{\alpha,E}\left(  i+1\right)  }{\zeta_{\alpha,E}\left(  i\right)
}\right)  ^{-1} \label{hq/hq}%
\end{equation}
from \cite[p.15,Prop. 5]{DL2012} (the argument relates to the box at $\left(
i+1,\alpha_{i}+1\right)  $ in the Ferrers diagram of $\alpha$ and the change
in its leg-length) so that%
\[
h_{q,t}\left(  qt;\alpha^{+}\right)  =\mathcal{R}_{1}\left(  \alpha,E\right)
^{-1}h_{q,t}\left(  qt;\alpha\right)  .
\]
Suppose $\alpha\in\mathcal{N}_{0}\mathbb{\ }$then from $V^{\left(  0\right)
}\left(  \alpha\right)  =\mathcal{R}_{1}\left(  \alpha,E\right)
^{-1}V^{\left(  0\right)  }\left(  \alpha^{+}\right)  $ (see Prop.
\ref{V0alpha}) and (\ref{V0lambda}) we obtain
\[
V^{\left(  0\right)  }\left(  \alpha\right)  =q^{\beta\left(  \alpha\right)
}t^{e_{0}\left(  \alpha^{+}\right)  }\frac{\left(  qt^{N};q,t\right)
_{\alpha^{+}}\left(  qt^{N-m-1};q,t\right)  _{\alpha^{+}}}{\left(
qt^{N-1};q,t\right)  _{\alpha^{+}}h_{q,t}\left(  qt;\alpha^{+}\right)  }.
\]
There is a slight complication for type (1) $V^{\left(  1\right)  }\left(
\alpha\right)  =\left(  -1\right)  ^{\mathrm{inv}\left(  \alpha\right)
}\mathcal{R}_{0}\left(  \alpha,F\right)  ^{-1}V^{\left(  1\right)  }\left(
\alpha^{+}\right)  $%
\[
V^{\left(  1\right)  }\left(  \alpha\right)  =\left(  -t\right)
^{-\mathrm{inv}\left(  \alpha\right)  }V^{\left(  1\right)  }\left(
\alpha^{+}\right)  \frac{h_{q,1/t}\left(  qt^{-1};\alpha^{+}\right)
}{h_{q,1/t}\left(  qt^{-1};\alpha\right)  }.
\]
Start with $\zeta_{\alpha,F}\left(  i\right)  =q^{\alpha_{i}}t^{r_{\alpha
}\left(  i\right)  -1-m}$ for $1\leq i\leq m$ (because $c\left(  i,F\right)
=i-m-1$) and then $\frac{\zeta_{\alpha,F}\left(  i+1\right)  }{\zeta
_{\alpha,F}\left(  i\right)  }=q^{\alpha_{i+1}-\alpha_{i}}t^{r_{\alpha}\left(
i+1\right)  -r_{\alpha}\left(  i\right)  }$. Suppose $\alpha_{i+1}>\alpha_{i}$
and apply $\Xi$ in (\ref{hq/hq}) to obtain%
\[
\frac{h_{q,1/t}\left(  qt^{-1},s_{i}\alpha\right)  }{h_{q,1/t}\left(
qt^{-1},\alpha\right)  }=\frac{1-q^{\alpha_{i+1}-\alpha_{i}}t^{r_{\alpha
}\left(  i+1\right)  -r_{\alpha}\left(  i\right)  }}{1-t^{-1}q^{\alpha
_{i+1}-\alpha_{i}}t^{r_{\alpha}\left(  i+1\right)  -r_{\alpha}\left(
i\right)  }}=tu_{0}\left(  \frac{\zeta_{\alpha,E}\left(  i+1\right)  }%
{\zeta_{\alpha,E}\left(  i\right)  }\right)  ^{-1};
\]
combine with $V^{\left(  1\right)  }\left(  s_{i}\alpha\right)  =-u_{0}\left(
\frac{\zeta_{\alpha,E}\left(  i+1\right)  }{\zeta_{\alpha,E}\left(  i\right)
}\right)  V^{\left(  1\right)  }\left(  \alpha\right)  $ and then%
\[
V^{\left(  1\right)  }\left(  \alpha\right)  h_{q,1/t}\left(  qt^{-1}%
,\alpha\right)  =-t^{-1}V^{\left(  1\right)  }\left(  s_{i}\alpha\right)
h_{q,1/t}\left(  qt^{-1},s_{i}\alpha\right)  .
\]
Thus $V^{\left(  1\right)  }\left(  \alpha\right)  h_{q,1/t}\left(
qt^{-1},\alpha\right)  =\left(  -t\right)  ^{-\mathrm{inv}\left(
\alpha\right)  }V^{\left(  1\right)  }\left(  \alpha^{+}\right)
h_{q,1/t}\left(  qt^{-1},\alpha^{+}\right)  $, and%
\[
V^{\left(  1\right)  }\left(  \alpha\right)  =\left(  -t\right)
^{-\mathrm{inv}\left(  \alpha\right)  }q^{\beta\left(  \alpha\right)
}t^{e_{0}\left(  \alpha^{+}\right)  }\frac{\left(  qt^{-N};q,t^{-1}\right)
_{\alpha^{+}}\left(  qt^{-m};q,t^{-1}\right)  _{\alpha^{+}}}{\left(
qt^{1-N};q,t^{-1}\right)  _{\alpha^{+}}h_{q,1/t}\left(  qt^{-1};\alpha\right)
}.
\]
We have shown that the values of certain Macdonald superpolynomials at special
points $\left(  1,t,\ldots,t^{N-1}\right)  $ or $\left(  1,t^{-1}%
,t^{-2},\ldots,t^{1-N}\right)  $ are products of linear factors of the form
$1-q^{a}t^{b}$ where $a\in\mathbb{N}$ and $-N\leq b\leq-N$.

\section{\label{SectSym}Restricted symmetrization and antisymmetrization}

A type of symmetric Macdonald superpolynomial has been investigated by
Blondeau et al\cite{BDLM2012}. The operators used in their work to define
Macdonald polynomials are significantly different from ours. There are results
on evaluations for these polynomials found by Gonz\'{a}lez and Lapointe
\cite{GL2020}. In this section we consider symmetrization over a subset of the
coordinates, and associated evaluations.

Fix $\lambda\in\mathcal{N}_{0}^{+}$ and consider the sum $p_{\lambda}^{s}%
=\sum\left\{  c_{\beta}M_{\beta,E}:\beta^{+}=\lambda,\beta\in\mathcal{N}%
_{0}\right\}  $ satisfying $\left(  \boldsymbol{T}_{i}-t\right)  p_{\lambda
}^{s}=0$ for $1\leq i<N-m-1$. In this section we determine $p_{\lambda}%
^{s}\left(  x^{\left(  0\right)  };\theta\right)  $. Similarly fix $\lambda
\in\mathcal{N}_{1}^{+}$ and consider the sum $p_{\lambda}^{a}=\sum\left\{
c_{\beta}M_{\beta,F}:\beta^{+}=\lambda,\beta\in\mathcal{N}_{1}\right\}  $
satisfying $\left(  \boldsymbol{T}_{i}+1\right)  p_{\lambda}^{a}=0$ for $1\leq
i<m$, then evaluate $p_{\lambda}^{a}\left(  x^{\left(  1\right)  }%
;\theta\right)  $.

\begin{lemma}
\label{symcofs}Suppose $\beta\in\mathbb{N}_{0}^{N},$ $E^{\prime}\in
\mathcal{Y}_{0}\cup\mathcal{Y}_{1}$ and $\beta_{i}<\beta_{i+1}$ for some $i$.
Let $z=\zeta_{\beta,E^{\prime}}\left(  i+1\right)  /\zeta_{\beta,E^{\prime}%
}\left(  i\right)  $ and let $p=c_{0}M_{s_{i}\beta,E^{\prime}}+c_{1}%
M_{\beta,E^{\prime}}$. If $c_{1}=\dfrac{t-z}{1-z}c_{0}$ then $\left(
\boldsymbol{T}_{i}-t\right)  p=0$ and if $c_{1}=-\dfrac{1-tz}{1-z}c_{0}$ then
$\left(  \boldsymbol{T}_{i}+1\right)  p=0.$
\end{lemma}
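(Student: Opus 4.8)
The plan is to reduce both assertions to a single eigenfunction acted on by $\boldsymbol{T}_{i}$, and then read off the eigenvector conditions from the quadratic relation. Since $\beta_{i}<\beta_{i+1}$, the step formula (\ref{step}) applies and gives $M_{s_{i}\beta,E^{\prime}}=\left(\boldsymbol{T}_{i}+\frac{1-t}{1-z}\right)M_{\beta,E^{\prime}}$, where $z=\zeta_{\beta,E^{\prime}}\left(i+1\right)/\zeta_{\beta,E^{\prime}}\left(i\right)$. Abbreviate $a:=\frac{1-t}{1-z}$ and $M_{0}:=M_{\beta,E^{\prime}}$. Substituting into $p=c_{0}M_{s_{i}\beta,E^{\prime}}+c_{1}M_{\beta,E^{\prime}}$ yields $p=\left(c_{0}\boldsymbol{T}_{i}+(c_{0}a+c_{1})\right)M_{0}$, so that everything is now expressed through powers of $\boldsymbol{T}_{i}$ applied to the single polynomial $M_{0}$.

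The key step is to apply $\boldsymbol{T}_{i}-t$ (respectively $\boldsymbol{T}_{i}+1$) and collapse the result using the quadratic relation $\boldsymbol{T}_{i}^{2}=(t-1)\boldsymbol{T}_{i}+t$ (equivalently $t\boldsymbol{T}_{i}^{-1}=1-t+\boldsymbol{T}_{i}$). Writing $K:=c_{0}a+c_{1}$ and substituting for $\boldsymbol{T}_{i}^{2}$, the coefficients of $\boldsymbol{T}_{i}M_{0}$ and of $M_{0}$ turn out to be proportional, with ratio $-t$ in the first case and $+1$ in the second. Concretely one obtains $(\boldsymbol{T}_{i}-t)p=(K-c_{0})(\boldsymbol{T}_{i}-t)M_{0}=\left[c_{1}+c_{0}(a-1)\right](\boldsymbol{T}_{i}-t)M_{0}$ and $(\boldsymbol{T}_{i}+1)p=(c_{0}t+K)(\boldsymbol{T}_{i}+1)M_{0}=\left[c_{1}+c_{0}(a+t)\right](\boldsymbol{T}_{i}+1)M_{0}$. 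This factorization is the heart of the argument, and it is purely formal given the quadratic relation.

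To finish it suffices to make the scalar prefactor vanish; since the lemma claims only sufficiency, I never need to know whether $(\boldsymbol{T}_{i}\mp\cdot)M_{0}$ is nonzero. The two elementary identities $1-a=\dfrac{t-z}{1-z}$ and $t+a=\dfrac{1-tz}{1-z}$ translate the hypotheses exactly: the assumption $c_{1}=\frac{t-z}{1-z}c_{0}$ is the same as $c_{1}+c_{0}(a-1)=0$, giving $(\boldsymbol{T}_{i}-t)p=0$, while $c_{1}=-\frac{1-tz}{1-z}c_{0}$ is the same as $c_{1}+c_{0}(a+t)=0$, giving $(\boldsymbol{T}_{i}+1)p=0$.

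I expect the only delicate point to be the bookkeeping in the collapse, so there is no genuine obstacle. A useful sanity check is the eigenvalue picture: by the quadratic relation $\boldsymbol{T}_{i}$ acts on $\mathrm{span}\left\{M_{\beta,E^{\prime}},M_{s_{i}\beta,E^{\prime}}\right\}$ with eigenvalues $t$ and $-1$, so the two prescribed linear combinations are precisely the corresponding one-dimensional eigenspaces, which explains why exactly one normalization of $c_{1}/c_{0}$ works for each sign.
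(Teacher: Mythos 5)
Your proof is correct, and the computations check out: writing $M_{s_{i}\beta,E^{\prime}}=\left(\boldsymbol{T}_{i}+a\right)M_{\beta,E^{\prime}}$ with $a=\frac{1-t}{1-z}$ via (\ref{step}), the quadratic relation gives the operator identities $\left(\boldsymbol{T}_{i}-t\right)\left(c_{0}\boldsymbol{T}_{i}+K\right)=\left(K-c_{0}\right)\left(\boldsymbol{T}_{i}-t\right)$ and $\left(\boldsymbol{T}_{i}+1\right)\left(c_{0}\boldsymbol{T}_{i}+K\right)=\left(c_{0}t+K\right)\left(\boldsymbol{T}_{i}+1\right)$ with $K=c_{0}a+c_{1}$, and your identities $1-a=\frac{t-z}{1-z}$, $t+a=\frac{1-tz}{1-z}$ match the hypotheses exactly. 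Your route differs from the paper's in a worthwhile way. The paper states the full $2\times2$ matrix of $\boldsymbol{T}_{i}$ with respect to the basis $\left[M_{\beta,E^{\prime}},M_{s_{i}\beta,E^{\prime}}\right]$ (both columns, including the action of $\boldsymbol{T}_{i}$ on $M_{s_{i}\beta,E^{\prime}}$, which is imported from the vector-valued Macdonald theory rather than derived in the paper) and then verifies by matrix-vector multiplication that the two prescribed coefficient vectors are killed by $\boldsymbol{T}_{i}-t$ and $\boldsymbol{T}_{i}+1$ respectively. You need only the first column of that matrix --- the step formula (\ref{step}), which the paper does state --- together with the Hecke quadratic relation; the second column is never used, because the factorization absorbs it. This makes your argument more self-contained relative to what is proved in the paper, at the cost of a little operator bookkeeping, whereas the paper's matrix formulation makes the two-dimensional module structure (and hence your eigenvalue sanity check) visible at a glance.
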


\begin{proof}
The general transformation rules are given in matrix form with respect to the
basis $\left[  M_{\beta,E^{\prime}},M_{s_{i}\beta,E^{\prime}}\right]  $%
\[
\boldsymbol{T}_{i}=%
\begin{bmatrix}
-\frac{1-t}{1-z} & \frac{\left(  1-tz\right)  \left(  t-z\right)  }{\left(
1-z\right)  ^{2}}\\
1 & \frac{z\left(  1-t\right)  }{1-z}%
\end{bmatrix}
.
\]
One directly verifies that$,$%
\[
\left(  \boldsymbol{T}_{i}-t\right)
\begin{bmatrix}
\dfrac{t-z}{1-z}\\
1
\end{bmatrix}
=%
\begin{bmatrix}
0\\
0
\end{bmatrix}
,\left(  \boldsymbol{T}_{i}+1\right)
\begin{bmatrix}
-\dfrac{1-tz}{1-z}\\
1
\end{bmatrix}
=%
\begin{bmatrix}
0\\
0
\end{bmatrix}
..
\]

\end{proof}

\begin{definition}
For $\lambda\in\mathcal{N}_{0}^{+}$ set $p_{\lambda}^{s}\left(  x;\theta
\right)  :=\sum\limits_{\alpha\in\mathcal{N}_{0},\alpha^{+}=\lambda
}\mathcal{R}_{0}\left(  \alpha,E\right)  M_{\alpha,E}\left(  x;\theta\right)
$.
\end{definition}

\begin{proposition}
Suppose $\lambda\in\mathcal{N}_{0}^{+}$ then $p_{\lambda}^{s}\left(
x;\theta\right)  $ satisfies $\left(  \boldsymbol{T}_{i}-t\right)  p_{\lambda
}^{\prime}=0$ for $1\leq i<N-m-1$.
\end{proposition}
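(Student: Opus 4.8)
The plan is to exploit, for each fixed $i$ with $1\leq i<N-m-1$, the involution $\alpha\mapsto s_{i}\alpha$ on the index set $\left\{  \alpha\in\mathcal{N}_{0}:\alpha^{+}=\lambda\right\}  $. First I would note that since $i,i+1<N-m$, the transposition $s_{i}$ permutes only two coordinates inside the active block $\left\{  1,\ldots,N-m-1\right\}  $ and leaves all coordinates $\geq N-m$ (which are forced to be $0$) untouched; hence $s_{i}$ preserves both membership in $\mathcal{N}_{0}$ and the rearrangement $\alpha^{+}=\lambda$. This lets me split the defining sum of $p_{\lambda}^{s}$ into the $s_{i}$-fixed points (those $\alpha$ with $\alpha_{i}=\alpha_{i+1}$) and the two-element orbits $\left\{  \alpha,s_{i}\alpha\right\}  $, which I normalise by choosing the representative with $\alpha_{i}<\alpha_{i+1}$.

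For a two-element orbit I would invoke Lemma \ref{symcofs} with $E^{\prime}=E$ and $z=\zeta_{\alpha,E}\left(  i+1\right)  /\zeta_{\alpha,E}\left(  i\right)  $: the combination $\mathcal{R}_{0}\left(  s_{i}\alpha,E\right)  M_{s_{i}\alpha,E}+\mathcal{R}_{0}\left(  \alpha,E\right)  M_{\alpha,E}$ is annihilated by $\boldsymbol{T}_{i}-t$ exactly when the coefficient of $M_{\alpha,E}$ equals $u_{0}\left(  z\right)  =\left(  t-z\right)  /\left(  1-z\right)  $ times the coefficient of $M_{s_{i}\alpha,E}$. This ratio is furnished directly by Lemma \ref{RbRb1} with $k=0$: since $\alpha_{i}<\alpha_{i+1}$ one has $\mathcal{R}_{0}\left(  \alpha,E\right)  =u_{0}\left(  z\right)  \mathcal{R}_{0}\left(  s_{i}\alpha,E\right)  $, which is precisely the required condition. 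Thus every orbit contributes a summand killed by $\boldsymbol{T}_{i}-t$.

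For an $s_{i}$-fixed point I would instead show $\left(  \boldsymbol{T}_{i}-t\right)  M_{\alpha,E}=0$ termwise, via the spectral criterion recorded in Section \ref{SectBG}, namely that $\zeta_{\alpha,E}\left(  i+1\right)  =t^{-1}\zeta_{\alpha,E}\left(  i\right)  $ forces $\left(  \boldsymbol{T}_{i}-t\right)  M_{\alpha,E}=0$. Here $\alpha_{i}=\alpha_{i+1}$ gives $r_{\alpha}\left(  i+1\right)  =r_{\alpha}\left(  i\right)  +1$, and the content formula $c\left(  v,E\right)  =N-m-v$ (valid for $1\leq v\leq N-m-1$) makes the content drop by exactly one, so $\zeta_{\alpha,E}\left(  i+1\right)  =t^{-1}\zeta_{\alpha,E}\left(  i\right)  $. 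Collecting the contributions of the fixed points and the orbits then yields $\left(  \boldsymbol{T}_{i}-t\right)  p_{\lambda}^{s}=0$ for each $i<N-m-1$.

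I expect the main obstacle to be the fixed-point case, where one must confirm that the content formula actually applies, i.e. that both ranks $r_{\alpha}\left(  i\right)  $ and $r_{\alpha}\left(  i+1\right)  $ lie in $\left\{  1,\ldots,N-m-1\right\}  $; otherwise the content could \emph{increase}, producing the wrong relation $\boldsymbol{T}_{i}+1$ (as happens for the trailing block in Lemma \ref{M0ctauE}). The key counting estimate is that when $\alpha_{i}=\alpha_{i+1}$ with $i+1\leq N-m-1$, the zeros lying in the active block to the left of $i+1$ number at most $\left(  N-m-1\right)  -\#\left\{  l:\alpha_{l}>0\right\}  $, which bounds $r_{\alpha}\left(  i+1\right)  \leq N-m-1$; hence $r_{\alpha}\left(  i\right)  \leq N-m-2$ and both ranks sit among the entries $1,\ldots,N-m-1$ occupying the first row of $Y_{E}$, exactly where $c\left(  v,E\right)  =N-m-v$ holds.
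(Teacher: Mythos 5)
Your proof is correct and follows essentially the same route as the paper: split the sum over $\left\{\alpha\in\mathcal{N}_{0}:\alpha^{+}=\lambda\right\}$ into $s_{i}$-fixed points, killed termwise by the spectral relation $\zeta_{\alpha,E}\left(i+1\right)=t^{-1}\zeta_{\alpha,E}\left(i\right)$, and two-element orbits, killed by combining Lemma \ref{RbRb1} (giving $\mathcal{R}_{0}\left(\alpha,E\right)=u_{0}\left(z\right)\mathcal{R}_{0}\left(s_{i}\alpha,E\right)$) with Lemma \ref{symcofs}. Your closing verification that $r_{\alpha}\left(i\right),r_{\alpha}\left(i+1\right)\leq N-m-1$, so that the content formula $c\left(v,E\right)=N-m-v$ genuinely applies, is a detail the paper leaves implicit, and it is a worthwhile addition rather than a deviation.
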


\begin{proof}
Fix $i$. If $\alpha\in\mathcal{N}_{0},\alpha^{+}=\lambda$ and $\alpha
_{i}=\alpha_{i+1}$ then $\left(  \boldsymbol{T}_{i}-t\right)  M_{\alpha,E}=0$
because $r_{\alpha}\left(  i+1\right)  =r_{\alpha}\left(  i\right)  +1$ and
thus $\zeta_{\alpha,E}\left(  i\right)  =t\zeta_{\alpha,E}\left(  i+1\right)
$. Otherwise take $\alpha_{i}<\alpha_{i+1}$ and $z=\zeta_{\alpha,E}\left(
i+1\right)  /\zeta_{\alpha,E}\left(  i\right)  ,$ then set
\[
p_{\alpha,i}:=\mathcal{R}_{0}\left(  s_{i}\alpha,E\right)  M_{s_{i}\alpha
,E}+\mathcal{R}_{0}\left(  \alpha,E\right)  M_{\alpha,E}=\mathcal{R}%
_{0}\left(  s_{i}\alpha,E\right)  \left\{  M_{s_{i}\alpha,E}+u_{0}\left(
z\right)  M_{\alpha,E}\right\}
\]
by Lemma \ref{RbRb1}, and $u_{0}\left(  z\right)  =\dfrac{t-z}{1-z}$. By Lemma
\ref{symcofs} $\left(  \boldsymbol{T}_{i}-t\right)  p_{a,i}=0$. For each $i$
the sum for $p_{\lambda}^{\prime}$ splits into singletons ($\alpha:\alpha
_{i}=\alpha_{i+1})$ and pairs $\left(  \beta,s_{i}\beta:\beta_{i}<\beta
_{i+1}\right)  $. Each piece is annihilated by $\boldsymbol{T}_{i}-t$.
\end{proof}

There is now enough information on hand to find $p_{\lambda}^{s}\left(
x^{\left(  0\right)  };\theta\right)  ,$since%
\begin{align*}
p_{\lambda}^{s}\left(  x^{\left(  0\right)  };\theta\right)   &
=\sum\limits_{\alpha\in\mathcal{N}_{0},\alpha^{+}=\lambda}\mathcal{R}%
_{0}\left(  \alpha,E\right)  M_{\alpha,E}\left(  x^{\left(  0\right)  }%
;\theta\right) \\
&  =\sum\limits_{\alpha\in\mathcal{N}_{0},\alpha^{+}=\lambda}\frac
{\mathcal{R}_{0}\left(  \alpha,E\right)  }{\mathcal{R}_{1}\left(
\alpha,E\right)  }M_{\lambda,E}\left(  x^{\left(  0\right)  };\theta\right)  .
\end{align*}
This sum can be evaluated using the norm formula established in \cite{D2020}.
This formula applies to arbitrary $\lambda\in\mathbb{N}_{0}^{N,+}$ and
arbitrary sets $E^{\prime}\in\mathcal{Y}_{0}\cup\mathcal{Y}_{1}$, In the
present context which uses only $\alpha\in\mathcal{N}_{0}$ with $\alpha
^{+}=\lambda$ the formula is used with $N$ replaced by $N-m-1$ and the reverse
$\lambda^{-}$ of $\lambda$ is replaced by%
\[
R_{0}\lambda=\left(  \lambda_{N-m-1},\ldots,\lambda_{2},\overset
{N-m-1}{\lambda_{1}},0\ldots,0\right)  .
\]
For $n=0,1,2,\ldots$ define $\left[  n\right]  _{t}!=\prod\limits_{i=1}%
^{n}\left[  i\right]  _{t}$. For $\lambda\in\mathcal{N}_{0}^{+}$ and
$j\leq\lambda_{1}$ let $n_{j}\left(  \lambda\right)  =\#\left\{
l:l<N-m,\lambda_{l}=j\right\}  $. The formula from \cite{D2020} specializes to%
\begin{equation}
\sum\limits_{\alpha\in\mathcal{N}_{0},\alpha^{+}=\lambda}\frac{\mathcal{R}%
_{0}\left(  \alpha,E\right)  }{\mathcal{R}_{1}\left(  \alpha,E\right)  }%
=\frac{\left[  N-m-1\right]  _{t}!}{\prod\limits_{j=0}^{\lambda_{1}}\left[
n_{j}\left(  \lambda\right)  \right]  _{t}!}\frac{1}{\mathcal{R}_{1}\left(
R_{0}\lambda,E\right)  }. \label{RRsum}%
\end{equation}
Note that the multiplier is a type of $t$-multinomial symbol. It is
straightforward to show%
\[
\mathcal{R}_{1}\left(  R_{0}\lambda,E\right)  =\prod\limits_{\substack{1\leq
i<j<N-m\\\lambda_{i}>\lambda_{j}}}\frac{1-q^{\lambda_{i}-\lambda_{j}}%
t^{j-i+1}}{1-q^{\lambda_{i}-\lambda_{j}}t^{j-i}}.
\]
This product can be combined with the $\left(  i,j\right)  $-product in
(\ref{V0lambda}) to show:%
\begin{align*}
p_{\lambda}^{s}\left(  x^{\left(  0\right)  };\theta\right)   &
=q^{\beta\left(  \lambda\right)  }t^{e_{0}\left(  \lambda\right)  }%
\frac{\left[  N-m-1\right]  _{t}!}{\prod\limits_{j=0}^{\lambda_{1}}\left[
n_{j}\left(  \lambda\right)  \right]  _{t}!}\frac{\left(  qt^{N};q,t\right)
_{\lambda}}{\left(  qt^{N-1};q,t\right)  _{\lambda}}\\
&  \times\prod\limits_{\substack{1\leq i<j<N-m\\\lambda_{i}>\lambda_{j}}%
}\frac{\left(  qt^{j-i+1},q\right)  _{\lambda_{i}-\lambda_{j}-1}}{\left(
qt^{j-i},q\right)  _{\lambda_{i}-\lambda_{j}-1}}\tau_{E}\left(  \theta\right)
.
\end{align*}
Also by (\ref{V0alpha})%
\[
p_{\lambda}^{s}\left(  x^{\left(  0\right)  };\theta\right)  =\frac{\left[
N-m-1\right]  _{t}!}{\prod\limits_{j=0}^{\lambda_{1}}\left[  n_{j}\left(
\lambda\right)  \right]  _{t}!}V^{\left(  0\right)  }\left(  R_{0}%
\lambda\right)  \tau_{E}\left(  \theta\right)
\]

\begin{definition}
For $y\in\mathbb{R}^{N-m-1}$ let $y^{\left(  0\right)  }$=$\left(
y_{1},\ldots,y_{N-m-1},t^{N-m-1},\ldots,t^{N-2},t^{N-1}\right)  .$
\end{definition}

Lemma \ref{M0ctauE} applies to each $M_{\alpha,E}$ in the sum for $p_{\lambda
}^{s}\left(  y^{\left(  0\right)  };\theta\right)  $ thus $\left(
T_{i}-t\right)  p_{\lambda}^{s}\left(  y^{\left(  0\right)  };\theta\right)
=0$ for $1\leq i<N-m-1$.

\begin{proposition}
$p_{\lambda}^{s}\left(  y^{\left(  0\right)  };\theta\right)  $ is symmetric
in $y$. In particular $p_{\lambda}^{s}\left(  x^{\left(  0\right)  }%
u;\theta\right)  =p_{\lambda}^{s}\left(  x^{\left(  0\right)  };\theta\right)
$ for any permutation $u$ of $\left\{  1,2,\ldots,N-m-1\right\}  $ (that is
$u\in\mathcal{S}_{N-m-1}\times Id_{m+1}$) .
\end{proposition}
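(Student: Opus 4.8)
The plan is to reduce the claim to invariance of $p_\lambda^s(y^{(0)};\theta)$ under the adjacent transpositions $s_1,\ldots,s_{N-m-2}$, which generate $\mathcal{S}_{N-m-1}\times Id_{m+1}$, and to prove each such invariance as an identity of $\mathcal{P}_m$-valued polynomials in $y=(y_1,\ldots,y_{N-m-1})$. First I would observe that for $1\le i<N-m-1$ the point $y^{(0)}s_i$ is again of the form $(ys_i)^{(0)}$: the transposition moves only $y_i,y_{i+1}$ and fixes the tail $(t^{N-m-1},\ldots,t^{N-1})$. That tail still satisfies $x_{j+1}=tx_j$ for $N-m\le j<N$, so Lemma \ref{M0ctauE} applies verbatim to $y^{(0)}s_i$ and gives $(T_i-t)p_\lambda^s(y^{(0)}s_i;\theta)=0$, i.e. $T_i p_\lambda^s(y^{(0)}s_i;\theta)=t\,p_\lambda^s(y^{(0)}s_i;\theta)$, alongside the same relation already recorded at $y^{(0)}$.

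For the core step I would invoke the preceding Proposition, $(\boldsymbol{T}_i-t)p_\lambda^s=0$, and evaluate it at $x=y^{(0)}$ using the formula (\ref{Tbp}). Writing $p=p_\lambda^s$ and $b=b(y^{(0)};i)$ this yields
\[
t\,p(y^{(0)})=b\,p(y^{(0)})+(T_i-b)p(y^{(0)}s_i),
\]
so $(t-b)p(y^{(0)})=(T_i-b)p(y^{(0)}s_i)$. Substituting $T_i p(y^{(0)}s_i)=t\,p(y^{(0)}s_i)$ collapses the right-hand side to $(t-b)p(y^{(0)}s_i)$, leaving
\[
(t-b)\bigl(p(y^{(0)})-p(y^{(0)}s_i)\bigr)=0.
\]
Because $b=\dfrac{t-1}{1-y_i/y_{i+1}}$ varies with $y$, the scalar $t-b$ is not the zero rational function; since $p(y^{(0)})$ and $p(y^{(0)}s_i)=p((ys_i)^{(0)})$ are polynomials in $y$, I can cancel $t-b$ on the dense set where it is nonzero and conclude $p(y^{(0)})=p((ys_i)^{(0)})$ identically in $y$. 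Composing these transpositions gives full symmetry in $y$.

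The main obstacle is exactly this factor $t-b$. At the distinguished point $x^{(0)}$ one has $y_i/y_{i+1}=t^{-1}$, hence $b(x^{(0)};i)=t$ and $t-b=0$, so the cancellation is forbidden there and a direct argument at $x^{(0)}$ fails. This is why the symmetry must be proved first for a generic $y^{(0)}$, where $t-b\neq0$, and only then specialized. The concluding assertion $p_\lambda^s(x^{(0)}u;\theta)=p_\lambda^s(x^{(0)};\theta)$ for $u\in\mathcal{S}_{N-m-1}\times Id_{m+1}$ then follows by substituting $y=(1,t,\ldots,t^{N-m-2})$ into the already-established polynomial identity, the degeneracy at that value being harmless since the identity holds for all $y$.
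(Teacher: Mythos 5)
Your proof is correct and follows essentially the same route as the paper: both evaluate the relation $\left(\boldsymbol{T}_{i}-t\right)p_{\lambda}^{s}=0$ at $y^{\left(0\right)}$ via formula (\ref{Tbp}), use Lemma \ref{M0ctauE} at the swapped point $y^{\left(0\right)}s_{i}=\left(ys_{i}\right)^{\left(0\right)}$ to replace $T_{i}$ by its eigenvalue $t$, and then cancel the factor $t-b\left(y^{\left(0\right)};i\right)$ by a genericity/polynomial-identity argument (the paper clears the denominator by multiplying by $y_{i}-y_{i+1}$, which is the same device as your dense-set cancellation). Your added remarks, that $y^{\left(0\right)}s_{i}$ has the required form so the Lemma applies, and that the degeneracy $b=t$ at $x^{\left(0\right)}$ is exactly why one must work with generic $y$ before specializing, make explicit what the paper leaves implicit.
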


\begin{proof}
Suppose $1\leq i<N-m-1$ then%
\begin{align*}
tp_{\lambda}^{s}\left(  y^{\left(  0\right)  };\theta\right)   &
=\boldsymbol{T}_{i}p_{\lambda}^{s}\left(  y^{\left(  0\right)  };\theta\right)
\\
&  =b\left(  y^{\left(  0\right)  },i\right)  p_{\lambda}^{s}\left(
y^{\left(  0\right)  };\theta\right)  +\left(  T_{i}-b\left(  y^{\left(
0\right)  },i\right)  \right)  p_{\lambda}^{s}\left(  y^{\left(  0\right)
}s_{i};\theta\right) \\
&  =b\left(  y^{\left(  0\right)  },i\right)  p_{\lambda}^{s}\left(
y^{\left(  0\right)  };\theta\right)  +\left(  t-b\left(  y^{\left(  0\right)
},i\right)  \right)  p_{\lambda}^{s}\left(  y^{\left(  0\right)  }s_{i}%
;\theta\right)  ..
\end{align*}%
\[
\left(  t-b\left(  y^{\left(  0\right)  },i\right)  \right)  p_{\lambda}%
^{s}\left(  y^{\left(  0\right)  };\theta\right)  =\left(  t-b\left(
y^{\left(  0\right)  },i\right)  \right)  p_{\lambda}^{s}\left(  y^{\left(
0\right)  }s_{i};\theta\right)  .
\]
The latter is a polynomial identity (after multiplying by $y_{i}-y_{i+1}$) and
thus holds for all $y^{\left(  0\right)  }$, and hence $p_{\lambda}^{s}\left(
y^{\left(  0\right)  }s_{i};\theta\right)  =p_{\lambda}^{s}\left(  y^{\left(
0\right)  };\theta\right)  $.
\end{proof}

Next we consider asymmetric polynomials in type (1). Recall $\alpha
\in\mathcal{N}_{1}$ implies $\zeta_{\alpha,F}\left(  i\right)  =q^{\alpha_{i}%
}t^{r_{\alpha}\left(  i\right)  -m-1}$ if $1\leq i\leq m$ and $\zeta
_{\alpha,F}\left(  i\right)  =t^{N-i}$ if $i>m$.

\begin{definition}
For $\lambda\in\mathcal{N}_{1}^{+}$ set $p_{\lambda}^{a}\left(  x;\theta
\right)  :=\sum\limits_{\alpha\in\mathcal{N}_{1},\alpha^{+}=\lambda}\left(
-1\right)  ^{\mathrm{inv}\left(  \alpha\right)  }\mathcal{R}_{1}\left(
\alpha,F\right)  M_{\alpha,F}\left(  x;\theta\right)  $.
\end{definition}

\begin{proposition}
Suppose $\lambda\in\mathcal{N}_{1}^{+}$ then $p_{\lambda}^{a}\left(
x;\theta\right)  $ satisfies $\left(  \boldsymbol{T}_{i}+t\right)  p_{\lambda
}^{a}=0$ for $1\leq i<m$.
\end{proposition}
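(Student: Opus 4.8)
The statement asks to show that $\boldsymbol{T}_i+t$ annihilates $p_\lambda^a$ for $1\le i<m$, and the plan is to mirror the type (0) argument for $p_\lambda^s$, with the antisymmetrizing weights $(-1)^{\mathrm{inv}(\alpha)}\mathcal{R}_1(\alpha,F)$ in place of the symmetrizing ones. Fix $i$ with $1\le i<m$. For $\alpha\in\mathcal{N}_1$ with $\alpha^+=\lambda$ there are two cases. If $\alpha_i=\alpha_{i+1}$ then $r_\alpha(i+1)=r_\alpha(i)+1$, so $\zeta_{\alpha,F}(i+1)=t\,\zeta_{\alpha,F}(i)$, and the eigenvalue relations recorded after (\ref{step}) give $(\boldsymbol{T}_i+1)M_{\alpha,F}=0$. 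If $\alpha_i<\alpha_{i+1}$ I pair $\alpha$ with $s_i\alpha$; the defining sum for $p_\lambda^a$ then decomposes into these singletons and two-term blocks, and it suffices to annihilate each block.

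For a block put $z=\zeta_{\alpha,F}(i+1)/\zeta_{\alpha,F}(i)$ and isolate
\[
p_{\alpha,i}=(-1)^{\mathrm{inv}(s_i\alpha)}\mathcal{R}_1(s_i\alpha,F)\,M_{s_i\alpha,F}+(-1)^{\mathrm{inv}(\alpha)}\mathcal{R}_1(\alpha,F)\,M_{\alpha,F}.
\]
By Lemma \ref{RbRb1} with $k=1$ one has $\mathcal{R}_1(\alpha,F)=u_1(z)\,\mathcal{R}_1(s_i\alpha,F)$, while $\alpha_i<\alpha_{i+1}$ forces $\mathrm{inv}(\alpha)=\mathrm{inv}(s_i\alpha)+1$, so the two signs are opposite. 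Hence the coefficient of $M_{\alpha,F}$ divided by that of $M_{s_i\alpha,F}$ equals $-u_1(z)=-\dfrac{1-tz}{1-z}$, and I would feed this ratio into the matrix form of $\boldsymbol{T}_i$ in Lemma \ref{symcofs} to identify the operator that kills $p_{\alpha,i}$.

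The main obstacle is exactly this identification. The quadratic relation (\ref{Hquad}) and the explicit matrix in Lemma \ref{symcofs} show that on each two-term block $\boldsymbol{T}_i$ has trace $t-1$ and determinant $-t$, hence eigenvalues $t$ and $-1$; in particular $-t$ is not an eigenvalue, so $\boldsymbol{T}_i+t$ has trivial kernel and cannot annihilate the nonzero $p_{\alpha,i}$ (nor, by the singleton computation, $M_{\alpha,F}$). The ratio $-\dfrac{1-tz}{1-z}$ instead matches the second alternative of Lemma \ref{symcofs}, yielding $(\boldsymbol{T}_i+1)p_{\alpha,i}=0$, and the singletons are likewise killed by $\boldsymbol{T}_i+1$. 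This agrees with the introductory description of $p_\lambda^a$, so the operator in the displayed statement should read $\boldsymbol{T}_i+1$ rather than $\boldsymbol{T}_i+t$. With that correction, summing the annihilated blocks and singletons over all $\alpha\in\mathcal{N}_1$ with $\alpha^+=\lambda$ completes the proof, after which $p_\lambda^a(x^{\left(1\right)};\theta)$ is evaluated, as in type (0), by reducing each $M_{\alpha,F}$ to a multiple of $M_{\lambda,F}(x^{\left(1\right)};\theta)$ via Proposition \ref{V1alpha} and applying the norm formula through $\Xi$.
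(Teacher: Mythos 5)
Your proof is correct and follows essentially the same route as the paper's: the same split of the defining sum into singletons ($\alpha_i=\alpha_{i+1}$) and pairs $(\alpha,s_i\alpha)$, the same use of Lemma \ref{RbRb1} to extract the coefficient ratio $-u_1(z)=-\frac{1-tz}{1-z}$, and the same appeal to Lemma \ref{symcofs} to annihilate each two-term block. Your diagnosis of the typo is also right: the operator in the displayed statement should be $\boldsymbol{T}_i+1$, not $\boldsymbol{T}_i+t$. This is corroborated by the paper itself, which requires $\left(\boldsymbol{T}_i+1\right)p_\lambda^a=0$ in the introduction to this section and concludes its own proof with ``Each piece is annihilated by $\boldsymbol{T}_i+1$''; your trace/determinant computation (trace $t-1$, determinant $-t$, hence eigenvalues $t$ and $-1$) correctly shows that $-t$ is not an eigenvalue of $\boldsymbol{T}_i$ for generic $t$, so the statement as printed could only hold if $p_\lambda^a$ were zero. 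You even silently repair a second instance of the same slip: the paper's proof writes $\left(\boldsymbol{T}_i+t\right)M_{\alpha,F}=0$ in the singleton case, whereas the relation $\zeta_{\alpha,F}(i+1)=t\,\zeta_{\alpha,F}(i)$ in fact yields $\left(\boldsymbol{T}_i+1\right)M_{\alpha,F}=0$, exactly as you state.
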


\begin{proof}
Fix $i$. If $\alpha\in\mathcal{N}_{1},\alpha^{+}=\lambda$ and $\alpha
_{i}=\alpha_{i+1}$ then $\left(  \boldsymbol{T}_{i}+t\right)  M_{\alpha,F}=0$
because $r_{\alpha}\left(  i+1\right)  =r_{\alpha}\left(  i\right)  +1$ and
thus $\zeta_{\alpha,F}\left(  i\right)  =t^{-1}\zeta_{\alpha,F}\left(
i+1\right)  $. Otherwise take $\alpha_{i}<\alpha_{i+1}$ and $z=\zeta
_{\alpha,E}\left(  i+1\right)  /\zeta_{\alpha,E}\left(  i\right)  ,$ then set
\begin{align*}
p_{\alpha,i}  &  :=\left(  -1\right)  ^{\mathrm{inv}\left(  s_{i}%
\alpha\right)  }\mathcal{R}_{1}\left(  s_{i}\alpha,F\right)  M_{s_{i}\alpha
,F}+\left(  -1\right)  ^{\mathrm{inv}\left(  \alpha\right)  }\mathcal{R}%
_{1}\left(  \alpha,F\right)  M_{\alpha,F}\\
&  =\left(  -1\right)  ^{\mathrm{inv}\left(  s_{i}\alpha\right)  }%
\mathcal{R}_{1}\left(  s_{i}\alpha,F\right)  \left\{  M_{s_{i}\alpha,F}%
-u_{1}\left(  z\right)  M_{\alpha,F}\right\}
\end{align*}
by Lemma \ref{RbRb1}, and $u_{1}\left(  z\right)  =\dfrac{1-tz}{1-z}$. By
Lemma \ref{symcofs} $\left(  \boldsymbol{T}_{i}+1\right)  p_{a,i}=0$. For each
$i$ the sum for $p_{\lambda}^{a}$ splits into singletons ($\alpha:\alpha
_{i}=\alpha_{i+1})$ and pairs $\left(  \beta,s_{i}\beta:\beta_{i}<\beta
_{i+1}\right)  $. Each piece is annihilated by $\boldsymbol{T}_{i}+1$.
\end{proof}

Similarly to the symmetric case we can determine $p_{\lambda}^{a}\left(
x^{\left(  1\right)  };\theta\right)  ,$since (by Proposition \ref{V1alpha})%
\begin{align*}
p_{\lambda}^{a}\left(  x^{\left(  1\right)  };\theta\right)   &
=\sum\limits_{\alpha\in\mathcal{N}_{1},\alpha^{+}=\lambda}\left(  -1\right)
^{\mathrm{inv}\left(  \alpha\right)  }\mathcal{R}_{1}\left(  \alpha,F\right)
M_{\alpha,F}\left(  x^{\left(  1\right)  };\theta\right) \\
&  =\sum\limits_{\alpha\in\mathcal{N}_{1},\alpha^{+}=\lambda}\frac
{\mathcal{R}_{1}\left(  \alpha,F\right)  }{\mathcal{R}_{0}\left(
\alpha,F\right)  }M_{\lambda,F}\left(  x^{\left(  1\right)  };\theta\right)  .
\end{align*}
Formula (\ref{RRsum}) can be adapted to find the sum by applying $\Xi$ and
chasing powers of $t$ (in $\left[  n_{i}\left(  \lambda\right)  \right]
_{t}!$ for example). The typical term in $\frac{\mathcal{R}_{1}\left(
\alpha,F\right)  }{\mathcal{R}_{0}\left(  \alpha,F\right)  }$ is%
\[
\frac{1-tq^{\alpha_{j}-\alpha_{i}}t^{r_{\alpha}\left(  j\right)  -c_{\alpha
}\left(  i\right)  }}{t-q^{\alpha_{j}-\alpha_{i}}t^{r_{\alpha}\left(
j\right)  -c_{\alpha}\left(  i\right)  }}=t^{-1}\frac{1-q^{\alpha_{j}%
-\alpha_{i}}t^{r_{\alpha}\left(  j\right)  -c_{\alpha}\left(  i\right)  +1}%
}{1-q^{\alpha_{j}-\alpha_{i}}t^{r_{\alpha}\left(  j\right)  -c_{\alpha}\left(
i\right)  -1}}%
\]
and applying $\Xi$ yields%
\[
t\frac{1-q^{\alpha_{j}-\alpha_{i}}t^{r_{\alpha}\left(  i\right)  -c_{\alpha
}\left(  j\right)  -1}}{1-q^{\alpha_{j}-\alpha_{i}}t^{r_{\alpha}\left(
i\right)  -c_{\alpha}\left(  j\right)  +1}}=\frac{u_{0}\left(  z\right)
}{u_{1}\left(  z\right)  }%
\]
with $z=q^{\alpha_{j}-\alpha_{i}}t^{r_{\alpha}\left(  i\right)  -c_{\alpha
}\left(  j\right)  }$, the typical term in $\frac{\mathcal{R}_{0}\left(
\alpha,E\right)  }{\mathcal{R}_{1}\left(  \alpha,E\right)  }$ (after the
interchange $m\longleftrightarrow N-m-1$). Thus
\[
\sum\limits_{\alpha\in\mathcal{N}_{1},\alpha^{+}=\lambda}\frac{\mathcal{R}%
_{1}\left(  \alpha,F\right)  }{\mathcal{R}_{0}\left(  \alpha,F\right)  }%
=\Xi\sum\limits_{\alpha^{+}=\lambda}\frac{\mathcal{R}_{0}\left(
\alpha,E\right)  }{\mathcal{R}_{1}\left(  \alpha,E\right)  }.
\]
From $\Xi\left[  n\right]  _{t}=$ $\left[  n\right]  _{1/t}=t^{1-n}\left[
n\right]  _{t}$ and $n_{j}\left(  \lambda\right)  =\#\left\{  l:l\leq
m,\lambda_{l}=j\right\}  $ it follows that
\begin{align*}
\frac{\left[  m\right]  _{1/t}!}{\prod\limits_{j=0}^{\lambda_{1}}\left[
n_{j}\left(  \lambda\right)  \right]  _{1/t}!}  &  =t^{A}\frac{\left[
m\right]  _{t}!}{\prod\limits_{j=0}^{\lambda_{1}}\left[  n_{j}\left(
\lambda\right)  \right]  _{t}!},\\
A  &  =-\frac{m\left(  m-1\right)  }{2}+\sum_{j\geq0}\frac{n_{j}\left(
\lambda\right)  \left(  n_{j}\left(  \lambda\right)  -1\right)  }{2}\\
&  =-\frac{1}{2}m^{2}+\frac{1}{2}m+\frac{1}{2}\sum_{j\geq0}n_{j}\left(
\lambda\right)  ^{2}-\frac{1}{2}m.
\end{align*}
Now let $R_{1}\lambda=\left(  \lambda_{m},\lambda_{m-1},\ldots,\overset
{m}{\lambda_{1}},0\ldots,0\right)  $ and consider
\[
\mathcal{R}_{0}\left(  R_{1}\lambda,F\right)  =\prod\limits_{\substack{1\ell
i<j\leq m\\\lambda_{i}.\lambda_{j}}}\frac{t-q^{\lambda_{i}-\lambda_{j}}%
t^{i-j}}{1-q^{\lambda_{i}-\lambda_{j}t^{i-j}}}=t^{\mathrm{inv}\left(
R_{1}\lambda\right)  }\prod\limits_{\substack{1\ell i<j\leq m\\\lambda
_{i}.\lambda_{j}}}\frac{1-q^{\lambda_{i}-\lambda_{j}}t^{i-j-1}}{1-q^{\lambda
_{i}-\lambda_{j}t^{i-j}}}%
\]
and the transformed%
\begin{align*}
\Xi\mathcal{R}_{1}\left(  R_{0}\lambda,E\right)   &  =\prod
\limits_{\substack{1\ell i<j\leq m\\\lambda_{i}.\lambda_{j}}}\frac
{1-q^{\lambda_{i}-\lambda_{j}}t^{i-j-1}}{1-q^{\lambda_{i}-\lambda_{j}t^{i-j}}%
}\\
&  =t^{-\mathrm{inv}\left(  R_{1}\lambda\right)  }\mathcal{R}_{0}\left(
R_{1}\lambda,F\right)  .
\end{align*}
This results in
\[
\sum\limits_{\alpha\in\mathcal{N}_{1},\alpha^{+}=\lambda}\frac{\mathcal{R}%
_{1}\left(  \alpha,F\right)  }{\mathcal{R}_{0}\left(  \alpha,F\right)
}=t^{A+\mathrm{inv}\left(  R_{1}\lambda\right)  }\frac{\left[  m\right]
_{t}!}{\prod\limits_{j=0}^{\lambda_{1}}\left[  n_{j}\left(  \lambda\right)
\right]  _{t}!}\frac{1}{\mathcal{R}_{0}\left(  R_{1}\lambda,F\right)  }.
\]
We find
\begin{align*}
\mathrm{inv}\left(  R_{1}\lambda\right)   &  =\sum\limits_{1\leq
i<j\leq\lambda_{1}}n_{i}\left(  \lambda\right)  n_{j}\left(  \lambda\right)
=\frac{1}{2}\left\{  \left(  \sum_{i=1}^{\lambda_{1}}n_{i}\left(
\lambda\right)  \right)  ^{2}-\sum_{i=1}^{\lambda_{1}}n_{i}\left(
\lambda\right)  ^{2}\right\} \\
&  =\frac{1}{2}m^{2}-\frac{1}{2}\sum_{i=1}^{\lambda_{1}}n_{i}\left(
\lambda\right)  ^{2}=-A,
\end{align*}
and we have shown%
\begin{align*}
p_{\lambda}^{a}\left(  x^{\left(  1\right)  };\theta\right)   &
=\frac{\left[  m\right]  _{t}!}{\prod\limits_{j=0}^{\lambda_{1}}\left[
n_{j}\left(  \lambda\right)  \right]  _{t}!}\frac{1}{\mathcal{R}_{0}\left(
R_{1}\lambda,F\right)  }M_{\lambda,F}\left(  x^{\left(  1\right)  }%
;\theta\right) \\
&  =\left(  -1\right)  ^{\mathrm{inv}\left(  R_{1}\lambda\right)  }\left[
m\right]  _{t}!\left\{  \prod\limits_{j=0}^{\lambda_{1}}\left[  n_{j}\left(
\lambda\right)  \right]  _{t}!\right\}  ^{-1}V^{\left(  1\right)  }\left(
R_{1}\lambda\right)  \tau_{F}\left(  \theta\right)
\end{align*}
Similarly to type (0) this formula can be further developed:%
\[
\mathcal{R}_{0}\left(  R_{1}\lambda,F\right)  ^{-1}=\prod
\limits_{\substack{1\leq i<j\leq m\\\lambda_{i}>\lambda_{j}}}\frac
{1-q^{\lambda_{i}-\lambda_{j}}t^{i-j}}{t-q^{\lambda_{i}-\lambda_{j}}t^{i-j}%
}=t^{-\mathrm{inv}\left(  R_{1}\lambda\right)  }\prod\limits_{\substack{1\leq
i<j\leq m\\\lambda_{i}>\lambda_{j}}}\frac{1-q^{\lambda_{i}-\lambda_{j}}%
t^{i-j}}{1-q^{\lambda_{i}-\lambda_{j}}t^{i-j-1}}.
\]
Thus (from Theorem \ref{V1lambda})%
\begin{align*}
p_{\lambda}^{a}\left(  x^{\left(  1\right)  };\theta\right)   &
=q^{\beta\left(  \lambda\right)  }t^{A\left(  \lambda\right)  }\frac{\left[
m\right]  _{t}!}{\prod\limits_{j=0}^{\lambda_{1}}\left[  n_{j}\left(
\lambda\right)  \right]  _{t}!}\frac{\left(  qt^{-N};q,t^{-1}\right)
_{\lambda}}{\left(  qt^{1-N};q,t^{-1}\right)  _{\lambda}}\\
&  \times\prod\limits_{\substack{1\leq i<j<N-m\\\lambda_{i}>\lambda_{j}}%
}\frac{\left(  qt^{i-j-1},q\right)  _{\lambda_{i}-\lambda_{j}-1}}{\left(
qt^{i-j},q\right)  _{\lambda_{i}-\lambda_{j}-1}}\tau_{F},\\
A\left(  \lambda\right)   &  =\sum_{i=1}^{m}\lambda_{i}\left(  N-m-i\right)
-\mathrm{inv}\left(  R_{1}\lambda\right)  ,\\
\mathrm{inv}\left(  R_{1}\lambda\right)   &  =\frac{1}{2}m^{2}-\frac{1}{2}%
\sum_{i=1}^{\lambda_{1}}n_{i}\left(  \lambda\right)  ^{2}.
\end{align*}

\begin{definition}
For $y\in\mathbb{R}^{m}$ let $y^{\left(  1\right)  }$=$\left(  y_{1}%
,\ldots,y_{m},t^{-m},\ldots,t^{2-N},t^{1-N}\right)  $
\end{definition}

Lemma \ref{M1ctauF} applies to each $M_{\alpha,F}$ in the sum for $p_{\lambda
}^{a}\left(  y^{\left(  1\right)  };\theta\right)  $ thus $\left(
T_{i}+1\right)  p_{\lambda}^{a}\left(  y^{\left(  1\right)  };\theta\right)
=0$ for $1\leq i<m$

\begin{proposition}
$p_{\lambda}^{a}\left(  y^{\left(  1\right)  };\theta\right)  $ is symmetric
in $y$. In particular $p_{\lambda}^{a}\left(  x^{\left(  1\right)  }%
u;\theta\right)  =p_{\lambda}^{a}\left(  x^{\left(  1\right)  };\theta\right)
$ for any permutation $u$ of $\left\{  1,2,\ldots,m\right\}  $ (that is
$u\in\mathcal{S}_{m}\times Id_{N-m}$) .
\end{proposition}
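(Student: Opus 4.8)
The plan is to reproduce the type~(0) argument used for $p_{\lambda}^{s}\left(y^{\left(0\right)};\theta\right)$, with the eigenvalue $t$ replaced by $-1$ throughout; the mechanism is identical and only the signs change. Fix $i$ with $1\leq i<m$. By the preceding Proposition one has $\left(\boldsymbol{T}_{i}+1\right)p_{\lambda}^{a}=0$, hence $\boldsymbol{T}_{i}p_{\lambda}^{a}\left(y^{\left(1\right)};\theta\right)=-p_{\lambda}^{a}\left(y^{\left(1\right)};\theta\right)$. Expanding the left side with the evaluation formula (\ref{Tbp}) at the point $y^{\left(1\right)}$, and writing $b=b\left(y^{\left(1\right)};i\right)$, gives $-p_{\lambda}^{a}\left(y^{\left(1\right)};\theta\right)=b\,p_{\lambda}^{a}\left(y^{\left(1\right)};\theta\right)+\left(T_{i}-b\right)p_{\lambda}^{a}\left(y^{\left(1\right)}s_{i};\theta\right)$.

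The key step is to identify $T_{i}p_{\lambda}^{a}\left(y^{\left(1\right)}s_{i};\theta\right)$. Because $i<m$, the transposition $s_{i}$ exchanges two of the first $m$ coordinates of $y^{\left(1\right)}$ and leaves the tail $\left(t^{-m},\ldots,t^{1-N}\right)$ untouched, so $y^{\left(1\right)}s_{i}$ still satisfies the hypothesis $x_{j}=tx_{j+1}$ for $m+1\leq j<N$ of Lemma \ref{M1ctauF}. That lemma shows each $M_{\alpha,F}\left(y^{\left(1\right)}s_{i};\theta\right)$ is a scalar multiple of $\tau_{F}$, and since $\left(T_{i}+1\right)\tau_{F}=0$ for $1\leq i<m$, the entire sum obeys $T_{i}p_{\lambda}^{a}\left(y^{\left(1\right)}s_{i};\theta\right)=-p_{\lambda}^{a}\left(y^{\left(1\right)}s_{i};\theta\right)$, i.e. $\left(T_{i}-b\right)p_{\lambda}^{a}\left(y^{\left(1\right)}s_{i};\theta\right)=-\left(1+b\right)p_{\lambda}^{a}\left(y^{\left(1\right)}s_{i};\theta\right)$.

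Substituting this back and collecting terms yields $\left(1+b\right)p_{\lambda}^{a}\left(y^{\left(1\right)};\theta\right)=\left(1+b\right)p_{\lambda}^{a}\left(y^{\left(1\right)}s_{i};\theta\right)$, where $1+b=\dfrac{t-y_{i}/y_{i+1}}{1-y_{i}/y_{i+1}}$. Clearing the denominator (multiply through by $y_{i+1}-y_{i}$) turns this into an identity between polynomials in $y$, valid for every $y$ including the zero locus of $1+b$; cancelling the common factor gives $p_{\lambda}^{a}\left(y^{\left(1\right)}s_{i};\theta\right)=p_{\lambda}^{a}\left(y^{\left(1\right)};\theta\right)$. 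As the $s_{i}$ with $1\leq i<m$ generate $\mathcal{S}_{m}\times Id_{N-m}$, the polynomial $p_{\lambda}^{a}\left(y^{\left(1\right)};\theta\right)$ is symmetric in $y_{1},\ldots,y_{m}$, and specializing $y$ to the first $m$ entries of $x^{\left(1\right)}$ yields the stated invariance $p_{\lambda}^{a}\left(x^{\left(1\right)}u;\theta\right)=p_{\lambda}^{a}\left(x^{\left(1\right)};\theta\right)$.

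The only subtle point, exactly as in the type~(0) case, is the passage from the rational relation carrying the common factor $1+b$ to a genuine symmetry: one must observe that after clearing denominators the equality is a polynomial identity, so $1+b$ may be cancelled even though it vanishes on a hypersurface. Everything else is a sign-bookkeeping transcription of the earlier proposition, so I expect no further obstacle.
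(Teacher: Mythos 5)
Your proposal is correct and follows essentially the same route as the paper: use $(\boldsymbol{T}_{i}+1)p_{\lambda}^{a}=0$, expand via the evaluation formula (\ref{Tbp}), invoke Lemma \ref{M1ctauF} at the swapped point $y^{(1)}s_{i}$ to get $(T_{i}+1)p_{\lambda}^{a}(y^{(1)}s_{i};\theta)=0$, and cancel the common factor $1+b$ after clearing denominators to obtain a polynomial identity. Your write-up is in fact slightly more careful than the paper's at two points it leaves implicit (why the lemma applies at $y^{(1)}s_{i}$, and the legitimacy of cancelling $1+b$), and it avoids the typographical slips in the paper's own statement and conclusion.
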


\begin{proof}
Suppose $1\leq i<m$ then%
\begin{align*}
-p_{\lambda}^{a}\left(  y^{\left(  1\right)  };\theta\right)   &
=\boldsymbol{T}_{i}p_{\lambda}^{a}\left(  y^{\left(  1\right)  };\theta\right)
\\
&  =b\left(  y^{\left(  1\right)  },i\right)  p_{\lambda}^{sa}\left(
y^{\left(  1\right)  };\theta\right)  +\left(  T_{i}-b\left(  y^{\left(
1\right)  },i\right)  \right)  p_{\lambda}^{a}\left(  y^{\left(  1\right)
}s_{i};\theta\right) \\
&  =b\left(  y^{\left(  1\right)  },i\right)  p_{\lambda}^{a}\left(
y^{\left(  1\right)  };\theta\right)  -\left(  1+b\left(  y^{\left(  1\right)
},i\right)  \right)  p_{\lambda}^{a}\left(  y^{\left(  1\right)  }s_{i}%
;\theta\right) \\
\left(  1+b\left(  y^{\left(  1\right)  },i\right)  \right)  p_{\lambda}%
^{a}\left(  y^{\left(  1\right)  };\theta\right)   &  =\left(  1+b\left(
y^{\left(  1\right)  },i\right)  \right)  p_{\lambda}^{a}\left(  y^{\left(
1\right)  }s_{i};\theta\right)  .
\end{align*}
The latter is a polynomial identity (after multiplying by $y_{i}-y_{i+1}$) and
thus holds for all $y^{\left(  1\right)  }$, and hence $p_{\lambda}%
^{as}\left(  y^{\left(  1\right)  }s_{i};\theta\right)  =p_{\lambda}%
^{a}\left(  y^{\left(  1\right)  }s_{i};\theta\right)  $.
\end{proof}

\section{\label{SectConc}Conclusion and Future Directions}

We established formulas for evaluating a relatively restrictive class of
nonsymmetric polynomials at special points. The values have product form,
whose typical terms are $1-q^{a}t^{b}$ where $a,b\in\mathbb{Z}$ and
$\left\vert b\right\vert \leq N$. The labels $E^{\prime}$ of the Macdonald
polynomials $M_{\alpha,E^{\prime}}$ have only two possibilities out of many,
$\binom{N-1}{m}$ for the isotype $\left(  N-m,1^{m}\right)  $. Computational
experiments suggest there are no other evaluations with this simple form.
Perhaps there are formulas combining sums and products, but we have no
conjectures to offer. However there are other evaluations to be studied: these
relate to singular polynomials. This refers to the situation where the
parameters $q,t$ satisfy a relation like $q^{a}t^{b}=1$ and a polynomial
$M_{\alpha,E^{\prime}}$ satisfies $\xi_{i}M_{\alpha,E^{\prime}}%
=\boldsymbol{\omega}_{i}M_{\alpha,E^{\prime}}$ for $1\leq i\leq N$ The
Jucys-Murphy operators on $s\mathcal{P}_{m}$ are defined in terms of $\left\{
\boldsymbol{T}_{i}\right\}  $ (see (\ref{defTTp})): $\boldsymbol{\omega}%
_{N}=1,\boldsymbol{\omega}_{i}=t^{-1}\boldsymbol{T}_{i}\boldsymbol{\omega
}_{i+1}\boldsymbol{T}_{i}$ for $1\leq i<N$. Of course finding these singular
parameters $\left(  q,t\right)  $ is already a research problem by itself. For
small $N$ and degree we can find some examples (with computer algebra) and
test evaluations. It appears there are interesting results to find.

Consider $N=6$ and $\mathcal{P}_{1,0},\alpha=(1,1,0,0,0,0)$ (of isotype
$\left(  5,1\right)  $). The spectral vector of $M_{\alpha,\left\{
5,6\right\}  }$ is $\left(  qt^{4},qt^{3},t^{2},t,t^{=1},1\right)  .$Let
\[
x=\left(  x_{1},x_{2},t^{2},t,t^{-1},1\right)  .
\]
The polynomial $M_{\alpha,\left\{  5,6\right\}  }$ is singular for $qt^{3}=1$
and
\begin{align*}
M_{\alpha,\left\{  5,6\right\}  }\left(  x;\theta\right)   &  =t^{16}\left(
x_{1}-1\right)  \left(  x_{2}-1\right)  \left(  t^{4}\theta_{6}-t^{5}%
\theta_{5}\right)  ,\\
\tau_{\left\{  5,6\right\}  }  &  =t^{4}\theta_{6}-t^{5}\theta_{5}.
\end{align*}
Similarly $M_{\alpha,\left\{  4,6\right\}  }$ is singular at $qt^{3}=1$; its
spectral vector is $\left(  qt^{4},qt^{3},t^{2},t^{-1},t,1\right)  $ and for
$x^{\prime}=\left(  x_{1},x_{2},t^{2},t^{-1},t,1\right)  $%
\begin{align*}
M_{\alpha,\left\{  4,6\right\}  }\left(  x^{\prime};\theta\right)   &
=t^{16}\left(  x_{1}-1\right)  \left(  x_{2}-1\right)  \tau_{\left\{
4,6\right\}  }\left(  \theta\right)  ,\\
\tau_{\left\{  4,6\right\}  }  &  =-t^{6}\theta_{4}+\frac{t^{5}}{1+t}\left(
\theta_{5}+\theta_{6}\right)  .
\end{align*}
As well $M_{\alpha,\left\{  3,6\right\}  }$ is singular at $qt^{3}=1$; its
spectral vector is $\left(  qt^{4},qt^{3},t^{-1},t^{2},t,1\right)  $ and for
$x^{\prime\prime}=\left(  x_{1},x_{2},t^{-1},t^{2},t,1\right)  $%
\begin{align*}
M_{\alpha,\left\{  3,6\right\}  }\left(  x^{\prime\prime};\theta\right)   &
=t^{16}\left(  x_{1}-1\right)  \left(  x_{2}-1\right)  \tau_{\left\{
3,6\right\}  }\left(  \theta\right)  ,\\
\tau_{\left\{  3,6\right\}  }  &  =-t^{7}\theta_{3}+\frac{t^{6}}{1+t+t^{2}%
}\left(  \theta_{4}+\theta_{5}+\theta_{6}\right)  .
\end{align*}
For an example with higher degree consider $N=6,\mathcal{P}_{4,1}$ and
$\beta=\left(  2,1,0,0,0,0\right)  ,F=\left\{  1,2,3\right\}  $ (the isotype
is $\left(  3,1^{3}\right)  $) Then $\zeta_{\beta,F}=\left(  q^{2}%
t^{-3},qt^{-2},t^{-1},t^{2},t,1\right)  $ and $M_{\beta,F}$ is singular for
$q=t^{2}.$ At $x=\left(  x_{1},x_{2},t^{-1},t^{2},t,1\right)  $ we find%
\begin{align*}
M_{\beta,F}\left(  x;\theta\right)   &  =t^{8}\left(  x_{1}-tx_{2}\right)
\left(  x_{1}-1\right)  \left(  x_{2}-1\right)  \tau_{F},\\
\tau_{F}  &  =-\theta_{1}\theta_{2}\theta_{3}\left(  \theta_{4}+\theta
_{5}+\theta_{6}\right)  .
\end{align*}

We would expect an evaluation formula involving the elements of the spectral
vector for which $\alpha_{i}=0$ and with as many free variables as nonzero
elements of $\alpha$. There is a nice necessary condition for a singular
value: the $t$-exponents of the specialized spectral vector have to agree with
the content vector of an RSYT. For example set $q=t^{2}$ in $\zeta_{\beta,F}$
with the result $\left(  t,1,t^{-1},t^{2},t,1\right)  $, and $\left[
1,0,-1,2,1,0\right]  $ is the content vector of%
\[%
\begin{bmatrix}
6 & 5 & 4\\
3 & 2 & 1
\end{bmatrix}
.
\]
Then $M_{\gamma,F}$ with $\gamma=\left(  2,1,0,0,0,0\right)  $ can not be
singular at $q=t^{2}$: the spectral vector $\zeta_{\gamma,F}=\left(
1,t,t^{-1},t^{2},t,1\right)  $ and $\left[  0,1,-1,2,1,0\right]  $ is not the
content vector of any RSYT.

There are interesting results dealing with singular Macdonald superpolynomials
waiting to be found.

\end{document}